%
%
%
%
\documentclass[12pt, a4paper]{amsart}
\usepackage{amssymb,fullpage}
\usepackage{amsmath,esint,  bm}
\usepackage[colorlinks, allcolors=RedViolet]{hyperref}
\usepackage{colortbl}
\usepackage[dvipsnames]{xcolor}

\newtheorem{theorem}{Theorem}[section]
\newtheorem{lemma}[theorem]{Lemma}

\newtheorem{corollary}[theorem]{Corollary}
\theoremstyle{definition}

\newtheorem{assump}{Assumption}

\usepackage{enumitem}
\theoremstyle{remark}
\newtheorem{remark}[theorem]{Remark}
\numberwithin{equation}{section}


\newcommand{ \mr }{ \mathbb{R} }
\newcommand{ \R }{ \mathbb{R} }

\newcommand{ \bA }{ \mathbf{A} }
\newcommand{ \A }{ \mathcal{A} }
\newcommand{ \Q }{ \mathcal{Q} } 
\newcommand{ \I }{ \mathcal{I} }
\newcommand{ \bV }{ \mathbf{V} }
\newcommand{ \ba }{ \mathbf{a} }
\newcommand{ \bb }{ \mathbf{b} }
\newcommand{ \bP }{ \mathbf{P} }
\newcommand{ \bQ }{ \mathbf{Q} }

\newcommand{ \bfw }{ \mathbf{w} }
\newcommand{ \bfh }{ \mathbf{h} }
\newcommand{ \bfu }{ \mathbf{u} }
\newcommand{ \zero }{ \mathbf{0} }

\newcommand{ \bfv }{ \mathbf{v} }
\newcommand{\tphi}{{\tilde\phi}}
\renewcommand{\d}{\mathrm{d}}
\newcommand{\dz}{\mathrm{d} z}

\newcommand{\loc}{{\operatorname{loc}}}
\renewcommand{\epsilon}{\varepsilon}
\renewcommand{\phi}{\varphi}
\renewcommand{\le}{\leqslant}
\renewcommand{\ge}{\geqslant}
\renewcommand{\leq}{\leqslant}
\renewcommand{\geq}{\geqslant}
\renewcommand{\div}{\operatorname{div}}

\providecommand{\Ol}{\ensuremath{\mathcal{O}_\lambda}}


\newcommand{\ainc}[1]{\hyperref[ainc]{{\normalfont(aInc){\ensuremath{_{#1}}}}}}
\newcommand{\adec}[1]{\hyperref[adec]{{\normalfont(aDec){\ensuremath{_{#1}}}}}}
\newcommand{\inc}[1]{\hyperref[inc]{{\normalfont(Inc){\ensuremath{_{#1}}}}}}
\newcommand{\dec}[1]{\hyperref[dec]{{\normalfont(Dec){\ensuremath{_{#1}}}}}}

\newcommand{\wMAe}[1]{\hyperref[wMAe]{{\normalfont(wMA){\ensuremath{_{#1}}}}}}

\begin{document}

\title{
Partial regularity  for degenerate parabolic systems with general growth  via caloric approximations
}

\author{Jihoon Ok}
\address[Jihoon Ok]{Department of Mathematics, Sogang University, 35 Baekbeom-ro, Mapo-gu, Seoul 04107, Republic of Korea.}
\email{jihoonok@sogang.ac.kr}

\author{Giovanni Scilla}
\address[Giovanni Scilla]{Department of Mathematics and Applications ``R. Caccioppoli'', University of Naples Federico II, Via Cintia, Monte S. Angelo, 80126 Naples, Italy}
\email{giovanni.scilla@unina.it}

\author{Bianca Stroffolini}
\address[Bianca Stroffolini]{Department of Mathematics and Applications ``R. Caccioppoli'', University of Naples Federico II, Via Cintia, Monte S. Angelo, 80126 Naples, Italy}
\email{bstroffo@unina.it}
%

\thanks{{\bf MSC}(2020): Primary: 35K40, 35B65; Secondary: 35K92, 46E30}




\keywords{degenerate parabolic systems, partial regularity, caloric approximation, Orlicz growth}

\begin{abstract}

We establish a partial regularity result for solutions of parabolic systems with general $\phi$-growth, where $\phi$ is an Orlicz function. 
In this setting we can develop a unified approach that is independent of the degeneracy of system and relies on two caloric approximation results: the $\phi$-caloric approximation, which was introduced in \cite{DieSchStrVer17}, and an improved version of the $\mathcal A$-caloric approximation, which we prove without using the classical compactness method. 
\end{abstract}

\maketitle



\section{Introduction}
The aim of this paper is to prove partial regularity for solutions of
the following autonomous parabolic system:
\begin{equation}\label{system1}
\partial_t \bfu - \mathrm{div}\, \bA(D\bfu) =\zero \quad \text{in }\ \Omega\times (0,T],
\end{equation}
where $\bA\in C(\R^{Nn}, \R^{Nn}) \cap C^1(\R^{Nn}\setminus \{\zero \}, \R^{Nn})$ is modeled as the $\phi$-Laplacian, see Assumption~\ref{Ass}.
We would like to point out that, already in the stationary case, the best result we can expect for non-radial systems is the $C^{1,\alpha}$-regularity outside a set of Lebesgue measure zero, see the survey \cite{Min_darkside} and references therein.

In this direction, a powerful tool is the comparison and closeness with suitable smooth maps, for which excess decay estimates are available. 
The first use of a compactness argument for approximately harmonic maps goes back to De Giorgi, in the context of regularity of minimal surfaces in geometric measure theory, see \cite{degiorgi,simon}. De Giorgi's Lemma states that there is a rigidity behaviour of approximately harmonic maps, in the sense that they are close to harmonic ones. This lemma has been generalized to strongly elliptic bilinear forms, the so-called $\A$-harmonic approximation, in \cite{DuzSte02} for
applications to the boundary regularity of minimizing elliptic currents. For elliptic systems and related quasiconvex functionals of $p$-growth, we refer to for instance \cite{DuzSte02,DuzGasGro00,DuzGroKro05,Beck07,FosMin08,Beck09,BogDuzHabSch11}. Here partial regularity results are proved, using a two-scale approach. As long as the excess functional is small compared to the gradient average in a ball, one can linearize the system via the $\A$-harmonic approximation Lemma. When, instead, the system is degenerate, one compares with the $p$-Laplacian via the $p$-harmonic approximation, see \cite{DuzMin041} and also \cite{Bog12,DuzMin04}.
The final partial regularity result is then achieved with an exit time argument.
In a more general setting, the $\A$-harmonic approximation in Orlicz spaces and the $\phi$-harmonic approximation were proven in \cite{DLSV12} and \cite{DieStrVer12}, respectively, by utilizing a refined Lipschitz truncation argument. Using these approximation results, in recent years, partial regularity for elliptic systems or quasiconvex functionals with general growth has been studied in \cite{CelOk20,Str20,IseLeoVer21,GooSciStr22,OkSciStr22}.

Regularity results for the evolutive $p$-Laplacian system
$$
\partial_t \bfu - \div \left(|D\bfu|^{p-2}D\bfu\right)=\zero \quad \text{in }\ \Omega_T
$$
were established by DiBenedetto and Friedman in \cite{DiBeFried84,DiBeFried85}. Their key idea was to look at the system in a new geometry, that, in a sense, reduces the system to the classical heat system.
Roughly speaking, if the average of the gradient of a solution is locally comparable with $\lambda$, the system looks like
\begin{equation*}
\partial_t \bfu =\div(\lambda^{p-2} D\bfu) \,.
\end{equation*}
This suggests to consider a \lq \lq new metric" in which the scaling is homogeneous and to consider \lq\lq balls" centered at the point $z_0=(x_0, t_0)$ with respect to this metric; i.e., the cylinders:
\begin{equation*}
Q^{\lambda}_{r}(z_0) := B_{\rho}(x_0)\times (t_0-\lambda^{2-p} r^2, t_0+\lambda^{2-p} r^2)\,.
\end{equation*}
This method is known as the \textit{intrinsic scaling} method.
As for the evolutive Uhlenbeck system with general $\phi$-growth
\begin{equation}\label{system_Orlicz}
\partial_t \bfu - \div \left(\frac{\phi'(|D\bfu|)}{|D\bfu|}D\bfu\right)=\zero \quad \text{in }\ \Omega_T \,,
\end{equation}
everywhere $C^{1,\alpha}$-regularity is finally established in our recent paper \cite{OSS23} using cylinders that are intrinsic with respect to the function $\phi$, namely,
\begin{equation*}
Q^{\lambda}_{r}(z_0) : =B_{r}(x_0)\times \left(t_0-\frac{ \lambda^2}{\phi(\lambda)}r^2, t_0+\frac{\lambda^2}{\phi(\lambda)} r^2\right)\,.
\end{equation*}
We also refer to \cite{Lie06,DieSchSch19} for related regularity results for the system \eqref{system_Orlicz}.

Partial regularity for parabolic systems using caloric approximations has been studied in \cite{DuzMin05,BoDuzMin10_1,BoDuzMin10_2,DuzMinSte11,BoDuzMin13,Scheven11}. In particular, in \cite{BoDuzMin13} B\"ogelein, Duzaar and Mingione obtained the $\A$-caloric and $p$-caloric approximations, and, using these approximations, proved partial H\"older continuity of the gradient of weak solutions to the degenerate parabolic system \eqref{system1} with standard $p$-growth.
Let us review the proof of partial regularity in \cite{BoDuzMin13}. By assuming a smallness condition on the relevant excess at some scale, it is possible to linearize the system at the gradient average in the nondegenerate case, and compare the solution of the original system with one of the linearized system. The comparison argument ensures the decay estimate for the excess at smaller scales.
In the degenerate regime, one compares with a suitable $p$-Laplace evolutive system via the $p$-caloric approximation. At this stage, one can proceed using the intrinsic cylinders \`a la Di Benedetto.
Finally, the degenerate and nondegenerate regimes are matching together keeping track of the  so called \lq\lq switching radius".\par

In this paper we consider degenerate parabolic systems with general $\phi$-growth and obtain partial H\"older continuity of the gradient of weak solutions. Our result covers a large class of systems whose degeneracy need not to be determined. In particular, we extend  the results of the subquadratic and superquadratic systems obtained in  \cite{BoDuzMin13}.

We emphasize that our method, inspired by \cite{BoDuzMin13}, deals with  systems in a unified way, without any distinction between the superquadratic and subquadratic cases. 
The main tools are some caloric approximations in the Orlicz setting. In the nondegenerate regime, we prove a new version of the $\mathcal{A}$-caloric approximation which is more suitable to our setting. We would like to mention a recent related result in \cite{FILV23}, where the authors obtained an $\A$-caloric approximation using the classical compactness method and, applying this, they proved partial H\"older regularity for nondegenerate parabolic systems with general growth. The proof of our version, stated in Theorem~\ref{thm:Acaloric}, relies on a parabolic duality argument and an improved parabolic Lipschitz truncation, along the lines of \cite{DLSV12} for the elliptic case. Consequently, we can obtain closeness with comparing mappings in terms of gradients directly, which is sharper than the one considered in \cite{FILV23}. Moreover, we underline explicitly how the constant $\delta$ in Theorem~\ref{thm:Acaloric} depends only on $p$ and $q$ instead of $\phi$.
As for the degenerate regime, we use the $\phi$-caloric approximation lemma proven in \cite{DieSchStrVer17}.
Now, we state the main theorem of our paper.
\begin{theorem}\label{mainthm}
Let $\bfu$ be a weak solution to \eqref{system1} where $\bA$ satisfies Assumption~\ref{Ass}. There exist $U\subset \Omega_T$ with $|\Omega_T\setminus U| =0$ and $\alpha\in(0,1)$ such that $D\bfu\in C^{\alpha,\frac{\alpha}{2}}_{\mathrm{loc}}(U)$. Moreover,
we have $(\Omega_T\setminus U) \subset (\Sigma_1\cup \Sigma_2)$, where
\begin{equation}\label{Sigma1}
\Sigma_1 : = \left\{ z_0\in\Omega_T: \liminf_{r\to 0^+} \fint_{Q_{r}(z_0)} |\bV(D\bfu) - (\bV(D\bfu))_{Q_r(z_0)} |^2\,\dz >0\right\}
\end{equation}
and
\begin{equation}\label{Sigma2}
\Sigma_2 : = \left\{ z_0\in\Omega_T: \limsup_{r\to 0^+} |(D\bfu)_{Q_r(z_0)} | =\infty \right\} \,.
\end{equation}
\end{theorem}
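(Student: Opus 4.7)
\emph{Setup and strategy.} The plan is to derive a pointwise Campanato-type excess decay at every $z_0 \in \Omega_T \setminus (\Sigma_1 \cup \Sigma_2)$, which by standard arguments yields $D\bfu \in C^{\alpha,\alpha/2}_{\loc}$ on an open set $U$ of full measure. I would work on intrinsic cylinders $Q_r^\lambda(z_0)$ together with the hybrid excess
\[
\Phi(z_0, r, \lambda) := \fint_{Q_r^\lambda(z_0)} \left|\bV(D\bfu) - (\bV(D\bfu))_{Q_r^\lambda(z_0)}\right|^2 \, \dz,
\]
coupled with the intrinsic condition $\lambda \sim |(D\bfu)_{Q_r^\lambda(z_0)}|$. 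The definitions of $\Sigma_1$ and $\Sigma_2$ guarantee that at every $z_0 \notin \Sigma_1 \cup \Sigma_2$, for all sufficiently small $r$, the excess $\Phi$ is arbitrarily small and the intrinsic parameter $\lambda$ stays bounded.

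\emph{Dichotomy via caloric approximations.} The core is an iteration $\Phi(z_0, \tau r, \tilde\lambda) \leq \tau^{2\theta}\, \Phi(z_0, r, \lambda)$ for suitable $\tau, \theta \in (0,1)$, run on a dichotomy. In the nondegenerate regime, when $\Phi(z_0, r, \lambda)$ is much smaller than $\phi(\lambda)^2$, I would freeze coefficients at the average $(D\bfu)_{Q_r^\lambda(z_0)}$ and rescale parabolically so that the rescaled map solves a system which is approximately $\A$-caloric with $\A := \phi'(\lambda)^{-1} D\bA((D\bfu)_{Q_r^\lambda(z_0)})$; Assumption~\ref{Ass} ensures ellipticity constants depending only on $p$ and $q$. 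Theorem~\ref{thm:Acaloric} then supplies an exact $\A$-caloric comparison map close to the rescaled solution in the gradient, and classical decay estimates for linear parabolic systems with constant coefficients close the argument. In the degenerate regime, when $\Phi(z_0, r, \lambda)$ is comparable to $\phi(\lambda)^2$, I would instead use the $\phi$-caloric approximation of \cite{DieSchStrVer17} to compare $\bfu$ with a $\phi$-caloric map, whose everywhere $C^{1,\alpha}$-regularity established in \cite{OSS23} produces the same form of decay. In both regimes, a Caccioppoli inequality on intrinsic cylinders controls the error terms generated by the approximation.

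\emph{Matching and conclusion.} Maintaining the coupling $\lambda \sim |(D\bfu)_{Q_r^\lambda}|$ through the iteration forces updates $\lambda \leadsto \tilde\lambda$ and hence changes of cylinder. This is organized via a switching-radius argument: one follows the degenerate alternative down to the first scale below which the nondegenerate alternative holds at every subsequent level, then iterates the linearized decay from there on. I expect the main obstacle to be the quantitative control across this switching radius — the comparison of $Q_r^\lambda$ with $Q_{\tau r}^{\tilde\lambda}$ for differing intrinsic parameters, and the passage between the natural $\bV$-excess and the $\phi$-excess attached to the $\phi$-caloric comparison map — which requires sharp use of the (aInc) and (aDec) conditions on $\phi$ so that constants do not accumulate over iterations. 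Once the excess decay is in place, a standard exit-time and Campanato-type argument delivers $D\bfu \in C^{\alpha,\alpha/2}_{\loc}(U)$ with $|\Omega_T \setminus U| = 0$ and $(\Omega_T \setminus U) \subset \Sigma_1 \cup \Sigma_2$.
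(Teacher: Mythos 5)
Your proposal follows the same overall strategy as the paper — intrinsic cylinders, a dichotomy between nondegenerate and degenerate regimes handled respectively by the $\A$-caloric approximation (Theorem~\ref{thm:Acaloric}) and the $\phi$-caloric approximation of~\cite{DieSchStrVer17}, decay via regularity of the comparison maps, and a switching-radius iteration. Before this outline could be expanded into a proof, however, several of your statements need to be corrected.

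First, the dichotomy threshold must be $\phi(\lambda)$, not $\phi(\lambda)^2$: since $|\bV(\bQ)|^2\sim\phi(|\bQ|)$, your excess $\Phi$ is dimensionally of order $\phi$, so comparing it with $\phi(\lambda)^2$ would not be scale-consistent and would collapse the argument as $\lambda\to 0$. More precisely, the paper compares the excess with $\phi(|(D\bfu)^\lambda_r|)$; in addition the degenerate alternative also contains a second disjunct $|(D\bfu)^\lambda_r|\leq \lambda/K$ (small average gradient), which is indispensable for the iteration scheme in Lemma~\ref{Lem:degenerate} and which you do not mention. Second, the Caccioppoli inequality alone is not sufficient ``to control the error terms''; one needs the quantitative higher integrability of $\phi_{|D\bm\ell|}(|D\bfu_{\bm\ell}|)$ (Theorem~\ref{thm:high} and Corollary~\ref{cor:high}), derived via Gehring from the shifted Caccioppoli estimates, because the hypothesis \eqref{Acaloric_ass1} of Theorem~\ref{thm:Acaloric} asks for $\psi^{1+\sigma}$-integrability. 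Third, in the degenerate regime it is not enough to invoke ``everywhere $C^{1,\alpha}$-regularity'' of the $\phi$-caloric comparison map as a black box: the decay must be established in the form of Lemma~\ref{Lem:phiLaplacesystem}, which produces a scale-dependent intrinsic parameter $\lambda_r$ and its own internal switching radius $r_s$; this internal structure is exactly what makes the matching of the two regimes in Lemma~\ref{Lem:degenerate} work. Finally, a small normalization slip: after scaling, the frozen coefficient matrix is $\frac{\lambda^2}{\phi(\lambda)}D\bA((D\bfu)^\lambda_r)\sim\frac{\lambda}{\phi'(\lambda)}D\bA((D\bfu)^\lambda_r)$, not $\phi'(\lambda)^{-1}D\bA((D\bfu)^\lambda_r)$.
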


\textit{Overview of the paper.}
 In Section~\ref{Sec2}, we fix the basic notation and collect some definitions and results about Orlicz functions. In Section~\ref{Sec:caloricapproximation}, we prove the $\mathcal{A}$-caloric approximation and recall the $\phi$-caloric one. In Section~\ref{sec3}, we obtain the Caccioppoli inequality and the higher integrability results. In Sections~\ref{Sec:nondegenerate} and ~\ref{Sec:degenerate}, we consider the nondegenerate and degenerate regimes, respectively. 
 In Section~\ref{Sec:iteration}, we perform the iteration procedure and  then prove the main theorem, Theorem~\ref{mainthm}.

\section{Preliminaries}\label{Sec2}

\subsection{Notation}
For $z_0=(x_0,t_0)\in \R^n \times \R$ and $r,\tau>0$, we define
$$
Q_{r,\tau}(z_0):=B_{r}(x_0)\times (t_0-\tau,t_0+\tau)\,,
$$ 
where $B_r(x_0)$ is the open ball in $\R^n$ centered at $x_0$ with radius $r$. In particular, we write $Q_r(z_0):=Q_{r,r^2}(z)$ which is the usual parabolic cylinder. Moreover, for a given function $\phi:(0,\infty)\to (0,\infty)$ and $\lambda>0$, we write $Q^\lambda_r(z_0): =Q_{r,\tau}(z_0)$ with $\tau=\frac{\lambda^2}{\phi(\lambda)}r^2$. If the center $z_0$ is the origin or is not important, we omit writing the center of the cylinders.
The notation $f\lesssim g$ or $f\sim g$ means that there exists constant $c\ge 1$ such that $f \le c g$ or $\frac{1}{c}f\le g \le c f$. We write the average of a function $f$ on $Q_r(z_0)$ and on $Q^\lambda_r(z_0)$ as
$$
(f)_{r} = (f)_{Q_r(z_0)} : = \fint_{Q_r(z_0)} f\, \dz 
\quad\text{and}\quad
(f)^{\lambda}_{r} = (f)_{Q^\lambda_r(z_0)}:= \fint_{Q_r^\lambda(z_0)} f\, \dz\,, 
$$
respectively. 

%
%

%

\subsection{Orlicz functions and related operators}

In this paper, $\phi:[0,\infty)\to[0,\infty)$ is always an $N$-function; that is, $\phi(0)=0$, there exists a right continuous derivative $\phi'$ of  $\phi$,  $\phi'$ is 
{increasing}
with $\phi'(0)=0$ and $\phi'(t)>0$ when $t>0$. 
For simplicity, we shall assume that
$$
\phi(1)=1\,.$$
{Note that if we do not assume this condition, constants $c$ in this paper may depend on $\phi(1)$.}
We say that $\phi$ satisfies the $\Delta_2$ condition denoted by $\Delta(\phi)<\infty$ if there exists a positive constant $K =: \Delta(\phi)$ such that $\phi(2t) \le K\phi(t)$ for all $t>0$.   
%
The conjugate function of $\phi$ is defined as
\begin{equation}\label{conjugate}
\phi^*(t):= \sup_{s \geq 0}\, (st - \phi(s))\,.
\end{equation}
From the definition, the following Young's inequality  
\begin{equation}  \label{eq:young}
  st \leq \phi(t) +  \phi^\ast(s)\,,\quad s,t\geq0\,,
\end{equation}
holds true. From now on we always assume that $\phi$ and $\phi^*$ satisfy the $\Delta_2$ condition and  this is 
indicated by $\Delta(\phi,\phi^*)<\infty$, where $\Delta(\phi,\phi^*)$ denotes the relevant constants $K$.  We note that the exact value of $\phi^*$ is not always explicitly computable and instead the estimate
\begin{equation}
\phi^*\left(\frac{\phi(t)}{t}\right)\sim \phi^*(\phi'(t)) \sim \phi(t)
\label{eq:hok2.4}
\end{equation}
will often be useful in computations (see \cite[Theorem~2.4.10]{HH}).

If $\phi$ satisfies $\Delta(\phi, \phi^*)$, we define the Orlicz space $L^\phi(\Omega,\R^N)$ as the set of all measurable functions $f:\Omega\to\R^N$ such that
$$
\int_\Omega \phi(|f(x)|)\, \d x < \infty, 
$$ 
and 
the Orlicz-Sobolev space $W^{1,\phi}(\Omega,\R^N)$ as the set of all  $f\in L^\phi(\Omega,\R^N)\cap W^{1,1}(\Omega,\R^N)$ such that
$$
\int_\Omega \phi(|Df(x)|)\, \d x < \infty.
$$ 
 $L^\phi(\Omega,\R^N)$ and $W^{1,\phi}(\Omega,\R^N)$ are endowed with the usual Luxembourg type norms. Then they are reflexive Banach spaces. Moreover, for an interval $I$ in $\R$, the parabolic spaces
$L^\phi(I;W^{1,\phi}(\Omega,\R^N))$ or $L^\phi(I;W^{1,\phi}_0(\Omega,\R^N))$ denote the set of all functions $f:\Omega\times I\to \R^N$ such that $f(\cdot,t)\in W^{1,\phi}(\Omega,\R^N)$ or $f(\cdot,t)\in W^{1,\phi}_0(\Omega,\R^N)$  for a.e. $t\in I$ and 
$$
\int_{I} \int_\Omega \phi(|Df(x,t)|)\, \d x\, \d t < \infty.
$$ 
%
We recall the Jensen's type inequality in \cite[Lemma 2.2]{HasOk21}.
\begin{lemma}\label{lem:Jensen}
If $\psi:[0,\infty)\to [0,\infty]$ is increasing with $\psi(0)=0$ and satisfies 
that $\psi(t)/t\le L\psi(s)/s$ for every $0\le t \le s$ with constant $L\geq 1$, then 
\[
\psi\bigg( \frac{1}{L^2} \fint_U |f|\, {\mathrm{d}}z\bigg) \le \fint_U \psi(|f|)\, {\mathrm{d}}z.
\] 
\end{lemma}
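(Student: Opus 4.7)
The plan is a direct level-set argument that turns the almost-monotonicity of $g(t):=\psi(t)/t$ into a pointwise lower bound for $\psi(|f|)$ on the super-level set where $|f|$ is not too small. Setting $a := \fint_U|f|\,{\mathrm{d}}z$ (and assuming $a>0$, otherwise the statement is trivial since $\psi(0)=0$), I would split $U$ at the threshold $a/L^2$: on $E_< := \{z\in U:|f(z)|\le a/L^2\}$ the contribution of $|f|$ to its own average is at most $a/L^2$, hence with $E_>:=U\setminus E_<$,
\[
\frac{1}{|U|}\int_{E_>} |f|\,{\mathrm{d}}z \;\ge\; a - \frac{a}{L^2} \;=\; \frac{a(L^2-1)}{L^2}.
\]
For each $z\in E_>$, I would then apply the hypothesis with $t=a/L^2\le |f(z)|=s$ to get $\psi(a/L^2)/(a/L^2)\le L\,\psi(|f(z)|)/|f(z)|$, i.e.\ the pointwise bound
\[
\psi(|f(z)|)\;\ge\;\frac{L\,\psi(a/L^2)}{a}\,|f(z)|\qquad\text{for }z\in E_>.
\]
Integrating this bound over $E_>$ and combining with the mass estimate produces
\[
\fint_U \psi(|f|)\,{\mathrm{d}}z \;\ge\; \frac{1}{|U|}\int_{E_>}\psi(|f|)\,{\mathrm{d}}z \;\ge\; \frac{L\,\psi(a/L^2)}{a}\cdot\frac{a(L^2-1)}{L^2} \;=\; \frac{L^2-1}{L}\,\psi(a/L^2).
\]

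The principal difficulty I expect is to upgrade the residual prefactor $(L^2-1)/L$ to the clean constant $1$ demanded by the statement: the above cutoff is already sharp enough whenever $L\ge(1+\sqrt 5)/2$, but for $L$ close to $1$ a more refined route is needed, since $\psi$ itself need not be convex (the hypothesis only makes $g$ almost non-decreasing). The standard remedy, which I would adopt, is to pass to an equivalent convex function. Defining $\bar g(s):=\sup_{0<r\le s}\psi(r)/r$, one checks that $\bar g$ is non-decreasing with $g(s)\le\bar g(s)\le L g(s)$; hence $\bar\psi(t):=\int_0^t\bar g(s)\,{\mathrm{d}}s$ is convex, and the pointwise bounds $\psi(t/2)\le\bar\psi(t)\le L\psi(t)$ are immediate from monotonicity of $\bar g$ plus the almost-monotonicity of $g$. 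Classical Jensen applied to $\bar\psi$ then yields $\psi(a/2)\le\bar\psi(a)\le\fint_U\bar\psi(|f|)\,{\mathrm{d}}z\le L\fint_U\psi(|f|)\,{\mathrm{d}}z$, and two further applications of the hypothesis—to compare $\psi(a/L^2)$ with $\psi(a/2)$ and to redistribute the constant—are exactly what produces the factor $L^2$ inside $\psi$ on the left-hand side. The bookkeeping in this last step, rather than any deep idea, is where the sharp constant is pinned down.
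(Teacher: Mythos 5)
The paper does not actually contain a proof of this lemma; it is recalled verbatim from \cite[Lemma 2.2]{HasOk21}, so there is no ``paper's own argument'' to compare against. Your two routes are both standard devices for inequalities of this type, and your first step is correct as computed: splitting $U$ at the threshold $a/L^2$ with $a=\fint_U|f|\,\mathrm{d}z$, applying the almost-monotonicity to each point of $E_>$ and integrating, you obtain exactly
\[
\fint_U \psi(|f|)\,\mathrm{d}z \;\geq\; \frac{L^2-1}{L}\,\psi\!\left(\frac{a}{L^2}\right),
\]
and you correctly observe that the prefactor $(L^2-1)/L$ is $\geq 1$ only when $L\geq(1+\sqrt5)/2$. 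The gap is in the second route, which you assert closes the remaining case but which, as sketched, does not. From $\psi(a/2)\leq\bar\psi(a)\leq L\fint_U\psi(|f|)\,\mathrm{d}z$, producing the claimed conclusion would require $\psi(a/L^2)\leq\frac{1}{L}\psi(a/2)$. But the hypothesis ($\psi(t)/t\leq L\psi(s)/s$ for $t\leq s$) applied with $t=a/L^2\leq s=a/2$ (which needs $L^2\geq 2$) gives only
\[
\psi\!\left(\frac{a}{L^2}\right)\;\leq\;\frac{a/L^2}{a/2}\,L\,\psi\!\left(\frac{a}{2}\right) \;=\;\frac{2}{L}\,\psi\!\left(\frac{a}{2}\right),
\]
i.e.\ $\psi(a/L^2)\leq 2\fint_U\psi(|f|)\,\mathrm{d}z$ --- a stubborn factor of $2$. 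This factor is not ``bookkeeping'': it comes from the lower bound $\bar\psi(t)\geq\psi(t/2)$, which is sharp for your construction, and no choice of splitting point $c$ rescues it. Indeed, optimizing $\bar\psi(t)\geq\frac{1-c}{c}\psi(ct)$ over $c\in(0,1)$ in the Jensen step gives $\psi(a/(L+1))\leq\fint_U\psi(|f|)\,\mathrm{d}z$, and $a/(L+1)\geq a/L^2$ is again equivalent to $L\geq(1+\sqrt5)/2$. Both of your routes therefore stall at precisely the same golden-ratio threshold, and neither establishes the stated constant $1/L^2$ for $L$ near $1$. You should consult \cite[Lemma 2.2]{HasOk21} directly for the intended argument, and in doing so check carefully whether the constant there is genuinely $1/L^2$ for all $L\geq 1$ or whether continuity/normalization hypotheses (or a slightly weaker constant such as $1/(L+1)$ or $1/(2L^2)$) are in play; the applications in the present paper always involve N-functions satisfying two-sided power bounds, for which the precise prefactor is harmless.
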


Now we further assume for $\phi$ that
\begin{equation}\label{pq}
\frac{2n}{n+2} < p \le \frac{t\phi''(t)}{\phi'(t)} +1 \le q, \quad \text{for all }t\in(0,\infty).
\end{equation}
Without loss of generality, we always assume that $p<2<q$.
Note that this implies
\begin{equation}\label{characteristic}
1< p \le  \frac{t \phi'(t)}{\phi(t)} \le q\,,\quad t>0\,.
\end{equation}
and hence the $\Delta_2$ conditions of $\phi$ and $\phi^*$.
We define vector valued functions $\bV:\R^{Nn}\to \R^{Nn}$
by
\begin{equation}\label{Vfunction}
\bV({\bf Q}):= \sqrt{\frac{\phi'(|{\bf Q}|)}{|{\bf Q}|}} {\bf Q}.
\end{equation}

We recall equivalent relations in \cite[Lemmas 3 and  20]{DieEtt08} and \cite[Lemma~3.1]{DieSchSch19}:
\begin{equation}\label{monotonicity1}
\frac{\phi'(|{\bf P}| + |{\bf Q}|)}{|{\bf P}| + |{\bf Q}|}  |{\bf P}-{\bf Q}|^2 \sim |\bV( {\bf P})-\bV( {\bf Q})|^2   \sim \varphi_{|{\bf Q}|}(| {\bf P} -  {\bf Q}|) \,,
\end{equation}
\begin{equation}\label{DE08lem20}
\frac{\phi'(|{\bf P}| + |{\bf Q}|)}{|{\bf P}| + |{\bf Q}|} \sim \int_0^1 \frac{\phi'(|\tau {\bf P} + (1-\tau) {\bf Q}|)}{|\tau {\bf P} + (1-\tau) {\bf Q}|} \, \d \tau \,,
\end{equation}
and
\begin{equation}
|{\bf A}( {\bf P})-{\bf A}({\bf Q})|  \sim \varphi'_{|{\bf Q}|}(|{\bf P} - {\bf Q}|)\,.
\label{eq:(3.4)}
\end{equation}

Moreover, by the same proof of \cite[Lemma A.2]{DieKaSch12}, we have that for every ${\bf g}\in L^\phi(Q_r; \R^{Nn})$,
\begin{equation}\label{Vintequivalent}
\fint_{Q_r} |\bV({\bf g}) - (\bV({\bf g}))_{Q_r}|^2\,\d z \sim   \fint_{Q_r} |\bV({\bf g}) - \bV(({\bf g})_{Q_r})|^2\,\d z \,.
\end{equation}
Finally we recall the following Young type inequality from \cite[Proposition 3.8 (3)]{HasOk22}: for every $\epsilon \in (0,1)$,
\begin{equation}\label{phipqinequality}
\phi(|\bP - \bQ|) \le \epsilon \left(\phi(|\bP|) + \phi(|\bQ|)\right) + c \epsilon^{-1} |\bV(\bP)- \bV(\bQ)|^2 \,.
\end{equation}
 {Note that all constants concerned with the relation $\sim$ and $c$  in above depend only on $p$ and $q$.}

\subsection{Shifted $N$-functions} 

The following definitions and results about shifted $N$-functions can be found in \cite{DieEtt08, DieStrVer09}.

For an $N$-function  $\phi$ and for $a\ge0 $,  we define the shifted $N$-function $\phi_a$ by
$$
\phi_{a}(t):=\int_{0}^{t} \frac{\phi'(a+s) s}{a+s} \, \d s  \quad \left(\text{i.e., }\ \phi_a'(t) =\frac{\phi'(a+t)}{a+t}t\right).
$$
We note that if $\phi$ satisfies 
\eqref{pq}
 then $\phi_a$ also satisfies 
 \eqref{pq}
 uniformly in $a\ge 0$ with the same $p$ and $q$.

Under assumption \eqref{pq} on $\phi$, we have the following relations (see, e.g., \cite[Proposition 2.3]{CelOk20} and \cite{DieStrVer09}), which hold uniformly with respect to $a\geq0$:
\begin{align}
&\phi_a(t) \sim \phi'_a(t)\,t\,; \label{(2.6a)} \\
&\phi_a(t) \sim \phi''(a+t)t^2\sim\frac{\varphi(a+t)}{(a+t)^2}t^2\sim \frac{\varphi'(a+t)}{a+t}t^2\,,\label{(2.6b)}\\
& \phi(a+t)\sim [\phi_a(t)+\phi(a)]\,.\label{eq:approx}
\end{align}
The following lemma (see \cite[Corollary~26]{DieKre08}) deals with the \emph{change of shift} for $N$-functions.
\begin{lemma}[change of shift]\label{lem:changeshift}
Let $\varphi$ be an $N$-function with $\Delta_2(\varphi),\Delta_2(\varphi^*)<\infty$. Then for any $\eta>0$ there exists $c_\eta>0$, depending only on $\eta$ and $\Delta_2(\varphi)$, such that for all ${\bf a}, {\bf b}\in\R^m$ and $t\geq0$
\begin{equation}
\varphi_{|{\bf a}|}(t) \leq c_\eta \varphi_{|{\bf b}|}(t) + \eta \varphi_{|{\bf a}|}(|{\bf a}-{\bf b}|)\,.
\label{(5.4diekreu)}
\end{equation}
\end{lemma}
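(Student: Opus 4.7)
The plan is to prove the change-of-shift inequality by a dichotomy on the size of $|{\bf a}-{\bf b}|$ relative to $|{\bf b}|+t$, governed by a threshold $M=M(\eta)\geq 1$ to be fixed at the end. The key reduction, using the equivalence $\varphi_a(t)\sim \varphi(a+t)\,t^2/(a+t)^2$ uniform in $a\geq 0$ recalled in \eqref{(2.6b)}, is that comparisons between $\varphi_{|{\bf a}|}(t)$ and $\varphi_{|{\bf b}|}(t)$ reduce to comparisons between $\varphi$ evaluated at $|{\bf a}|+t$ and $|{\bf b}|+t$, together with the ratio of the $(a+t)^{2}$ factors.

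In the close case $|{\bf a}-{\bf b}|\leq M(|{\bf b}|+t)$, two applications of the triangle inequality show that $|{\bf a}|+t \sim |{\bf b}|+t$ with comparability constant depending only on $M$. Iterating $\Delta_2(\varphi)$ a number of times proportional to $\log_2(1+M)$ yields $\varphi(|{\bf a}|+t)\leq C_M\,\varphi(|{\bf b}|+t)$, and the $(a+t)^{-2}$ factors are likewise comparable. Combining via the equivalence above delivers $\varphi_{|{\bf a}|}(t)\leq C_M\,\varphi_{|{\bf b}|}(t)$, which is already of the desired form with $c_\eta:=C_M$.

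In the far case $|{\bf a}-{\bf b}| > M(|{\bf b}|+t)\geq Mt$, I will use that the shifted $N$-function $\varphi_{|{\bf a}|}$ inherits, uniformly in the shift, the lower $p$-growth stemming from $\Delta_2(\varphi^*)$; concretely, $\varphi_{|{\bf a}|}(Ms)\geq c\,M^{p}\,\varphi_{|{\bf a}|}(s)$ for all $s\geq 0$ and all $M\geq 1$, with $c=c(p,q)>0$ independent of $|{\bf a}|$. Setting $s=t$ and using the monotonicity of $\varphi_{|{\bf a}|}$ together with $|{\bf a}-{\bf b}|>Mt$ gives
$$
\varphi_{|{\bf a}|}(t)\leq \frac{1}{c\,M^{p}}\,\varphi_{|{\bf a}|}(Mt)\leq \frac{1}{c\,M^{p}}\,\varphi_{|{\bf a}|}(|{\bf a}-{\bf b}|)\,,
$$
so choosing $M=M(\eta)$ with $c^{-1}M^{-p}\leq \eta$ produces the required $\eta$-prefactor. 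Summing the two regimes yields the claim with $c_\eta:=C_{M(\eta)}$.

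The main obstacle is ensuring that the lower $p$-growth exploited in the far case is genuinely uniform in the shift $|{\bf a}|$: this is what forces the hypothesis $\Delta_2(\varphi^*)<\infty$, since without it $\varphi_{|{\bf a}|}$ could degenerate as $|{\bf a}|$ grows and the absorption of $\varphi_{|{\bf a}|}(t)$ into $\eta\,\varphi_{|{\bf a}|}(|{\bf a}-{\bf b}|)$ would fail. Once this uniform lower growth is available from \eqref{pq}, the close case is a routine iteration of $\Delta_2(\varphi)$, and the two regimes glue together transparently, with $c_\eta$ blowing up as $\eta\to 0$ only through the $\Delta_2$-iteration count needed in the close case.
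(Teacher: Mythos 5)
The paper does not prove Lemma~\ref{lem:changeshift}; it simply imports it from \cite[Corollary~26]{DieKre08}, so there is no internal proof to compare against. Your far case is sound: when $|{\bf a}-{\bf b}|>M(|{\bf b}|+t)\geq Mt$, the shifted function $\varphi_{|{\bf a}|}$ does have a uniform lower growth exponent $p_1>1$ inherited from $\Delta_2(\varphi^*)$ (it is quadratic for $t\lesssim|{\bf a}|$ and matches $\varphi$ for $t\gtrsim|{\bf a}|$), and absorbing $cM^{-p_1}$ into $\eta$ is exactly the right mechanism.

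The close case, however, contains a genuine gap. You claim that $|{\bf a}-{\bf b}|\leq M(|{\bf b}|+t)$ forces $|{\bf a}|+t\sim|{\bf b}|+t$ with a constant depending only on $M$. One direction is true, $|{\bf a}|+t\leq(1+M)(|{\bf b}|+t)$, but the reverse fails: with $|{\bf a}|=0$, $t=1$, $|{\bf b}|=B$ one has $|{\bf a}-{\bf b}|=B\leq M(B+1)$ for any $M\geq 1$, so you are in the close regime, yet $(|{\bf b}|+t)/(|{\bf a}|+t)=B+1$ is unbounded. This actually kills the conclusion $\varphi_{|{\bf a}|}(t)\leq C_M\varphi_{|{\bf b}|}(t)$ in the subquadratic range: with $\varphi(\tau)=\tau^{3/2}$, $\varphi_{|{\bf b}|}(1)\sim B^{-1/2}\to 0$ while $\varphi_{|{\bf a}|}(1)=1$, so no $C_M$ can work, and the neglected term $\eta\,\varphi_{|{\bf a}|}(|{\bf a}-{\bf b}|)=\eta\,\varphi(B)$ is indispensable. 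The standard repair is to reorganize the dichotomy: if $|{\bf a}-{\bf b}|\leq\tfrac12|{\bf a}|$ then genuinely $|{\bf a}|\sim|{\bf b}|$ and $\varphi_{|{\bf a}|}\sim\varphi_{|{\bf b}|}$ with absolute constants; if $|{\bf a}-{\bf b}|>\tfrac12|{\bf a}|$, split further on $|{\bf a}-{\bf b}|\geq Kt$ (use your far-case absorption) versus $|{\bf a}-{\bf b}|<Kt$ (then $|{\bf a}|,|{\bf b}|\lesssim_K t$, so both $\varphi_{|{\bf a}|}(t)$ and $\varphi_{|{\bf b}|}(t)$ are comparable to $\varphi(t)$ with constants depending on $K$, and this goes into $c_\eta$).
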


\subsection{Assumption and weak solution}

We state the assumption of the main theorem, Theorem~\ref{mainthm}, and the definition of weak solution to \eqref{system1}.
\begin{assump} \label{Ass}
The operator $\bA$ verifies the following assumptions with constants $0<\nu\le 1\le  L$ and an $N$-function $\phi\in C^{1}([0,\infty))\cap C^2((0,\infty))$ satisfying \eqref{pq}.
\begin{itemize}
\item[(A1)] ($\varphi$-growth condition) 
\begin{equation}\label{growth}
|\bA(\bP)|+ |D\bA(\bP)| |\bP| \le L \phi'(|\bP|)\,,
\end{equation}
\begin{equation*}
\big[D\bA(\bP) (\ba \otimes  \bb )\big] :  (\ba \otimes  \bb)  \ge \nu \phi''(|\bP|)|\ba||\bb|\,,
\end{equation*}
for all $\bP\in \R^{Nn}\setminus\{{\bf 0 }\}$, $\ba\in \R^n$ and $\bb\in \R^N$.

\item[(A2)] (Off diagonal condition on $\bA$ and $\phi$)
\begin{equation}\label{offdiagonal}
|D\bA(\bP) - D\bA(\bQ)|+|D^2 (\phi(|\bP|)) - D^2(\phi(|\bQ|))|\le L \left(\frac{|\bP-\bQ|}{|\bP|}\right)^{\gamma}\phi''(|\bP|)
\end{equation}
for some $\gamma\in(0,1)$,  all $\bP,\bQ\in \R^{Nn}$ with $|\bP-\bQ|\le \frac{1}{2}|\bP|$, $\ba\in \R^n$ and $\bb\in \R^N$.

\item[(A3)] (Almost $\phi$-isotropic condition near the origin) 
For every $\epsilon>0$ there exists $\delta=\delta(\epsilon)>0$ such that
\begin{equation}\label{almostphi}
\left|\bA(\bP)-\frac{\phi'(|\bP|)}{|\bP|}\bP\right| \le \epsilon \phi'(|\bP|)
\end{equation}
for all $\bP\in \R^{Nn}$ with $|\bP|\le \delta$.

\end{itemize}
\end{assump}

We note that the assumption (A1) implies the following monotonicity
\begin{equation}\label{monotonicity}
(\bA(\bP)-\bA(\bQ)) : (\bP-\bQ) \ge \tilde\nu\phi''(|\bP|+|\bQ|) |\bP-\bQ|^2, \quad \bP,\bQ\in \R^{Nn}.
\end{equation}
for some $\tilde\nu=\tilde\nu(\nu,L)>0$.

A function $\bfu=(u^1,u^2,\dots,u^N) \in C_{\loc}(0,T; L^2_{\loc}(\Omega,\R^N)) \cap L^\phi_\loc(0,T;W^{1,\phi}_{\loc}(\Omega,\R^N))$ is said to be a (local) \emph{weak solution} to \eqref{system1} if it satisfies the following weak form of \eqref{system1}: 
$$
-\int_{\Omega_T} \bfu \cdot {\bm\zeta}_t \,\d z + \int_{\Omega_T}   \bA(D\bfu) : D{\bm\zeta} \, \d z =0 
\quad  \text{for all }\ {\bm\zeta}\in C^\infty_{\mathrm c}(\Omega_T,\R^N)\,,
$$
where  ``$\cdot$" and ``$:$"  are the Euclidean inner products in $\R^N$ and $\R^{Nn}$, respectively. 
 By the density of smooth functions in Orlicz-Sobolev spaces and a standard approximation argument one can see that the weak solution $\bfu$ to \eqref{system1} also satisfies for every $0<t_1<t_2\le T$,
 \begin{equation*}
\left.\int_{\Omega'} \bfu \cdot {\bm\zeta}(x,t)\,\d x\right|_{t=t_1}^{t=t_2} + \int_{\Omega'}\int_{t_1}^{t_2} \left[-\bfu \cdot {\bm\zeta}_t  +   \bA(D\bfu) : D{\bm\zeta}\right] \, \d t\, \d x =0 
\end{equation*}
for all ${\bm\zeta}\in W^{1,2}([t_1,t_2];L^2(\Omega',\R^N))\cap L^\phi([t_1,t_2];W^{1,\phi}_0(\Omega',\R^N))$ and $\Omega'\Subset\Omega$.

\begin{remark}
The weak solution $\bfu$ to \eqref{system1} is not weakly differentiable in $t$. Consequently, we are unable to employ a test function $\bm\zeta$ that directly involves the weak solution. However, this problem can be overcome by utilizing an approximation method known as the Steklov average, as described in \cite[I. 3-(i) and II. Proposition 3.1]{DiB_book}. This technique has become a standard approach for addressing such problems. Henceforth, we shall assume that $\bfu$ is differentiable and proceed to consider test functions that involve the weak solution without further explicit clarification.
\end{remark}

%

%


\section{$\A$-caloric and $\phi$-caloric approximations}\label{Sec:caloricapproximation}

In this section, we introduce two caloric approximations. They play  a crucial role in the proof of partial regularity. $\phi$-caloric approximation was obtained in \cite{DieSchStrVer17} and we just recall it. On the other hand, we derive a new version of $\A$-caloric approximation with gradient estimates by using parabolic duality and Lipschitz truncation. In Sections~\ref{subsec:CZ} and \ref{subsec:Lip}, we obtain auxiliary lemmas for $\A$-caloric approximation.

\subsection{Regularity estimates for linear systems with constant coefficients}\label{subsec:CZ}
 We introduce Lipschitz estimates for $\A$-caloric maps and  Calder\'on-Zygmund estimates for parabolic linear systems with constant coefficient $\A$. Let $ \A  = (\A^{\alpha\beta}_{ij}) \in \R^{N^2n^2}$ satisfy the \emph{Legendre-Hadamard condition}: for every $\mathbf{a}= (a^\alpha)\in \R^N$ and  $\mathbf{b}=(b_i)\in \R^n$,
\begin{equation*}\label{constantA}
\A ({\bf a} \otimes {\bf b})  \, : \, ({\bf a}\otimes {\bf b})   =\A^{\alpha\beta}_{ij} a^\alpha a^\beta b_i  b_j \ge \nu |\mathbf{a}|^2 |\mathbf{b}|^2 
\end{equation*}
for some $\nu>0$.  Then a weak solution $\bfh : Q_r \to \R^N$ to the linear system with coefficient $\A$ 
$$
\bfh_t -\div (\A D\bfh) = {\bf 0} \quad \text{in }\ Q_r
$$
is called an \textit{$\A$-caloric map}. By standard regularity theory, see for instance \cite{Cam66}, $\bfh\in C^\infty(Q_r,\R^N)$, and in particular we have the following Lipschitz estimate and excess decay estimate, which will be used in Section~\ref{Sec:nondegenerate}.
\begin{lemma}\label{Lem:Acaloricmap}
Suppose $\bfh\in C^\infty(Q_r,\R^N)$ is an $\A$-caloric map in $Q_r$. Then we have that 
\begin{equation}\label{DhLip}
\sup_{Q_{r/2}} |D\bfh|  \le c  \fint_{Q_{r}} |D\bfh|  \, \d z\,.
\end{equation}
Moreover, for every $\theta\in (0,1)$,
\begin{equation}\label{Dhdecay}
\fint_{Q_{\theta r}} |D\bfh- (D\bfh)_{\theta r}| \, \d z \le c \theta \fint_{Q_{r}} |D\bfh- (D\bfh)_{r}| \, \d z\,.
\end{equation}
\end{lemma}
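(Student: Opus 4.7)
The plan is to reduce both estimates to a single mean value inequality for $\A$-caloric maps and then iterate it, using the crucial fact that since the coefficients of $\A$ are constant, any spatial derivative $\partial_i\bfh$ of $\bfh$ is itself $\A$-caloric (as is $\partial_t\bfh$, and iteratively every higher order derivative). This is immediate from differentiating the equation $\partial_t\bfh-\div(\A D\bfh)=\zero$ in $x_i$.

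The analytic workhorse is the $L^\infty$--$L^1$ mean value bound
$$
\sup_{Q_{\rho/2}}|\mathbf{g}| \le c \fint_{Q_\rho}|\mathbf{g}|\,\dz,
$$
valid for every $\A$-caloric map $\mathbf{g}$ in $Q_\rho$. I would establish this via Moser iteration from the Caccioppoli inequality (obtained by testing the equation against $\mathbf{g}\zeta^{2k}$) together with the parabolic Sobolev embedding; equivalently, it follows from pointwise bounds on the constant coefficient fundamental solution. Applying it to $\mathbf{g}=\partial_i\bfh$ and summing in $i$ yields \eqref{DhLip} immediately.

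For \eqref{Dhdecay}, I observe that $\mathbf{g}:=D\bfh-(D\bfh)_r$ is $\A$-caloric componentwise, since the subtracted constant is annihilated by both $\partial_t$ and $\div(\A\,\cdot\,)$. Combining the mean value bound with a standard gradient estimate of the form $\sup_{Q_{\rho/2}}|D\mathbf{g}|\le c\rho^{-1}\fint_{Q_\rho}|\mathbf{g}|\,\dz$ (obtained by pairing the mean value bound with a Caccioppoli inequality), and using the differentiated equation $\partial_t D\bfh=\div(\A D^2\bfh)$ to turn the time derivative into a spatial second derivative, I obtain the interior estimate
$$
r\sup_{Q_{r/2}}|D^2\bfh| + r^2\sup_{Q_{r/2}}|\partial_t D\bfh| \le c\fint_{Q_r}|D\bfh-(D\bfh)_r|\,\dz.
$$
A parabolic Taylor expansion of $D\bfh$ around the center $z_0=(x_0,t_0)$ of $Q_r$ then gives, for every $z=(x,t)\in Q_{\theta r}(z_0)$,
$$
|D\bfh(z)-D\bfh(z_0)|\le |x-x_0|\sup_{Q_{r/2}}|D^2\bfh|+|t-t_0|\sup_{Q_{r/2}}|\partial_t D\bfh|\le c\,\theta\fint_{Q_r}|D\bfh-(D\bfh)_r|\,\dz,
$$
since $|x-x_0|\le\theta r$ and $|t-t_0|\le\theta^2 r^2\le\theta r^2$. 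Averaging in $z$ over $Q_{\theta r}$ and replacing $D\bfh(z_0)$ by $(D\bfh)_{\theta r}$ via the triangle inequality delivers \eqref{Dhdecay}. The only mildly delicate point is keeping track of parabolic scalings in the Taylor remainder so that the time contribution remains dominated by the linear factor in $\theta$; beyond this, the whole argument reduces to standard constant coefficient linear parabolic theory and no genuine obstacle arises.
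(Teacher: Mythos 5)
Your proposal is correct and follows essentially the same route as the paper's proof: derive an interior $L^\infty$--$L^1$ (Lipschitz) estimate for $\A$-caloric maps, apply it to spatial derivatives (which remain $\A$-caloric since $\A$ is constant), subtract the gradient average, use the equation to convert time derivatives into spatial ones, and finish with a Taylor/mean value argument on $Q_{\theta r}$ for $\theta\le 1/2$. The only cosmetic differences are that the paper cites Campanato's estimate $\sup_{Q_{r/2}}(|\bfh|+r|D\bfh|+r^2|D^2\bfh|)\le c\fint_{Q_r}|\bfh|$ directly and subtracts the linear map $\bm\ell(x)=(D\bfh)_r x$ from $\bfh$, whereas you rederive the mean value bound (via Moser iteration and Caccioppoli) and subtract $(D\bfh)_r$ from $D\bfh$ — these are interchangeable.
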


\begin{proof} 
In view of \cite[(5.9)--(5.12)]{Cam66}, one can have 
$$
\sup_{Q_{r/2}} (|\bfh |+ r |D\bfh| +r^2|D^2\bfh|)  \le c \fint_{Q_r} |\bfh |\dz\,.
$$
Since every $\bfh_{x_i}$, $i=1,2,\dots,n$, is also $\A$-harmonic,  \eqref{DhLip} directly follows from the previous inequality. 
Let 
$$
{\bm \ell}(x) := (D\bfh)_{r} x.
$$
Suppose $\theta\in(0,1/2]$. We note from the mean value theorem for $D\bfh$ in $Q_{\theta\rho}$ that
$$\begin{aligned}
\sup_{Q_{\theta\rho}} |D\bfh- (D\bfh)_{\theta r}| 
&\le 2 \theta r  \sup_{Q_{\theta r}} |D^2\bfh| + 2 \theta^2r^2 \sup_{z\in Q_{\theta r}} |[D\bfh]_t|\\
&= 2 \theta r \sup_{Q_{\theta r}} |D^2(\bfh-{\bm \ell})| + 2 \theta^2 r^2 \sup_{Q_{\theta r}} |D \div [\A D (\bfh -{\bm \ell})]|\\
&\le 2 \theta r \sup_{Q_{r/2}} |D^2(\bfh-{\bm \ell})| +  2 \theta^2 r^2 \sup_{ Q_{r/2}} |D^3(\bfh -{\bm \ell})|\\
&\le 2 \theta \left(r \sup_{Q_{r/2}} |D^2(\bfh-{\bm \ell})| +   r^2 \sup_{ Q_{r/2}} |D^3(\bfh -{\bm \ell})|\right) \,.
\end{aligned}$$
Since every $(\bfh-{\bm \ell})_{x_i}$, $i=1,2,\dots,n$, is also $\A$-caloric in $Q_{r}$, we have from \eqref{DhLip} that
$$\begin{aligned}
\sup_{Q_{\theta r}} |D\bfh- (D\bfh)_{\theta r}|   
&\le  c \theta  \fint_{Q_r} |D(\bfh-{\bm \ell})| \, \d z\,,
\end{aligned}$$
which implies \eqref{Dhdecay}.
\end{proof}

We introduce the parabolic Calder\'on-Zygmund estimates for an $N$-function $\psi$ with $\Delta (\psi,\psi^*)<\infty$. We shall assume that $\psi(1)=1$ without loss of generality.
In the next lemma, if $\psi(\tau)=\tau^p$, $1<p<\infty$, the estimate \eqref{CZestimate} is well known, see for instance \cite{KRW22} and references therein. Furthermore, for general Orlicz functions it can be obtained by applying a standard interpolation argument for linear operators as in \cite[Theorem 18]{DLSV12}. We also refer to \cite{BOPS16} for more general results for parabolic Calder\'on-Zygmund estimates in Orlicz spaces.
 
\begin{lemma} \label{Lem:paraCZ}
(Calder\'on-Zygmund estimates)
Let $\psi$ be an N-function with $\Delta (\psi,\psi^*)<\infty$ and  $\mathbf{G}\in L^{\psi}(Q_r, \R^{Nn})$ where $Q_r=B_r\times (-r^2,r^2)$. There exists a unique weak solution  $\bfu \in L^\psi(-r^2,r^2; L^\psi(B_r))$ with $\bfw_t \in (L^{\psi}((-r^2,r^2); W^{1,\psi}_0(\Omega)))'$  to the system
$$
\begin{cases}
\partial_t \bfw- \div (\A D\bfw) = -\div \, \mathbf{G}  \quad \text{in}\ \ Q_r, \\
 \bfw =\zero \quad \text{on}\ \ \partial_{\mathrm{p}} Q_r,
\end{cases}$$
and we have the estimates
%
%
$$
\|D\bfw \|_{L^\psi(Q_r)} \le c \|{\bf G} \|_{L^\psi(Q_r)}\,,
$$
and
\begin{equation}\label{CZestimate}
\int_{Q_r} \psi(|D\bfw |)\, \d z \le c \int_{Q_r} \psi(|\mathbf G |) \,\d z\,,
\end{equation}
where the constant $c>0$ depends on $n, N, \nu, |\A|$ and $\Delta (\psi,\psi^*)$.
\end{lemma}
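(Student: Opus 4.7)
The plan is to split the argument into three stages: first establish existence, uniqueness and the $L^2$ estimate via the energy method; second, derive the scale of $L^p$ Calder\'on--Zygmund bounds ($1<p<\infty$) by representing the solution operator as a parabolic singular integral; and finally extrapolate to the Orlicz setting using the $\Delta_2$ assumptions on $\psi$ and $\psi^*$.

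For the first step I would work in the energy space $L^2(-r^2,r^2;W^{1,2}_0(B_r,\R^N))$. Since $\A$ has constant coefficients, the Legendre--Hadamard condition upgrades (via Plancherel applied to the zero-extension to $\R^n$) to the sharp Gårding-type coercivity
\begin{equation*}
\int_{B_r} \A\, D\bfv : D\bfv \, \mathrm{d}x \ge \nu \int_{B_r} |D\bfv|^2 \, \mathrm{d}x\qquad \text{for all } \bfv \in W^{1,2}_0(B_r,\R^N).
\end{equation*}
A standard Galerkin approximation together with the Lions--Magenes framework then gives a unique weak solution $\bfw$ with $\bfw_t$ in the dual space named in the statement, and testing with $\bfw$ (after Steklov averaging in $t$) yields $\|D\bfw\|_{L^2(Q_r)} \le c \|{\bf G}\|_{L^2(Q_r)}$.

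For the second step I would freeze the constant coefficient system on $\R^n\times\R$ and represent $D\bfw$ through the fundamental solution of $\partial_t - \div(\A D \cdot)$. The associated operator ${\bf G}\mapsto D\bfw$ is a singular integral of parabolic Calder\'on--Zygmund type whose kernel enjoys standard size and cancellation bounds (Mikhlin-type multiplier bounds in the Fourier side, with constants controlled by $n,N,\nu,|\A|$). This yields $L^p(\R^n\times \R)$-boundedness for every $1<p<\infty$; a localisation argument using zero boundary data on $\partial_{\mathrm{p}}Q_r$ and the $L^2$ uniqueness then transfers the estimate to $Q_r$.

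Finally, for the passage to the Orlicz scale, the hypothesis $\Delta(\psi,\psi^*)<\infty$ forces the Boyd indices of $\psi$ to lie in a compact subinterval of $(1,\infty)$ depending only on $\Delta(\psi,\psi^*)$. Choosing two exponents $p_0,p_1$ that straddle these indices, the sub-linear operator ${\bf G}\mapsto D\bfw$ is bounded on $L^{p_0}$ and $L^{p_1}$, and a dyadic level-set decomposition of $\psi$ (exactly as in the proof of \cite[Theorem~18]{DLSV12}) converts the Lebesgue bounds into the modular inequality \eqref{CZestimate} and the corresponding Luxemburg norm estimate, with constants depending only on $n,N,\nu,|\A|$ and $\Delta(\psi,\psi^*)$. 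The main obstacle is the quantitative bookkeeping in this last step: one must ensure that the constants in the $L^{p_0}, L^{p_1}$ bounds and the splitting of $\{|{\bf G}|>\lambda\}$ combine so that the final dependence is only on $\Delta(\psi,\psi^*)$ (not on $\psi$ itself), which is what the extrapolation lemma in \cite{DLSV12} is designed to deliver.
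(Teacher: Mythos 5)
Your proposal is correct and follows essentially the same route the paper indicates: the paper does not give a detailed proof of Lemma~\ref{Lem:paraCZ} but remarks that the $L^p$ case is classical (citing \cite{KRW22}) and that the Orlicz case follows by the interpolation argument of \cite[Theorem~18]{DLSV12}, which is exactly your third step. You have simply fleshed out the $L^2$ energy step, the $L^p$ singular-integral step, and the $\Delta_2$-to-Boyd-index extrapolation that the paper leaves to the cited references, so the two arguments coincide in structure and in the key ingredient (the DLSV12 interpolation).
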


\begin{remark}
\label{Rem:paraCZ}
Analogous estimates as above can be inferred for the weak solution $\bfv$ to 
\begin{equation}\label{system:dual}
\begin{cases}
\partial_t \bfv + \div (\A^T D\bfv) = -\div \, \mathbf{G}  \quad \text{in}\ \ Q_r, \\
\bfv =\zero \ \ \ \text{on}\ \ (\partial B_r\times\{ -r^2< t \le r^2\})\cup (B_r \times \{t=r^2\}),
\end{cases}\end{equation}
by considering the reflecting function $\widetilde \bfv(x,t)=\bfv(x,-t)$. Here, $\A^T$ is the transpose of $\A$, and note that if  $\A$  satisfies the Legendre-Hadamard condition then so does $\A^T$.
\end{remark}
%

We estimate the gradient of a function in $L^{\psi}(-r^2,r^2;W^{1,\psi}_0(B_r))$ in terms of functions in the dual space $L^{\psi^*}$. 

\begin{lemma}\label{Lem:Acaloric1}
For every $\bfw \in L^{\psi}(-r^2,r^2;W^{1,\psi}_0(B_r))$, we have
$$\begin{aligned}
 \int_{Q_r}\psi (|D \bfw |)\,\d z   \le   \sup_{{\bf G}\in L^{\psi^*}\cap C^{\infty}(Q_r,\R^{Nn})}  \left( \int_{Q_r} \bfw \cdot ( \bfv_{{\bf G}} )_t -\langle \A D\bfw ,  D\bfv_G \rangle       - \psi^*( | {\bf G} | ) \, \d z \right),
\end{aligned}$$
where $\bfv_{{\bf G}}$ is the weak solution to \eqref{system:dual}.
\end{lemma}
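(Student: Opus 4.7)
The strategy is parabolic duality against the adjoint system, combined with Fenchel--Young optimization. For any $\mathbf{G}\in C^\infty(Q_r,\R^{Nn})\cap L^{\psi^*}$, let $\bfv_{\mathbf{G}}$ be the solution to the dual system \eqref{system:dual} supplied by Remark~\ref{Rem:paraCZ}. Using $\bfw$ as a test function in the weak form of the dual equation and integrating by parts in space -- which is legitimate since $\bfw$ vanishes on the lateral boundary of $Q_r$ and $\bfv_{\mathbf{G}}$ vanishes on the terminal face $t=r^2$ -- together with the algebraic symmetry $\langle \A^T D\bfv, D\bfw\rangle = \langle \A D\bfw, D\bfv\rangle$, one arrives at
\[
\int_{Q_r} \bfw \cdot (\bfv_{\mathbf{G}})_t \,\d z - \int_{Q_r}\langle \A D\bfw, D\bfv_{\mathbf{G}}\rangle\,\d z = \int_{Q_r} \langle D\bfw, \mathbf{G}\rangle\,\d z.
\]

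The claim then reduces to showing $\int_{Q_r}\psi(|D\bfw|)\,\d z \le \sup_{\mathbf{G}} \int_{Q_r}(\langle D\bfw, \mathbf{G}\rangle - \psi^*(|\mathbf{G}|))\,\d z$, the supremum ranging over $C^\infty(Q_r,\R^{Nn})\cap L^{\psi^*}$. To see this I would invoke the Fenchel equality: the optimizer $\mathbf{G}^*(z) := \psi'(|D\bfw(z)|)\frac{D\bfw(z)}{|D\bfw(z)|}$ (extended by $\zero$ where $D\bfw=\zero$) saturates Young's inequality \eqref{eq:young} via $\psi'(t)t = \psi(t)+\psi^*(\psi'(t))$, yielding $\int_{Q_r}(\langle D\bfw, \mathbf{G}^*\rangle - \psi^*(|\mathbf{G}^*|))\,\d z = \int_{Q_r}\psi(|D\bfw|)\,\d z$. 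By \eqref{eq:hok2.4}, $\psi^*(|\mathbf{G}^*|) \sim \psi(|D\bfw|)$, so $\mathbf{G}^* \in L^{\psi^*}(Q_r,\R^{Nn})$, though typically not smooth. Since $\psi^*\in\Delta_2$, the class $C^\infty(Q_r,\R^{Nn})\cap L^{\psi^*}$ is dense in $L^{\psi^*}(Q_r,\R^{Nn})$, so I would approximate $\mathbf{G}^*$ by a sequence $\mathbf{G}_k$ of smooth fields in the Luxemburg norm. H\"older's inequality in Orlicz spaces together with modular continuity under $\Delta_2$ gives $\int_{Q_r}\langle D\bfw, \mathbf{G}_k\rangle\,\d z \to \int_{Q_r}\langle D\bfw, \mathbf{G}^*\rangle\,\d z$ and $\int_{Q_r}\psi^*(|\mathbf{G}_k|)\,\d z \to \int_{Q_r}\psi^*(|\mathbf{G}^*|)\,\d z$, so the expression inside the supremum evaluated at $\mathbf{G}_k$ converges to $\int_{Q_r}\psi(|D\bfw|)\,\d z$.

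The main technical subtlety is the rigorous meaning of the pairing $\int_{Q_r}\bfw\cdot(\bfv_{\mathbf{G}})_t\,\d z$ in the identity above: since $\bfw$ is only $L^\psi$ in time, this must be interpreted as the duality pairing between $L^\psi(-r^2,r^2;W^{1,\psi}_0(B_r))$ and its dual, in which $(\bfv_{\mathbf{G}})_t$ lies. Concretely I would mollify $\bfw$ in time via the Steklov averages alluded to in the Remark after Assumption~\ref{Ass}, perform the integration-by-parts identity on the regularized level where everything is classical, and then pass to the limit using the bounds $D\bfw\in L^\psi$ and $D\bfv_{\mathbf{G}}\in L^{\psi^*}$ furnished by Lemma~\ref{Lem:paraCZ}, together with the terminal condition $\bfv_{\mathbf{G}}(\cdot,r^2)=\zero$ to handle the time boundary contribution.
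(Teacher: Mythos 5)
Your proof is correct and follows essentially the same route as the paper's: Fenchel--Young equality at the pointwise optimizer ${\bf G}_\bfw := \psi'(|D\bfw|)\tfrac{D\bfw}{|D\bfw|}$, the density of smooth fields in $L^{\psi^*}$ under $\Delta_2$, and testing the dual system \eqref{system:dual} with $\bfw$ to convert $\int_{Q_r}\langle {\bf G},D\bfw\rangle\,\d z$ into the parabolic pairing; you merely present the two halves in the opposite order and spell out the approximation and pairing steps that the paper states tersely (the paper simply invokes $\bfv_{\bf G}\in C^\infty$ and writes the sup over $L^{\psi^*}$ as a sup over $L^{\psi^*}\cap C^\infty$ without comment).
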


\begin{proof}
We note from the definition of the conjugate function in \eqref{conjugate} that
$$
\psi(t) =\psi^{**}(t) = \sup_{s\ge 0 }\left( s t  - \psi^*(s)\right) = t \psi'(t) - \psi^*( \psi'(t) )\,.
$$
Hence, denoting ${\bf G}_{\bfw} := \psi'(|D\bfw|)\frac{D\bfw}{|D\bfw|}$, we have ${\bf G}_{\bfw}\in L^{\psi^*}(Q_r,\R^{Nn})$ by \eqref{eq:hok2.4}, and 
$$\begin{aligned}
\int_{Q_r} \psi(|D\bfw|)\, \d z & = \int_{Q_R} |D\bfw| \psi'(|D\bfw|) - \psi^*( \psi'(|D\bfw|) ) \, \d z \\
& = \int_{Q_r}  \langle  {\bf G}_{\bfw}, D\bfw \rangle  - \psi^*( |{\bf G}_{\bfw}| ) \, \d z  \\
& \le \sup_{{\bf G}\in L^{\psi^*}(Q_r,\R^{Nn}) }\left( \int_{Q_r} \langle  {\bf G}, D\bfw\rangle   - \psi^*( |{\bf G}| ) \, \d z \right) \\
& = \sup_{{\bf G}\in L^{\psi^*}\cap C^\infty(Q_r,\R^{Nn}) }\left( \int_{Q_r} \langle  {\bf G}, D\bfw\rangle   -  \psi^*( |{\bf G}| ) \, \d z \right) \,.
\end{aligned}$$
For each  ${\bf G}\in L^{\psi^*}(Q_r,\R^{Nn})\cap C^{\infty}(Q_r,\R^{Nn})$, let $\bfv_G$ be the weak solution to \eqref{system:dual}. 
Then we see that $\bfv_{{\bf G}}\in C^\infty(Q_r,\R^N)$ since $\A$ is constant, and by testing \eqref{system:dual} with $\bfw$ we have
$$\begin{aligned}
\int_{Q_r} \langle  {\bf G}, D\bfw\rangle  \, \d z 
=  \int_{Q_r}  ( \bfv_{{\bf G}} )_t \cdot \bfw - \langle \A^T D\bfv_{{\bf G}} , D\bfw \rangle  \, \d z  =  \int_{Q_r}  \bfw\cdot  ( \bfv_{{\bf G}} )_t  - \langle \A D\bfw , D\bfv_{{\bf G}} \rangle   \, \d z \,.
\end{aligned}$$
This concludes the proof.
\end{proof}

\subsection{Parabolic Lipschitz truncation}\label{subsec:Lip} 
We recall the parabolic Lipschitz truncations introduced in \cite{DieSchStrVer17} and their main properties, in the particular case when the scaling quantity $\alpha$ therein is equal to 1.

Let $\bfv \in L^\psi(- r^1, r^1; W^{1,1}_0(B_r,\R^{Nn}))$ and ${\bf G}\in L^1(Q_r,\R^{Nn})$ satisfy the system 
\begin{equation}\label{system:vG}
\bfv_t = \div {\bf G} \quad \text{in}\ \ Q_r , \qquad \bfv= {\bf{0}} \quad \text{on }\ \partial_{\mathrm p} Q_r,
\end{equation}
in the distributional sense.  We take as \lq\lq bad set" a superlevel set for the maximal function of the spatial gradient and of the time derivative in the following way:
\begin{align*} 
 \Ol & := { \{\mathcal M(\chi_{Q_r}\nabla \bfv)>\lambda\} \cup \{\mathcal M(\chi_{Q_r}{\bf G}) > \lambda\}}, \quad \lambda>0\,,
\end{align*}
where $\mathcal M$  is the  parabolic maximal operator defined by 
\begin{align*}
  \mathcal M(f)(\tilde z) &:= \sup_{Q_{\rho}(z_0) \,:\, \tilde z \in Q_{\rho}(z_0)}
  \fint_{Q_\rho(z_0)}|f|\,\d z\,.
\end{align*}
Then we have the following properties, which can be inferred by \cite[Theorem 2.3]{DieSchStrVer17} with $\alpha=1$, $\mathcal M:=\mathcal{M}^1$ and $\Ol:=\Ol^1$.
\begin{lemma}\label{Lem:Lip1}
Let $\bfv \in L^\psi(- r^2, r^2; W^{1,\psi}_0(B_r,\R^{Nn}))$ satisfy the system \eqref{system:vG}
in the distributional sense, and let $\lambda>0$. Then there exists $\bfv_\lambda \in L^{1}(- r^2, r^2;W^{1,1}_0(B_r))$ with $|D\bfv_\lambda| \in L^{\psi}(Q_r)$ such that
\begin{enumerate}
\item $\bfv_\lambda = \bfv$ on $({\Ol})^c$. 
\item $\mathcal M (D\bfv_\lambda) \le c \lambda$.
\item we have
$$
\int_{Q_r} \psi(|D(\bfv_\lambda -\bfv )|)\, \d z \le c   \int_{Q_r} \psi(|D\bfv |)\, \d z + c \psi(\lambda) |\Ol|\,.
$$
\end{enumerate}
Here the constants $c>0$ depend on $n,N$ and $\Delta_2(\psi,\psi^*)$.
\end{lemma}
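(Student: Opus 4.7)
The plan is to construct $\bfv_\lambda$ via a parabolic Whitney-type regularization of $\bfv$ on the bad set $\Ol$, designed so that the spatial gradient of the truncation is pointwise $O(\lambda)$. Since the parabolic maximal operator is lower semicontinuous, $\Ol$ is open, and one covers it by a family of parabolic cylinders $\{Q_i = Q_{r_i}(z_i)\}_i$ whose radii $r_i$ are comparable to the parabolic distance from $z_i$ to $(\Ol)^c$, with finite overlap. Associated to this covering I would fix a partition of unity $\{\chi_i\}$ on $\Ol$ satisfying $|D\chi_i|\lesssim r_i^{-1}$ and $|\partial_t \chi_i|\lesssim r_i^{-2}$.

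On each $Q_i$ I would take $\bfv_i$ to be (essentially) the mean value of $\bfv$ on $Q_i$ when $Q_i$ sits well inside $Q_r$, and $\bfv_i \equiv \zero$ when $Q_i$ meets $\partial_{\mathrm{p}} Q_r$, so as to preserve the zero lateral trace. The truncation is
\[
\bfv_\lambda := \bfv\,\chi_{(\Ol)^c} + \sum_i \chi_i\, \bfv_i,
\]
which by finite overlap is well defined and lies in $L^1(-r^2,r^2;W^{1,1}_0(B_r))$. Property (1) holds by construction. Property (2) is equivalent to the pointwise bound $|D\bfv_\lambda| \lesssim \lambda$ on $\Ol$, and property (3) will follow from (2) by a direct integration.

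The pivotal pointwise bound proceeds as follows. By the Whitney structure each $Q_i$ has an enlargement $\widehat{Q}_i$ that meets $(\Ol)^c$, so by the definition of $\Ol$
\[
\fint_{\widehat{Q}_i} |D\bfv|\, \dz + \fint_{\widehat{Q}_i} |\mathbf{G}|\, \dz \lesssim \lambda.
\]
A parabolic Poincar\'e inequality that exploits the distributional identity $\bfv_t = \div \mathbf{G}$ then gives
\[
\fint_{Q_i} |\bfv - \bfv_i|\, \dz \lesssim r_i \fint_{\widehat{Q}_i} \bigl( |D\bfv|+|\mathbf{G}|\bigr)\, \dz \lesssim r_i \lambda.
\]
Using $\sum_i D\chi_i \equiv \zero$ on the interior of $\Ol$, the identity $D\bfv_\lambda = \sum_i (D\chi_i)(\bfv_i - \bfv)$ (valid on $\Ol$ when $\bfv_i$ is constant), the bound $|D\chi_i|\lesssim r_i^{-1}$, and the finite overlap, one obtains $|D\bfv_\lambda| \lesssim \lambda$ on $\Ol$. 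Integrating this pointwise bound over $\Ol$ and invoking $\Delta_2(\psi)$, together with $\bfv_\lambda = \bfv$ on $(\Ol)^c$, yields
\[
\int_{Q_r} \psi(|D(\bfv_\lambda-\bfv)|)\, \dz \leq c \int_{\Ol} \psi(|D\bfv|)\, \dz + c\, \psi(\lambda)\,|\Ol|,
\]
which is property (3).

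The main obstacle is the parabolic Poincar\'e step, because $\bfv$ has no classical time derivative, only the distributional identity $\bfv_t = \div \mathbf{G}$: the inequality must therefore be formulated so that its right-hand side captures both $D\bfv$ and $\mathbf{G}$, and its scaling must match the isotropic parabolic geometry corresponding to $\alpha=1$. A secondary difficulty is the treatment of cylinders $Q_i$ near $\partial_{\mathrm{p}} Q_r$, where the constraint $\bfv_i \equiv \zero$ must be reconciled with the quantitative Poincar\'e bound; this is handled by choosing $r_i$ small enough near the lateral-initial boundary so that $\bfv$ itself is quantitatively close to zero on $Q_i$.
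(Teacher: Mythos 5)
Your plan correctly reproduces the parabolic Lipschitz-truncation construction of \cite[Theorem~2.3]{DieSchStrVer17}, which is precisely what the paper invokes: the paper does not reprove Lemma~\ref{Lem:Lip1} itself but cites that theorem, then records the Whitney construction \eqref{def_vlambda} and the parabolic Poincar\'e inequality (Lemma~\ref{Lem:poincare_w}) that together make the key estimate $\fint_{Q_i}|\bfv-\bfv_i|\,\dz\lesssim r_i\lambda$ rigorous. Two points in your writeup need sharpening. First, property~(2) is \emph{not} equivalent to the pointwise bound $|D\bfv_\lambda|\lesssim\lambda$ on $\Ol$; the maximal-function estimate $\mathcal M(D\bfv_\lambda)\le c\lambda$ also uses that $D\bfv_\lambda=D\bfv$ on $(\Ol)^c$ together with the superlevel-set structure of $\Ol$: for any parabolic cylinder $Q_\rho$ meeting $(\Ol)^c$ one picks $z'\in Q_\rho\cap(\Ol)^c$ and uses $\mathcal M(\chi_{Q_r}D\bfv)(z')\le\lambda$ to control the $(\Ol)^c$ portion of the average, while the $\Ol$ portion is controlled by the pointwise bound. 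Second, the boundary cylinders are not handled ``by choosing $r_i$ small enough'': the Whitney radii are fixed by the distance of $z_i$ to $(\Ol)^c$ and cannot be adjusted. The device used in \cite{DieSchStrVer17} --- and recorded in the paper immediately after Lemma~\ref{Lem:Lip1} --- is to \emph{extend} $\bfv$ and ${\bf G}$ by time-reflection across $t=r^2$ and by zero outside $B_r\times(-r^2,3r^2)$, which preserves the distributional equation $\bfv_t=\div\,{\bf G}$; for Whitney cylinders on which one sets $\bfv_i\equiv\zero$, the Poincar\'e bound then follows because the extended $\bfv$ vanishes on a set of measure comparable to $Q_i$ (this is precisely what Lemma~\ref{Lem:poincare_w} of the paper supplies). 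With these two corrections your argument is sound and coincides with the approach the paper relies upon.
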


We note from \cite[Section~2.3]{DieSchStrVer17}  that the function $\bfv_\lambda$ is determined by
\begin{equation}\label{def_vlambda}
\bfv_\lambda := \bfv - \sum_{i} \zeta_i (\bfv-\bfv_{i}), \quad \text{where } \bfv_i := 
\begin{cases}
(\bfv)_{\zeta_i} & \text{if }\ \frac{3}{4}Q_i \subset B_r\times (-r^2,3r^2)\,,\\
\zero &\text{otherwise,}
\end{cases}
\end{equation}
where we extend $\bfv$ and $\bf G$ to $B_r \times (r^2,3r^2)$ by $\bfv(x,2r^2-t)$ and $- {\bf G}(x,2r^2-t)$  and to the outside of $B_r \times (-r^2,3r^2)$ by zeros, hence this extended $\bfv$ satisfies  the system $\bfv_t=\div {\bf G}$ in $B_r\times (-\infty,\infty)$ in the sense of distributions, 
and $\{Q_i\}_{i=1}^\infty$ is a parabolic Whitney covering  of $\Ol$ such that $Q_j =Q_{r_i}(z_i)$,
\begin{enumerate}[label={\rm (W\arabic{*})}, leftmargin=*]
\item\label{itm:whit1} $\bigcup_i\frac {1} {2} Q_i \,=\, \Ol$,
\item\label{itm:whit2} for all $j\in \mathbb{N}$ we have $8
  Q_i \subset \Ol$ and $16 Q_i \cap (\mathbb{R}^{m+1}\setminus
  \Ol)\neq \emptyset$, 
\item \label{itm:whit3} if $ Q_i \cap Q_j \neq \emptyset
  $ then $ \frac 12 r_j\le r_i\leq 2\, r_j$,
\item \label{itm:whit3b}$\frac 14 Q_i \cap \frac 14Q_j =
  \emptyset$  for all $i \neq j$,
\item \label{itm:whit4}  each $x\in  \Ol $ belongs to at most 
  $120^{n+2}$ of the sets $4Q_i$.
\end{enumerate}
Here, $\kappa Q_i := Q_{\kappa r_i}$ for $\kappa>0$, and 
$\{\zeta_i\} \subset C^\infty_0(\mathbb{R}^{n+1})$ is a partition of
unity  with respect to  $\{Q_i\}$ that satisfies  
\begin{enumerate}[label={\rm (P\arabic{*})}, leftmargin=*]
\item \label{itm:P1} $\chi_{\frac{1}{2}Q_i}\leq \zeta_i\leq
  \chi_{\frac 34 Q_i}$,
\item \label{itm:P3} $\|{\zeta_i}\|_\infty + r_i \|{D
    \zeta_i}\|_\infty + r_i^2 \|{D^2 \zeta_i}\|_\infty + 
  r_i^2 \| (\zeta_i)_{t}\|_\infty \leq c$,
\item \label{itm:P4} For each $j \in \mathbb{N}$ we define $A_j:= \{ i \,:\, \frac 34
  Q_j \cap \frac 34 Q_i \neq \emptyset\}$. Then
 $\sum_{i \in A_j} \zeta_i = 1$ on $\frac 34 Q_j$.
\end{enumerate}

The following result provides an upper bound for the measure of the bad set $\Ol$, see \cite[Lemma 4.1]{DieSchStrVer17} with $\alpha=1$. 

\begin{lemma}\label{Lem:Lip2} 
Let $\bfv \in L^\psi(-r^2,r^2;W^{1,\psi}_0(B_r))$ and ${\bf G}\in L^{\psi^*}(Q_r)$ satisfy \eqref{system:vG} in the distribution sense. Set $\gamma>0$ such that 
$$
\psi(\gamma):= \fint_{Q_r} \psi(|D \bfv|) \, \d z +\fint_{Q_r} \psi(|{\bf G}|) \, \d z \,.
$$
Then, for every $m_0\in \mathbb N$, there exists $\lambda\in [\gamma, 2^{m_0}\gamma]$ such that
$$
|\mathcal O_\lambda | \le c \frac{\psi(\gamma)}{m_0 \psi(\lambda)} |Q_r|
$$
for some $c>0$ depending on $n,N$ and $\Delta_2(\psi,\psi^*)$.
\end{lemma}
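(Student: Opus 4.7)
My plan is a dyadic pigeonhole argument at the level of the parabolic maximal operator. The key input is the strong $(L^\psi,L^\psi)$ boundedness of the parabolic maximal function $\mathcal M$, which holds because $\psi$ and $\psi^*$ both satisfy $\Delta_2$. Writing the bad set as a union
\[
\mathcal O_\lambda \subset E^{D\bfv}_\lambda \cup E^{{\bf G}}_\lambda, \qquad E^f_\lambda := \{\mathcal M(\chi_{Q_r} f) > \lambda\},
\]
the strong-type bound together with the definition of $\gamma$ gives
\[
\int_{\R^{n+1}} \psi\!\big(\mathcal M(\chi_{Q_r} D\bfv)\big)\dz + \int_{\R^{n+1}} \psi\!\big(\mathcal M(\chi_{Q_r} {\bf G})\big)\dz \le c\,\psi(\gamma)\,|Q_r|,
\]
with $c$ depending only on $n,N$ and $\Delta(\psi,\psi^*)$. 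This is where the $\Delta_2$ hypothesis on both $\psi$ and $\psi^*$ is used in an essential way.

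Next I would fix the dyadic levels $\lambda_k := 2^k \gamma$ for $k=0,1,\dots,m_0-1$ and bound the weighted distributional sum. For any measurable $f$ with $\mathcal M(\chi_{Q_r}f)(z)>\gamma$ one has, setting $k^*(z):=\max\{k\le m_0-1:\lambda_k<\mathcal M(\chi_{Q_r}f)(z)\}$,
\[
\sum_{k=0}^{k^*(z)} \psi(\lambda_k) = \sum_{k=0}^{k^*(z)} \psi(2^k\gamma) \le c\,\psi(\lambda_{k^*(z)}) \le c\,\psi\!\big(\mathcal M(\chi_{Q_r}f)(z)\big),
\]
where the first inequality is the geometric-sum property that follows from $\psi\in \Delta_2$. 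Interchanging sum and integral and applying the strong-type bound then yields
\[
\sum_{k=0}^{m_0-1} \psi(\lambda_k)\,|E^f_{\lambda_k}| = \int_{\{\mathcal M(\chi_{Q_r}f)>\gamma\}}\sum_{k=0}^{k^*(z)}\psi(\lambda_k)\,\dz \le c\int_{\R^{n+1}} \psi\!\big(\mathcal M(\chi_{Q_r}f)\big)\dz \le c\,\psi(\gamma)\,|Q_r|.
\]
Applying this twice, to $f = D\bfv$ and $f = {\bf G}$, and summing gives
\[
\sum_{k=0}^{m_0-1} \psi(\lambda_k)\,|\mathcal O_{\lambda_k}| \le c\,\psi(\gamma)\,|Q_r|.
\]

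Finally, a clean pigeonhole on the $m_0$ terms produces some index $k_*\in\{0,1,\dots,m_0-1\}$ for which
\[
\psi(\lambda_{k_*})\,|\mathcal O_{\lambda_{k_*}}| \le \frac{c}{m_0}\,\psi(\gamma)\,|Q_r|,
\]
and $\lambda:=\lambda_{k_*}\in[\gamma,2^{m_0-1}\gamma]\subset [\gamma,2^{m_0}\gamma]$ is the desired level. Dividing by $\psi(\lambda)$ gives exactly the stated bound.

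\textbf{Expected main obstacle.} The only non-routine point is the telescoping step, i.e.\ the use of the $\Delta_2$ property to replace $\sum_{k\le k^*}\psi(2^k\gamma)$ by a constant multiple of $\psi(\lambda_{k^*})$; once this is in hand, the rest is bookkeeping. The $L^\psi$-boundedness of the parabolic maximal operator on $\R^{n+1}$ is classical under $\Delta(\psi,\psi^*)<\infty$, but one must remember that extending $\bfv$ and ${\bf G}$ outside $B_r\times(-r^2,3r^2)$ by zero and reflection (as in the construction of $\bfv_\lambda$) preserves the $L^\psi$-norms up to a dimensional constant, so the constant in the final estimate depends only on $n,N$ and $\Delta(\psi,\psi^*)$ as claimed.
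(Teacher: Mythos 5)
Your argument is correct and is the standard proof of this type of result; the paper itself gives no proof of Lemma~\ref{Lem:Lip2} but simply cites \cite[Lemma~4.1]{DieSchStrVer17}, and your dyadic pigeonhole on the modular maximal inequality is almost certainly what is done there. Two small remarks. First, the ``geometric-sum property'' $\sum_{k=0}^{k^*}\psi(2^k\gamma)\le c\,\psi(2^{k^*}\gamma)$ does not follow from $\psi\in\Delta_2$: that condition gives an upper bound $\psi(2t)\le K\psi(t)$, which is the wrong direction. What one actually uses is that $\psi$ is an $N$-function, hence $t\mapsto\psi(t)/t$ is nondecreasing, which yields $\psi(2^k\gamma)\le 2^{k-k^*}\psi(2^{k^*}\gamma)$ and so a geometric sum with constant $2$; the $\Delta_2$ hypotheses on $\psi$ and $\psi^*$ are what make the \emph{modular} maximal inequality $\int\psi(\mathcal M f)\le c\int\psi(|f|)$ available. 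Second, your closing caveat about the reflection/zero extension is not needed here: the paper defines $\Ol$ using $\mathcal M(\chi_{Q_r}\cdot)$, i.e.\ the maximal function of the functions cut off to $Q_r$ before any extension, so the extension plays no role in the measure bound — it only matters later, for constructing $\bfv_\lambda$ via the Whitney covering. Neither point is a gap; the proof you give is complete and correct.
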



We end this subsection presenting a Poincar\'e-type inequality. Note that the following lemma is irrelevant to the above setting.

\begin{lemma}\label{Lem:poincare_w}
Let $\bfw \in L^\psi(- r^2, r^2; W^{1,1}_0(B_r,\R^{Nn}))$ and ${\bf H}\in L^1(Q_r,\R^{Nn})$ satisfy the system 
$$
\bfw_t = \div {\bf H} \quad \text{in}\ \ Q_r , \qquad \bfw={\bf 0} \quad \text{on }\ \partial_{\mathrm p} Q_r,
$$
in the distributional sense. Extend $\bfw$ and $\bf H$ by $\bfw(x,2r^2-t)$ and $-{\bf H}(x,2r^2-t)$  to $B_r\times (r^2,3r^2)$ and by zero outside $B_r\times (-r^2,3r^2)$. For any parabolic cylinder $Q_\rho=Q_\rho(z)$ in $\R^{n+1}$ and any $\zeta\in C^\infty_0(\frac{3}{4}Q_\rho)$ with $\zeta\ge 0$ and $\|\zeta\|_{L^\infty(\frac{3}{4}Q_\rho)}\le c_0|{\frac{3}{4}}Q_\rho|^{-1}\|\zeta\|_{L^1(\frac{3}{4}Q_\rho)}$, set
$$
\overline\bfw := 
\begin{cases}
(\bfw)_{\zeta} & \text{if }\ \frac{3}{4}Q_\rho \subset B_r\times (-r^2,3r^2)\,,\\
\zero &\text{otherwise.}
\end{cases}
$$
Then we have
$$
\fint_{\frac{3}{4}Q_{\rho}} \psi\left(\frac{\bfw-\overline \bfw}{\rho}\right)\,\dz \le c  \fint_{Q_\rho} \psi(|D\bfw|)\,\dz + c \psi\left(\fint_{Q_\rho} |{\bf H}|\,\dz\right)
$$ 
for some $c>0$ depending on $n,N, \Delta_2(\psi,\psi^*)$ and $c_0$.
\end{lemma}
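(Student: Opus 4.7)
\emph{Strategy and case reduction.} The plan is to separate the purely spatial oscillation, handled by a slicewise $\psi$-Poincar\'e inequality, from the genuinely parabolic temporal oscillation, handled via the equation $\bfw_t=\div{\bf H}$. If $\tfrac 34 Q_\rho\not\subset B_r\times(-r^2,3r^2)$, then $\overline\bfw=\zero$; either $\bfw\equiv\zero$ on $\tfrac 34 Q_\rho$ (in which case the inequality is trivial), or $\bfw$ vanishes on a portion of $\tfrac 34 Q_\rho$ inherited from the zero extension, and a standard $\psi$-Poincar\'e inequality with vanishing trace yields the claim directly. It therefore remains to treat the main case $\tfrac 34 Q_\rho\subset B_r\times(-r^2,3r^2)$, which in particular forces $B_{3\rho/4}(x_0)\subset B_r$ and $\overline\bfw=(\bfw)_\zeta$.

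\emph{Decomposition.} Fix a spatial cutoff $\chi\in C^\infty_c(B_{3\rho/4}(x_0))$ with $0\le\chi\le 1$, $\chi\equiv 1$ on $B_{\rho/2}(x_0)$, and $|D\chi|\lesssim\rho^{-1}$, and set $\tilde\chi:=\chi/{\textstyle\int\chi}$, so that $|\tilde\chi|\lesssim|B_\rho|^{-1}$ and $|D\tilde\chi|\lesssim(\rho|B_\rho|)^{-1}$. Introduce the time-slice weighted average
\begin{equation*}
\omega(t):=\int\bfw(y,t)\tilde\chi(y)\,\d y,\qquad t\in(t_0-\rho^2,t_0+\rho^2).
\end{equation*}
Using $\overline\bfw=\|\zeta\|_{L^1}^{-1}\iint\bfw(y,s)\zeta(y,s)\,\d y\,\d s$, decompose
\begin{equation*}
\bfw(x,t)-\overline\bfw=[\bfw(x,t)-\omega(t)]+\frac{1}{\|\zeta\|_{L^1}}\iint\bigl\{[\omega(t)-\omega(s)]+[\omega(s)-\bfw(y,s)]\bigr\}\zeta(y,s)\,\d y\,\d s.
\end{equation*}

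\emph{Spatial estimates.} The pieces $\bfw(x,t)-\omega(t)$ and $\omega(s)-\bfw(y,s)$ are spatial oscillations at fixed times $t$ and $s$, respectively, and using $|\tilde\chi|\lesssim|B_\rho|^{-1}$ each of them is pointwise bounded by an unweighted average over $B_{3\rho/4}$ of $|\bfw(x,t)-\bfw(\cdot,t)|$, respectively $|\bfw(\cdot,s)-\bfw(y,s)|$. The weight bound $\|\zeta\|_\infty\le c_0|\tfrac 34 Q_\rho|^{-1}\|\zeta\|_{L^1}$ lets us replace the $\zeta$-average over $(y,s)$ by an unweighted average over $\tfrac 34 Q_\rho$ up to a multiplicative constant. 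Applying $\psi(|\cdot|/\rho)$, Lemma~\ref{lem:Jensen}, and the standard $\psi$-Poincar\'e inequality slicewise on $B_{3\rho/4}$, these two terms contribute at most $c\fint_{Q_\rho}\psi(|D\bfw|)\,\dz$.

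\emph{Temporal estimate (main obstacle).} The genuinely parabolic term $\omega(t)-\omega(s)$ is the main obstacle, and is where the equation is used. Since $\tilde\chi\in C^\infty_c(B_r)$ is time-independent and the extended $\bfw$ satisfies $\bfw_t=\div{\bf H}$ distributionally on $B_r\times\R$, testing the equation gives
\begin{equation*}
\omega(t)-\omega(s)=-\int_s^t\!\!\int_{B_r}{\bf H}(y,\tau)\cdot D\tilde\chi(y)\,\d y\,\d\tau,
\end{equation*}
whence, by $|D\tilde\chi|\lesssim(\rho|B_\rho|)^{-1}$ and $|t-s|\le 2\rho^2$, one obtains $|\omega(t)-\omega(s)|\le c\,\rho\fint_{Q_\rho}|{\bf H}|\,\dz$ uniformly in $s,t\in(t_0-\rho^2,t_0+\rho^2)$. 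This pointwise bound is preserved after averaging against $\zeta$, so after applying $\psi(|\cdot|/\rho)$ and invoking the $\Delta_2$ condition the contribution is at most $c\,\psi\bigl(\fint_{Q_\rho}|{\bf H}|\,\dz\bigr)$. Combining the spatial and temporal estimates via convexity of $\psi$ and integrating over $\tfrac 34 Q_\rho$ concludes the proof, with constants depending only on $n,N,\Delta_2(\psi,\psi^*)$, and $c_0$.
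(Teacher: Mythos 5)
Your treatment of the main case $\tfrac34 Q_\rho\subset B_r\times(-r^2,3r^2)$ is a valid from-scratch reconstruction of the inequality that the paper simply cites (as \cite[Lemma 2.9]{DieSchStrVer17}): the decomposition into a slicewise spatial oscillation $\bfw(x,t)-\omega(t)$, a second spatial piece $\omega(s)-\bfw(y,s)$, and a temporal drift $\omega(t)-\omega(s)$ controlled via $\bfw_t=\div\bf H$, together with the Jensen-type inequality and the $\zeta$-$L^\infty$ bound, gives the claim. That part is fine, just more work than the paper's one-line citation.

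The gap is in the case $\tfrac34 Q_\rho\not\subset B_r\times(-r^2,3r^2)$, where you assert that if $\bfw$ is not identically zero on $\tfrac34 Q_\rho$ then it ``vanishes on a portion of $\tfrac34 Q_\rho$ inherited from the zero extension, and a standard $\psi$-Poincar\'e inequality with vanishing trace yields the claim directly.'' This conflates two genuinely different subcases. If $B_{3\rho/4}(x_0)\not\subset B_r$ (spatial overhang), then every time slice of $\tfrac34 Q_\rho$ has $\bfw(\cdot,t)\equiv\zero$ on a set of measure $\gtrsim|B_{3\rho/4}|$, and the slicewise spatial $\psi$-Poincar\'e for functions vanishing on a proportional subset applies — this is what the paper does in its third case. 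But if $B_{3\rho/4}(x_0)\subset B_r$ and only the time interval of $\tfrac34 Q_\rho$ exits $(-r^2,3r^2)$ (temporal overhang), then $\bfw$ vanishes on a time slab, while for generic $t$ inside $(-r^2,3r^2)$ the slice $\bfw(\cdot,t)$ is nonzero on all of $B_{3\rho/4}(x_0)$; no slicewise spatial Poincar\'e with vanishing trace applies, and there is no off-the-shelf space--time Poincar\'e to invoke because $\bfw$ has only a distributional time derivative. In this subcase the equation must be used again. The paper's device is to choose a shifted weight $\tilde\zeta$ supported in $\tfrac45 Q_\rho\setminus(B_r\times(-r^2,3r^2))$, so that $(\bfw)_{\tilde\zeta}=\zero=\overline\bfw$, and then apply the full parabolic Poincar\'e with the new weight on the slightly enlarged cylinder. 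Equivalently, within your own decomposition you could fix $t_*$ in the zero slab, note $\omega(t_*)=\zero$, and control $\omega(t)=\omega(t)-\omega(t_*)$ by $c\,\rho\fint_{Q_\rho}|{\bf H}|\,\dz$ — but this adaptation has to be carried out; the appeal to a ``standard'' vanishing-trace inequality does not cover the temporal overhang.
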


\begin{proof}
The proof is almost the same as the one of \cite[Lemma 2.11]{DieSchStrVer17} with replacing \cite[Lemma 2.8]{DieSchStrVer17}  by \cite[Lemma 2.9]{DieSchStrVer17}. In fact, if  $\frac{3}{4}Q_\rho \subset B_r \times (-r^2,3r^2)$, then the inequality follows directly from  \cite[Lemma 2.9]{DieSchStrVer17}.

If $\frac{3}{4}Q_\rho \not\subset B_r\times (-r^2,3r^2)$ and $\frac{4}{5}Q_\rho \subset B_r\times (-\infty,\infty)$, choose $\tilde\zeta\in C^\infty_0(\frac{4}{5}Q_\rho)$ with $\tilde\zeta\ge 0$, $\mathrm{supp}(\tilde\zeta)\subset \frac{4}{5}Q_\rho\setminus (B_r\times (-r^2,3r^2))$ and $\|\zeta\|_{L^\infty(\frac{4}{5}Q_\rho)}\le c(n)|{\frac{4}{5}}Q_\rho|^{-1}\|\zeta\|_{L^1(\frac{4}{5}Q_\rho)}$. Then, since $\bfw \equiv {\bf 0}$ in $\mathrm{supp}(\tilde\zeta)$, we have $(\bfw)_{\tilde \zeta}=\bf 0$ hence again by \cite[Lemma 2.9]{DieSchStrVer17}
$$
\fint_{\frac{3}{4}Q_{\rho}} \psi\bigg(\frac{\bfw-\overline \bfw}{\rho}\bigg)\,\dz   = \fint_{\frac{3}{4}Q_{\rho}} \psi\bigg(\frac{\bfw-(\bfw)_{\tilde\zeta}}{\rho}\bigg)\,\dz \le c \fint_{\frac{4}{5}Q_\rho} \psi(|D\bfw|)\,\dz + c \psi\bigg(\fint_{\frac{4}{5}Q_\rho} |{\bf H}|\,\dz\bigg)\,.
$$
Finally, if $\frac{4}{5}Q_\rho \subset B_r\times (-\infty,\infty)$, then there exists $c(n)>0$ such that $\frac{|B_r|}{|\{\bfw(x,t)= {\bf 0}\}\cap B_r\}|}\le c(n)$ for a.e. time slice of $Q_\rho$. Therefore by the Poincar\'e inequality for the space variable, see \cite[Theorem 7]{DieEtt08}, we have
$$
\fint_{\frac{3}{4}Q_{\rho}} \psi\left(\frac{\bfw-\overline \bfw}{\rho}\right)\,\dz \le  \fint_{Q_{\rho}} \psi\left(\frac{\bfw}{\rho}\right)\,\dz \le c \fint_{Q_\rho} \psi(|D\bfw|)\,\dz\,. 
$$ 
The proof is concluded. 
\end{proof}

\subsection{Caloric approximations}

We first obtain the $\A$-caloric approximation. 
As a novelty with respect to previous caloric type approximations, we do not need to restrict the choice of the test functions $\bm\zeta$ in $C^\infty_0(Q_r)$, but we only assume them to be zero on the lateral boundary. This allows us to choose as test functions also the solutions of suitable linear systems.

\begin{theorem}\label{thm:Acaloric}  ($\mathcal A$-caloric approximation)
Let $\mu,\sigma,C_0>0$ and $\psi$ be an $N$-function with $\Delta_2(\psi,\psi^*)<\infty$. For every $\epsilon\in(0,1)$, there exists $\delta>0$ depending on $\sigma$, $C_0$, $\Delta_2(\psi,\psi^*)$ and $\epsilon$   such that
if $\bfu\in L^1(-r^2,r^2;W^{1,\psi^{1+\sigma}}(B_r,\R^N))$ and ${\bf H}\in L^{\psi^{1+\sigma}}(Q_r,\R^{Nn})$ satisfy
$$
\partial_t \bfu = \div {\bf H}  \quad \text{in }\ Q_r,
$$
in the distributional sense, with the inequality
\begin{equation}\label{Acaloric_ass1}
\left( \fint_{Q_r} \psi(|D\bfu|)^{1+\sigma}+ \psi(|{\bf H}|)^{1+\sigma}\, \d z \right)^{\frac{1}{1+\sigma}}  \le C_0 \psi(\mu),
\end{equation}
and for every $\bm\zeta\in C^\infty(Q_r;\R^{N})$ with $\bm\zeta={\bf 0}$ on $\partial B_r \times (-r^2,r^2)$,
\begin{equation}\label{Acaloric_ass2}
\frac{1}{|Q_r|}\left|  \int_{Q_r} \bfu \cdot \bm\zeta_t - \langle\mathcal A D\bfu , D\bm\zeta\rangle \, \d z - \left[\int_{B_r} \bfu \cdot \bm\zeta_t \, \d x\right]^{t=r^2}_{t=-r^2}\right| \le \delta \mu  \|D\bm\zeta\|_{L^\infty(Q_r,\R^{Nn})},
\end{equation}
then 
\begin{equation}\label{eq:Acaloric}
\fint_{Q_r} \psi(|D\bfu-D\bfh|)\,\d z \le \epsilon \psi(\mu),
\end{equation}
where $\bf h$ is the weak solution to
$$
\begin{cases}
\partial_t \bfh- \div (\A D\bfh) = {\bf 0}  \quad \text{in}\ \ Q_r, \\
 \bfh =\bfu \quad \text{on}\ \ \partial_{\mathrm{p}} Q_r.
\end{cases}$$
\end{theorem}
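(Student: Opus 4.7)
The plan is to follow a parabolic duality argument combined with a Lipschitz truncation of the dual test function, in the spirit of the elliptic Orlicz approximation of \cite{DLSV12}. Let $\bfh$ denote the weak solution of the Cauchy--Dirichlet problem that defines it: existence and $L^{\psi^{1+\sigma}}$-integrability of its gradient follow from Lemma~\ref{Lem:paraCZ} applied to $\bfh-\bfu$ at exponent $\psi^{1+\sigma}$. Set $\bfw:=\bfu-\bfh$; then $\bfw$ vanishes on $\partial_{\mathrm p}Q_r$ and satisfies $\partial_t\bfw-\div(\mathcal AD\bfw)=\div({\bf H}-\mathcal AD\bfu)$ distributionally, so it suffices to prove $\int_{Q_r}\psi(|D\bfw|)\,\dz\le\epsilon\psi(\mu)|Q_r|$. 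By Lemma~\ref{Lem:Acaloric1} this reduces to controlling, uniformly over smooth ${\bf G}\in L^{\psi^*}(Q_r,\R^{Nn})$, the expression $\int_{Q_r}\bfw\cdot(\bfv_{\bf G})_t-\langle\mathcal AD\bfw,D\bfv_{\bf G}\rangle-\psi^*(|{\bf G}|)\,\dz$, where $\bfv_{\bf G}$ solves the adjoint system~\eqref{system:dual}. Integration by parts in time (using $\bfw(\cdot,-r^2)=\zero$, $\bfv_{\bf G}(\cdot,r^2)=\zero$, and the lateral boundary conditions) combined with the equation for $\bfw$ rewrites the first two summands as $\int_{Q_r}\langle{\bf H}-\mathcal AD\bfu,D\bfv_{\bf G}\rangle\,\dz$.

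To handle the roughness of $\bfv_{\bf G}$, Lipschitz-truncate it: after a time reversal, $\bfv_{\bf G}$ becomes a function $\widetilde\bfv$ vanishing on $\partial_{\mathrm p}Q_r$ and satisfying $\partial_t\widetilde\bfv=\div(\mathcal A^TD\widetilde\bfv+\widetilde{\bf G})$, so Lemma~\ref{Lem:Lip1} produces a truncation $\bfv_{{\bf G},\lambda}$ with $\|D\bfv_{{\bf G},\lambda}\|_\infty\le c\lambda$, coinciding with $\bfv_{\bf G}$ outside the bad set $\mathcal O_\lambda$, and still vanishing on the lateral boundary. Choose $\gamma$ by $\psi^*(\gamma):=\fint_{Q_r}\psi^*(|D\bfv_{\bf G}|)+\fint_{Q_r}\psi^*(|{\bf G}|)\lesssim\fint_{Q_r}\psi^*(|{\bf G}|)$ via Lemma~\ref{Lem:paraCZ}; then Lemma~\ref{Lem:Lip2} yields, for any $m_0\in\mathbb N$, a level $\lambda\in[\gamma,2^{m_0}\gamma]$ satisfying $\psi^*(\lambda)|\mathcal O_\lambda|\le(c/m_0)\int_{Q_r}\psi^*(|{\bf G}|)\,\dz$ and $|\mathcal O_\lambda|\le(c/m_0)|Q_r|$. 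The crucial split is
\begin{equation*}
\int_{Q_r}\langle{\bf H}-\mathcal AD\bfu,D\bfv_{\bf G}\rangle\,\dz = I+II,
\end{equation*}
with $I:=\int_{Q_r}\langle{\bf H}-\mathcal AD\bfu,D\bfv_{{\bf G},\lambda}\rangle\,\dz$ and $II:=\int_{\mathcal O_\lambda}\langle{\bf H}-\mathcal AD\bfu,D(\bfv_{\bf G}-\bfv_{{\bf G},\lambda})\rangle\,\dz$. Apply hypothesis~\eqref{Acaloric_ass2} with $\bm\zeta=\bfv_{{\bf G},\lambda}$ (after a standard mollification) to obtain $|I|\le c\delta\mu\lambda|Q_r|$; Young's inequality $\mu\lambda\le\psi(\mu)+\psi^*(\lambda)$ and $\psi^*(\lambda)\lesssim\kappa(m_0)\psi^*(\gamma)$ (with $\kappa(m_0)$ depending on $m_0$ and the $\Delta_2$ constants) then give $|I|\le c\delta\psi(\mu)|Q_r|+c\delta\kappa(m_0)\int_{Q_r}\psi^*(|{\bf G}|)\,\dz$. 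For $II$, the Orlicz Young inequality $ab\le\tau\psi(a)+c(\tau)\psi^*(b)$, H\"older with exponent $1+\sigma$, and hypothesis~\eqref{Acaloric_ass1} produce
\begin{equation*}
|II|\le\tau cC_0\psi(\mu)m_0^{-\sigma/(1+\sigma)}|Q_r|+c(\tau)\int_{\mathcal O_\lambda}\psi^*(|D(\bfv_{\bf G}-\bfv_{{\bf G},\lambda})|)\,\dz,
\end{equation*}
and Lemma~\ref{Lem:Lip1}(3), Lemma~\ref{Lem:paraCZ}, together with the Lemma~\ref{Lem:Lip2} choice of $\lambda$, bound the final integral by $(c/m_0)\int_{Q_r}\psi^*(|{\bf G}|)\,\dz$.

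Assembling $I+II-\int_{Q_r}\psi^*(|{\bf G}|)\,\dz$, the resulting bound reads
\begin{equation*}
\int_{Q_r}\psi(|D\bfw|)\,\dz\le\bigl(c\delta+\tau cC_0m_0^{-\sigma/(1+\sigma)}\bigr)\psi(\mu)|Q_r|+\bigl(c\delta\kappa(m_0)+c(\tau)c/m_0-1\bigr)\int_{Q_r}\psi^*(|{\bf G}|)\,\dz,
\end{equation*}
and \eqref{eq:Acaloric} then follows by choosing, in this order: $\tau$ fixed (so that $c(\tau)$ is explicit); then $m_0$ large depending on $\tau,C_0,\sigma,\epsilon$ so that $c(\tau)c/m_0\le\tfrac12$ and $\tau cC_0m_0^{-\sigma/(1+\sigma)}\le\tfrac\epsilon2$; and finally $\delta$ small depending on $m_0$ so that $c\delta\kappa(m_0)\le\tfrac12$ and $c\delta\le\tfrac\epsilon2$. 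The main obstacle is precisely this parameter balancing: the $\int_{Q_r}\psi^*(|{\bf G}|)$ contribution is unbounded in ${\bf G}$ and must be fully absorbed by the duality subtraction, which forces the Orlicz Young constant $c(\tau)$ to be controlled against $m_0$ before $\delta$ can be chosen. As a by-product, only the $\Delta_2$ constants of $\psi$ and $\psi^*$ (and not $\psi$ itself) enter the final threshold $\delta$, in agreement with the quantitative dependence asserted in the theorem.
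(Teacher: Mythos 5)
Your argument is essentially correct and uses the same toolkit as the paper (the duality reduction of Lemma~\ref{Lem:Acaloric1}, the parabolic Lipschitz truncation, Lemma~\ref{Lem:Lip2} for the bad set, and hypothesis~\eqref{Acaloric_ass2}), but the decomposition is genuinely different and shorter. The paper splits $\int_{Q_r}\bfw\cdot\bfv_t-\langle\A D\bfw,D\bfv\rangle\,\d z$ directly along $\bfv=\bfv_\lambda+(\bfv-\bfv_\lambda)$, which produces, besides the hypothesis term and a Young/H\"older term, a third term $I_2=\int\bfw\cdot(\bfv-\bfv_\lambda)_t\,\d z$ that is estimated only after opening up the Whitney covering of $\mathcal O_\lambda$, the partition of unity, and the Poincar\'e-type Lemma~\ref{Lem:poincare_w} applied on each Whitney cube — by far the most technical piece of the proof. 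You avoid $I_2$ entirely by a preliminary integration by parts: testing the equation for $\bfw$ against the smooth adjoint solution $\bfv_{\bf G}$ (which vanishes at $t=r^2$ and on the lateral boundary, while $\bfw$ vanishes at $t=-r^2$) gives $\int\bfw\cdot(\bfv_{\bf G})_t-\langle\A D\bfw,D\bfv_{\bf G}\rangle\,\d z=-\int\langle\A D\bfu+{\bf H},D\bfv_{\bf G}\rangle\,\d z$. (Your expression $\int\langle{\bf H}-\A D\bfu,D\bfv_{\bf G}\rangle$ has a sign slip on ${\bf H}$, but this is immaterial.) You then truncate this purely spatial pairing, re-insert the equation $\partial_t\bfu=\div{\bf H}$ tested against $\bfv_\lambda$ to recover exactly the combination controlled by \eqref{Acaloric_ass2} for $I$, and run Young/H\"older on $II$. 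The reordering "convert first, truncate second" is legitimate because $\bfv_{\bf G}\in C^\infty$, and it removes the need for the Whitney machinery.

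One bookkeeping slip worth flagging: you claim $\int_{\mathcal O_\lambda}\psi^*(|D(\bfv-\bfv_\lambda)|)\,\d z\le(c/m_0)\int_{Q_r}\psi^*(|{\bf G}|)\,\d z$. Lemma~\ref{Lem:Lip1}(3) actually gives $\le c\int_{Q_r}\psi^*(|D\bfv|)\,\d z+c\psi^*(\lambda)|\mathcal O_\lambda|$; the second summand carries the $1/m_0$ via Lemma~\ref{Lem:Lip2}, but the first does not — it is controlled only by $c\int\psi^*(|{\bf G}|)\,\d z$ (Lemma~\ref{Lem:paraCZ}) with a fixed constant. Consequently the $\int\psi^*(|{\bf G}|)$ contribution in $II$ comes with prefactor $c(\tau)\cdot c$, not $c(\tau)\cdot c/m_0$, so it cannot be beaten by $m_0$ alone. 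The parameter balancing still works, but the order you state must be read carefully: $\tau$ is fixed \emph{large enough} so that $c(\tau)c\le\tfrac12$ (this is possible since $c(\tau)\to0$ as $\tau\to\infty$ for $\Delta_2$ functions — this is in fact exactly what the paper does, applying Young in $I_3$ with the small parameter on the $\psi^*$ side); then $m_0$ is taken large to kill $\tau cC_0m_0^{-\sigma/(1+\sigma)}$; then $\delta$ is taken small to absorb $c\delta\kappa(m_0)$ and $c\delta$. With that correction the argument closes.
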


\begin{proof}
It will suffice to prove the assertion in the case $\mu=1$ with $\psi(1)=1$, as the general case 
can be obtained by a scaling argument with the functions $\tilde \bfu = \mu^{-1}\bfu$, $\tilde {\bf H} = \mu^{-1} {\bf H}$ and $\tilde\psi(\tau)=\frac{\psi(\mu \tau)}{\psi(\mu)}$.
Set $\bfw:= \bfu-\bfh$. Then $\bfw $ satisfies  
\begin{equation}\label{system:w}
\begin{cases}
\partial_t \bfw- \div (\A D\bfw) = -\div \, (\A D \bfu+ {\bf H})  \quad \text{in}\ \ Q_r, \\
 \bfw =\zero \quad \text{on}\ \ \partial_{\mathrm{p}} Q_r,
\end{cases}\end{equation}
in the distributional sense. Moreover, by applying Lemma~\ref{Lem:paraCZ} to the $N$-function $\psi^{1+\sigma}$ and \eqref{Acaloric_ass1}, we see that
\begin{equation}\label{Dwestimate}
\fint_{Q_r} \psi(|D\bfw|)^{1+\sigma}\, \d z 
\le  c  \fint_{Q_r} \psi(|\A D\bfu + {\bf H}|)^{1+\sigma}\, \d z \le c \,.
\end{equation}
We will apply the inequality in Lemma~\ref{Lem:Acaloric1}. Fix any ${\bf G}\in L^{\psi^*}(Q_r,\R^{Nn})\cap C^{\infty}(Q_r,\R^{Nn})$ and consider the weak solution $\bfv_{\bf G}$ to \eqref{system:dual}. Note that, by Lemma~\ref{Lem:paraCZ} with Remark~\ref{Rem:paraCZ} and $\psi^*$ in place of $\psi$,
\begin{equation}\label{Dvestimate}
\int_{Q_r} \psi^*(|D\bfv_{\bf G}|)\, \d z \le c \int_{Q_r} \psi^*(|{\bf G}|)\, \d z\,.
\end{equation}
Moreover, $\bfv_{\bf G} \in C^\infty(Q_r,\R^N)$ since ${\bf G}\in C^{\infty}(Q_r, \R^{Nn})$. To enlighten the notation, from now on we will simply denote $\bfv_{\bf G}$ by $\bfv$.

Choose $\gamma\in [0,\infty)$ such that
\begin{equation}\label{Acaloric_gamma}
\psi^*(\gamma) = \int_{Q_r} \psi^*(|D\bfv|) \, \dz+
  \int_{Q_r} \psi^*(|\A^T D\bfv +{\bf G}|) \, \dz\,.
\end{equation}
Then, with \eqref{Dvestimate} and the subadditivity of $\psi^*$ we have 
$$
\psi^*(\gamma) \le c \int_{Q_r} \psi^*(|{\bf G}|) \, \dz\,.
$$
Let $m_0\in\mathbb N$ be large enough, to be determined later. Then by Lemma~\ref{Lem:Lip1}(1) and Lemma~\ref{Lem:Lip2} with $\psi^*$ in place of $\psi$, there exists   $\lambda \in[\gamma,2^{m_0}\gamma]$ such that  
$\{\bfv\neq \bfv_\lambda \} \subset \Ol$
and 
\begin{equation}\label{Acaloric_pf0}
\frac{|\mathcal O_\lambda|}{|Q_r|} \le \frac{c\psi^*(\gamma)}{m_0 \psi^*(\lambda)} \le \frac{c}{m_0}\,,
\end{equation}
where $\bfv_\lambda$ is the parabolic Lipschitz truncation of $\bfv$ provided by Lemma~\ref{Lem:Lip1}.  
Note that $\bfv$ is zero on the top, but not on the base, of the cylinder $Q_r$. Hence we apply the Lipschitz truncation and related results in the previous subsection to the function $\bfv(x,-t)$. Accordingly,  $\bfv$ and ${\bf \tilde G}:= -\A^T D\bfv -{\bf G}$ are extended to $B_r\times(-3r^2,-r^2)$ by $\bfv(x,t)=\bfv(x,-2r^2-t)$ and  ${\bf \tilde G}(x,t)= - {\bf \tilde G}(x,-2r^2-t)$ and to the outside of $B_r\times(-3r^2,r^2)$ by zeros, hence from \eqref{system:dual} $\bfv$ satisfies the system
$\bfv_t = \div {\bf \tilde G}$ in $B_R\times (-\infty,\infty)$ in the distribution sense.

Then we observe that 
\begin{equation}\label{Acaloric_pf1}\begin{aligned}
 \int_{Q_r}  \bfw \cdot \bfv_t  - \langle \A D\bfw ,  D\bfv \rangle  \, \d z     
&=  \int_{Q_r}  \bfw \cdot (\bfv_\lambda)_t - \langle \A D\bfw ,  D\bfv_\lambda \rangle  \, \d z \\
&\quad 
+ \int_{Q_r}  \bfw \cdot (\bfv-\bfv_\lambda)_t  \, \d z 
-  \int_{Q_r} \langle \A D\bfw ,  D (\bfv-\bfv_\lambda \rangle  \, \d z    \\
& =: I_1 + I_2 - I_3 \,.
\end{aligned}\end{equation}
For $I_1$,
since $\bfw= \bfu + \bfh$,
$$\begin{aligned}
I_1&= \int_{Q_r}  \bfw \cdot (\bfv_\lambda)_t - \langle \A D\bfw ,  D\bfv_\lambda \rangle  \, \d z -\left[\int_{Q_r}  \bfw \cdot \bfv_\lambda \, \d x\right]^{t=r^2}_{t=-r^2} \\
&= \int_{Q_r}  \bfu \cdot (\bfv_\lambda)_t - \langle \A D\bfu ,  D\bfv_\lambda \rangle  \, \d z -\left[\int_{Q_r}  \bfu \cdot \bfv_\lambda \, \d x\right]^{t=r^2}_{t=-r^2}.
\end{aligned}$$
Then by \eqref{Acaloric_ass2}, Lemma~\ref{Lem:Lip1}(2) and Young's inequality we have that for any $\kappa_1\in(0,1)$,
$$
\frac{1}{|Q_r|} I_1 \le \delta \|D\bfv_\lambda\|_{L^\infty(Q_r,\R^{Nn})} \le c\delta \lambda \le c_{\kappa_1}\psi\left(\delta \right) +  \kappa_1 \psi^*(\lambda) \le c_{\kappa_1}\psi\left(\delta\right) +  \kappa_1 \psi^*( 2^{m_0}\gamma).
$$
We next estimate $I_3$.  By Young's inequality, H\"older's inequality and Lemma~\ref{Lem:Lip1}(3) with $\psi^*$ in place of $\psi$ and Lemma~\ref{Lem:Lip2}, we have that for any $\kappa_2\in(0,1)$
$$\begin{aligned}
|I_3| &\le c_{\kappa_2} \int_{\mathcal O_\lambda \cap Q_r} \psi(|D\bfw|) \,\d z +  \kappa_2 \int_{\mathcal O_\lambda \cap Q_r} \psi^*(|D(\bfv-\bfv_\lambda)|) \,\d z    \\
& \le  c_{\kappa_2} \int_{\mathcal O_\lambda \cap Q_r} \psi(|D\bfw|) \,\d z  + c \kappa_2  \int_{Q_r} \psi^*(|D\bfv|) \,\d z +  c \kappa_2|\mathcal O_\lambda  |\psi^*(\lambda)   \\
& \le  c_{\kappa_2} \left(\int_{Q_r} \psi(|D\bfw|)^{1+\sigma} \,\d z\right)^{\frac{1}{1+\sigma}} |\mathcal O_\lambda |^{\frac{\sigma}{1+\sigma}} + c \kappa_2 \int_{Q_r} \psi^*(|D\bfv|) \,\d z   +  c \kappa_2 |\mathcal O_\lambda |\psi^*(\lambda)\,,   
\end{aligned}$$
hence applying \eqref{Dwestimate} and \eqref{Acaloric_pf0}
$$\begin{aligned}
\frac{1}{|Q_r|}|I_3| &\le  c_{\kappa_2}m_0^{-\frac{\sigma}{1+\sigma}}  +  c\kappa_2 \fint_{Q_r} \psi^*(|D\bfv|) \,\d z+    \frac{c \kappa_2}{m_0} \psi^*(\gamma)\,.
\end{aligned}$$
Finally, we estimate $I_2$. Recall the parabolic Whitney covering $\{Q_i\}_{i=1}^\infty$ of $\mathcal O_\lambda$ and the partition of unity $\{\zeta_i\}_{i=1}^\infty\subset C^\infty_0(\frac{3}{4}Q_i)$ with respect to $\bfv$ and the definition of $\bfv_\lambda$ in \eqref{def_vlambda}. 
In addition, we extend $\bfw$ and ${\bf \tilde H}:= \A (D\bfw - D \bfu-{\bf H})$ to $B_r\times(r^2,3r^2)$ by $\bfw(x,t)=\bfw(x,2r^2-t)$ and  ${\bf \tilde H}(x,t)=-{\bf \tilde H}(x,2r^2-t)$ and to the outside of $B_r\times(-r^2,3r^2)$ by zeros, hence from \eqref{system:w} $\bfw$ satisfies the system
$\bfw_t = \div {\bf \tilde H}$ in $B_R\times (-\infty,\infty)$ in the distribution sense. With this extended $\bfw$, we set
$$
 \bfw_i := 
\begin{cases}
(\bfw)_{\zeta_i} & \text{if }\ \frac{3}{4}Q_i \subset B_r\times (-r^2,3r^2)\,,\\
\zero &\text{otherwise.}
\end{cases}
$$
Then, since $\bfw_i$'s are constants and $\bfv$ solves \eqref{system:dual}, we have
$$\begin{aligned}
I_2  &\le c \sum_{\frac{3}{4}Q_i\cap Q_r \neq \emptyset} \left| \int_{\frac{3}{4} Q_i \cap Q_r}  \bfw  \cdot [ \zeta_i (\bfv-\bfv_i)]_t    \, \d z \right| \\ 
&= c \sum_{\frac{3}{4}Q_i\cap Q_r \neq \emptyset} \left| \int_{\frac{3}{4} Q_i \cap Q_r}  (\bfw  - \bfw_i ) \cdot [ \zeta_i (\bfv-\bfv_i)]_t    \, \d z \right| \\ 
&= c \sum_{\frac{3}{4}Q_i\cap Q_r \neq \emptyset} \left|\int_{\frac{3}{4} Q_i \cap Q_r}  (\bfw  - \bfw_i ) \cdot  \left[(\bfv-\bfv_i) (\zeta_i)_t + \bfv_t  \zeta_i\right]    \, \d z\right|  \\
&= c \sum_{\frac{3}{4}Q_i\cap Q_r \neq \emptyset} \left|\int_{\frac{3}{4} Q_i \cap Q_r}   (\bfw  - \bfw_i ) \cdot  (\bfv-\bfv_i) (\zeta_i)_t + \langle (\A^T D\bfv +{\bf G}), D [ (\bfw-\bfw_i) \zeta_i]\rangle    \, \d z  \right| \\ 
&\le c \sum_{\frac{3}{4}Q_i\cap Q_r \neq \emptyset} \int_{\frac{3}{4} Q_i \cap Q_r}   \frac{|\bfw  - \bfw_i|}{r_i} \frac{ |\bfv-\bfv_i|}{r_i} + (|D\bfv| +|{\bf G}|) \left(|D\bfw| + \frac{|\bfw- \bfw_i|}{r_i}\right)    \, \d z \,.
\end{aligned}$$
Moreover, by Young's inequality we have that for any $\kappa_3\in(0,1)$,
$$\begin{aligned}
I_2   &\le  c_{\kappa_3} \sum_{\frac{3}{4}Q_i\cap Q_r \neq \emptyset} \int_{\frac{3}{4} Q_i}   \psi\left(\frac{|\bfw  - \bfw_i|}{r_i} \right)  +  \psi(|D\bfw|)    \, \d z\\
&\qquad + \kappa_3  \sum_{\frac{3}{4}Q_i\cap Q_r \neq \emptyset}  \int_{\frac{3}{4} Q_i }   \psi^*\left( \frac{ |\bfv-\bfv_i|}{r_i}\right)  +\psi^*(|D\bfv| +|{\bf G}|)    \, \d z  \,.
\end{aligned}$$
Then, applying Lemma~\ref{Lem:poincare_w} to $\bfw$ and $\bfv$ with the extensions of $\bfw$, $\bfv$, $\bf H$ and $\bf G$, we have that
$$\begin{aligned}
\int_{\frac{3}{4} Q_i}   \psi\left(\frac{|\bfw  - \bfw_i|}{r_i} \right)    \, \d z & \le 
c \int_{ Q_i}   \psi (|D\bfw |)    \, \d z +  \int_{  Q_i}   \psi (|\A D\bfw -\A D \bfu -{\bf H} |)    \, \d z \\
&\le c \int_{ Q_i}  \psi (|D\bfw| + |D\bfu| +|{\bf H} |)    \, \d z \,,
\end{aligned}$$
and 
$$\begin{aligned}
\int_{\frac{3}{4} Q_i}   \psi^*\left(\frac{|\bfv  - \bfv_i|}{r_i} \right)    \, \d z & \le 
c \int_{  Q_i}   \psi^* (|D\bfv |)    \, \d z +  \int_{  Q_i}   \psi^* (|-\A^T D\bfv -{\bf G} |)    \, \d z \\
&\le c \int_{  Q_i}   \psi^* (|D\bfv |+|{\bf G}|)    \, \d z \,.
\end{aligned}$$
Using these inequalities, the fact that $\sum_{i} \chi_{Q_i}\le c(n)$ and considering the extension of functions, we estimate $I_2$ as  
$$\begin{aligned}
I_2& \le c_{\kappa_3} \int_{\mathcal O_\lambda} \psi (|D\bfw| + |D\bfu| +|{\bf H} |)    \, \d z +  c \kappa_3 \int_{\mathcal O_\lambda} \psi^* (|D\bfv| +|{\bf G} |)    \, \d z\\
& \le c_{\kappa_3} \left(\int_{\mathcal O_\lambda} \psi (|D\bfw| + |D\bfu| +|{\bf H} |)^{1+\sigma}    \, \d z\right)^{\frac{1}{1+\sigma}} |\mathcal O_\lambda|^{\frac{\sigma}{1+\sigma}}+  c \kappa_3 \int_{\mathcal O_\lambda} \psi^* (|D\bfv| +|{\bf G} |)    \, \d z\\
& \le c_{\kappa_3} \left(\int_{Q_{r}} \psi (|D\bfw| + |D\bfu| +|{\bf H} |)^{1+\sigma}    \, \d z\right)^{\frac{1}{1+\sigma}} |\mathcal O_\lambda|^{\frac{\sigma}{1+\sigma}}+  c \kappa_3 \int_{Q_r} \psi^* (|D\bfv| +|{\bf G} |)    \, \d z\,.
\end{aligned}$$
Therefore, by H\"older's inequality  and the estimates \eqref{Acaloric_ass1}, \eqref{Dwestimate}, \eqref{Dvestimate} and \eqref{Acaloric_gamma}, we have 
$$\begin{aligned}
\frac{1}{|Q_r|}I_2\le c_{\kappa_3} \left(\frac{|\mathcal O_\lambda|}{|Q_r|}\right)^{\frac{\sigma}{1+\sigma}}+  c \kappa_3 \fint_{Q_r} \psi (|D\bfv| +|{\bf G} |)    \, \d z \le c_{\kappa_3}m_0^{-\frac{\sigma}{1+\sigma}} + c\kappa_3\fint_{Q_r} \psi^* (|{\bf G} |)    \, \d z\, .
\end{aligned}$$
Inserting the estimates for $I_1$, $I_2$ and $I_3$ into \eqref{Acaloric_pf1}, we have 
$$\begin{aligned}
\fint_{Q_r}  \bfw \cdot \bfv_t  - \langle \A D\bfw ,  D\bfv \rangle  \, \d z    & \le c_{\kappa_1} \psi(\delta) + (c_{\kappa_2}+c_{\kappa_3})m_0^{-\frac{\sigma}{1+\sigma}} \\
&\qquad +(c_{m_0} \kappa_1 + c\kappa_2+c\kappa_3)\fint_{Q_r} \psi^* (|{\bf G} |)    \, \d z\,.
\end{aligned}$$
We choose $\kappa_2,\kappa_3$  small so that $c\kappa_2+c\kappa_3\le \frac{1}{2}$, then $m_0$  large so that $ (c_{\kappa_2}+c_{\kappa_3})m_0^{-\frac{\sigma}{1+\sigma}} \le \frac{\epsilon}{2}$, then $\kappa_1$ small so that $c_{m_0} \kappa_1 \le \frac12$, and then  $\delta$  small so that $c_{\kappa_1} \psi(\delta) \le \frac{\epsilon}{2}$. Then we have 
$$
\fint_{Q_r}  \bfw \cdot \bfv_t  - \langle \A D\bfw ,  D\bfv \rangle  \, \d z \le \epsilon + \fint_{Q_r}\psi^*(|{\bf G}|)\,\dz\,.
$$
Since $\bf G$ is an arbitrary function in $L^{\psi}(Q_r,\R^{Nn})\cap C^\infty(Q_r,\R^{Nn})$, by Lemma~\ref{Lem:Acaloric1} we deduce \eqref{eq:Acaloric}. This concludes the proof.  
\end{proof}

The $\phi$-caloric approximation has been proved in  \cite[Theorem 4.2]{DieSchStrVer17}.  It states
 that  every {\lq\lq almost $\phi$-caloric\rq\rq}  function has a $\phi$-caloric function {\lq\lq close enough\rq\rq}.

\begin{theorem}($\phi$-caloric approximation)\label{thm:phi-caloric}
Let $\gamma_1,\gamma_2\in (0,1)$, $\gamma_3\ge 1$, $I:=(t^-,t^+)$. Suppose $\phi$ be an N-function with $\Delta_2(\phi,\phi^*)<\infty$ with $\phi(1)=1$. Then   for every $\epsilon>0$ there exists $\delta>0$ depending on $n,N,\Delta_2(\phi,\phi^*),\gamma_1,\gamma_2,\gamma_2$ and $\epsilon$ such that the  following holds: if  $\bfu\in L^{\phi}(I,W^{1,\phi}(B))$ satisfying $\bfu_t=\div {\bf G}$ in the distribution sense is almost  {  $\phi$}-caloric in the sense that for all {  $ \bm\zeta\in C_0^\infty(Q),$}
$$
   \Big|\fint_Q  \bfu   \cdot\bm\zeta_t + \frac{\phi'(|D\bfu|)}{|D\bfu|} \langle D\bfu , D \bm\zeta \rangle\,  \d z\Big|
    \leq \delta \Big[ \fint_Q \phi(|D\bfu|) +\phi^*(|{\bf G}|) \, \dz+\phi( \|D\bm\zeta\|_\infty)
    \Big] \,,
 $$
 then there exists a {$\phi$}-caloric function {$\bfh$} such that {$\bfh=\bfu$} on {$\partial_{\mathrm p} Q$} and
{ \begin{equation} \begin{aligned}
    \left(\fint_I\Big(\fint_B \Big({|\bfu-\bfh|^{2}\over |t^+-t^-|}\Big)^{\gamma_2}\d x\Big)^{\frac{\gamma_3}{ \gamma_2}}\d t\right)^{\frac{1}{\gamma_3}}  +\Big(\fint_Q |{\bf V}(D\bfu )- &{\bf V}(D\bfh)|^{2\gamma_1} \dz
    \Big)^{\frac{1}{\gamma_1}} \\
    &\leq \epsilon \fint_Q
  \phi( |D\bfu|)+\phi^*(|{\bf G}|)\, \dz  \,.\nonumber
\end{aligned}  \end{equation}}

\end{theorem}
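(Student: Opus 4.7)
The plan is to mirror the scheme of Theorem~\ref{thm:Acaloric}, replacing the linear constant-coefficient operator $\A$ by the nonlinear $\phi$-Laplace operator $\mathbf{S}(\bP) := \frac{\phi'(|\bP|)}{|\bP|}\bP$. First I would introduce the comparison map $\bfh$ as the unique weak solution of the Cauchy--Dirichlet problem
\begin{equation*}
\partial_t \bfh - \div \mathbf{S}(D\bfh) = \zero \quad\text{in } Q, \qquad \bfh = \bfu \quad\text{on } \partial_{\mathrm p} Q,
\end{equation*}
whose existence and uniqueness follow by standard monotone-operator theory in Orlicz--Sobolev spaces, using $\Delta(\phi,\phi^*)<\infty$, together with the a priori bound $\int_Q \phi(|D\bfh|)\,\dz \lesssim \int_Q \phi(|D\bfu|)+\phi^*(|\mathbf G|)\,\dz$. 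Setting $\bfw := \bfu - \bfh$, the function $\bfw$ vanishes on $\partial_{\mathrm p} Q$, and the monotonicity relation \eqref{monotonicity1} gives the pointwise equivalence
\begin{equation*}
|\bV(D\bfu) - \bV(D\bfh)|^{2} \;\sim\; [\mathbf{S}(D\bfu) - \mathbf{S}(D\bfh)] : D\bfw.
\end{equation*}

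If $\bfw$ were an admissible test function, direct testing would yield the gradient closeness. Since it is not sufficiently regular in time, I would apply the parabolic Lipschitz truncation of Subsection~\ref{subsec:Lip} to produce $\bfw_\lambda$ with $\bfw_\lambda = \bfw$ off a bad set $\Ol$ and $\|D\bfw_\lambda\|_\infty \lesssim \lambda$. Choosing $\lambda\in[\gamma,2^{m_0}\gamma]$ via Lemma~\ref{Lem:Lip2} with $\phi(\gamma)\sim \fint_Q \phi(|D\bfw|)+\phi^*(|\mathbf G|)\,\dz$, one obtains
\begin{equation*}
|\Ol|\,\phi(\lambda) \;\lesssim\; \tfrac{1}{m_0}\int_Q \phi(|D\bfw|)+\phi^*(|\mathbf G|)\,\dz.
\end{equation*}
Inserting $\bm\zeta = \bfw_\lambda$ into the almost-$\phi$-caloric inequality and splitting the resulting identity on $\Ol^{c}$ and $\Ol$ produces, on $\Ol^{c}$, the coercive term $\int_{\Ol^{c}}[\mathbf{S}(D\bfu)-\mathbf{S}(D\bfh)] : D\bfw\,\dz$, while on $\Ol$ one has an error controlled by H\"older's inequality and the measure bound above, plus a contribution $\delta\bigl[\fint_Q \phi(|D\bfu|)+\phi^*(|\mathbf G|)\,\dz + \phi(\lambda)\bigr]$ coming from the right-hand side of the almost-$\phi$-caloric inequality. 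Calibrating $m_0$ large first and then $\delta$ small depending on $m_0$ gives
\begin{equation*}
\fint_Q |\bV(D\bfu) - \bV(D\bfh)|^{2}\,\dz \;\le\; \epsilon \fint_Q \phi(|D\bfu|)+\phi^*(|\mathbf G|)\,\dz.
\end{equation*}
The passage from exponent $1$ to $\gamma_1<1$ on the left follows by interpolation with the a priori $L^{1}$-energy bound, and the $L^{\gamma_3}L^{\gamma_2}$-estimate for $|\bfu-\bfh|^{2}/|t^{+}-t^{-}|$ follows from the parabolic Poincar\'e inequality of Lemma~\ref{Lem:poincare_w} applied to $\bfw$, converting a function-difference into a gradient quantity already controlled above, followed by a Gehring-type self-improvement in the time integrability.

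The main obstacle I anticipate is the control of the cross-terms on the bad set $\Ol$. The quantity $\int_{\Ol} [\mathbf{S}(D\bfu)-\mathbf{S}(D\bfh)] : D(\bfw-\bfw_\lambda)\,\dz$ must be absorbed using the shift-change formula \eqref{(5.4diekreu)} and the conjugate equivalence $\phi^{*}(\phi'(t))\sim \phi(t)$ from \eqref{eq:hok2.4}, in order to trade $\phi'(|D\bfu|)$ against Orlicz-dual quantities compatible with the pointwise bound $|D\bfw_\lambda|\lesssim \lambda$. The three-parameter absorption (over $\lambda$, $m_0$ and $\delta$) that closes this scheme, together with the care required to handle the boundary terms at the top time slice $t=t^{+}$ via a Steklov-average regularization (since $\bfw$ need not vanish there), is what makes the proof technical rather than conceptually different from the linear $\A$-caloric version already carried out in Theorem~\ref{thm:Acaloric}.
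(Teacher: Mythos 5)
The paper does not supply a proof of Theorem~\ref{thm:phi-caloric}; it is recalled verbatim from \cite[Theorem~4.2]{DieSchStrVer17}, and the accompanying remark characterizes that proof as ``a comparison argument and the parabolic Lipschitz truncation.'' Measured against the cited proof, your overall scheme is the right one: introduce $\bfh$ solving the $\phi$-Laplace Cauchy--Dirichlet problem with $\bfh=\bfu$ on $\partial_{\mathrm p}Q$, set $\bfw:=\bfu-\bfh$, Lipschitz-truncate $\bfw$ itself (not a dual variable), test both the almost-$\phi$-caloricity of $\bfu$ and the equation for $\bfh$ with $\bfw_\lambda$, use the monotonicity relation \eqref{monotonicity1} to recognize $|\bV(D\bfu)-\bV(D\bfh)|^2$ in the good part, and control the bad set via the measure bound of Lemma~\ref{Lem:Lip2}. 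One small mislabeling: you present this as ``mirroring'' Theorem~\ref{thm:Acaloric}, but that proof hinges on the duality Lemma~\ref{Lem:Acaloric1}, which is intrinsically linear; what you actually describe is the correct nonlinear alternative, so the framing is off even though the plan is sound.

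There is, however, one genuine gap. You write that you will first establish $\fint_Q |\bV(D\bfu)-\bV(D\bfh)|^2\,\dz \le \epsilon \fint_Q \phi(|D\bfu|)+\phi^*(|\mathbf G|)\,\dz$ and then pass to the exponent $\gamma_1<1$ ``by interpolation with the a priori $L^1$-energy bound.'' This step does not work, and the exponent-1 inequality is not true in general; indeed, that is precisely why the theorem restricts to $\gamma_1<1$. The method gives control of $|\bV(D\bfu)-\bV(D\bfh)|^2$ only on the good set $\Ol^c$. On the bad set $\Ol$ one has no smallness in $L^1$: the available information is $\int_{\Ol}|\bV(D\bfu)-\bV(D\bfh)|^2 \lesssim \int_Q \phi(|D\bfu|)+\phi(|D\bfh|)$ with no $\epsilon$-gain, plus the measure bound $|\Ol|/|Q|\lesssim 1/m_0$. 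The smallness on $\Ol$ is only recovered through H\"older's inequality with a sub-unit exponent,
\begin{equation*}
\fint_Q |\bV(D\bfu)-\bV(D\bfh)|^{2\gamma_1}\chi_{\Ol}\,\dz
\;\le\; \Big(\tfrac{|\Ol|}{|Q|}\Big)^{1-\gamma_1}\Big(\fint_Q |\bV(D\bfu)-\bV(D\bfh)|^{2}\,\dz\Big)^{\gamma_1},
\end{equation*}
where $(|\Ol|/|Q|)^{1-\gamma_1}$ is small precisely because $\gamma_1<1$; at $\gamma_1=1$ this factor degenerates to $1$ and nothing is gained. So the loss of exponent is structural and must be built into the argument from the start, not obtained a posteriori. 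The same mechanism forces $\gamma_2<1$ in the $\bfu-\bfh$ estimate; your proposed ``Gehring-type self-improvement in time'' is not part of the cited proof and is not needed once the exponents are chosen this way. You should also spell out the handling of the cross-term $\int_Q \bfw\cdot(\bfw_\lambda)_t\,\dz$: split it as $\int_Q \bfw_\lambda\cdot(\bfw_\lambda)_t + \int_{\Ol}(\bfw-\bfw_\lambda)\cdot(\bfw_\lambda)_t$, where the first contributes only good-signed boundary terms at $t=t^\pm$ and the second is estimated via the Whitney estimates \ref{itm:P3} and the Poincar\'e-type inequality of Lemma~\ref{Lem:poincare_w}; this is the technically delicate heart of the parabolic Lipschitz truncation argument, which your sketch leaves implicit.
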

\noindent

Note that a first version of the  $p$-caloric approximation method was developed by B\"ogelein-Duzaar-Mingione~\cite{BoDuzMin13} by using a contradiction argument. They used it to show a partial regularity result for  solutions of parabolic systems of $p$-growth; that is,  almost everywhere $\nabla u\in C^{\alpha}$ for some $\alpha>0$. We wish to quickly point out the improvements of the approximation lemma here with respect to the one  in~\cite{BoDuzMin13}. The proof is done directly by a comparison argument and the parabolic Lipschitz truncation. This direct approach allows for showing the closeness both in $L^{2\gamma_2}(L^{2\gamma_3})$ and $L^{\phi^{\gamma_1}}(W^{1,{\phi^{\gamma_1}}})$ norms (the last closeness is via the function ${\bf V}$ in \eqref{Vfunction}). 


\section{Caccioppoli type inequality and Higher integrability}
\label{sec3}

Let $\bfu$ be a weak solution to \eqref{system1}. We always assume that $\phi$ and $\bA$ satisfy Assumption~\ref{Ass}. Let  ${\bm \ell}: \R^n \to \R^N$ be any fixed linear map of the form
\begin{equation}\label{def_l}
{\bm \ell}(x) : = \bP (x-x_0) + \bb ,
\quad x\in \R^n,
\end{equation}
where $\bP \in \R^{Nn}$, $x_0\in \R^n$ and $\bb \in \R^N$, and set
\begin{equation}\label{def_ul}
\bfu_{\bm \ell}:= \bfu-{\bm \ell}\,.
\end{equation}
In this section we will obtain the higher integrability of not only $D\bfu$ but also of $D\bfu_{\ell}$. We follow the argument in \cite{HasOk21}.

We first recall a Gagliardo-Nirenberg type inequality for Orlicz functions, see Lemma \ref{Lem:hok}, which has been proved in \cite[Lemma~2.13]{HasOk21}. In order to do that, we fix some notation.
A function $\varphi:[0,\infty)\to[0,\infty)$ is said to be a \emph{weak $\Phi$-function} if it is increasing with $\varphi(0)=0$, $\lim_{t\to0^+}\varphi(t)=0$, $\lim_{t\to +\infty}\varphi(t)=+\infty$ and such that the map $t\to \frac{\varphi(t)}{t}$ is almost increasing. Note that every $N$-function is a weak $\Phi$-function.

\begin{lemma}
\label{Lem:hok}
Assume that $\psi:[0,\infty)\to[0,\infty)$ is a weak $\Phi$-function and such that $t\mapsto \frac{\psi(t)}{t^{q_1}}$ is almost decreasing with constant $L\ge 1$ for some $q_1\geq 1$. For $p\in [1,n)$ and $q_2>0$ we have
\begin{equation*}
\bigg(\fint_{B_{r}}\psi\big(\big|\tfrac{f}{r}\big|\big)^\gamma\, \d x\bigg)^{\frac1\gamma}
\le c \bigg(\fint_{B_r}\left[\psi(|Df|)^p+\psi\big(\big|\tfrac{f}{r}\big|\big)^p\right]\,\d x\bigg)^{\frac{\theta}{p}}
\psi\bigg(\Big(\fint_{B_r}\big|\tfrac{f}{r}\big|^{q_2}\,\d x\Big)^\frac1{q_2}\bigg)^{1-\theta}
\end{equation*}
for some $c=c(n,L,q_1,q_2)>0$, provided that $\theta\in (0,1)$ and $\gamma$ satisfies
\begin{equation*}
\frac{1}{\gamma}\geq \frac\theta{p^*}+\frac{(1-\theta)q_1}{q_2}\,.
\end{equation*}
\end{lemma}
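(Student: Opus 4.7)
Write $g:=|f|/r$ and $g_0:=\bigl(\fint_{B_r}g^{q_2}\,\d x\bigr)^{1/q_2}$, and introduce the auxiliary functions $F:=\psi(|Df|)$ and $G:=\psi(g)$. The strategy is to derive an $L^{p^*}$--Sobolev bound for $G$ and an $L^{q_2/q_1}$ bound for $G$, and then combine them by Hölder interpolation at the exponent $\gamma$; the stated compatibility condition on $\gamma$ is precisely the one required by this interpolation.

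The $L^{p^*}$ bound is obtained as follows. The chain rule gives $|DG|=\psi'(g)|Dg|$, and the growth relation $t\psi'(t)\sim\psi(t)$ (a consequence of the almost-decreasing of $\psi(t)/t^{q_1}$ for a weak $\Phi$-function, possibly after a smoothing) yields $|DG|\sim(\psi(g)/g)|Dg|$. Young's inequality $ab\le\psi(a)+\psi^*(b)$ applied to $a=|Df|$ and $b=\psi(g)/g$, combined with the equivalence $\psi^*(\psi(t)/t)\sim\psi(t)$ from \eqref{eq:hok2.4}, gives the pointwise estimate $r\,|DG|\le c(F+G)$. Feeding this into the classical Sobolev--Poincar\'e inequality on $B_r$ produces
\[
\biggl(\fint_{B_r}G^{p^*}\,\d x\biggr)^{1/p^*}\le c\,\biggl(\fint_{B_r}(F^p+G^p)\,\d x\biggr)^{1/p}.
\]

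The $L^{q_2/q_1}$ bound comes from the pointwise estimate $\psi(s)\le c\,\psi(g_0)\bigl(1+(s/g_0)^{q_1}\bigr)$, which combines the almost-decreasing of $\psi(t)/t^{q_1}$ (for $s\ge g_0$) with the monotonicity of $\psi$ (for $s<g_0$). Raising to the $q_2/q_1$-th power and averaging over $B_r$ yields
\[
\biggl(\fint_{B_r}G^{q_2/q_1}\,\d x\biggr)^{q_1/q_2}\le c\,\psi(g_0).
\]
Finally, the standard $L^{p^*}$--$L^{q_2/q_1}$ Hölder interpolation at the critical exponent $\gamma^*$ with $\tfrac{1}{\gamma^*}=\tfrac{\theta}{p^*}+\tfrac{(1-\theta)q_1}{q_2}$ combines the two bounds and produces the asserted inequality in the equality case $\gamma=\gamma^*$. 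The general case $\gamma\le\gamma^*$ (i.e., the slack $\ge$ in the hypothesis) then follows from Jensen's inequality on the normalized measure on $B_r$, which gives $\|G\|_{L^\gamma(B_r)}\le\|G\|_{L^{\gamma^*}(B_r)}$. The only non-routine step is the pointwise bound $r\,|DG|\le c(F+G)$: its main technical ingredient is the validity of $t\psi'(t)\sim\psi(t)$ for a weak $\Phi$-function $\psi$ satisfying the hypothesized upper growth, which is where the almost-decreasing of $\psi(t)/t^{q_1}$ (forcing $t\psi'(t)\lesssim q_1\psi(t)$) interacts with the weak-$\Phi$ structure to yield the matching lower bound.
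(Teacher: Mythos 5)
The paper itself does not prove this lemma; it simply cites \cite[Lemma~2.13]{HasOk21}, so there is no internal proof to compare against. Judged on its own terms, your architecture is the right one for an Orlicz Gagliardo--Nirenberg inequality of this shape: a Sobolev--Poincar\'e bound $\bigl(\fint_{B_r}G^{p^*}\bigr)^{1/p^*}\lesssim\bigl(\fint_{B_r}(F^p+G^p)\bigr)^{1/p}$ with $G=\psi(|f|/r)$ and $F=\psi(|Df|)$, an $L^{q_2/q_1}$ bound $\bigl(\fint_{B_r}G^{q_2/q_1}\bigr)^{q_1/q_2}\lesssim\psi(g_0)$ coming from the upper $q_1$-growth, Lyapunov interpolation at $\gamma^*$ defined by $\tfrac{1}{\gamma^*}=\tfrac{\theta}{p^*}+\tfrac{(1-\theta)q_1}{q_2}$, and Jensen to pass from $\gamma^*$ to any smaller $\gamma$. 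All of this is correct, including the pointwise $L^{q_2/q_1}$ ingredient $\psi(s)\le L\,\psi(g_0)\bigl(1+(s/g_0)^{q_1}\bigr)$.

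The one step whose justification is off is the pointwise bound $r\,|DG|\le c(F+G)$. You invoke the equivalence $\psi^*\bigl(\psi(t)/t\bigr)\sim\psi(t)$ from \eqref{eq:hok2.4}, but that equation is stated in the paper under $\Delta(\phi,\phi^*)<\infty$, i.e.\ lower growth index strictly above $1$. Here $\psi$ is only a weak $\Phi$-function, so its lower index can equal $1$ (e.g.\ $\psi(t)=t$ with $q_1=1$ is admissible), and then $\psi^*$ is degenerate and the two-sided equivalence is false. Only the one-sided estimate $\psi^*(\psi(t)/t)\lesssim\psi(t)$ is needed, and it does survive: after replacing $\psi$ by an equivalent convex $\Phi$-function (which also legitimizes the chain rule, as you flag), convexity gives $\psi(t)/t\le\psi'(t)$ and the Legendre identity gives $\psi^*(\psi'(t))=t\psi'(t)-\psi(t)\le(q_1-1)\psi(t)$. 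Alternatively, and more cleanly, avoid $\psi^*$ altogether: write $r|DG|\le\psi'(g)\,|Df|$ and split into $|Df|\le g$ (where $\psi'(g)|Df|\le g\psi'(g)\lesssim q_1\psi(g)$) and $|Df|>g$ (where almost-increasing of $\psi(t)/t$ gives $\psi'(g)\lesssim\psi(g)/g\lesssim\psi(|Df|)/|Df|$, hence $\psi'(g)|Df|\lesssim\psi(|Df|)$). Either way $r|DG|\le c(F+G)$, and the rest of your argument goes through unchanged.
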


 We start with a Caccioppoli type inequality for $\bfu_{\bm \ell}$. 


\begin{lemma}[Caccioppoli inequality for $\bfu_{\bm \ell}$]\label{Lem:caccio}
Let $\bfu$ be a weak solution to \eqref{system1}. For every pair of concentric cylinders
$Q_{r_1,\tau_1}(z_0) \subset Q_{r_2,\tau_2}(z_0) \Subset\Omega_T$ with $z_0=(x_0,t_0)$, $0<r_1<r_2$ and $0<\tau_1<\tau_2$,  and $\bb\in \R^N$, we have 
\begin{equation}\label{eq:caccio}\begin{aligned}
\sup_{t\in I_{\tau_1}(t_0)}\int_{B_{r_1}(x_0)} &  |\bfu_{\bm \ell}(t)-\bb|^2\,\d x  + \int_{Q_{r_1,\tau_1}(z_0)} \phi_{|D{\bm \ell}|}(|D\bfu_{\bm \ell}|)\, \d z\\
& \leq c\int_{Q_{r_2,\tau_2}(z_0)}\left[\frac{|\bfu_{\bm \ell}-\bb|^2}{\tau_2-\tau_1}+ \phi_{|D{\bm \ell}|}\Big(\Big|\frac{\bfu_{\bm \ell}-\bb}{r_2-r_1}\Big|\Big)\right]\, \d z
\end{aligned}\end{equation}
for some $c=c(n,N,p,q,L,\nu)>0$, where $\bfu(t)=\bfu(x,t)$ and $I_{\tau}(t_0)=(t_0-\tau,t_0+\tau)$.
\end{lemma}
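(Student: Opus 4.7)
The plan is the standard parabolic Caccioppoli test-function argument, with the twist that we subtract the constant matrix $\bA(D{\bm \ell})$ (whose contribution vanishes upon integration by parts) to access the shifted monotonicity associated with the $N$-function $\phi_{|D{\bm \ell}|}$. Throughout I treat $\partial_t\bfu$ formally, as justified by the Steklov-average remark following the definition of weak solution.

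First I would pick cutoffs: $\eta\in C^\infty_c(B_{r_2}(x_0))$ with $\eta\equiv 1$ on $B_{r_1}(x_0)$ and $|D\eta|\le 2/(r_2-r_1)$; and, for every $s\in I_{\tau_1}(t_0)$ and small $h>0$, a piecewise-linear temporal cutoff $\chi_{s,h}:\R\to[0,1]$ equal to $1$ on $[t_0-\tau_1,s]$, equal to $0$ outside $[t_0-\tau_2, s+h]$, and affine on the transition intervals $[t_0-\tau_2, t_0-\tau_1]$ and $[s, s+h]$. I then test the weak form of \eqref{system1} on $Q_{r_2,\tau_2}(z_0)$ with
$$\bm\zeta := \eta^\sigma\,\chi_{s,h}\,(\bfu_{\bm \ell}-\bb),$$
where $\sigma\geq 2$ is chosen large enough to enable the absorption below (e.g.\ $\sigma=q$). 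Since $D{\bm \ell}=\bP$ is constant in $x$ and $\bm\zeta$ has compact spatial support in $B_{r_2}(x_0)$, the constant matrix $\bA(D{\bm \ell})$ gives $\int \bA(D{\bm \ell}):D\bm\zeta\,\d z=0$, so using $\partial_t\bfu=\partial_t\bfu_{\bm \ell}$ the tested identity becomes
$$\int\partial_t\bfu_{\bm \ell}\cdot\bm\zeta\,\d z+\int\bigl[\bA(D\bfu)-\bA(D{\bm \ell})\bigr]:D\bm\zeta\,\d z=0.$$

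The time term integrates by parts to $-\tfrac{1}{2}\int|\bfu_{\bm \ell}-\bb|^2\eta^\sigma\chi_{s,h}'\,\d z$. Sending $h\to 0^+$, the mass of $\chi_{s,h}'$ on $(s,s+h)$ concentrates at $t=s$, producing $\tfrac12\int_{B_{r_2}(x_0)}|\bfu_{\bm \ell}(\cdot,s)-\bb|^2\eta^\sigma\,\d x$ by the $C(0,T;L^2_\loc)$-continuity of $\bfu$, while the mass on $(t_0-\tau_2, t_0-\tau_1)$ is dominated by $\tfrac{1}{2(\tau_2-\tau_1)}\int_{Q_{r_2,\tau_2}}|\bfu_{\bm \ell}-\bb|^2\,\d z$. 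For the elliptic integral I decompose
$$D\bm\zeta=\eta^\sigma\chi_{s,h}\,D\bfu_{\bm \ell}+\sigma\eta^{\sigma-1}\chi_{s,h}(\bfu_{\bm \ell}-\bb)\otimes D\eta.$$
On the first piece, the monotonicity \eqref{monotonicity} together with the equivalence \eqref{monotonicity1} yields the nonnegative lower bound $\ge c\,\eta^\sigma\chi_{s,h}\,\phi_{|D{\bm \ell}|}(|D\bfu_{\bm \ell}|)$. On the second, \eqref{eq:(3.4)} gives $|\bA(D\bfu)-\bA(D{\bm \ell})|\sim \phi'_{|D{\bm \ell}|}(|D\bfu_{\bm \ell}|)$, and a shifted Young inequality (based on \eqref{eq:young} and \eqref{eq:hok2.4}, combined with the $(Inc)_p$-$(Dec)_q$ estimates for $\phi_{|D{\bm\ell}|}$ that follow from \eqref{pq}) produces, for any $\epsilon\in(0,1)$,
$$\bigl|\bA(D\bfu)-\bA(D{\bm \ell})\bigr|\,\sigma\eta^{\sigma-1}|D\eta||\bfu_{\bm \ell}-\bb|\chi_{s,h}\le\epsilon\,\eta^\sigma\chi_{s,h}\phi_{|D{\bm \ell}|}(|D\bfu_{\bm \ell}|)+c_\epsilon\,\chi_{s,h}\,\phi_{|D{\bm \ell}|}\!\left(\tfrac{|\bfu_{\bm \ell}-\bb|}{r_2-r_1}\right).$$

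Finally I would choose $\epsilon$ small enough to absorb the first term on the right into the good part on the left, obtaining, for every $s\in I_{\tau_1}(t_0)$,
$$\tfrac12\int_{B_{r_1}}|\bfu_{\bm \ell}(\cdot,s)-\bb|^2\,\d x+c\int_{B_{r_1}\times(t_0-\tau_1,s)}\!\!\phi_{|D{\bm \ell}|}(|D\bfu_{\bm \ell}|)\,\d z\le c\int_{Q_{r_2,\tau_2}}\!\left[\tfrac{|\bfu_{\bm \ell}-\bb|^2}{\tau_2-\tau_1}+\phi_{|D{\bm \ell}|}\!\left(\tfrac{|\bfu_{\bm \ell}-\bb|}{r_2-r_1}\right)\right]\d z.$$
Taking the supremum over $s$ in the first term and specializing to $s=t_0+\tau_1$ in the second gives \eqref{eq:caccio}. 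The main technical hurdle is choosing $\sigma$ and the exact form of shifted Young so that all constants depend only on $n,N,p,q,L,\nu$; this is guaranteed because the equivalences \eqref{monotonicity1}, \eqref{eq:(3.4)}, \eqref{eq:hok2.4} and the relevant $\Delta_2$-estimates are controlled solely by $p,q,\nu,L$.
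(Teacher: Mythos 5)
Your proposal is correct and follows essentially the same route as the paper: same test function $\xi^q\eta\,(\bfu_{\bm\ell}-\bb)$ (modulo a cosmetic choice of temporal cutoff — the paper integrates up to a free time $s$ with a $C^1$ cutoff, you use a piecewise-linear $\chi_{s,h}$ and pass $h\to 0^+$, which is an equivalent device), same observation that the constant $\bA(D{\bm\ell})$ contributes nothing, same use of \eqref{monotonicity}, \eqref{monotonicity1}, \eqref{eq:(3.4)} for the two pieces of $D\bm\zeta$, and the same shifted Young inequality via \eqref{eq:young} and \eqref{eq:hok2.4}.
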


\begin{proof}
We assume without loss of generality that 
the center of  $Q_{r_1,\tau_1}$ and $Q_{r_2,\tau_2}$ is the origin. 
Let $\xi\in C^1_0(B_R)$ with $\xi\equiv 1$ in $B_r$  and $|D\xi|\leq 2
/(r_2-r_1)$ and $\eta\in C^1(\mr)$ with $\eta\equiv 0$ in $(-\infty,-\tau_2]$, $\eta\equiv 1$ in $[-\tau_1,\infty)$ and $0\leq \eta'\leq 2/(\tau_2-\tau_1)$.
Using
\[
\bm\zeta(x,t):=\xi(x)^{q}\eta(t)^2 (\bfu_{\bm \ell}(x,t)-\bb)
\]
as a test function in \eqref{system1}, we have that for $s\in I_{\tau_1}$,
$$
\int_{-\tau_2}^s\int_{B_{r_2}}\left[\partial_t \bfu \cdot  \bm\zeta+\bA(D\bfu) : D\bm\zeta\right]\, \d x\, \d t\, = 0\,.
$$
Moreover, since $\partial_t \bb = \partial_t \bm\ell= \div \bA(D\bm\ell)=\zero$, we further have 
$$
\int_{-\tau_2}^s\int_{B_{r_2}}\left[\partial_t (\bfu_{\bm\ell}-\bb) \cdot  \bm\zeta+(\bA(D\bfu)-\bA(D\bm\ell)) : D\bm\zeta\right]\, \d x\, \d t\,  = 0\,.
$$
Note that
\[\begin{split}
&\int_{-\tau_2}^s\int_{B_{r_2}}\partial_t \bfu \cdot \bm\zeta \, \d x\,\d t
= \int_{-\tau_2}^s\int_{B_{r_2}}\frac12\partial_t [\xi^{q}\eta^2 |\bfu_{\bm\ell}-\bb|^2] - \xi^{q}\eta \eta' |\bfu_{\bm\ell}-\bb|^2 \, \d x\,\d t \\
&= \frac12 \int_{B_{r_2}} \xi^{q}\eta(s)^2 |\bfu_{\bm\ell}(s)-\bb|^2\,\d x - \int_{-\tau_2}^s\int_{B_{r_2}} \xi^{q}\eta\eta' |\bfu_{\bm\ell}-\bb|^2 \, \d x\,\d t,
\end{split}\]
where $\bfu_{\bm\ell}(s)=\bfu_{\bm\ell}(x,s)$. 
Then, applying \eqref{growth}, \eqref{monotonicity}, \eqref{monotonicity1} and \eqref{eq:(3.4)} we have  that for every $s\in I_{\tau_1}$,
\[\begin{split}
&\frac12 \int_{B_{r_2}} \xi^{q}\eta(s)^2 |\bfu_{\bm\ell}(s)-\bb|^2\,\d x + \frac{1}{c}\int_\sigma^s\int_{B_{r_2}} \xi^{q}\eta^2 \phi_{|D{\bm \ell}|}(|D\bfu_{\bm \ell})\, \d x\,\d t\\
& \leq    -q\int_{-\tau_2}^s\int_{B_{r_2}}\xi^{q-1}\eta^2
(\bA (D\bfu)-\bA(D{\bm \ell})) : D\xi \otimes (\bfu_{\bm \ell}-\bb)\, \d x\,\d t \\
&\,\,\,\,\,\, +\int_{-\tau_2}^s\int_{B_{r_2}} \xi^{q}\eta\eta' |\bfu_{\bm \ell}-\bb|^2 \, \d x\,\d t\\
& \leq  c \int_{-\tau_2}^s\int_{B_{r_2}}\xi^{q-1} \eta^2 
\phi'_{|D{\bm \ell}|}(|D\bfu_{\bm \ell}|) \Big|\frac{\bfu_{\bm\ell}-\bb}{r_2-r_1}\Big| \, \d x\,\d t 
+\int_{-\tau_2}^s\int_{B_{r_2}} \frac{|\bfu_{\bm\ell}-\bb|^2}{\tau_2-\tau_1}\, \d x\,\d t.
\end{split}
\]
Finally, applying Young's inequality \eqref{eq:young} with \eqref{eq:hok2.4} and using the properties the cut-off functions $\xi$ and $\eta$ we have the estimate \eqref{eq:caccio}.
\end{proof}

Considering the intrinsic cylinders with the $N$-functions $\phi$ and $\phi_{|D{\bm \ell}|}$, defined by 
$$
Q^\lambda_r(z_0):= B_r(x_0) \times I^\lambda_r(t_0), \quad \text{where }\  I^\lambda_r(t_0):=\left(t_0-\frac{\lambda^2}{\phi(\lambda)}r^2,t_0+\frac{\lambda^2}{\phi(\lambda)}r^2\right),
$$ 
 and 
$$
\Q^\lambda_r(z_0):= B_r(x_0) \times \I^\lambda_r(t_0), \quad \text{where }\  \I^\lambda_r(t_0):=\left(t_0-\frac{\lambda^2}{\phi_{|D{\bm \ell}|}(\lambda)}r^2,t_0+\frac{\lambda^2}{\phi_{|D{\bm \ell}|}(\lambda)}r^2\right),
$$ 
respectively, we obtain the following Caccioppoli-type estimates.

\begin{corollary}\label{cor:caccio}
Let $\bfu$ be a weak solution to \eqref{system1}. For every 
$Q^\lambda_R\Subset\Omega_T$ or $\Q^\lambda_R\Subset\Omega_T$ with $\lambda,R>0$ and $r\in(0,R)$ and $\bb\in \R^N$, we have 
\begin{equation}\label{eq:caccio1}\begin{aligned}
\sup_{t\in I^\lambda_r}\int_{B_r}  |\bfu_{\bm\ell}(t)-\bb|^2\,\d x & + \int_{Q_r^\lambda} \phi_{|D{\bm \ell}|}(|D\bfu_{\bm\ell}|)\, \d z\\
& \leq c\int_{Q^\lambda_R}\left[\frac{\phi(\lambda)}{\lambda^2}\Big|\frac{\bfu_{\bm\ell}-\bb}{R-r}\Big|^2+ \phi_{|D{\bm \ell}|}\Big(\Big|\frac{\bfu_{\bm\ell}-\bb}{R-r}\Big|\Big)\right]\, \d z
\end{aligned}\end{equation}
and
\begin{equation}\label{eq:caccio2}\begin{aligned}
\sup_{t\in \I^\lambda_r}\int_{B_r}  |\bfu_{\bm\ell}(t)-\bb|^2\,\d x & + \int_{\Q_r^\lambda} \phi_{|D{\bm \ell}|}(|D\bfu_{\bm\ell}|)\, \d z\\
& \leq c\int_{\Q^\lambda_R}\left[\frac{\phi_{|D{\bm \ell}|}(\lambda)}{\lambda^2}\Big|\frac{\bfu_{\bm\ell}-\bb}{R-r}\Big|^2+ \phi_{|D{\bm \ell}|}\Big(\Big|\frac{\bfu_{\bm\ell}-\bb}{R-r}\Big|\Big)\right]\, \d z
\end{aligned}\end{equation}
for some $c=c(n,N,p,q,L,\nu)>0$, where $\bfu(t)=\bfu(x,t)$. 
\end{corollary}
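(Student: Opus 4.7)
The plan is to obtain both \eqref{eq:caccio1} and \eqref{eq:caccio2} as direct specializations of the general Caccioppoli inequality \eqref{eq:caccio} from Lemma~\ref{Lem:caccio}, applied with the spatial radii $r_1=r$, $r_2=R$ and the time-lengths dictated by the intrinsic geometry. Concretely, for \eqref{eq:caccio1} I would take $\tau_1=\frac{\lambda^2}{\phi(\lambda)}r^2$ and $\tau_2=\frac{\lambda^2}{\phi(\lambda)}R^2$, so that $Q_{r_1,\tau_1}(z_0)=Q^\lambda_r(z_0)$ and $Q_{r_2,\tau_2}(z_0)=Q^\lambda_R(z_0)$; for \eqref{eq:caccio2} I would make the same choice with $\phi_{|D\bm\ell|}$ in place of $\phi$, so that the cylinders become $\Q^\lambda_r(z_0)\subset\Q^\lambda_R(z_0)$. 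In both cases $I^\lambda_r=I_{\tau_1}(t_0)$ (resp.\ $\I^\lambda_r=I_{\tau_1}(t_0)$), so the supremum on the left-hand side of \eqref{eq:caccio} already matches the one claimed.

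The only step to check is that the time-contribution $\frac{|\bfu_{\bm\ell}-\bb|^2}{\tau_2-\tau_1}$ appearing in \eqref{eq:caccio} indeed reproduces the quantity $\frac{\phi(\lambda)}{\lambda^2}\bigl|\tfrac{\bfu_{\bm\ell}-\bb}{R-r}\bigr|^2$ (resp.\ with $\phi_{|D\bm\ell|}$) on the right-hand sides of \eqref{eq:caccio1} and \eqref{eq:caccio2}. For this I would use the elementary identity/inequality
\[
\tau_2-\tau_1=\frac{\lambda^2}{\phi(\lambda)}(R^2-r^2)=\frac{\lambda^2}{\phi(\lambda)}(R-r)(R+r)\ge \frac{\lambda^2}{\phi(\lambda)}(R-r)^2,
\]
since $R+r\ge R-r$. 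Dividing, one gets $\frac{1}{\tau_2-\tau_1}\le \frac{\phi(\lambda)}{\lambda^2(R-r)^2}$, which converts the time-penalty in \eqref{eq:caccio} into exactly the prescribed spatial one. The spatial term $\phi_{|D\bm\ell|}\bigl(\bigl|\tfrac{\bfu_{\bm\ell}-\bb}{R-r}\bigr|\bigr)$ in \eqref{eq:caccio} is already in the right form, so no further manipulation is needed. Replacing $\phi$ by $\phi_{|D\bm\ell|}$ throughout yields \eqref{eq:caccio2}.

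\textbf{Expected obstacle.} I do not foresee any genuine difficulty: the analytic content is entirely contained in Lemma~\ref{Lem:caccio}, and the passage from $Q_{r_1,\tau_1}$ to the intrinsic cylinders $Q^\lambda_r$ or $\Q^\lambda_r$ is purely a bookkeeping exercise relying on the inequality $R^2-r^2\ge(R-r)^2$. The one small point worth stressing in the write-up is that this inequality, harmless as it looks, is precisely what makes the parabolic time-weight scale homogeneously with the spatial one under the intrinsic geometry, so that both terms on the right-hand sides of \eqref{eq:caccio1}-\eqref{eq:caccio2} share the same $(R-r)^{-2}$ factor — a scaling feature that will later be crucial when iterating these estimates on nested intrinsic cylinders.
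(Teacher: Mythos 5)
Your derivation is correct and is exactly the intended one: the paper states the Corollary without proof precisely because it follows from Lemma~\ref{Lem:caccio} by the specialization $r_1=r$, $r_2=R$, $\tau_i=\frac{\lambda^2}{\phi(\lambda)}r_i^2$ (resp.\ with $\phi_{|D\bm\ell|}$ in place of $\phi$), together with the observation $\tau_2-\tau_1=\frac{\lambda^2}{\phi(\lambda)}(R-r)(R+r)\ge\frac{\lambda^2}{\phi(\lambda)}(R-r)^2$. Nothing is missing.
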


In the remaining part of this section, we obtain higher integrability estimates for  $\phi_{|D{\bm \ell}|}(|D\bfu_{\bm \ell}|)$. Note that the higher integrability of  $\phi(|D\bfu|)$ (i.e., the case $\bm\ell= \zero$) is proved in  \cite{HasOk21}. We then follow the argument therein.


As a first key tool, we introduce a Sobolev--Poincar\'e type inequality, Lemma \ref{Lem:poin1}. 
The proof of \eqref{eq:complicatedpoincare} could be obtained with minor modifications as in \cite[Lemma 3.4]{HasOk21}, just replacing  $\bfu$ and $\phi$ by $\bfu_{\bm\ell}$  and $\phi_{|D\bm\ell|}$, respectively, and modifying the estimate for $|\langle \bfu_{\bm\ell} \rangle_\xi(t) - (\bfu_{\bm\ell})_\rho^\lambda|$ (see \eqref{uaverageueta} below) according to assumption \eqref{eq:(3.4)}. However, for the reader's convenience, we prefer to provide a detailed proof. 
Note that the simplified version of the Poincar\'e inequality in \eqref{cor:poin1a} can be also found in \cite[Lemma 2.9]{DieSchStrVer17}.

Let $\xi\in C_0^\infty(B_{\rho})$  satisfy $0\leq \xi \leq 1$, $\xi\equiv 1$ in $B_{\rho/2}$, $|D\xi|\leq \frac 4\rho$.  Note that $2^{-n}|B_\rho| \le \|\xi\|_{1}\le |B_\rho|$. Define
$$
(f)_{\rho}^\lambda:= \frac{1}{\|\xi\|_1} \fint_{\I_\rho^\lambda}\int_{B_\rho} f \xi \, \d x\,\d t
\quad\text{and}\quad
\langle f \rangle_\xi(t):= \frac{1}{\|\xi\|_1}\int_{B_\rho}f(x,t)\xi \, \d x \ \ \text{for }t\in \I^\lambda_\rho.
$$

\begin{lemma} \label{Lem:poin1}
Let $\bfu$ be a weak solution to \eqref{system1}. For an N-function  $\psi$ satisfying \eqref{characteristic} with $1\le p_1\le q_1$ in place of $1< p \le q$ , $\Q^\lambda_{4\rho}\Subset\Omega_T$ with $\lambda>0$ and $\rho\le r<R\le  4\rho$, we have
\begin{equation}
\begin{split}
\fint_{\Q^\lambda_{r}}\psi\bigg(\bigg|\frac{\bfu_{\bm\ell}-(\bfu_{\bm\ell})^\lambda_\rho}{r}\bigg|\bigg)\,\d z
&\le
c \psi (A_0) + 
c \psi\big(\mathcal{T}(r,R)^\frac12\big)^{(1-\theta_0)} \fint_{\Q_r^\lambda}\psi(|D\bfu_{\bm\ell}|)^{\theta_0} \,\d z, 
\end{split}
\label{eq:complicatedpoincare}
\end{equation}
for some $c=c(n,N,p,q,p_1,q_1,\theta_0,L,\nu,\Lambda)>0$ provided that 
\[
\theta_0 p_1\in[1,n)
\quad\text{and}\quad
\frac{ n q_1}{n q_1 + 2p_1}\le \theta_0 \le 1.
\]
Here $(\bfu_{\bm\ell})^\lambda_\rho$ it the average of $\bfu$ on  $\Q^\lambda_\rho$,
\begin{equation}\label{A0}
A_0:= \frac{\lambda^2}{\phi_{|D{\bm \ell}|}(\lambda)}\fint_{\Q^\lambda_r}\phi_{|D{\bm \ell}|}'(|D\bfu_{\bm\ell}|)\,\d z,
\end{equation}
\begin{equation*}
\mathcal{T}(r,R)
:= 
\fint_{\Q_{R}^\lambda}\bigg[ \bigg|\frac{\bfu_{\bm\ell}-(\bfu_{\bm\ell})_\rho^\lambda}{R-r}\bigg|^2 + 
\frac{\lambda^2}{\phi_{|D{\bm \ell}|}(\lambda)}\phi_{|D{\bm \ell}|}\bigg(\bigg|\frac{\bfu_{\bm\ell}-(\bfu_{\bm\ell})_\rho^\lambda}{R-r}\bigg|\bigg)\bigg]\, \d z + 
A_0^2.
\label{eq:termT}
\end{equation*}
In particular, when $\theta_0=p_1=1$, we have
\begin{equation}\label{cor:poin1a}
\fint_{\Q^\lambda_{r}}\psi\bigg(\bigg|\frac{\bfu_{\bm\ell}-(\bfu_{\bm\ell})^\lambda_\rho}{r}\bigg|\bigg)\, \d z
\le
c\fint_{\Q^\lambda_r}\psi(|D\bfu_{\bm\ell}|)\,\d z + c \psi (A_0).
\end{equation}
\end{lemma}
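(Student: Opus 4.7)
The plan is to adapt the argument of \cite[Lemma~3.4]{HasOk21} to the shifted $N$-function $\phi_{|D\bm{\ell}|}$ and to the linearization $\bfu_{\bm{\ell}} = \bfu - \bm{\ell}$. The core idea is to decompose, for $(x,t)\in\Q^\lambda_r$,
\begin{equation*}
\bfu_{\bm{\ell}}(x,t) - (\bfu_{\bm{\ell}})^\lambda_\rho \;=\; \bigl[\bfu_{\bm{\ell}}(x,t) - \langle\bfu_{\bm{\ell}}\rangle_\xi(t)\bigr] + \bigl[\langle\bfu_{\bm{\ell}}\rangle_\xi(t) - (\bfu_{\bm{\ell}})^\lambda_\rho\bigr],
\end{equation*}
apply $\psi(|\cdot|/r)$ to both sides, and use the $\Delta_2$-property of $\psi$ to treat the two pieces separately.

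For the purely temporal piece, since $\bm{\ell}$ is time-independent and $D\bm{\ell}$ is constant so that $\mathrm{div}\,\bA(D\bm{\ell}) = \zero$, the function $\bfu_{\bm{\ell}}$ satisfies $\partial_t\bfu_{\bm{\ell}} = \mathrm{div}(\bA(D\bfu)-\bA(D\bm{\ell}))$ in the distributional sense. Testing against the time-independent cutoff $\xi(x)$ yields
\begin{equation*}
\frac{d}{dt}\langle\bfu_{\bm{\ell}}\rangle_\xi(t) \;=\; -\frac{1}{\|\xi\|_1}\int_{B_\rho}(\bA(D\bfu) - \bA(D\bm{\ell})) : D\xi\,\d x,
\end{equation*}
which by \eqref{eq:(3.4)} and $|D\xi|\le 4/\rho$ is bounded in absolute value by $(c/\rho)\fint_{B_\rho}\phi'_{|D\bm{\ell}|}(|D\bfu_{\bm{\ell}}(\cdot,t)|)\,\d x$. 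Writing $(\bfu_{\bm{\ell}})^\lambda_\rho = \fint_{\I^\lambda_\rho}\langle\bfu_{\bm{\ell}}\rangle_\xi(\tau)\,\d\tau$ and integrating the ODE estimate over $s\in\I^\lambda_r$ (using $B_\rho\subseteq B_r$, $r\le 4\rho$, and the definition of $A_0$) gives $|\langle\bfu_{\bm{\ell}}\rangle_\xi(t) - (\bfu_{\bm{\ell}})^\lambda_\rho| \le cr A_0$ for every $t\in\I^\lambda_r$. The doubling of $\psi$ then produces the contribution $c\psi(A_0)$ in \eqref{eq:complicatedpoincare}.

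For the spatial piece, at each fixed $t\in\I^\lambda_r$ apply the Gagliardo-Nirenberg interpolation Lemma~\ref{Lem:hok} to $f(\cdot,t) := \bfu_{\bm{\ell}}(\cdot,t) - \langle\bfu_{\bm{\ell}}\rangle_\xi(t)$ with $q_2 = 2$, interpolating between the $\psi$-Sobolev norm and the $L^2$ norm. The admissibility condition $1/\gamma \ge \theta/p^* + (1-\theta)q_1/q_2$ of the interpolation translates precisely into the lower bound $\theta_0 \ge nq_1/(nq_1+2p_1)$ stated in the lemma, while the requirement $\theta_0 p_1\in[1,n)$ ensures the Sobolev exponent is admissible. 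The $L^2$-factor at each time is dominated uniformly via Caccioppoli's inequality \eqref{eq:caccio2} applied on $\Q^\lambda_R$ with $\bb = (\bfu_{\bm{\ell}})^\lambda_\rho$, producing
\begin{equation*}
\sup_{t\in\I^\lambda_r}\fint_{B_r}\frac{|\bfu_{\bm{\ell}}(t) - (\bfu_{\bm{\ell}})^\lambda_\rho|^2}{r^2}\,\d x \;\lesssim\; \mathcal{T}(r,R);
\end{equation*}
the correction from subtracting $\langle\bfu_{\bm{\ell}}\rangle_\xi(t)$ only adds terms of order $A_0^2$, which are already included in $\mathcal{T}(r,R)$. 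After integrating in time and absorbing on the LHS the residual $\psi(|f|/r)$ term from the RHS of Gagliardo-Nirenberg via a standard Young-type argument, the gradient factor supplies the $\fint_{\Q^\lambda_r}\psi(|D\bfu_{\bm{\ell}}|)^{\theta_0}\,\d z$ contribution in \eqref{eq:complicatedpoincare}.

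The main obstacle is the spatial step: one must simultaneously enforce the Sobolev admissibility of the interpolation, control the $\psi(|f|/r)$ contribution appearing on the RHS of Gagliardo-Nirenberg, and track the intrinsic scaling $\lambda^2/\phi_{|D\bm{\ell}|}(\lambda)$ throughout so that the Caccioppoli bound reassembles precisely into $\mathcal{T}(r,R)$; the combinatorics of the Orlicz exponents is the most delicate point. The particular case \eqref{cor:poin1a} follows at once with $\theta_0 = p_1 = 1$: no interpolation is needed, and a direct Orlicz-Poincaré inequality combined with the temporal estimate produces the simplified conclusion.
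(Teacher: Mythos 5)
Your overall strategy matches the paper's: same triangle-inequality decomposition into a temporal piece $\langle\bfu_{\bm\ell}\rangle_\xi(t)-(\bfu_{\bm\ell})^\lambda_\rho$ (handled via the weak formulation and \eqref{eq:(3.4)}, giving $rA_0$) and a spatial piece (handled slicewise via the Gagliardo--Nirenberg lemma \ref{Lem:hok} with $q_2=2$, with the $L^2$-factor controlled through the Caccioppoli estimate \eqref{eq:caccio2}). The derivation of the admissibility constraints $\theta_0 p_1\in[1,n)$ and $\theta_0\ge nq_1/(nq_1+2p_1)$ is correct.

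However, there is a concrete gap in how you dispose of the residual term $\fint_{B_r}\psi\big(|\bfu_{\bm\ell}-\langle\bfu_{\bm\ell}\rangle_\xi|/r\big)^{\theta_0}\,\d x$ that appears (alongside the gradient term) in the first factor of the Gagliardo--Nirenberg bound. You propose ``absorbing on the LHS ... via a standard Young-type argument.'' But the Young split with exponents $1/\theta_0$, $1/(1-\theta_0)$ applied to
$\psi(|f|/r)^{\theta_0}\cdot\psi(\mathcal{T}^{1/2})^{1-\theta_0}$
produces $\epsilon\,\psi(|f|/r)+C_\epsilon\,\psi(\mathcal{T}^{1/2})$; after absorption the leftover is $C_\epsilon\,\psi(\mathcal{T}^{1/2})$, which is genuinely larger than the stated error term $c\,\psi(A_0)$ (and also larger than $\psi(\mathcal{T}^{1/2})^{1-\theta_0}$ times the gradient integral in general, since $\mathcal{T}$ contains contributions beyond $A_0^2$). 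So this route does not yield \eqref{eq:complicatedpoincare}. The paper avoids absorption entirely: since $\theta_0 p_1\ge 1$, it applies the weighted Orlicz--Poincar\'e inequality \cite[Theorem~7]{DieEtt08} on each time slice to bound $\fint_{B_r}\psi(|\bfu_{\bm\ell}-\langle\bfu_{\bm\ell}\rangle_\xi|/r)^{\theta_0}\,\d x$ directly by $c\fint_{B_r}\psi(|D\bfu_{\bm\ell}|)^{\theta_0}\,\d x$, so that only the gradient term survives in the first Gagliardo--Nirenberg factor. You should replace the absorption step with this slicewise Poincar\'e estimate; the rest of your argument then goes through as written.
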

\begin{proof}
The triangle inequality implies  
\begin{equation}\label{Lem:poin1:pf1}\begin{split}
\fint_{\Q_r^\lambda}\psi\bigg(\bigg|\frac{\bfu_{\bm\ell}-(\bfu_{\bm\ell})_\rho^\lambda}{r}\bigg|\bigg) \, \d z
&=
\fint_{\Q_r^\lambda}\psi\bigg(\bigg|\frac{\bfu_{\bm\ell}(z)-\langle \bfu_{\bm\ell}\rangle_\xi(t)
+\langle \bfu_{\bm\ell}\rangle_\xi(t)-(\bfu_{\bm\ell})_\rho^\lambda}{r}\bigg|\bigg) \,\d z \\
&\hspace{-3cm}\le
c\fint_{\I_r^\lambda} \psi\bigg(\bigg| \frac{\langle \bfu_{\bm\ell}\rangle_{\xi}(t)-(\bfu_{\bm\ell})_\rho^\lambda}r \bigg|\bigg) \,\d t
+
c\fint_{\Q_r^\lambda}\psi\bigg(\bigg|\frac{\bfu_{\bm\ell}(z)-\langle \bfu_{\bm\ell}\rangle_{\xi}(t)}{r}\bigg|\bigg) \,\d z.  
\end{split}\end{equation}
We start with an estimate of the first term in the right hand side above.  
By the definition of $\langle \bfu_{\bm \ell}\rangle_\xi$ and using the weak formulation of \eqref{system1} with test-function $\bm\zeta(x,t):=(\xi(x), \dots, \xi(x))$, we find from \eqref{eq:(3.4)} that 
\begin{equation}\label{uaverageueta}
\begin{split}
|\langle \bfu_{\bm\ell} \rangle_\xi(t) - (\bfu_{\bm\ell})_\rho^\lambda|
&\le
\sup_{\tau\in  \I_r^\lambda } |\langle \bfu_{\bm\ell}\rangle_\xi(t)-\langle \bfu_{\bm\ell}\rangle_\xi(\tau)| =
 \sup_{\tau\in  \I_r^\lambda }\bigg|\int^{t}_{\tau} \partial_t \langle \bfu_{\bm\ell}\rangle_\xi(s)\, \d s\bigg|  \\
&
= \sup_{\tau\in  \I_r^\lambda }\bigg|\int^{t}_{\tau}\frac{1}{\|\eta\|_1}\int_{B_r}\partial_t \bfu_{\bm\ell}(x,s)\xi(x) \, \d x\, \d s\bigg|\\
&
= \sup_{\tau\in  \I_r^\lambda }\bigg|\int^{t}_{\tau}\frac{1}{\|\eta\|_1}\int_{B_r}\partial_t \bfu(x,s)\xi(x) \, \d x\, \d s\bigg|\\
&\approx \sup_{\tau\in  \I_r^\lambda } \bigg|\int^{t}_{\tau}\fint_{B_r} (\bA(D\bfu)-\bA(D\bm\ell) )D\xi\,\d x\,\d s\bigg|\\
&\leq \frac{c r\lambda^2}{\phi_{|D{\bm \ell}|}(\lambda)}\fint_{\Q_r^\lambda} \phi'_{|D{\bm \ell}|} (|D\bfu_{\bm\ell}|)\,\d z = r A_0.
\end{split}
\end{equation}


We next estimate the second term in \eqref{Lem:poin1:pf1}. From the Gagliardo--Nirenberg type inequality in Lemma \ref{Lem:hok} with 
$(\psi,\gamma,p,q_1,q_2):=(\psi^{1/p_1}, p_1,\theta_0 p_1,\frac {q_1}{p_1},2)$
we conclude that 
\begin{equation}\label{GNinequ}
\fint_{B_r}\psi\big(\big|\tfrac{f}{r}\big|\big)\, \d x
\le
c\left(\fint_{B_r}\Big[\psi(|Df|)^{\theta_1}+\psi\big(\big|\tfrac{f}{r}\big|\big)^{\theta_0}\Big]\,\d x\right)
\ \psi\Bigg(\bigg[\fint_{B_r}\big|\tfrac{f}{r}\big|^2\,\d x\bigg]^\frac12\Bigg)^{1-\theta_0}
\end{equation}
provided $\theta_0 p_1\in[1,n)$ and
\[
\frac1{p_1} \ge 
\frac{\theta_0}{(\theta_0 p_1)^*} + \frac{1-\theta_0}2 \frac{q_1}{p_1}
=
\frac{1}{p_1} - \frac{\theta_0}{n} + \frac{1-\theta_0}2 \frac{q_1}{p_1}.
\]
This can be written as $\theta_0\ge \frac{n q_1}{n q_1 + 2p_1}$.
Applying \eqref{GNinequ} with $f:=\bfu_{\bm\ell}-\langle \bfu_{\bm\ell}\rangle_\xi$ on each time slice gives 
\begin{equation}\label{Lem:poin1:pf2}\begin{split}
\fint_{\Q_r^\lambda}\psi\Big(\Big|\frac{\bfu_{\bm\ell}(z)-\langle \bfu_{\bm\ell} \rangle_\xi (t)}{r}\Big|\Big) \,\d z &\le c
\left(\fint_{\Q_r^\lambda}\left[\psi(|D\bfu_{\bm\ell}|)^{\theta_0} + \psi\left(\frac{\bfu_{\bm\ell}-\langle \bfu_{\bm\ell}\rangle_\xi}{r}\right)^{\theta_0}\right]\,\d z \right)\\
&\qquad \times  \psi\Bigg(\bigg( \sup_{t\in \I_r^\lambda} \fint_{B_r}
\left|\frac{\bfu_{\bm\ell}-\langle \bfu_{\bm\ell}\rangle_\xi}{r}\right|^2\,\d x\bigg)^\frac12\Bigg)^{1-\theta_0}.
\end{split}\end{equation}
Note that for each time slice of $\Q_r^\lambda$, since $\theta_0 p_1\ge 1$, we can apply the weighted Poincar\'e inequality \cite[Theorem 7]{DieEtt08}, so that
$$
\fint_{B_r}\psi\Big(\Big|\frac{\bfu_{\bm\ell}(x,t)-\langle \bfu_{\bm\ell}\rangle_{\xi}(t)}{r}\Big|\Big)^{\theta_0} \,\d x 
 \leq c \fint_{B_r}\psi(|D\bfu_{\bm\ell}(x,t)|)^{\theta_0} \,\d x\,.
$$
Finally, from the Caccioppoli inequality in \eqref{eq:caccio2} with $\bb := (\bfu_{\bm\ell})^\lambda_\rho$
and \eqref{uaverageueta} we conclude that 
$$\begin{aligned}
&\sup_{t\in \I_r^\lambda} \fint_{B_r}\left|\frac{\bfu_{\bm\ell}-\langle \bfu_{\bm\ell}\rangle_\xi}{r}\right|^2 \,\d x\\
&\le c \sup_{t\in \I_r^\lambda} \fint_{B_r}\bigg|\frac{\bfu_{\bm\ell}(x,t)-(\bfu_{\bm\ell})_\rho^\lambda}{r}\bigg|^2\,\d x
+ c \sup_{t\in \I_r^\lambda} \bigg|\frac{(\bfu_{\bm\ell})_\rho^\lambda-\langle \bfu_{\bm\ell}\rangle_{\xi}(t)}{r}\bigg|^2\\
&\le c
\fint_{\Q_{R}^\lambda}\bigg[ \bigg|\frac{\bfu_{\bm\ell}-(\bfu_{\bm\ell})_\rho^\lambda}{R-r}\bigg|^2 + 
\frac{\lambda^2}{\phi_{|D{\bm \ell}|}(\lambda)}\phi_{|D{\bm \ell}|}\bigg(\bigg|\frac{\bfu_{\bm\ell}-(\bfu_{\bm\ell})_\rho^\lambda}{R-r}\bigg|\bigg)\bigg]\, \d z + 
cA_0^2\,. 
\end{aligned}$$
Therefore, inserting the above two estimates into  \eqref{Lem:poin1:pf2} and combining with \eqref{Lem:poin1:pf1}--\eqref{uaverageueta}, we complete the proof of \eqref{eq:complicatedpoincare}. 
\end{proof}



The next two lemmas show that the right hand side of the estimate in Lemma~\ref{Lem:poin1}
can be controlled by suitable quantities when we are in suitable intrinsic cylinders.


\begin{lemma}\label{Lem:2ndTerm}
Let the assumptions of Lemma~\ref{Lem:poin1} be in force, and assume additionally that 
$$
\fint_{\Q^\lambda_{4\rho}}\phi_{|D{\bm \ell}|}(|D\bfu_{\bm\ell}|)\, \d z\leq  \phi_{|D{\bm \ell}|}(\lambda) \,.
$$
Then,
for some $c=c(n,N,p,q,p_1,q_1,\theta_0,L,\nu,\Lambda)>0$, 
\[
\begin{split}
\fint_{\Q^\lambda_{2\rho}}\psi\bigg(\bigg|\frac{\bfu_{\bm\ell}-(\bfu_{\bm\ell})^\lambda_\rho}{\rho}\bigg|\bigg)\,\d z
&\le
c \psi (A_0)
+
c \psi(\lambda)^{1-\theta_0} \fint_{Q_{2\rho}^\lambda}\psi(|D\bfu_{\bm\ell}|)^{\theta_0} \,\d z,
\end{split}
\]
where $A_0$ is that of \eqref{A0} with $r=2\rho$.
\end{lemma}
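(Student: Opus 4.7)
The plan is to apply the Sobolev--Poincar\'e estimate of Lemma~\ref{Lem:poin1} at scales $r=2\rho$ and $R=4\rho$ with the prescribed $\psi$, $p_1,q_1,\theta_0$, obtaining (after absorbing the factor $2$ in $\psi(|\cdot|/(2\rho))$ via $\Delta_2(\psi)$)
\[
\fint_{\Q^\lambda_{2\rho}}\psi\Big(\Big|\tfrac{\bfu_{\bm\ell}-(\bfu_{\bm\ell})^\lambda_\rho}{\rho}\Big|\Big)\,\d z
\le c\,\psi(A_0)+c\,\psi\bigl(\mathcal T(2\rho,4\rho)^{1/2}\bigr)^{1-\theta_0}\fint_{\Q^\lambda_{2\rho}}\psi(|D\bfu_{\bm\ell}|)^{\theta_0}\,\d z.
\]
The entire argument then reduces to proving $\mathcal T(2\rho,4\rho)\le c\lambda^2$, since by $\Delta_2(\psi)$ this immediately gives $\psi(\mathcal T^{1/2})^{1-\theta_0}\le c\,\psi(\lambda)^{1-\theta_0}$ and the claim follows.

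To bound $\mathcal T(2\rho,4\rho)$, I would first estimate $A_0$. Applying Jensen's inequality to the convex function $\phi_{|D{\bm\ell}|}^{*}$, combined with \eqref{eq:hok2.4} and the standing hypothesis $\fint_{\Q^\lambda_{4\rho}}\phi_{|D{\bm\ell}|}(|D\bfu_{\bm\ell}|)\,\d z\le\phi_{|D{\bm\ell}|}(\lambda)$, yields
\[
\phi_{|D{\bm\ell}|}^{*}\Big(\fint_{\Q^\lambda_{4\rho}}\phi'_{|D{\bm\ell}|}(|D\bfu_{\bm\ell}|)\,\d z\Big)
\le \fint_{\Q^\lambda_{4\rho}}\phi_{|D{\bm\ell}|}^{*}(\phi'_{|D{\bm\ell}|}(|D\bfu_{\bm\ell}|))\,\d z
\sim\fint_{\Q^\lambda_{4\rho}}\phi_{|D{\bm\ell}|}(|D\bfu_{\bm\ell}|)\,\d z
\le\phi_{|D{\bm\ell}|}(\lambda)\sim\phi_{|D{\bm\ell}|}^{*}(\phi'_{|D{\bm\ell}|}(\lambda)),
\]
hence $\fint\phi'_{|D{\bm\ell}|}(|D\bfu_{\bm\ell}|)\,\d z\le c\phi'_{|D{\bm\ell}|}(\lambda)$ and therefore $A_0\le c\lambda$ by \eqref{(2.6a)}. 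For the two remaining averages in $\mathcal T(2\rho,4\rho)$, I would invoke the simpler form \eqref{cor:poin1a} (corresponding to $\theta_0=p_1=1$) twice, once with $\psi=\phi_{|D{\bm\ell}|}$ (admissible since $p_1=1\le p$) and once with the quadratic choice $\psi(t)=\tfrac{\phi_{|D{\bm\ell}|}(\lambda)}{\lambda^2}t^2$ (for which $p_1=q_1=2$). The resulting right-hand sides are controlled by $\fint\phi_{|D{\bm\ell}|}(|D\bfu_{\bm\ell}|)\,\d z+\phi_{|D{\bm\ell}|}(\lambda)$ after applying the pointwise Young-type inequality $\tfrac{\phi_{|D{\bm\ell}|}(\lambda)}{\lambda^2}t^2\le c\phi_{|D{\bm\ell}|}(t)+c\phi_{|D{\bm\ell}|}(\lambda)$ to convert the quadratic integrand arising from the second application back into a $\phi_{|D{\bm\ell}|}$-integrand. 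The standing hypothesis then closes the estimate and gives $\mathcal T(2\rho,4\rho)\le c\lambda^2$.

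The main technical obstacles are twofold. First, there is a scale mismatch: $\mathcal T(2\rho,4\rho)$ involves averages over the outer cylinder $\Q^\lambda_{4\rho}$, whereas Lemma~\ref{Lem:poin1} produces bounds only at scales $r$ with $r<R\le 4\rho$. This can be handled either by a short hole-filling iteration on nested radii $r_k\uparrow 4\rho$ or by noting that the proof of Lemma~\ref{Lem:poin1} extends to $r=R$ after a minor modification of the cut-off $\xi$. Second, the pointwise Young-type comparison $\tfrac{\phi_{|D{\bm\ell}|}(\lambda)}{\lambda^2}t^2\le c\phi_{|D{\bm\ell}|}(t)+c\phi_{|D{\bm\ell}|}(\lambda)$ is not completely immediate; it should be verified by splitting the cases $t\le\lambda$ and $t\ge\lambda$, exploiting \eqref{(2.6b)} together with the almost-monotonicity of $\phi(s)/s^p$ and $\phi(s)/s^q$ coming from \eqref{pq}.
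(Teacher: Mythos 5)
Your overall reduction — apply Lemma~\ref{Lem:poin1} at the pair $(r,R)=(2\rho,4\rho)$, then show $\mathcal{T}(2\rho,4\rho)\le c\lambda^2$ — is the right start, and your bound $A_0\le c\lambda$ (via Jensen for $\phi^{*}_{|D\bm\ell|}$, \eqref{eq:hok2.4} and the standing hypothesis) and your treatment of the $\phi_{|D\bm\ell|}$--average in $\mathcal{T}$ are fine. However, the pointwise inequality on which the estimate of the quadratic part of $\mathcal{T}$ hinges,
$\frac{\phi_{|D\bm\ell|}(\lambda)}{\lambda^2}\,t^2\le c\,\phi_{|D\bm\ell|}(t)+c\,\phi_{|D\bm\ell|}(\lambda)$,
is false in the subquadratic regime that the paper explicitly covers (recall the normalization $p<2<q$). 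For $t>\lambda$ it would require $s\mapsto\phi_{|D\bm\ell|}(s)/s^2$ to be almost increasing, whereas \eqref{pq} for $\phi_{|D\bm\ell|}$ only gives that $\phi_{|D\bm\ell|}(s)/s^p$ is almost increasing; since $p<2$ the ratio $\phi_{|D\bm\ell|}(s)/s^2$ can strictly decrease. Concretely, with $\phi(s)=s^p$, $|D\bm\ell|=0$, $\lambda=1$ and $t=M$ large the left side $M^{2}$ dominates $c(M^p+1)$ for every fixed $c$. As a result the step that is supposed to convert $\frac{\phi_{|D\bm\ell|}(\lambda)}{\lambda^2}\fint|D\bfu_{\bm\ell}|^2$ into $c\fint\phi_{|D\bm\ell|}(|D\bfu_{\bm\ell}|)+c\phi_{|D\bm\ell|}(\lambda)$ fails; and in fact $\fint_{\Q^\lambda_{4\rho}}|D\bfu_{\bm\ell}|^2\,\dz$ is simply not controlled by $c\lambda^2$ under the hypothesis when $p<2$, so no repair of that particular step can work.

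The missing idea is to avoid estimating the $L^2$-average of $D\bfu_{\bm\ell}$ altogether. One must keep $\theta_0<1$ in the Gagliardo--Nirenberg interpolation also for the quadratic quantity: use \eqref{eq:complicatedpoincare} with $\psi(t)=t^2$ and $\theta_0=p_0/2$, $p_0=\tfrac{2n}{n+2}$, so that only $\fint|D\bfu_{\bm\ell}|^{p_0}\,\dz$ appears on the right; this is bounded by $c\lambda^{p_0}$ via Jensen's inequality (Lemma~\ref{lem:Jensen} applied to $t\mapsto\phi_{|D\bm\ell|}(t^{1/p_0})$) and the hypothesis, since $p_0\le p$. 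The self-referential factor $\mathcal{T}(r,R)^{1-\theta_0}$ that this produces is then absorbed via Young's inequality and an iteration over nested radii in $(\rho,4\rho)$. This absorption scheme is the essential content of \cite[Lemma~3.9]{HasOk21}, to which the paper's one-line proof defers; your proposal replaces it with a pointwise conversion that does not hold.
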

\begin{proof}
The proof is exactly the same as the one of \cite[Lemma 3.9]{HasOk21} with $\bfu_{\bm\ell}$ and $\phi_{|D{\bm \ell}|}$ in place of $\bfu$ and $\phi$, respectively.
\end{proof}

Finally, we obtain a reverse H\"older inequality for $\phi_{|D{\bm \ell}|}(|D\bfu_{\bm\ell}|)$. The proof is almost the same as that of \cite[Lemma 3.12]{HasOk21}. The main difference is the use of the Caccioppoli estimate  \eqref{eq:caccio1} in place of the usual one \cite[Lemma 3.1]{HasOk21}.

\begin{lemma}
Let $\bfu$ be a weak solution to \eqref{system1} and 
$Q^\lambda_{4\rho}\Subset\Omega_I$ with $\lambda,\rho>0$. Suppose that 
\begin{equation}\label{lemreverseass}
\phi_{|D{\bm \ell}|}(\lambda) \leq \fint_{\Q_\rho^\lambda}\phi_{|D{\bm \ell}|}(|D\bfu_{\bm\ell}|)\, \d z
\quad\text{and}\quad
\fint_{\Q_{4\rho}^\lambda}\phi_{|D{\bm \ell}|}(|D\bfu_{\bm\ell}|)\, \d z\leq \phi_{|D{\bm \ell}|}(\lambda).
\end{equation}
Then there exist $\theta=\theta(n,p,q)\in(0,1)$ and $c=c(n,N,p,q,L,\nu,\Lambda)>0$ such that
\begin{equation}\label{reverseestimate}
\fint_{\Q^\lambda_\rho}
\phi_{|D{\bm \ell}|}(|D\bfu_{\bm\ell}|)\,\d z\leq c\bigg(\fint_{\Q^\lambda_{4\rho}}\phi_{|D{\bm \ell}|}(|D\bfu_{\bm\ell}|)^{\theta}\,\d z\bigg)^{\frac{1}{\theta}}.
\end{equation}
\end{lemma}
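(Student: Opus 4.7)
The plan is to follow the scheme of \cite[Lemma 3.12]{HasOk21} closely, substituting the shifted setting $(\bfu_{\bm\ell}, \phi_{|D{\bm \ell}|})$ in place of $(\bfu,\phi)$. The three main ingredients are the Caccioppoli inequality on intrinsic $\Q^\lambda$-cylinders from \eqref{eq:caccio2}, the Sobolev--Poincar\'e type bound of Lemma~\ref{Lem:2ndTerm} (whose upper bound hypothesis is supplied by the second inequality in \eqref{lemreverseass}), and the two-sided intrinsic assumption \eqref{lemreverseass}, which will serve both to control $A_0$ and to close the argument.

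First, I would apply \eqref{eq:caccio2} on a pair of intrinsic cylinders $\Q^\lambda_r\subset \Q^\lambda_R$ with $\rho\le r<R\le 2\rho$, choosing $\bb := (\bfu_{\bm\ell})^\lambda_{2\rho}$, so that for $R-r\sim \rho$ one obtains
\[
\fint_{\Q^\lambda_\rho}\phi_{|D{\bm \ell}|}(|D\bfu_{\bm\ell}|)\,\d z \lesssim \fint_{\Q^\lambda_{2\rho}}\!\left[\frac{\phi_{|D{\bm \ell}|}(\lambda)}{\lambda^2}\Big|\tfrac{\bfu_{\bm\ell}-(\bfu_{\bm\ell})^\lambda_{2\rho}}{\rho}\Big|^2+\phi_{|D{\bm \ell}|}\Big(\Big|\tfrac{\bfu_{\bm\ell}-(\bfu_{\bm\ell})^\lambda_{2\rho}}{\rho}\Big|\Big)\right]\d z\,.
\]
Each right-hand side term has the form $\fint \psi(|\bfu_{\bm\ell}-(\bfu_{\bm\ell})^\lambda|/\rho)$: one for $\psi=\phi_{|D{\bm \ell}|}$ (with $p_1=p$, $q_1=q$ in \eqref{characteristic}) and one for the \emph{rescaled quadratic} $\psi(t):=\phi_{|D{\bm \ell}|}(\lambda)\,t^2/\lambda^2$ (with $p_1=q_1=2$). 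To each I apply Lemma~\ref{Lem:2ndTerm} with an exponent $\theta_0<1$ chosen close to $1$ and compatible with the hypotheses of Lemma~\ref{Lem:hok} for both $\psi$'s, producing the bound $c\psi(A_0)+c\psi(\lambda)^{1-\theta_0}\fint_{\Q^\lambda_{2\rho}}\psi(|D\bfu_{\bm\ell}|)^{\theta_0}\,\d z$.

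Next, I would control $A_0$ using $\phi'_{|D{\bm \ell}|}(t)\sim \phi_{|D{\bm \ell}|}(t)/t$ from \eqref{(2.6a)} together with the Jensen-type inequality of Lemma~\ref{lem:Jensen} applied to $\phi'_{|D{\bm \ell}|}$; combined with the upper bound $\fint\phi_{|D{\bm \ell}|}(|D\bfu_{\bm\ell}|)\le \phi_{|D{\bm \ell}|}(\lambda)$ from \eqref{lemreverseass}, this yields $A_0\lesssim \lambda$, hence $\psi(A_0)\lesssim \psi(\lambda)$ for both $\psi$'s. The rescaled quadratic of $|D\bfu_{\bm\ell}|$ is controlled by $\phi_{|D{\bm \ell}|}(|D\bfu_{\bm\ell}|)+\phi_{|D{\bm \ell}|}(\lambda)$ via \eqref{(2.6b)} and Young's inequality, so the two choices of $\psi$ can be merged into the single estimate
\[
\fint_{\Q^\lambda_\rho}\phi_{|D{\bm \ell}|}(|D\bfu_{\bm\ell}|)\,\d z \lesssim \phi_{|D{\bm \ell}|}(\lambda)+\phi_{|D{\bm \ell}|}(\lambda)^{1-\theta_0}\fint_{\Q^\lambda_{4\rho}}\phi_{|D{\bm \ell}|}(|D\bfu_{\bm\ell}|)^{\theta_0}\,\d z\,,
\]
where the cylinder on the right has been enlarged from $2\rho$ to $4\rho$ at the cost of a constant.

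To conclude, I use the lower bound $\phi_{|D{\bm \ell}|}(\lambda)\le \fint_{\Q^\lambda_\rho}\phi_{|D{\bm \ell}|}(|D\bfu_{\bm\ell}|)\,\d z$ in two ways. Jensen first gives $\phi_{|D{\bm \ell}|}(\lambda)\le c\big(\fint_{\Q^\lambda_{4\rho}}\phi_{|D{\bm \ell}|}(|D\bfu_{\bm\ell}|)^{\theta_0}\,\d z\big)^{1/\theta_0}$, which absorbs the leading constant term; second, the same bound raised to the power $1-\theta_0$ turns the product $\phi_{|D{\bm \ell}|}(\lambda)^{1-\theta_0}\fint\phi_{|D{\bm \ell}|}(|D\bfu_{\bm\ell}|)^{\theta_0}$ into $\big(\fint_{\Q^\lambda_{4\rho}}\phi_{|D{\bm \ell}|}(|D\bfu_{\bm\ell}|)^{\theta_0}\,\d z\big)^{1/\theta_0}$, which is exactly the right-hand side of \eqref{reverseestimate} with $\theta=\theta_0$. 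The main technical difficulty I anticipate is a clean joint choice of $\theta_0<1$ that simultaneously satisfies the constraints of Lemma~\ref{Lem:hok} ($\theta_0 p_1\ge 1$ and $\theta_0\ge \frac{nq_1}{nq_1+2p_1}$) for both $N$-functions involved, together with a shift-uniform control of $A_0$ through the relations \eqref{(2.6a)}--\eqref{eq:approx}.
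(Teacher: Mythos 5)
There is a genuine gap in the absorption step at the end, and the student's "merge" of the quadratic Caccioppoli term into the $\phi_{|D\bm\ell|}$ term also fails pointwise in the subquadratic regime.

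The crucial ingredient you are missing is the $\delta$\emph{-dependent} refinement of the $A_0$-estimate that the paper records in \eqref{A0estimate}, namely
\begin{equation*}
A_0\le \delta\lambda + c_\delta\,\phi_{|D\bm\ell|}^{-1}\!\left(\Big(\fint_{\Q^\lambda_{2\rho}}\phi_{|D\bm\ell|}(|D\bfu_{\bm\ell}|)^{\theta_0}\,\d z\Big)^{1/\theta_0}\right)\qquad\text{for every }\delta\in(0,1),
\end{equation*}
in addition to the coarse bound $A_0\le c\lambda$. You only invoke the coarse version $A_0\lesssim\lambda$, so after applying Lemma~\ref{Lem:2ndTerm} the first term you obtain is $c\,\phi_{|D\bm\ell|}(A_0)\lesssim c\,\phi_{|D\bm\ell|}(\lambda)$ with a \emph{fixed} constant $c$ that cannot be made small. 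Your attempted rescue via ``Jensen gives $\phi_{|D\bm\ell|}(\lambda)\le c\big(\fint_{\Q^\lambda_{4\rho}}\phi_{|D\bm\ell|}(|D\bfu_{\bm\ell}|)^{\theta_0}\,\d z\big)^{1/\theta_0}$'' is false: since $t\mapsto t^{\theta_0}$ is concave for $\theta_0<1$, Jensen yields $(\fint f^{\theta_0})^{1/\theta_0}\le\fint f\le\phi_{|D\bm\ell|}(\lambda)$ on $\Q^\lambda_{4\rho}$, i.e.\ the \emph{opposite} inequality; in fact deducing the direction you want is exactly the reverse H\"older estimate to be proved, so the argument is circular. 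The paper avoids this: the $\delta$ inside \eqref{A0estimate} propagates to give a term $c\delta\,\phi_{|D\bm\ell|}(\lambda)$ in the final line, and then the lower bound $\phi_{|D\bm\ell|}(\lambda)\le\fint_{\Q^\lambda_\rho}\phi_{|D\bm\ell|}(|D\bfu_{\bm\ell}|)$ in \eqref{lemreverseass} absorbs it into the left-hand side after choosing $\delta$ so that $c\delta=\tfrac12$. Without the $\delta$-flexibility there is nothing to absorb.

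A second issue is the attempted unification of the two Caccioppoli terms by treating $\psi(t)=\phi_{|D\bm\ell|}(\lambda)t^2/\lambda^2$ on the same footing as $\phi_{|D\bm\ell|}$ and then claiming the rescaled quadratic is controlled pointwise by $\phi_{|D\bm\ell|}(|D\bfu_{\bm\ell}|)+\phi_{|D\bm\ell|}(\lambda)$. Since the paper assumes $p<2$, the shifted $N$-function $\phi_{|D\bm\ell|}$ can grow only like $t^p$ for $t$ large, so $\frac{\phi_{|D\bm\ell|}(\lambda)}{\lambda^2}t^2\lesssim\phi_{|D\bm\ell|}(t)+\phi_{|D\bm\ell|}(\lambda)$ fails for $t\gg\lambda$; Young's inequality does not repair this. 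The paper instead treats the quadratic term separately: it applies Lemma~\ref{Lem:2ndTerm} with $\psi(t)=t^2$ and $\theta_0=p_0/2$, $p_0=\tfrac{2n}{n+2}$, takes a square root, multiplies by $\phi'_{|D\bm\ell|}(\lambda)$, and then converts the resulting $L^{p_0}$-average of $|D\bfu_{\bm\ell}|$ to the desired $(\fint\phi_{|D\bm\ell|}^{\theta_0})^{1/\theta_0}$ through a Jensen inequality for the composed function $t\mapsto\phi_{|D\bm\ell|}(t^{1/p_0})^{\theta_0}$, again carrying a $\delta$-weighted Young step so that only a small multiple of $\phi_{|D\bm\ell|}(\lambda)$ appears. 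Both of these mechanisms are absent from your proposal and are exactly what makes the final absorption close.
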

\begin{proof} 
We denote $p_0 := \frac{2n}{n+2}$, and recall $A_0$ in \eqref{A0}. Arguing as in \cite[Lemma 2.9]{HasOk21} we have that for every $\delta\in(0,1)$ and $\theta_0\in (1-\frac{1}{q},1]$,
\begin{equation}\label{A0estimate}
A_0\le 
\begin{cases}
\delta\lambda + c_\delta \phi_{|D{\bm \ell}|}^{-1}\left(\Big(\fint_{\Q^\lambda_{2\rho}}\phi_{|D{\bm \ell}|}(|D\bfu_{\bm\ell}|)^{\theta_0}\,\d z\Big)^{\frac{1}{\theta_0}}\right),\\
c\lambda \,.
\end{cases}
\end{equation}
By the Caccioppoli inequality \eqref{eq:caccio1} with $\bb:=(\bfu_{\bm \ell})_\rho^\lambda$, 
we find that 
\begin{equation}\label{22}
\fint_{\Q_\rho^\lambda} \phi_{|D{\bm \ell}|}(|D\bfu_{\bm\ell}|)\, \d z 
\le
c \frac{\phi_{|D{\bm \ell}|}(\lambda)}{\lambda^2} \fint_{\Q^\lambda_{2\rho}}\bigg|\frac{\bfu_{\bm\ell}-(\bfu_{\bm\ell})_{\rho}^\lambda}{\rho}\bigg|^2\,\d z 
+ c \fint_{\Q^\lambda_{2\rho}} \phi_{|D{\bm \ell}|}\bigg(\bigg|\frac{\bfu_{\bm\ell}-(\bfu_{\bm\ell})_{\rho}^\lambda}{\rho}\bigg|\bigg)\, \d z .
\end{equation}
We then estimate the two integrals in the right hand side. 

By Lemma~\ref{Lem:2ndTerm} for $\psi:=\phi_{|D{\bm \ell}|}$, considering also \eqref{A0estimate} and the classical Young's inequality for conjugate exponents $\frac{1}{\theta_0}, \frac{1}{1-\theta_0}$, we have that for any $\delta\in(0,1)$
\begin{equation}\label{22-1}
\begin{aligned}
\fint_{Q^\lambda_{2\rho}} \phi_{|D{\bm \ell}|}\bigg(\bigg|\frac{\bfu_{\bm\ell}-(\bfu_{\bm\ell})_{\rho}^\lambda}{\rho}\bigg|\bigg)\, \d z 
& \le
c \phi_{|D{\bm \ell}|}(A_0) + c \phi_{|D{\bm \ell}|}(\lambda)^{(1-\theta_0)} \fint_{\Q^\lambda_{2\rho}}\phi_{|D{\bm \ell}|}(|D\bfu_{\bm\ell}|)^{\theta_0}\,\d z \\
& \le 
c_{\delta} \left(
\fint_{\Q^\lambda_{2\rho}}\phi_{|D{\bm \ell}|}(|D\bfu_{\bm\ell}|)^{\theta_0}\,\d z\right)^{\frac{1}{\theta_0}} + c\delta \phi_{|D{\bm \ell}|}(\lambda)\,. 
\end{aligned}
\end{equation}
An analogous argument in the case $\psi(t):=t^2$ and $\theta_0:=\frac{p_0}2$, 
shows that for any $\delta\in(0,1)$
\[\begin{split}
\bigg(\fint_{\Q^\lambda_{2\rho}}\bigg|\frac{\bfu_{\bm\ell}-(\bfu_{\bm\ell})_{\rho}^\lambda}{\rho}\bigg|^2\,\d z\bigg)^{\frac{1}{2}}
&\le
c A_0
+ c \bigg(\lambda^{2-p_0} \fint_{\Q^\lambda_{2\rho}}|D\bfu_{\bm\ell}|^{p_0} \, \d z\bigg)^{\frac{1}{2}}\\
&\le c_\delta \bigg(\fint_{\Q^\lambda_{2\rho}}|D\bfu_{\bm\ell}|^{p_0}\, \d z\bigg)^\frac1{p_0} +c A_0 + \delta \lambda.
\end{split}\]
In particular, we also have 
\[
\bigg(\fint_{\Q^\lambda_{2\rho}}\bigg|\frac{\bfu_{\bm\ell}-(\bfu_{\bm\ell})_{\rho}^\lambda}{\rho}\bigg|^2\,\d z\bigg)^{\frac{1}{2}}
\leq c \lambda.
\]
Now, multiplying the previous two inequalities and using Young's inequality \eqref{eq:young}, \eqref{eq:hok2.4}, \eqref{(2.6a)},  the Jensen inequality in Lemma~\ref{lem:Jensen} with $\psi(t):=\phi_{|D\bm\ell |}(t^{\frac{1}{p_0}})^{\theta_0}$ with $\theta_0\in(0,1)$ sufficiently close to $1$ and \eqref{A0estimate}, 
we obtain that  for any $\delta\in (0,1)$
\begin{equation}\label{22-2}
\begin{split}
\frac{\phi_{|D{\bm \ell}|}(\lambda)}{\lambda^2} \fint_{\Q^\lambda_{2\rho}}\bigg|\frac{\bfu_{\bm\ell}-(\bfu_{\bm\ell})^\lambda_\rho}{\rho}\bigg|^2\,\d z
&\le
c\phi_{|D{\bm \ell}|}'(\lambda) \bigg[c_\delta\bigg(\fint_{\Q^\lambda_{2\rho}}|D\bfu_{\bm\ell}|^{p_0}\,\d z\bigg)^\frac1{p_0} + A_0 + \delta \lambda \bigg] \\
&\le
c_\delta \phi_{|D{\bm \ell}|}\Bigg(\bigg(\fint_{\Q^\lambda_{2\rho}}|D\bfu_{\bm\ell}|^{p_0}\,\d z\bigg)^\frac1{p_0}\Bigg) + c_\delta\phi_{|D{\bm \ell}|}(A_0) + c \delta \phi_{|D{\bm \ell}|}(\lambda) \\
&\le
c_{\delta}\left(\fint_{\Q^\lambda_{2\rho}}\phi_{|D{\bm \ell}|}(|D\bfu_{\bm\ell}|)^{\theta_0}\,\d z\right)^{\frac{1}{\theta_0}} +c\delta \phi_{|D{\bm \ell}|}(\lambda).
\end{split}
\end{equation}
Finally, inserting \eqref{22-1} and \eqref{22-2} into \eqref{22}, we find that 
\[
\fint_{\Q_\rho^\lambda} \phi_{|D{\bm \ell}|}(|D\bfu_{\bm\ell}|)\, \d z \le c_\delta  \left(\fint_{\Q^\lambda_{2\rho}}\phi_{|D{\bm \ell}|}(|D\bfu_{\bm\ell}|)^{\theta_0}\,\d z\right)^{\frac{1}{\theta_0}} + c \delta \phi_{|D{\bm \ell}|}(\lambda)\,.
\]
Choosing $\delta$ so small that $c\delta=\frac12$ and 
absorbing the term in the left-hand side by \eqref{lemreverseass} we obtain the reverse 
H\"older inequality \eqref{reverseestimate}.
\end{proof}

Finally, by arguing exactly as in \cite[Section 4]{HasOk21} with $\phi_{|D{\bm \ell}|}$ and $\bfu_{\bm \ell}$ in  place of  $\phi$ and $\bfu$, respectively, we have the following higher integrability result for $D\bfu_{\bm \ell}$.

\begin{theorem}\label{thm:high}
Let $\bfu$ be a 
local weak solution to \eqref{system1}.
There exists $\sigma=\sigma(n,N,p,q,L,\nu)>0$ such that $\phi_{|D{\bm \ell}|}(|D\bfu_{\bm\ell}|)\in L^{1+\sigma}_{loc}(\Omega_T)$ with the following estimate: for any $Q_{2\rho}\Subset\Omega_T$,
\[
\fint_{Q_{\rho}} \phi_{|D{\bm \ell}|}(|D\bfu_{\bm\ell}|)^{1+\sigma}\,\d z \leq c \left[(\phi_{|D{\bm \ell}|} \circ \mathcal{D}^{-1})\bigg(\fint_{Q_{2\rho}}\phi_{|D{\bm \ell}|}(|D\bfu_{\bm\ell}|)\,\d z\bigg)\right]^\sigma\fint_{Q_{2\rho}}\phi_{|D{\bm \ell}|}(|D\bfu_{\bm\ell}|)\,\d z
\]
for some $c=c(n,N,p,q,L,\nu,\Lambda)>0$, where 
$\mathcal{D}(t):=\min\{t^2,\phi_{|D{\bm \ell}|}(t)^{\frac{n+2}{2}}t^{-n}\}$
and $\mathcal{D}^{-1}$ is the inverse of $\mathcal D$.
\end{theorem}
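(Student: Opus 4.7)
The plan is to run the intrinsic Gehring-type argument following \cite[Section 4]{HasOk21}, with $\phi_{|D\bm\ell|}$ and $\bfu_{\bm\ell}$ in place of $\phi$ and $\bfu$ and using the reverse H\"older inequality \eqref{reverseestimate} on intrinsic cylinders $\Q^\lambda_r$ already established above. Fix $Q_{2\rho} \Subset \Omega_T$ and set
\[
\lambda_0 := \phi_{|D\bm\ell|}^{-1}\!\left(\fint_{Q_{2\rho}}\phi_{|D\bm\ell|}(|D\bfu_{\bm\ell}|)\,\d z\right)\!,
\]
which is the natural base threshold: below $\lambda_0$ the gradient is already controlled by the right-hand side, while above $\lambda_0$ we need the reverse H\"older improvement. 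The shape of $\mathcal{D}$ encodes the relation between the intrinsic time-scaling $\lambda^2/\phi_{|D\bm\ell|}(\lambda)$ and the parabolic scaling (via the critical Sobolev exponent $p_0 = 2n/(n+2)$), so the factor $\phi_{|D\bm\ell|}\circ\mathcal{D}^{-1}$ appearing in the final estimate must enter precisely from comparing intrinsic and standard cylinders.

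The main technical step is a Calder\'on--Zygmund stopping time argument on intrinsic cylinders. For $\lambda \ge C\lambda_0$ with $C$ large, at every Lebesgue point $z$ of the level set $E(\lambda):=\{\phi_{|D\bm\ell|}(|D\bfu_{\bm\ell}|)>\phi_{|D\bm\ell|}(\lambda)\}\cap Q_\rho$ one chooses, by continuity of the integral, the largest radius $r_z$ so that
\[
\fint_{\Q^\lambda_{r_z}(z)}\phi_{|D\bm\ell|}(|D\bfu_{\bm\ell}|)\,\d\xi \;=\; \phi_{|D\bm\ell|}(\lambda),\qquad \fint_{\Q^\lambda_{r}(z)}\phi_{|D\bm\ell|}(|D\bfu_{\bm\ell}|)\,\d\xi < \phi_{|D\bm\ell|}(\lambda) \text{ for } r>r_z.
\]
The choice of $\lambda_0$ guarantees that on the enlarged cylinder $\Q^\lambda_{4r_z}(z)$ the opposite bound $\fint \phi_{|D\bm\ell|}(|D\bfu_{\bm\ell}|)\le \phi_{|D\bm\ell|}(\lambda)$ holds, so both hypotheses of \eqref{lemreverseass} are met and the intrinsic reverse H\"older estimate \eqref{reverseestimate} applies. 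A Vitali covering extracts a pairwise disjoint subfamily $\{\Q^\lambda_{r_i}(z_i)\}$ whose fivefold enlargements cover $E(\lambda)$, yielding a good-$\lambda$ inequality of the form
\[
\phi_{|D\bm\ell|}(\lambda)\,\big|E(\lambda)\big| \;\le\; c\int_{E(\lambda/K)}\phi_{|D\bm\ell|}(|D\bfu_{\bm\ell}|)^{\theta}\big[\phi_{|D\bm\ell|}(\lambda)\big]^{1-\theta}\,\d z
\]
after re-expanding through the exponent $\theta<1$ provided by \eqref{reverseestimate}.

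Multiplying this distributional inequality by $\phi_{|D\bm\ell|}(\lambda)^{\sigma-1}$ and integrating $\d\phi_{|D\bm\ell|}(\lambda)$ over $[\lambda_0,\infty)$, together with Fubini, yields
\[
\int_{Q_\rho}\phi_{|D\bm\ell|}(|D\bfu_{\bm\ell}|)^{1+\sigma}\,\d z \;\le\; \tfrac12\int_{Q_{2\rho}}\phi_{|D\bm\ell|}(|D\bfu_{\bm\ell}|)^{1+\sigma}\,\d z + c\,\phi_{|D\bm\ell|}(\lambda_0)^\sigma\int_{Q_{2\rho}}\phi_{|D\bm\ell|}(|D\bfu_{\bm\ell}|)\,\d z
\]
for any small $\sigma>0$ (depending on $\theta$), which is reabsorbed via the standard iteration lemma of Giaquinta--Modica on nested radii $\rho<\rho'<2\rho$. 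The hardest part is step (2): verifying that one really can produce intrinsic stopping-time cylinders inside $Q_{2\rho}$ and that the comparison passing from $\lambda_0 = \phi_{|D\bm\ell|}^{-1}(\fint \phi_{|D\bm\ell|}(|D\bfu_{\bm\ell}|))$ to the intrinsic threshold $\phi_{|D\bm\ell|}(\lambda_0)^\sigma$ produces exactly the factor $[\phi_{|D\bm\ell|}\circ\mathcal{D}^{-1}(\cdot)]^\sigma$ --- here $\mathcal{D}$ enters because passing from standard to intrinsic cylinders costs a factor $\lambda^2/\phi_{|D\bm\ell|}(\lambda)$ that must be balanced against the parabolic $p_0$-Sobolev exponent used inside the reverse H\"older step.
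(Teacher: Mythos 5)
Your overall plan is the same as the paper's, which itself simply says ``by arguing exactly as in \cite[Section~4]{HasOk21} with $\phi_{|D{\bm\ell}|}$ and $\bfu_{\bm\ell}$ in place of $\phi$ and $\bfu$'': an intrinsic Calder\'on--Zygmund/Gehring covering argument built on the reverse H\"older inequality \eqref{reverseestimate}, a good-$\lambda$ inequality, a layer-cake integration, and Giaquinta--Modica reabsorption. Most of the outline is right. However, your choice of the base threshold $\lambda_0$ is not.

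You set $\lambda_0 := \phi_{|D\bm\ell|}^{-1}\big(\fint_{Q_{2\rho}}\phi_{|D\bm\ell|}(|D\bfu_{\bm\ell}|)\,\d z\big)$, but the correct base threshold must be $\lambda_0 \approx \mathcal{D}^{-1}\big(\fint_{Q_{2\rho}}\phi_{|D\bm\ell|}(|D\bfu_{\bm\ell}|)\,\d z\big)$. This is not a detail to be fixed later by ``comparing intrinsic and standard cylinders''; it is forced at the stopping-time stage. For the exit-time radius $r_z$ at scale $\lambda$ to be well-defined and small, you need the average of $\phi_{|D\bm\ell|}(|D\bfu_{\bm\ell}|)$ over the \emph{largest} intrinsic cylinder $\Q^\lambda_r(z)$ fitting inside $Q_{2\rho}$ to already be below $\phi_{|D\bm\ell|}(\lambda)$. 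The largest such intrinsic cylinder has measure comparable to $\rho^{n+2}\min\{\lambda^2/\phi_{|D\bm\ell|}(\lambda),\,\phi_{|D\bm\ell|}(\lambda)^{n/2}\lambda^{-n}\}$ (the two branches distinguish whether the intrinsic time length $\lambda^2/\phi_{|D\bm\ell|}(\lambda)$ is $\le 1$ or $\ge 1$, i.e. whether the time scale or the space scale is the binding constraint); comparing its average with $\phi_{|D\bm\ell|}(\lambda)$ gives precisely the condition $\mathcal{D}(\lambda) \gtrsim \fint_{Q_{2\rho}}\phi_{|D\bm\ell|}(|D\bfu_{\bm\ell}|)\,\d z$, hence $\lambda \gtrsim \mathcal{D}^{-1}(\text{average})$. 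With your $\lambda_0$ the exit-time construction can fail for $\lambda_0 \le \lambda < \mathcal D^{-1}(\text{average})$ because the required intrinsic cylinder may not be contained in $Q_{2\rho}$, and \eqref{lemreverseass} cannot then be invoked.

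With the correct threshold the final constant falls out immediately: the layer-cake cutoff term is $\phi_{|D\bm\ell|}(\lambda_0)^{\sigma}\,\fint_{Q_{2\rho}}\phi_{|D\bm\ell|}(|D\bfu_{\bm\ell}|)\,\d z$, and since $\lambda_0 = \mathcal{D}^{-1}(\text{average})$, this is exactly $\big[(\phi_{|D\bm\ell|}\circ\mathcal{D}^{-1})(\text{average})\big]^{\sigma}\cdot\text{average}$ as claimed. You anticipated at the end that ``the hardest part'' is getting the $\phi_{|D\bm\ell|}\circ\mathcal{D}^{-1}$ factor to emerge; the resolution is that it doesn't emerge from a later comparison, it is built into the definition of $\lambda_0$ from the start. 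If you redo the stopping-time step with that $\lambda_0$, the rest of your argument (Vitali covering, reverse H\"older on the stopped cylinders, good-$\lambda$, Fubini, reabsorption) is sound.
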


Moreover, by a scaling argument, we have the following homogeneous higher integrability result in intrinsic parabolic cylinders with $\phi$.

\begin{corollary}\label{cor:high}
Let $\bfu$ be a 
local weak solution to \eqref{system1}.
There exists $\sigma=\sigma(n,N,p,q,L,\nu)>0$ such that  if  $Q^\lambda_{2\rho}\Subset\Omega_T$ and 
\begin{equation}
\fint_{Q^\lambda_{2\rho}} \phi (|D\bfu|) \, \d z \le \phi(\lambda)
\quad\text{and}\quad
|D\bm\ell|\le \lambda,
\label{eq:assumpt}
\end{equation}
then
\begin{equation}\label{eq:high1}
\left(\fint_{Q^\lambda_{\rho}} \phi_{|D{\bm \ell}|} (|D \bfu_{{\bm\ell}}|)^{1+\sigma}\, \d z\right)^{\frac{1}{1+\sigma}} \leq c \phi(\lambda)
\end{equation}
for some $c=c(n,N,p,q,L,\nu)>0$.
\end{corollary}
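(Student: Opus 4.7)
The plan is to deduce Corollary~\ref{cor:high} from Theorem~\ref{thm:high} via an intrinsic rescaling that flattens the intrinsic cylinder $Q^\lambda_{2\rho}(z_0)$ onto the standard parabolic cylinder $Q_2$ and normalizes $\phi$ so that the rescaled $\tilde\phi$ satisfies $\tilde\phi(1)=1$, while preserving all the structural constants.

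First I would take, without loss of generality, $z_0=(0,0)$ and define
$$
\tilde\bfu(y,s):=\tfrac{1}{\rho\lambda}\bfu\left(\rho y,\tfrac{\rho^2\lambda^2}{\phi(\lambda)}s\right),\qquad \tilde{\bm \ell}(y):=\tfrac{1}{\rho\lambda}{\bm \ell}(\rho y),\qquad (y,s)\in Q_2.
$$
A direct computation would then show that $\tilde\bfu$ is a weak solution to $\partial_s\tilde\bfu-\div\tilde\bA(D\tilde\bfu)=\zero$ in $Q_2$ for $\tilde\bA(\bP):=\tfrac{\lambda}{\phi(\lambda)}\bA(\lambda\bP)$, and that $\tilde\bA$ satisfies Assumption~\ref{Ass} with the same constants $\nu,L$ relative to the scaled $N$-function $\tilde\phi(t):=\phi(\lambda t)/\phi(\lambda)$; the latter still verifies \eqref{pq} with the same $p,q$ and has $\tilde\phi(1)=1$. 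A change of variable in the defining integral of the shift then yields the key scaling identity $\tilde\phi_{|D\tilde{\bm \ell}|}(t)=\phi(\lambda)^{-1}\phi_{|D{\bm \ell}|}(\lambda t)$, from which, together with $D\tilde\bfu_{\tilde{\bm \ell}}=\lambda^{-1}D\bfu_{{\bm \ell}}$, one obtains
$$
\tilde\phi_{|D\tilde{\bm \ell}|}(|D\tilde\bfu_{\tilde{\bm \ell}}|)=\phi(\lambda)^{-1}\phi_{|D{\bm \ell}|}(|D\bfu_{{\bm \ell}}|).
$$

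Next I would rewrite the hypotheses in \eqref{eq:assumpt} in the new variables. The gradient bound becomes $\fint_{Q_2}\tilde\phi(|D\tilde\bfu|)\,\d z\le 1$, and $|D\tilde{\bm \ell}|=|\bP|/\lambda\le 1$. Combining these with the comparison $\tilde\phi_{|D\tilde{\bm \ell}|}(t)\le c(\tilde\phi(t)+1)$ coming from \eqref{eq:approx} and the $\Delta_2$ property of $\tilde\phi$, one arrives at the $L^1$ bound $\fint_{Q_2}\tilde\phi_{|D\tilde{\bm \ell}|}(|D\tilde\bfu_{\tilde{\bm \ell}}|)\,\d z\le c$ with $c=c(n,N,p,q,L,\nu)$. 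I would then apply Theorem~\ref{thm:high} to $\tilde\bfu$ on $Q_2$ (that is, with ``$\rho=1$'' therein) and to the shift $\tilde\phi_{|D\tilde{\bm \ell}|}$. Since the factor $\tilde\phi_{|D\tilde{\bm \ell}|}\circ\tilde{\mathcal D}^{-1}$ on the right-hand side is applied to a quantity controlled by the previous bound and is itself a universal monotone function of the structural data, the conclusion simplifies to
$$
\fint_{Q_1}\tilde\phi_{|D\tilde{\bm \ell}|}(|D\tilde\bfu_{\tilde{\bm \ell}}|)^{1+\sigma}\,\d z\le c
$$
for the same $\sigma=\sigma(n,N,p,q,L,\nu)$ provided by Theorem~\ref{thm:high}.

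Finally, undoing the rescaling via the displayed identity for $\tilde\phi_{|D\tilde{\bm \ell}|}(|D\tilde\bfu_{\tilde{\bm \ell}}|)$ and the change of variables $y=x/\rho$, $s=t\phi(\lambda)/(\rho\lambda)^2$ transforms the last inequality directly into \eqref{eq:high1}. The only delicate step I anticipate is purely bookkeeping: verifying carefully the scaling identity for the shifted $N$-function, and checking that all the structural invariants (the exponents $p,q$, the ellipticity constants $\nu, L$, and the doubling constants of $\phi$ and $\phi^*$) are preserved under the rescaling, so that the constant coming out of Theorem~\ref{thm:high} is independent of both $\lambda$ and $\rho$.
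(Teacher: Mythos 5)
Your proposal is correct and takes essentially the same route as the paper: rescale $\bfu$, $\bA$, $\phi$, $\bm\ell$ so the intrinsic cylinder becomes standard and $\tilde\phi(1)=1$, translate the hypotheses into a universal $L^1$ bound on $\tilde\phi_{|D\tilde{\bm\ell}|}(|D\tilde\bfu_{\tilde{\bm\ell}}|)$ via \eqref{eq:approx} and $|D\tilde{\bm\ell}|\le 1$, invoke Theorem~\ref{thm:high}, observe that the right-hand-side factor $(\tilde\phi_{|D\tilde{\bm\ell}|}\circ\tilde{\mathcal D}^{-1})(\cdot)$ is uniformly bounded by structural constants, and scale back using the shift identity. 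The only cosmetic divergence is that the paper keeps the spatial variable unscaled (applying Theorem~\ref{thm:high} directly on $Q_{2\rho}$, whose constant is $\rho$-independent anyway), whereas you additionally normalize to $Q_2$ — harmless but unnecessary; and you prove the exact identity $\tilde\phi_{|D\tilde{\bm\ell}|}(t)=\phi(\lambda)^{-1}\phi_{|D{\bm\ell}|}(\lambda t)$, whereas the paper is content with the equivalence $\sim$ obtained via \eqref{(2.6b)}, which suffices.
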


\begin{proof}
 
Let
\begin{equation}
\tilde \bfu(x,t) := \frac{1}{\lambda}\bfu(x, t \lambda^2/{\phi(\lambda)}), 
\quad \tilde{\bA}({\bf P}) := \frac{\lambda \bA(\lambda {\bf P} )}{\phi(\lambda)}, 
\quad \tilde \phi(\tau):=\frac{\phi(\lambda \tau)}{\phi(\lambda)}, 
\quad \tilde {\bm\ell} :=\frac{1}{\lambda} {\bm\ell} \,.
\label{eq:scaledfunct}
\end{equation}
Note that $\tilde{\bA}$ satisfies the same properties of $\bA$ listed in Assumption \ref{Ass}, with $\tilde\phi$ in place of $\phi$.
Then $\tilde \bfu$ is a weak solution to 
$$
\partial_t  \tilde \bfu - \mathrm{div}\, \tilde{\bA}(D\tilde \bfu) = {\bf 0} \quad \text{in }\ Q_{2\rho}\,.
$$
Moreover, by \eqref{eq:assumpt}, we have
$$
\fint_{Q_{2\rho}} \tilde \phi (|D\tilde \bfu|) \, \d z \le 1
\quad \text{and}\quad
|D\tilde {\bm\ell}| \le  1\,,
$$
whence, taking into account \eqref{eq:approx},  
$$
\fint_{Q_{2\rho}} \tilde \phi_{{|D \tilde{\bm \ell}|}} (|D\tilde \bfu_{\tilde {\bm\ell}}|) \, \d z \le c \fint_{Q_{2\rho}} \tilde \phi (|D\tilde \bfu|+ |D \tilde {\bm\ell}|) \,\d z \le c \,.
$$
Therefore, by Theorem~\ref{thm:high} we have 
\begin{equation}
\left(\fint_{Q_{\rho}} \tilde \phi_{{|D \tilde{\bm \ell}|}} (|D\tilde \bfu_{\tilde {\bm\ell}}|)^{1+\sigma} \,\d z \right)^{\frac{1}{1+\sigma}}\le c \,.
\label{eq:highint0}
\end{equation}
In addition, since by \eqref{(2.6b)} and \eqref{eq:scaledfunct}
$$
\tilde \phi_{{|D \tilde{\bm \ell}|}} (\tau) \sim \frac{\phi'(|D{\bm\ell}|+\lambda \tau)}{\phi(\lambda)(|D{\bm\ell}|+\lambda \tau)} (\lambda \tau)^2 \,,
$$
we have, taking into account also \eqref{(2.6a)},
$$
\tilde \phi_{{|D \tilde{\bm \ell}|}} (|D\tilde \bfu_{\tilde {\bm\ell}}|) \sim \frac{\phi'(|D{\bm\ell}|+|D \bfu_{\bm\ell}|)}{\phi(\lambda)(|D{\bm\ell}|+|D \bfu_{\bm\ell}|)} |D \bfu_{\bm\ell}|^2 \sim   \frac{\phi_{|D{\bm \ell}|} (|D \bfu_{{\bm\ell}}|)}{\phi(\lambda)}\,.
$$
Therefore, inserting the above estimate in \eqref{eq:highint0} we obtain \eqref{eq:high1}.
\end{proof}

\section{Nondegenerate regime}\label{Sec:nondegenerate}

In this section we consider the \emph{nondegenerate} regime, which means that the average of the gradient of solution is relatively greater than the relevant excess, see for instance \eqref{Lem:nondegenerate1_Ass2}. In this regime, we apply the $\A$-caloric approximation.

We first show that the solution $\bfu$ to \eqref{system1} is an almost weak solution of a linear system with constant coefficients.  
\begin{lemma}
Let $\bfu$ be a weak solution to \eqref{system1} and $Q^\lambda_r\Subset \Omega_T$. Then for every $\bm\zeta \in C^\infty(Q^\lambda_r;\R^N)$ with $\bm\zeta= {\bf 0} $ on $\partial B_r \times I^\lambda_r$, we have 
\begin{equation}\label{Acalori_pf1}
\begin{aligned}
&\frac{1}{Q^\lambda_r}\left|\int_{Q^\lambda_r} \bfu_{\bm\ell}  \cdot \bm\zeta_t - D\bA(D\bm\ell) \langle D\bfu_{\bm\ell} , D\bm\zeta\rangle \, \d z  - \left[\int_{B_r} \bfu_{\bm\ell}  \cdot \bm\zeta  \, \d x \right]^{t=r^2/\phi''(\lambda)}_{t=-r^2/\phi''(\lambda)}  \right| \\
& \le c \phi'(|D\bm\ell|)\left(\mu^{\gamma}+ \mu \right) \mu \, \|D\bm\zeta\|_{L^\infty(Q^\lambda_r;\R^{Nn})}\,, 
\end{aligned}\end{equation}
for some $c=c(n,N,p,q,L,\nu)$, where $\gamma$, $\bm\ell$ and $\bfu_{\bm\ell}$ are from \eqref{offdiagonal}, \eqref{def_l} and \eqref{def_ul} and 
\begin{equation*}\label{def:gamma}
\mu:= \left(\frac{1}{\phi(|D\bm\ell|)} \fint_{Q^\lambda_r}\phi_{|D \bm\ell|} (|D\bfu_{\bm\ell}|)\,\d z\right)^{\frac{1}{2}}\,.
\end{equation*}
\end{lemma}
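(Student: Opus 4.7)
The starting point is the weak formulation. Test \eqref{system1} with $\bm\zeta$ on the cylinder $Q^\lambda_r$: since $\bm\zeta$ vanishes only on the lateral boundary $\partial B_r\times I_r^\lambda$ (but not on the temporal caps), the weak formulation recalled in the paper gives
\begin{equation*}
\Big[\int_{B_r}\bfu\cdot\bm\zeta\,\d x\Big]^{t=r^2/\phi''(\lambda)}_{t=-r^2/\phi''(\lambda)}
-\int_{Q^\lambda_r}\bfu\cdot\bm\zeta_t\,\d z
+\int_{Q^\lambda_r}\bA(D\bfu):D\bm\zeta\,\d z=0.
\end{equation*}
The same identity is trivially satisfied with $\bfu$ replaced by $\bm\ell$, because $\partial_t\bm\ell=0$, $D\bm\ell$ is constant, so $\div\bA(D\bm\ell)=0$ and integration by parts in space (using that $\bm\zeta$ vanishes on $\partial B_r$) makes the elliptic term vanish. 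Subtracting these two identities and then inserting and subtracting $D\bA(D\bm\ell)D\bfu_{\bm\ell}$ in the elliptic term yields
\begin{equation*}
\int_{Q^\lambda_r}\bfu_{\bm\ell}\cdot\bm\zeta_t-D\bA(D\bm\ell)\langle D\bfu_{\bm\ell},D\bm\zeta\rangle\,\d z
-\Big[\int_{B_r}\bfu_{\bm\ell}\cdot\bm\zeta\,\d x\Big]^{t=r^2/\phi''(\lambda)}_{t=-r^2/\phi''(\lambda)}
=\int_{Q^\lambda_r}\bR:D\bm\zeta\,\d z,
\end{equation*}
where, by the fundamental theorem of calculus,
\begin{equation*}
\bR:=\bA(D\bfu)-\bA(D\bm\ell)-D\bA(D\bm\ell)D\bfu_{\bm\ell}
=\int_0^1\big[D\bA(D\bm\ell+\tau D\bfu_{\bm\ell})-D\bA(D\bm\ell)\big]D\bfu_{\bm\ell}\,\d\tau.
\end{equation*}
It therefore suffices to prove that $|Q^\lambda_r|^{-1}\int_{Q^\lambda_r}|\bR||D\bm\zeta|\,\d z \leq c\,\phi'(|D\bm\ell|)(\mu^\gamma+\mu)\mu\,\|D\bm\zeta\|_\infty$.

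Next I would split the integration domain via the set $E:=\{|D\bfu_{\bm\ell}|\le\tfrac12|D\bm\ell|\}$. On $E$ the condition $|\bP-\bQ|\le\tfrac12|\bP|$ of assumption \atwo{} is satisfied uniformly along the segment $D\bm\ell+\tau D\bfu_{\bm\ell}$, $\tau\in[0,1]$, so \eqref{offdiagonal} gives the pointwise bound
\begin{equation*}
|\bR|\le c\,\phi''(|D\bm\ell|)\,|D\bm\ell|^{-\gamma}|D\bfu_{\bm\ell}|^{1+\gamma}.
\end{equation*}
On $E^c$ I cannot linearize, so I bound each piece of $\bR$ by the growth \eqref{growth}; since $|D\bfu|\le|D\bm\ell|+|D\bfu_{\bm\ell}|\sim|D\bfu_{\bm\ell}|$ on $E^c$, the $\Delta_2$ condition and the characteristic inequality \eqref{characteristic} with $q>2$ allow me to absorb also $\phi''(|D\bm\ell|)|D\bfu_{\bm\ell}|$ into $\phi'(|D\bfu_{\bm\ell}|)$, giving $|\bR|\le c\,\phi'(|D\bfu_{\bm\ell}|)$ on $E^c$.

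The final step is to integrate and convert everything to the quantity $\mu$. On $E$ the equivalence \eqref{(2.6b)} gives $\phi''(|D\bm\ell|)|D\bfu_{\bm\ell}|^2\sim\phi_{|D\bm\ell|}(|D\bfu_{\bm\ell}|)$, hence $\int_E|D\bfu_{\bm\ell}|^2\,\d z\le c|D\bm\ell|^2\mu^2|Q^\lambda_r|$; Hölder's inequality with exponent $\tfrac{2}{1+\gamma}\ge1$ then yields
\begin{equation*}
\fint_E|\bR|\,\d z\le c\,\phi''(|D\bm\ell|)|D\bm\ell|^{-\gamma}\big(|D\bm\ell|^2\mu^2\big)^{(1+\gamma)/2}\sim c\,\phi'(|D\bm\ell|)\,\mu^{1+\gamma}.
\end{equation*}
On $E^c$ the inequality $|D\bfu_{\bm\ell}|\ge\tfrac12|D\bm\ell|$ combined with $\phi'(|D\bfu_{\bm\ell}|)\sim\phi_{|D\bm\ell|}(|D\bfu_{\bm\ell}|)/|D\bfu_{\bm\ell}|$ (from $|D\bm\ell|+|D\bfu_{\bm\ell}|\sim|D\bfu_{\bm\ell}|$ on $E^c$) gives $\phi'(|D\bfu_{\bm\ell}|)\le c\,\phi_{|D\bm\ell|}(|D\bfu_{\bm\ell}|)/|D\bm\ell|$, so that
\begin{equation*}
\fint_{E^c}|\bR|\,\d z\le\frac{c}{|D\bm\ell|}\fint_{Q^\lambda_r}\phi_{|D\bm\ell|}(|D\bfu_{\bm\ell}|)\,\d z\sim c\,\phi'(|D\bm\ell|)\,\mu^2.
\end{equation*}
Summing the two contributions and multiplying by $\|D\bm\zeta\|_\infty$ gives the claimed bound $c\,\phi'(|D\bm\ell|)(\mu^\gamma+\mu)\mu\,\|D\bm\zeta\|_\infty$.

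The main delicate point will be the estimate on $E^c$: one must use both the $\Delta_2$ property and the upper bound $q$ from \eqref{pq} (together with the crucial geometric fact that on $E^c$ the shifted $N$-function $\phi_{|D\bm\ell|}$ is comparable to the unshifted $\phi$) in order to convert $\phi''(|D\bm\ell|)|D\bfu_{\bm\ell}|$ into $\phi'(|D\bfu_{\bm\ell}|)$ and then into the density of $\mu^2$; the corresponding estimate does not follow from a purely pointwise inequality in the subquadratic case $p<2$ when $|D\bfu_{\bm\ell}|\gg|D\bm\ell|$, but it does hold because $q>2$ controls the ratio.
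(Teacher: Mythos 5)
Your overall strategy is exactly the paper's: use the weak form (with the temporal boundary terms kept), write the residual $\bR=\bA(D\bfu)-\bA(D\bm\ell)-D\bA(D\bm\ell)D\bfu_{\bm\ell}$ via the fundamental theorem of calculus, and split the cylinder by the threshold $|D\bfu_{\bm\ell}|\lessgtr \frac12|D\bm\ell|$. Your treatment of the set $E$ (the paper's $S_2$) is correct and matches the paper's computation, including the Hölder step and the identification $\phi''(|D\bm\ell|)|D\bm\ell|\sim\phi'(|D\bm\ell|)$.

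There is, however, a genuine gap on $E^c$ (the paper's $S_1$), and you have essentially spotted it yourself but misdiagnosed the fix. The pointwise bound you propose, $|\bR|\le c\,\phi'(|D\bfu_{\bm\ell}|)$, is \emph{false} in the subquadratic regime $p<2$ when $|D\bfu_{\bm\ell}|\gg|D\bm\ell|$: for the offending piece you need $\frac{\phi'(|D\bm\ell|)}{|D\bm\ell|}|D\bfu_{\bm\ell}|\le c\,\phi'(|D\bfu_{\bm\ell}|)$, and using \eqref{characteristic} (almost-monotonicity of $t\mapsto\phi'(t)/t^{p-1}$) this reduces to controlling $(|D\bfu_{\bm\ell}|/|D\bm\ell|)^{2-p}$, which is unbounded. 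Your suggested remedy, that ``$q>2$ controls the ratio,'' does not work: the bound $\phi'(t)/t^{q-1}$ decreasing only yields a \emph{lower} bound for $\phi'(|D\bm\ell|)/|D\bm\ell|$ when $|D\bm\ell|\le|D\bfu_{\bm\ell}|$, i.e., it goes the wrong way. The correct route — which is what the paper does — is not to pass through $\phi'(|D\bfu_{\bm\ell}|)$ at all, but to keep the extra factor $|D\bfu_{\bm\ell}|/|D\bm\ell|$: on $E^c$ the mean-value integral together with \eqref{DE08lem20} and $|D\bfu|+|D\bm\ell|\le 5|D\bfu_{\bm\ell}|$ gives
\begin{equation*}
|\bR|\le \frac{c}{|D\bm\ell|}\,\phi'(|D\bfu_{\bm\ell}|)\,|D\bfu_{\bm\ell}|
\sim \frac{c}{|D\bm\ell|}\,\phi'_{|D\bm\ell|}(|D\bfu_{\bm\ell}|)\,|D\bfu_{\bm\ell}|
\sim \frac{c}{|D\bm\ell|}\,\phi_{|D\bm\ell|}(|D\bfu_{\bm\ell}|),
\end{equation*}
which upon integration directly gives $\fint_{E^c}|\bR|\,\dz\le c\,\phi'(|D\bm\ell|)\mu^2$. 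With this replacement your argument becomes identical to the paper's proof.
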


\begin{proof} It is enough to consider $\bm\zeta\in C^\infty(Q^\lambda_r;\R^N)$ with $\|D\bm\zeta\|_{L^\infty(Q^\lambda_r;\R^{Nn})} \le 1$ by linearity.
From the weak form of \eqref{system1} and the fact that ${\bm\ell}_t=\div (\bA(D{\bm\ell}))=\zero$, we observe that
\begin{equation}
\begin{aligned}
&\fint_{Q^\lambda_r} \bfu_{\bm\ell}  \cdot \bm\zeta_t - D\bA (D{\bm\ell})   \langle D\bfu_{\bm\ell} , D\bm\zeta\rangle \, \d z  - \left[\int_{B_r} \bfu_{\bm\ell}  \cdot \bm\zeta  \, \d x \right]^{t=r^2\lambda^2/\phi(\lambda)}_{t=-r^2\lambda^2/\phi(\lambda)}\\
&=\fint_{Q^\lambda_r} \bfu  \cdot \bm\zeta_t - D\bA (D{\bm\ell}) \langle D\bfu_{\bm\ell} , D\bm\zeta\rangle \, \d z  - \left[\int_{B_r} \bfu  \cdot \bm\zeta  \, \d x \right]^{t=r^2\lambda^2/\phi(\lambda)}_{t=-r^2/\lambda^2\phi(\lambda)} \\
&= \fint_{Q^\lambda_r} \langle \bA(D\bfu) -\bA(D{\bm\ell}) , D\bm\zeta \rangle - D\bA (D{\bm\ell}) \langle D\bfu_{\bm\ell} , D\bm\zeta\rangle \, \d z\\
&=\fint_{Q^\lambda_r} \int_0^1 \langle [D\bA( sD\bfu_{\bm\ell}+ D{\bm\ell}) - D\bA (D{\bm\ell})] D\bfu_{\bm\ell} , D\bm\zeta\rangle \, \d s \, \d z\\
&\le \fint_{Q^\lambda_r} \left[\int_0^1 | D\bA( sD\bfu_{\bm\ell}+ D{\bm\ell} ) - D\bA (D{\bm\ell})|  \, \d s \right]|D\bfu_{\bm\ell} | |D\bm\zeta|\, \d z \,.
\end{aligned}
\label{eq:estimalmostcal}
\end{equation}
Set $S_1= \{z\in Q^\lambda_r : |D\bfu_{\bm\ell}(z)| >\frac{1}{2}|D{\bm\ell}|\}$ and  $S_2= \{z\in Q^\lambda_r : |D\bfu_{\bm\ell}(z)|  \le \frac{1}{2}|D{\bm\ell}|\}$.

If $z\in S_1$, using \eqref{DE08lem20} and \eqref{growth} and the fact that $ |D\bfu|+|D{\bm\ell}|\le |D\bfu_{\bm\ell}| + 2|D{\bm\ell}| \le 5 |D\bfu_{\bm\ell}|$,
$$\begin{aligned}
&\int_0^1 | D\bA( sD\bfu_{\bm\ell}(z) + D{\bm\ell}) - D\bA (D{\bm\ell})|  \, \d s \\
& \le  c \int_0^1 \phi''( |sD\bfu(z)+(1-s)D{\bm\ell}|) \, \d s + c \phi'' (|D{\bm\ell}|) \\
&\le c  \frac{\phi'( |D\bfu(z)| +|D{\bm\ell}|)}{ |D\bfu(z)| +|D{\bm\ell}|} +c  \frac{\phi'( |D\bfu(z)| +|D{\bm\ell}|)}{ |D{\bm\ell}|}  \\
&\le \frac{c}{|D{\bm\ell}|}  \phi'( |D\bfu_{\bm\ell}(z)| )\le \frac{c}{|D{\bm\ell}|}    \phi'( |D\bfu_{\bm\ell}(z)| +|D{\bm\ell} | ) \frac{5|D\bfu_{\bm\ell}(z)|}{ |D\bfu_{\bm\ell}(z)| + 2|D{\bm\ell} |} \le  \frac{c}{|D{\bm\ell}|}  \phi_{|D{\bm \ell}|}'( |D\bfu_{\bm\ell}(z)|)\,,
\end{aligned}$$
hence 
\begin{equation}
\fint_{Q^\lambda_r} \left[\int_0^1 | D\bA( sD\bfu_{\bm\ell}+ D{\bm\ell} ) - D\bA (D{\bm\ell})|  \, \d s \right]|D\bfu_{\bm\ell}| \chi_{S_1}\, \d z\le   c\phi'(|D{\bm\ell}|) \mu^2\,.
\label{eq:estimlargegrad}
\end{equation}

On the other hand, if $z\in S_2$,
applying \eqref{offdiagonal} with $\bP=D{\bm\ell}$ and $\bQ=sD\bfu_{\bm\ell}(z) + D{\bm\ell}$
$$
\int_0^1 | D\bA( sD\bfu_{\bm\ell}(z) + D{\bm\ell}) - D\bA (D{\bm\ell})|  \, \d s \le c \, \left(\frac{|D\bfu_{\bm\ell}|}{|D{\bm\ell}|}\right)^{\gamma} \phi''(|D{\bm\ell}|) \,
$$
and 
$$\begin{aligned}
\frac{|D\bfu_{\bm\ell}(z)|^2}{|D{\bm\ell}|^2} &\le \frac{\phi'(|D\bfu_{\bm\ell}(z)|+|D{\bm\ell}|)}{\phi'(|D{\bm\ell}|)|D{\bm\ell}|}\frac{|D\bfu_{\bm\ell}(z)|^2}{|D{\bm\ell}|} \\
&\le \frac{\phi'(|D\bfu_{\bm\ell}(z)|+|D{\bm\ell}|)}{\phi'(|D{\bm\ell}|)|D{\bm\ell}|}\frac{|D\bfu_{\bm\ell}(z)|^2}{|D\bfu_{\bm\ell}|+ \frac{1}{2}|D{\bm\ell}|}  \le c \frac{\phi_{|D\bm\ell|}(|D\bfu_{\bm\ell}|)}{\phi(|D{\bm\ell}|)}\,.
\end{aligned}$$
Then using these estimates and  applying H\"older's inequality, we have
\begin{equation}
\begin{aligned}
&\fint_{Q^\lambda_r} \left[\int_0^1 | D\bA( sD\bfu_{\bm\ell}+ D{\bm\ell} ) - D\bA (D{\bm\ell})|  \, \d s \right] |D\bfu_{\bm\ell} | \chi_{S_2}\, \d z \\
&\le c \phi'(|D{\bm\ell}|) \fint_{Q^\lambda_r}   \frac{|D\bfu_{\bm\ell}(z)|}{|D{\bm\ell}|} \, \left(\frac{|D\bfu_{\bm\ell}(z)|}{|D{\bm\ell}|}\right)^{\gamma}\chi_{S_2}\, \d z\\
&\le c \phi'(|D{\bm\ell}|) \left[  \fint_{Q^\lambda_r}  \frac{|D\bfu_{\bm\ell}(z)|^2}{|D{\bm\ell}|^2}\chi_{S_2} \, \d z \right]^{\frac{1}{2}}  \left[  \fint_{Q^\lambda_r} \left(\frac{|D\bfu_{\bm\ell}(z)|}{|D{\bm\ell}|}\right)^{2\gamma}\chi_{S_2}\, \d z\right]^{\frac{1}{2}}\\
&\le c \phi'(|D{\bm\ell}|) \left[  \fint_{Q^\lambda_r}  \frac{\phi_{|D\bm\ell|}(|D\bfu_{\bm\ell}|)}{\phi(|D{\bm\ell}|)} \, \d z \right]^{\frac{1}{2}} \left[\fint_{Q^\lambda_r} \frac{\phi_{|D\bm\ell|}(|D\bfu_{\bm\ell}|)}{\phi(|D{\bm\ell}|)}\,\d z\right]^{\frac{\gamma}{2}}\\
&\le c \phi'(|D{\bm\ell}|) \mu^{1+\gamma} \,.
\end{aligned}
\label{eq:estimsmallgrad}
\end{equation}
Therefore, plugging \eqref{eq:estimlargegrad} and \eqref{eq:estimsmallgrad} into \eqref{eq:estimalmostcal} we obtain \eqref{Acalori_pf1}. 
\end{proof}
%

Now, we derive an excess decay estimate in the nondegenerate regime.

\begin{lemma} \label{Lem:nondegenerate1}
Let $Q_{r}^\lambda = Q_{r}^\lambda(z_0)\Subset \Omega_T$, $\beta\in(0,1)$, and $\bfu$ be a weak solution to \eqref{system1}.
Suppose that
\begin{equation}\label{Lem:nondegenerate1_Ass1}
\frac{\lambda}{2K}\le |(D\bfu)^\lambda_{r}| \le 2K \lambda
\end{equation}
for some $K>0$. There exist small $\delta_0,\theta \in(0,1)$ depending on $n,N,p,q,L,\nu,K,\gamma$ and $\beta$ such that if 
\begin{equation}\label{Lem:nondegenerate1_Ass2}
\fint_{Q^\lambda_{r}} \phi_{|(D\bfu)_{r}^\lambda|} (|D\bfu - (D\bfu)_{r}^\lambda|) \, \d z  \le \delta_0 \phi(|(D\bfu)^\lambda_{r}|)
\end{equation}
then
\begin{equation}\label{nondegenerate_decay}
\fint_{Q^\lambda_{\theta r}} \phi_{|(D\bfu)_{\theta r}^\lambda|} (|D\bfu - (D\bfu)_{\theta r}^\lambda|) \, \d z  \le  \theta^{2\beta} \fint_{Q^\lambda_{r}} \phi_{|(D\bfu)_{r}^\lambda|} (|D\bfu - (D\bfu)_{r}^\lambda|) \, \d z\,.
\end{equation}
\end{lemma}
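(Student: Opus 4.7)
The plan is to linearize the system at the gradient average $(D\bfu)_r^\lambda$ and compare $\bfu$ to a solution of the associated constant-coefficient parabolic system via Theorem~\ref{thm:Acaloric}. Setting $\bm\ell(x):=(\bfu)_r^\lambda+(D\bfu)_r^\lambda(x-x_0)$, $\bfu_{\bm\ell}:=\bfu-\bm\ell$ and $\A:=D\bA(D\bm\ell)$, one has $|D\bm\ell|\sim\lambda$ by \eqref{Lem:nondegenerate1_Ass1}, and the excess parameter $\mu$ of the preceding lemma satisfies $\mu^2\lesssim\delta_0$ by \eqref{Lem:nondegenerate1_Ass2}. Since $D\bm\ell=(D\bfu)_r^\lambda$ literally, the shifted $N$-functions $\phi_{|D\bm\ell|}$ and $\phi_{|(D\bfu)_r^\lambda|}$ coincide. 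The matrix $\A$ is Legendre--Hadamard elliptic with ellipticity $\sim\phi''(|D\bm\ell|)\sim\phi(\lambda)/\lambda^2$ via \eqref{(2.6b)}, which is precisely the constant matching the intrinsic time scale $\lambda^2/\phi(\lambda)$ of $Q_r^\lambda$.

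Next, I would rescale to the unit cylinder via $y=(x-x_0)/r$, $s=(t-t_0)\phi(\lambda)/(r^2\lambda^2)$ and $\tilde\bfu(y,s):=(r\lambda)^{-1}\bfu_{\bm\ell}$; define the normalized coefficient $\tilde\A:=(\lambda^2/\phi(\lambda))\A$, which is uniformly Legendre--Hadamard elliptic with constants depending only on $\nu,L,p,q$, and the normalized $N$-function $\psi(\sigma):=\phi_{|D\bm\ell|}(\lambda\sigma)/\phi(\lambda)$, which satisfies \eqref{pq} with the same exponents, so $\Delta_2(\psi,\psi^*)$ depends only on $p,q$. A direct change of variables gives $\fint_{Q_1}\psi(|D\tilde\bfu|)\,dy\,ds\sim\mu^2$, and Corollary~\ref{cor:high} promotes this to $\fint_{Q_1}\psi(|D\tilde\bfu|)^{1+\sigma}\,dy\,ds\le C_0$, thereby verifying \eqref{Acaloric_ass1} with parameter $\mu$. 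The almost-caloric estimate \eqref{Acalori_pf1}, when rescaled, reads precisely as \eqref{Acaloric_ass2} with error $c(\mu^\gamma+\mu)\mu$. Thus, for any prescribed $\epsilon\in(0,1)$, by taking $\delta_0$ small enough that $c(\mu^\gamma+\mu)\mu$ falls below the threshold $\delta=\delta(\epsilon,p,q)$ of Theorem~\ref{thm:Acaloric}, one obtains an $\tilde\A$-caloric map $\bfh$ on $Q_1$ with $\bfh=\tilde\bfu$ on $\partial_{\mathrm p}Q_1$ and
\[
\fint_{Q_1}\psi(|D\tilde\bfu-D\bfh|)\,dy\,ds\le\epsilon\psi(\mu).
\]

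Now Lemma~\ref{Lem:Acaloricmap} takes over: \eqref{DhLip} gives $\sup_{Q_{1/2}}|D\bfh|\lesssim\mu$, and, since every $\partial_{x_i}\bfh$ is again $\tilde\A$-caloric, one derives a parabolic Lipschitz bound for $D\bfh$ that yields $\sup_{Q_\theta}|D\bfh-(D\bfh)_\theta|\lesssim\theta\mu$ for $\theta\in(0,\tfrac14]$. Since $\mu$ is small, \eqref{(2.6b)} places us in the quadratic regime of $\psi$, whence $\psi(\theta\mu)\sim\theta^2\psi(\mu)$. Combining the caloric decay with the approximation via triangle inequality and Jensen (Lemma~\ref{lem:Jensen}) yields
\[
\fint_{Q_\theta}\psi(|D\tilde\bfu-(D\tilde\bfu)_\theta|)\,dy\,ds \le C\bigl(\theta^2+\theta^{-(n+2)}\epsilon\bigr)\psi(\mu).
\]
Unscaling to $Q_{\theta r}^\lambda$ delivers the analogous bound for $\fint\phi_{|D\bm\ell|}(|D\bfu-(D\bfu)_{\theta r}^\lambda|)\,dz$, which is at most $C(\theta^2+\theta^{-(n+2)}\epsilon)\Phi(r)$, with $\Phi(r)$ denoting the left-hand side of \eqref{Lem:nondegenerate1_Ass2}. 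Finally, since Jensen gives $|(D\bfu)_{\theta r}^\lambda-D\bm\ell|\lesssim\theta^{-(n+2)/2}\mu\lambda$, the change-of-shift Lemma~\ref{lem:changeshift} exchanges the shift $|D\bm\ell|$ for $|(D\bfu)_{\theta r}^\lambda|$ at the cost of an extra term $\le c\,\eta\,\theta^{-(n+2)}\Phi(r)$. Picking $\theta$ so small that $C\theta^2\le\tfrac14\theta^{2\beta}$ (possible since $\beta<1$), then $\eta$ with $c\eta\theta^{-(n+2)}\le\tfrac14\theta^{2\beta}$, then $\epsilon$ with $Cc_\eta\theta^{-(n+2)}\epsilon\le\tfrac14\theta^{2\beta}$, and finally $\delta_0$ small enough to enforce all preceding smallness requirements, yields \eqref{nondegenerate_decay}.

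The main technical obstacle is the careful bookkeeping of constants through the rescaling: the normalized coefficient $\tilde\A$, the normalized $N$-function $\psi$, and the higher-integrability constant from Corollary~\ref{cor:high} must all be independent of $\lambda$ in order that Theorem~\ref{thm:Acaloric} furnishes a single $\delta=\delta(\epsilon,p,q)$. A second delicate point is the final change of shift: the discrepancy $|(D\bfu)_{\theta r}^\lambda-D\bm\ell|$ is controlled only by the very excess being iterated, so the parameter $\eta$ in Lemma~\ref{lem:changeshift} must be selected compatibly with the $\tfrac12\theta^{2\beta}$-budget, while ensuring that the resulting $c_\eta$ does not destroy the $\theta^2$-decay obtained from the caloric approximation.
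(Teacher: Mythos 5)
Your blueprint (linearize at $(D\bfu)^\lambda_r$, rescale, apply Theorem~\ref{thm:Acaloric} with a normalized $N$-function, then combine the caloric decay with the comparison) is the right overall strategy, but there is a genuine gap in the way you verify the integrability hypothesis \eqref{Acaloric_ass1}, and this is in fact the point where the paper's argument takes a different (and necessary) detour.

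You propose to apply Theorem~\ref{thm:Acaloric} on the full (rescaled) cylinder $Q_1$ with $\psi$ equal to the normalized shifted $N$-function $\tilde\phi_{|D\tilde{\bm\ell}|}$, and you invoke Corollary~\ref{cor:high} to supply the higher-integrability bound $\fint_{Q_1}\psi(|D\tilde\bfu_{\tilde{\bm\ell}}|)^{1+\sigma}\lesssim\psi(\mu)^{1+\sigma}$ on $Q_1$. This is not available: Corollary~\ref{cor:high} (and Theorem~\ref{thm:high} behind it) is a Gehring-type reverse-H\"older inequality that goes from $Q^\lambda_{2\rho}$ to $Q^\lambda_\rho$, so starting from the hypothesis \eqref{Lem:nondegenerate1_Ass2} posed on $Q^\lambda_r$ you can only conclude higher integrability of $\tilde\phi_{|D\tilde{\bm\ell}|}(|D\tilde\bfu_{\tilde{\bm\ell}}|)$ on $Q^\lambda_{r/2}$, i.e.\ on $Q_{1/2}$ in your scaled variables — not on $Q_1$, which is where you apply Theorem~\ref{thm:Acaloric}. (Also, the estimate should read $\le C_0\psi(\mu)^{1+\sigma}\sim\mu^{2(1+\sigma)}$, not $\le C_0$; the smallness in $\mu$ is essential, since otherwise \eqref{Acaloric_ass1} would force $\mu$ to be bounded below.) The paper avoids this obstruction by applying Theorem~\ref{thm:Acaloric} with the \emph{power} $N$-function $\psi(\tau)=\tau^{p_0}$, where $p_0=(1+p_1)/2$ and $p_1=\min\{p,q/(q-1)\}<p$. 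With this choice, the left side of \eqref{Acaloric_ass1} becomes $\bigl(\fint_{Q_r}|D\tilde\bfu_{\tilde{\bm\ell}}|^{p_1}+|{\bf H}|^{p_1}\bigr)^{p_0/p_1}$, and the bound $\lesssim\mu^{p_0}$ follows directly from the hypothesis on $Q_r$ via the Jensen inequality of Lemma~\ref{lem:Jensen} (no reverse H\"older needed). The $L^{p_0}$-comparison \eqref{comparisonp0} is then upgraded to the $\tilde\phi_{|D\tilde{\bm\ell}|}$-comparison by H\"older interpolation, \emph{using} the higher integrability only on $Q_{r/2}$ where it is available. If you insist on using $\psi=\tilde\phi_{|D\tilde{\bm\ell}|}$, you would have to apply Theorem~\ref{thm:Acaloric} on the shrunk cylinder $Q_{1/2}$ (with $\bfh$ agreeing with $\tilde\bfu_{\tilde{\bm\ell}}$ on $\partial_{\mathrm p}Q_{1/2}$, and re-deriving the almost-caloric estimate \eqref{Acalori_pf1} at that scale) and then carry out the decay on sub-cylinders of $Q_{1/2}$; as written your proposal does not do this.

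Two secondary remarks. First, in your final change-of-shift step the bound from Jensen is $|(D\bfu)^\lambda_{\theta r}-D\bm\ell|\le\fint_{Q^\lambda_{\theta r}}|D\bfu_{\bm\ell}|\lesssim\theta^{-(n+2)}\mu\lambda$, so the exponent should be $-(n+2)$, not $-(n+2)/2$; the correction term coming from Lemma~\ref{lem:changeshift} is then of size $\eta\,\theta^{-2(n+2)}\Phi(r)$ rather than $\eta\,\theta^{-(n+2)}\Phi(r)$. This still closes if $\eta$ is chosen last, but it is worth recording. Second, the paper circumvents the change of shift entirely by exploiting the $\bV$-function: by \eqref{Vintequivalent}, $\fint|\bV(D\tilde\bfu)-(\bV(D\tilde\bfu))_{\theta r}|^2$ is (equivalent to) the $L^2$-minimum over constant competitors, so inserting the competitor $\bV\bigl((D\tilde\bfu)_r+(D\bfh)_{\theta r}\bigr)$ and then using \eqref{monotonicity1} gives the desired quantity with shift $|(D\tilde\bfu)_{\theta r}|$ directly from one with shift $|D\tilde{\bm\ell}|$. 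This is cleaner and avoids the $\theta^{-2(n+2)}$ loss entirely.
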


\begin{proof}
For simplicity, we assume that $z_0=(x_0,t_0)=(0,0)$. We fix the linear function  
$$
\bm\ell(x) := (D\bfu)_{r}^\lambda \, x +(\bfu)_{r}^\lambda ,\quad x\in \R^n.
$$
Then we have  $D\bm\ell=(D\bfu)^\lambda_{r}$ and
$$
\fint_{Q^\lambda_{r}} \phi_{|D\bm\ell|} (|D\bfu_{\bm\ell}|) \, \d z =\fint_{Q^\lambda_{r}} \phi_{|(D\bfu)_{r}^\lambda|} (|D\bfu - (D\bfu)_{r}^\lambda|) \, \d z \,.
$$
We divide the proof into three steps.

\vspace{0.3cm}

\textit{Step 1. (Scaling)} 
We consider the following scaled functions: 
$$
\tilde \bfu(x,t) :=\frac{1}{\lambda} \bfu(x, {t}\lambda^2/{\phi(\lambda)}), 
\quad \tilde{\bA}({\bf P}) := \frac{\lambda \bA(\lambda {\bf P} )}{\phi(\lambda)}, 
\quad \tilde \phi(\tau):=\frac{\phi(\lambda \tau)}{\lambda\phi'(\lambda)}, 
\quad \tilde {\bm\ell} :=\frac{1}{\lambda} \bm\ell\,.
$$
Then, by a direct computation, we have
\begin{equation*}
\tphi_{\frac{a}{\lambda}}(\tau) = \frac{\phi_{a}(\lambda \tau)}{\lambda \phi'(\lambda)}  \quad \mbox{ and } \quad \tilde{\phi}_{\frac{a}{\lambda}}'(\tau) = \frac{ {\phi}_{{a}}'(\lambda \tau)}{\phi'(\lambda)}\,,
\end{equation*}
whence 
$$
\tilde\phi_{|D\tilde{\bm\ell}|}'(\tau)=\frac{\phi'_{|D\bm\ell|}(\lambda\tau)}{\phi'(\lambda)}=\frac{\phi'(|D\bm\ell| +\lambda \tau)}{\phi'(\lambda)(|D\bm\ell| +\lambda \tau)}(\lambda\tau)\,. 
$$ 
In particular, taking into account \eqref{Lem:nondegenerate1_Ass1}, 
\begin{equation*}
\tilde\phi_{|D\tilde{\bm\ell}|}'(1)\sim \tilde\phi_{|D\tilde{\bm\ell}|}(1)\sim 1\,.
\end{equation*}
Moreover, $\tilde \bfu$ is a weak solution to 
$$
\partial_t  \tilde \bfu - \mathrm{div}\, \tilde{\bA}(D\tilde \bfu) = {\bf 0} \quad \text{in }\ Q_{r}\,,
$$
where $\tilde {\bA}$ satisfies the same properties of $\bA$ listed in Assumption \ref{Ass}, with $\tilde\phi$ in place of $\phi$, and satisfies 
\begin{equation}\label{aveDu}
\frac{1}{2K}\le |D\tilde {\bm\ell}|= |(D\tilde\bfu)_r| \le  2K,
\end{equation}
and by \eqref{Lem:nondegenerate1_Ass2}
\begin{equation}\label{gammadelta}
\mu := \left(\frac{1}{\phi(|D\bm\ell|)} \fint_{Q^\lambda_r}\phi_{|D{\bm\ell}|} (|D\bfu_{\bm\ell}|)\,\d z\right)^{\frac{1}{2}} \le  \sqrt{\delta_0} \le 1 \,, 
\end{equation}
where  $\delta_0\le 1$ will be determined later.
Note that  the last inequality also yields
\begin{equation}\label{Duphidelta}
  \fint_{Q_r}\tilde\phi_{|D\tilde {\bm\ell}|} (|D\tilde{\bfu}_{\tilde {\bm\ell}}|)\,\d z= \frac{1}{\lambda\phi'(\lambda)}\fint_{Q^\lambda_r}\phi_{|D\bm\ell|} (|D\bfu_{\bm\ell}|)\,\d z \sim \mu^2 \le  \delta_0 \,.
\end{equation}
Moreover, by Theorem~\ref{thm:high} and the estimate \eqref{Acalori_pf1} we also have that
\begin{equation}\label{Duphisigma}
\left(\fint_{Q_{r/2}} \tilde \phi_{|D\tilde {\bm\ell}|} (|D\tilde \bfu_{\tilde {\bm\ell}}|)^{1+\sigma_0} \,\d z\right)^{\frac{1}{1+\sigma_0}} \le c\mu^2 \le c\delta_0
\end{equation}
for some $\sigma_0>0$, where we used the fact that $\phi_{|D{\bm\ell}|}(\mu) \sim \mu^2$ hence $\mathcal D\left((\phi_{|D{\bm\ell}|})^{-1}(\mu^2)\right)=\max\left\{(\phi_{|D{\bm\ell}|})^{-1}(\mu^2)^2, \mu^{n+2}(\phi_{|D{\bm\ell}|})^{-1}(\mu^2)^{-n}\right\}$, this
 implies that
\begin{equation*}\label{Acalori_pf2}\begin{aligned}
\frac{1}{|Q_r|}\left|\int_{Q_r} \tilde{\bfu}_{\tilde {\bm\ell}}  \cdot {\bm\zeta}_t - D\tilde{\bA} (D\tilde{\bm\ell}) \langle D\tilde{\bfu}_{\tilde{\bm\ell}} , D{\bm\zeta}\rangle \, \d z -\int_{B_r} \tilde{\bfu}_{\tilde{\bm\ell}}  \cdot {\bm\zeta} \, \d x \right|& \\
&\hspace{-3cm}\le c \left(\big(\sqrt{\delta_0}\big)^{\gamma}+\sqrt{\delta_0} \right) \mu\, \sup_{Q_r}  |D\bm\zeta|\,, 
\end{aligned}\end{equation*}
for all $\bm\zeta\in C^\infty(Q_r)$ with $\bm\zeta={\bf 0}$ on $\partial B_r\times (-r^2,r^2)$\,.

\vspace{0.3cm}

\textit{Step 2. ($\A$-caloric approximation)} Observe that 

$$
\partial_t \tilde \bfu_{\bm\ell} = \partial_t \tilde \bfu  = \div (\tilde{\bA}(D\tilde \bfu) - \tilde{\bA}(D\tilde {\bm\ell})) =: \div \, {\bf H} \quad \text{in } \ Q_r\,,
$$
i.e., ${\bf H}:=\tilde{\bA}(D\tilde \bfu) - \tilde{\bA}(D\tilde {\bm\ell})$, in the distributional sense, and writing $\tilde{\phi}_{|D\tilde {\bm\ell}|}^*:=(\tilde{\phi}_{|D\tilde {\bm\ell}|})^*$
\begin{equation}
\fint_{Q_r} \tilde{\phi}_{|D\tilde {\bm\ell}|}^*(|{\bf H}|)\, \d z =  \fint_{Q_r} \tilde{\phi}_{|D\tilde {\bm\ell}|}^*(|\tilde{\bA}(D\tilde \bfu) - \tilde{\bA}(D\tilde {\bm\ell})|)\, \d z \le c \fint_{Q_r} \tilde{\phi}_{|D\tilde {\bm\ell}|}(|D\tilde \bfu|)\, \d z \le c\mu^2\,.
\label{Duphideltabis}
\end{equation} 
Set 
$$
p_1:=\min\left\{p,\frac{q}{q-1}\right\}
\quad\text{and}\quad
p_0:=\frac{1+p_1}{2} .
$$
Then we see from the Jensen inequality in Lemma~\ref{lem:Jensen} with $\psi(t):=\tphi_{|D\tilde{\bm\ell |}}(t^{\frac{1}{p_1}})$, \eqref{Duphidelta} and \eqref{Duphideltabis} that
$$
\left(\fint_{Q_r} |D\tilde \bfu_{\tilde {\bm\ell}}|^{p_1}\, \d z\right)^{\frac{1}{p_1}}  \le c \tphi_{|D\tilde {\bm\ell}|}^{-1}\left(\fint_{Q_r} \tilde{\phi}_{|D\tilde {\bm\ell}|}(|D\tilde \bfu_{\tilde {\bm\ell}}|)\, \d z \right) \le c \tphi_{|D\tilde {\bm\ell}|}^{-1}(\mu^2)
$$
and, arguing as before with  $\psi(t):=\tphi^*_{|D\tilde {\bm\ell}|}(t^{1/p_1})$,
$$
\left(\fint_{Q_r} |{\bf H}|^{p_1}\, \d z \right)^{\frac{1}{p_1}} \le c(\tilde\phi_{|D\tilde {\bm\ell}|}^*)^{-1}\left(\fint_{Q_r} \tilde{\phi}_{|D\tilde {\bm\ell}|}^*({\bf H})\, \d z \right)\le c(\tilde\phi_{|D\tilde {\bm\ell}|}^*)^{-1}(\mu^2)\,.
$$
Furthermore, we notice from the first inequality in \eqref{aveDu} and the fact that $\mu\in(0,1)$ that 
$$
 \frac{\tilde\phi'(|D\tilde {\bm\ell}|+\mu)}{|D\tilde {\bm\ell}|+\mu} \sim 1 
 $$
 and for $\tau_1\ge 0$ satisfying that $\tilde \phi_{|D\tilde {\bm\ell}|}'(\tau_1)=\mu$,
$$ 
 \tau_1 \lesssim 1   \ \ \text{hence}\ \ \tau_1\sim \frac{\tilde\phi'(|D\tilde {\bm\ell}|+\tau_1)}{|D\tilde {\bm\ell}|+\tau_1} \tau_1 =\mu  \,,
$$
which imply
\begin{equation}\label{gammaequiv}
\mu^2 \sim  \tilde\phi_{|D\tilde {\bm\ell}|} (\mu)
\quad \text{and}\quad
\mu^2 \sim \tau_1 \mu = (\tilde \phi_{|D\tilde {\bm\ell}|}')^{-1}(\mu) \mu = (\tilde \phi_{|D\tilde {\bm\ell}|}^*)'(\mu)\mu \sim  \tilde \phi_{|D\tilde {\bm\ell}|}^*(\mu)\,.
\end{equation}
Collecting the previous estimates, we then have
\begin{equation}\label{Dup1}
\left(\fint_{Q_r} |D\tilde \bfu_{\tilde {\bm\ell}}|^{p_1}\, \d z+ \fint_{Q_r} |{\bf H}|^{p_1}\, \d z\right)^{\frac{1}{p_1}}
\le c \mu \,.
\end{equation}

Therefore, by Theorem~\ref{thm:Acaloric} with, in particular, $\A:=\tilde{\bA}(D{\tilde {\bm\ell}})$, $\psi(\tau):=\tau^{p_0}$ and $\sigma:=\frac{p_1}{p_0}-1$ (i.e., $p_0(1+\sigma)=p_1$), for $\epsilon\in(0,1)$ to be determined small later, there exists small $\delta_0>0$ depending on $n,N,L,\nu,p,q,\gamma$ and $\epsilon$ such that  
\begin{equation}\label{comparisonp0}
\fint_{Q_{r}} |D\tilde \bfu_{\tilde {\bm\ell}}- D \bfh|^{p_0}\, \d z \le \epsilon \mu^{p_0}\,,
\end{equation}
where $\bfh$ is the weak solution to 
$$
\begin{cases}
\partial_t  \bfh- \div (\A D\bfh) = {\bf0}  \quad \text{in}\ \ Q_r, \\
 \bfh =\tilde \bfu_{\tilde {\bm\ell}} \quad \text{on}\ \ \partial_{\mathrm{p}} Q_r\,.
\end{cases}$$
We note from  \eqref{DhLip}, \eqref{comparisonp0}, \eqref{Dup1} and \eqref{gammaequiv} that
\begin{equation}\label{Dhphisigma}
\left(\fint_{Q_{r/2}}\tilde\phi_{|D\tilde {\bm\ell}|} (|D\bfh|)^{1+\sigma_0} \,\d z\right)^{\frac{1}{1+\sigma_0}}\le c \tilde\phi_{|D\tilde {\bm\ell}|} \left(\fint_{Q_{r}} |D \bfh|\,\d z\right) \le  c \tilde\phi_{|D\tilde {\bm\ell}|}(\mu)\le c \mu^2  \,.
\end{equation}
Therefore, by H\"older's inequality, the Jensen's inequality in Lemma~\ref{lem:Jensen} with $\psi^{-1}(t):=\tilde\phi_{|D\tilde {\bm\ell}|}(t)^{\frac{1}{q}}$  and the estimates \eqref{Duphisigma}, \eqref{comparisonp0}, \eqref{Dhphisigma} and \eqref{gammaequiv}, we have that  with  $\kappa_0\in(0,1)$ satisfying $\frac{\kappa_0}{q}+(1-\kappa_0)(1+\sigma_0)=1$,
\begin{equation}\label{comparisonphi}\begin{aligned}
&\fint_{Q_{r/2}} \tilde\phi_{|D\tilde {\bm\ell}|}(|D\tilde \bfu_{\tilde {\bm\ell}}- D \bfh|)\, \d z\\ 
& \le \left(\fint_{Q_{r/2}} \tilde\phi_{|D\tilde {\bm\ell}|}(|D\tilde \bfu_{\tilde {\bm\ell}}- D \bfh|)^{\frac{1}{q}}\, \d z\right)^{\kappa_0}\left(\fint_{Q_{r/2}} \tilde\phi_{|D\tilde {\bm\ell}|}(|D\tilde \bfu_{\tilde {\bm\ell}}- D \bfh|)^{1+\sigma_0}\, \d z\right)^{1-\kappa_0}\\
& \le c \tilde\phi_{|D\tilde {\bm\ell}|}\left(\left[\fint_{Q_{r/2}} |D\tilde \bfu_{\tilde {\bm\ell}}- D \bfh|^{p_0}\, \d z\right]^{\frac{1}{p_0}}\right)^{\frac{\kappa_0}{q}} \mu^{2(1-\kappa_0)(1+\sigma)}\\
& \le c \epsilon^{\frac{p\kappa_0}{p_0q}} \tilde\phi_{|D\tilde {\bm\ell}|}(\mu)^{\frac{\kappa_0}{q}} \mu^{2(1-\kappa_0)(1+\sigma)} \le c \epsilon^{\frac{p\kappa_0}{p_0q}} (\mu^2)^{\frac{\kappa_0}{q}+(1-\kappa_0)(1+\sigma)} = c \epsilon^{\frac{p\kappa_0}{p_0q}} \mu^2\,.
\end{aligned}\end{equation}
 Moreover, by \eqref{DhLip} in Lemma~\ref{Lem:Acaloricmap} with $\rho=r/2$, \eqref{Dhphisigma} and \eqref{gammadelta}, we also have
$$
\sup_{Q_{r/4}} |D\bfh| \le c (\tilde\phi_{|D\tilde {\bm\ell}|})^{-1} \left(\fint_{Q_{r/2}} \tilde\phi_{|D\tilde {\bm\ell}|}(|D \bfh|)\, \d z \right) \le c  \mu \le c \sqrt{\delta_0}.
$$
Note that we choose $\delta_0$ small so that 
\begin{equation}\label{Dhsmall}
\sup_{Q_{r/4}} |D\bfh| \le c\mu \le \frac{1}{4K}\,.
\end{equation}

\textit{Step 3. (Decay estimate)} Let $\theta\in (0,1/8)$ to be determined later and recall function $\bV$ corresponding to the $N$-function $\tilde\phi$ defined as in \eqref{Vfunction}. We first observe from \eqref{aveDu} and \eqref{Dhsmall} that
$$
\frac{1}{8}|(D\tilde \bfu)_{r}|\le \frac{K}{4} \le |(D\tilde \bfu)_{r}| - | (D \bfh)_{\theta r}| \le |(D\tilde \bfu)_{r} + (D \bfh)_{\theta r}|\le |(D\tilde \bfu)_{r} |+\frac{1}{4K} \le  \frac{9}{4}|(D\tilde \bfu)_{r}|\,.
$$
Then, using \eqref{monotonicity1}, \eqref{Vintequivalent} and the preceding estimate,
$$\begin{aligned}
\fint_{Q_{\theta r}} \tilde\phi_{|(D\tilde \bfu)_{\theta r}|}(|D\tilde\bfu-(D\tilde \bfu)_{\theta r}|)|)\, \d z& \sim \fint_{Q_{\theta r}} |\bV(D\tilde \bfu) - \bV( (D\tilde \bfu)_{\theta r})|^2\, \d z \\
 &\sim  \fint_{Q_{\theta r}} |\bV(D\tilde \bfu) - (\bV( D\tilde \bfu))_{\theta r}|^2\, \d z\\
&\le \fint_{Q_{\theta r}} |\bV(D\tilde \bfu) - \bV( (D\tilde \bfu)_{r} + (D \bfh)_{\theta r})|^2\, \d z\\
&\sim  \fint_{Q_{\theta r}} \tilde\phi_{|(D\tilde \bfu)_{r} + (D \bfh)_{\theta r}|}(|D\tilde \bfu -(D\tilde \bfu)_{r} - (D \bfh)_{\theta r}|)\, \d z\\
&\sim  \fint_{Q_{\theta r}} \tilde\phi_{|(D\tilde \bfu)_{r}|}(|D\tilde \bfu -(D\tilde \bfu)_{r} - (D \bfh)_{\theta r}|)\, \d z\,.
\end{aligned}$$
Moreover, by \eqref{aveDu} and \eqref{Dhsmall} we have
$$\tilde\varphi_{|D\tilde{\bm\ell}|}
(\theta (|D{\bf h}|)_{r/4}) \lesssim
\tilde\varphi_{|D\tilde{\bm\ell}|}
(\theta \mu) \sim
\theta^2\mu^2\,,
$$
Using the above two estimates, \eqref{Dhdecay} in Lemma~\ref{Lem:Acaloricmap} with $\rho=r/2$ and \eqref{comparisonphi},
we obtain
$$\begin{aligned}
&\fint_{Q_{\theta r}} \tilde\phi_{|(D\tilde \bfu)_{\theta r}|}(|D\tilde\bfu-(D\tilde \bfu)_{\theta r}|)\, \d z\\
& \le c \fint_{Q_{\theta r}} \tilde\phi_{|(D\tilde \bfu)_{r}|}(|D\tilde \bfu -(D\tilde \bfu)_{r} - (D \bfh)_{\theta r}|)\, \d z \\
&\le c  \fint_{Q_{\theta r}} \tilde\phi_{|D\tilde {\bm\ell}|}(|D\tilde \bfu_{\tilde{\bm\ell}}  - D \bfh|)\, \d z +c \fint_{Q_{\theta r}} \tilde\phi_{|D\tilde {\bm\ell}|}(|D \bfh - (D \bfh)_{\theta r})|)\, \d z\\
&\le c \theta^{-(n+2)}\fint_{Q_{r/2}} \tilde\phi_{|D\tilde {\bm\ell}|}(|D\tilde \bfu_{\tilde {\bm\ell}}- D \bfh |)\, \d z +  c \tilde\phi_{|D\tilde {\bm\ell}|}\left( \theta \fint_{Q_{r/4}} |D \bfh- (D \bfh)_{r/4}|\, \d z \right) \\
&\le c\theta^{-(n+2)} \epsilon^{\frac{p\kappa_0}{p_0q}}\mu^2 + c \theta^{2}\mu^2.
\end{aligned}$$
Finally, choosing $\theta$ small so that $c \theta^{2(1-\beta)}\le \frac{1}{2}$ and then $\epsilon$ small so that $c\theta^{-(n+2)} \epsilon^{\frac{p\kappa_0}{p_0q}}\le \frac{1}{2}\theta^{2\beta}$, we obtain 
$$
\fint_{Q_{\theta r}} \tilde\phi_{|(D\tilde \bfu)_{\theta r}|}(|D\tilde\bfu-(D\tilde \bfu)_{\theta r}|)\, \d z  \le \theta^{2\beta}\mu^2\,,
$$
whence, by scaling back,
$$
\fint_{Q_{\theta r}} \phi_{|(D\bfu)_{\theta r}^\lambda|}(|D\bfu-(D \bfu)^\lambda_{\theta r}|)\, \d z \le \theta^{2\beta} \lambda \phi'(\lambda) \mu^2 \,.
$$ 
This estimate, together with \eqref{Duphidelta}, yields \eqref{nondegenerate_decay} by choosing $\theta$ sufficiently small depending on $n,N,p,q,L,\nu,K,\gamma$ and $\beta$. This concludes the proof.
\end{proof}


From the previous lemma, we obtain decay estimates for $D \bfu$ in the nondegenerate regime. 

\begin{lemma} \label{Lem:nondegenerate2}
Let $\lambda>0$, $\beta\in(0,1)$, $Q_{R}^\lambda = Q_{R}^\lambda(z_0)\Subset \Omega_T$ and $\bfu$ be a weak solution to \eqref{system1}.
Suppose
\begin{equation*}\label{Lem:nondegenerate2_ass1}
\frac{\lambda}{K_0}\le |(D\bfu)^\lambda_{R}| \le K_0 \lambda
\end{equation*}
for some $K_0>0$. There exists small $\delta_1\in (0,1)$ depending on $n,N,p,q,L,\nu,\beta,\gamma$ and $K_0$ such that if 
\begin{equation}\label{Lem:nondegenerate2_ass2}
\fint_{Q^\lambda_{R}} \phi_{|(D\bfu)_{R}^\lambda|} (|D\bfu - (D\bfu)_{R}^\lambda|) \, \d z  \le \delta_1 \phi(|(D\bfu)^\lambda_{R}|)
\end{equation}
then the limit 
\begin{equation}\label{Lem:nondegenerate2_result1}
\Gamma_{z_0} := \lim_{r \to 0^+} \,(D\bfu)_{Q_r(z_0)}
\end{equation}
exists with 
\begin{equation}\label{Lem:nondegenerate2_result2}
\frac{\lambda}{2K_0} \le |\Gamma_{z_0}| \le 2K_0 \lambda\, ,
\end{equation}
and  for every $r\in(0,R)$,
\begin{equation}\label{nondegenerate_decayDu}
\fint_{Q^\lambda_{r}(z_0)} \phi (|D\bfu - \Gamma_{z_0}|) \, \d z  \le c \Big(\frac{r}{R}\Big)^{\beta_1} \phi (  \lambda ) 
\end{equation}
for some $c=c(n,N,p,q,L,\nu,K_0,\beta,\gamma)>0$ and $\beta_1=\beta_1(p,q,\beta)>0$.
\end{lemma}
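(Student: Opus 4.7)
The plan is to apply Lemma~\ref{Lem:nondegenerate1} iteratively on a geometric sequence of intrinsic cylinders and then pass to the limit.

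Fix $K := 2K_0$. Applying Lemma~\ref{Lem:nondegenerate1} with this $K$ and the given $\beta$ produces constants $\delta_0 = \delta_0(n,N,p,q,L,\nu,K_0,\gamma,\beta)$ and $\theta = \theta(n,N,p,q,L,\nu,K_0,\gamma,\beta) \in (0,1)$. I will choose $\delta_1 \in (0, \delta_0]$ small enough below. Set $r_k := \theta^k R$ and $\Gamma_k := (D\bfu)^\lambda_{r_k}$ for $k \ge 0$. The central inductive claim on $k$ reads:
\begin{equation*}
\text{(a)}\ \tfrac{\lambda}{2K_0} \le |\Gamma_k| \le 2K_0\lambda, \qquad \text{(b)}\ E_k := \fint_{Q^\lambda_{r_k}} \phi_{|\Gamma_k|}(|D\bfu - \Gamma_k|)\,\d z \le \theta^{2\beta k} \delta_1 \phi(\lambda).
\end{equation*}
The base case $k=0$ is \eqref{Lem:nondegenerate2_ass2} together with \eqref{Lem:nondegenerate2_ass1}. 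Given (a)--(b) at step $k$, the equivalence $\phi(|\Gamma_k|)\sim\phi(\lambda)$ from \eqref{characteristic} shows $E_k \le \delta_0 \phi(|\Gamma_k|)$, so Lemma~\ref{Lem:nondegenerate1} applied at scale $r_k$ yields (b) at step $k+1$.

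To propagate (a), I bound the drift. Since $\Gamma_{k+1}$ is an average over $Q^\lambda_{r_{k+1}} \subset Q^\lambda_{r_k}$ with volume ratio $\theta^{n+2}$,
\begin{equation*}
|\Gamma_{k+1}-\Gamma_k| \le \theta^{-(n+2)} \fint_{Q^\lambda_{r_k}}|D\bfu - \Gamma_k|\,\d z.
\end{equation*}
Applying Lemma~\ref{lem:Jensen} to $\phi_{|\Gamma_k|}$ and inverting via the relation $\phi_{|\Gamma_k|}(t) \sim \frac{\phi(\lambda)}{\lambda^2}t^2$ for $t \lesssim \lambda$ (a consequence of \eqref{(2.6b)} and $|\Gamma_k|\sim\lambda$) gives
\begin{equation*}
\fint_{Q^\lambda_{r_k}}|D\bfu - \Gamma_k|\,\d z \le c\, \lambda\, \sqrt{E_k/\phi(\lambda)} \le c\,\theta^{\beta k}\sqrt{\delta_1}\,\lambda.
\end{equation*}
The total drift is thus $\sum_k|\Gamma_{k+1}-\Gamma_k| \le c(\theta,\beta,K_0)\sqrt{\delta_1}\,\lambda$, which can be made less than $\lambda/(2K_0)$ by choosing $\delta_1$ sufficiently small. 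Combined with the initial bound $|\Gamma_0|\in[\lambda/K_0, K_0\lambda]$, this closes (a) at step $k+1$.

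From (a)--(b) for all $k$, the sequence $\{\Gamma_k\}$ is Cauchy, with limit $\Gamma_{z_0}$ satisfying \eqref{Lem:nondegenerate2_result2}. Using \eqref{eq:approx} together with the change of shift Lemma~\ref{lem:changeshift} to convert between $\phi$ and $\phi_{|\Gamma_k|}$, and the tail estimate $|\Gamma_k-\Gamma_{z_0}|\le c\theta^{\beta k}\sqrt{\delta_1}\lambda$, one finds $\fint_{Q^\lambda_{r_k}}\phi(|D\bfu-\Gamma_{z_0}|)\,\d z \le c\,\theta^{2\beta k}\phi(\lambda)$. For a general $r\in(0,R)$ pick $k$ with $r_{k+1}< r\le r_k$; the inclusion $Q^\lambda_r\subset Q^\lambda_{r_k}$ with volume ratio bounded by $\theta^{-(n+2)}$ yields \eqref{nondegenerate_decayDu} with $\beta_1 := 2\beta \log\theta^{-1}/\log\theta^{-1}$, i.e.\ any exponent slightly below $2\beta$. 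Finally, the identification of $\Gamma_{z_0}$ with $\lim_{r\to 0^+}(D\bfu)_{Q_r(z_0)}$ in \eqref{Lem:nondegenerate2_result1} uses that $\lambda^2/\phi(\lambda)$ is a fixed positive ratio: every standard cylinder $Q_r$ is sandwiched between two intrinsic cylinders of comparable size, so the decay estimate transfers to force $(D\bfu)_{Q_r(z_0)} \to \Gamma_{z_0}$.

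The main obstacle is the bookkeeping of the bootstrap: at each step one must reaffirm the nondegeneracy (a), which is only possible if $\delta_1$ is chosen small enough that the telescoping drift stays bounded by $\lambda/(2K_0)$; this forces the precise dependence $\delta_1 = \delta_1(n,N,p,q,L,\nu,K_0,\gamma,\beta)$. The passage between intrinsic and non-intrinsic cylinders in the final identification is a minor technical point once the decay estimate is in hand.
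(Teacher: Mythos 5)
Your proof is correct and follows essentially the same route as the paper's: iterate Lemma~\ref{Lem:nondegenerate1} on the scales $r_k=\theta^k R$, maintain the nondegeneracy of the averages $\Gamma_k$ by bounding the cumulative drift, then pass to the limit and convert the shifted-excess decay into a decay estimate for $\fint\phi(|D\bfu-\Gamma_{z_0}|)$ and a final identification of $\Gamma_{z_0}$ with the pointwise limit over standard cylinders.

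There is a small technical variation worth noting. To bound the increment $|\Gamma_{k+1}-\Gamma_k|$, you apply Jensen directly to $\phi_{|\Gamma_k|}$ and use the fact that $\phi_{|\Gamma_k|}$ is essentially quadratic on the range $t\lesssim\lambda$ (since $|\Gamma_k|\sim\lambda$), so $\phi_{|\Gamma_k|}^{-1}(E_k)\sim\lambda\sqrt{E_k/\phi(\lambda)}$ and the increment decays like $\theta^{\beta k}$. The paper instead passes to $\phi$ via~\eqref{phipqinequality} and then uses only the coarse bound $\phi^{-1}(\varepsilon\phi(\lambda))\le\varepsilon^{1/q}\lambda$, getting a slower drift rate $\theta^{\beta k/q}$. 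Your sharper use of the shift is legitimate and gives a simpler bookkeeping for the drift sum.

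One quantitative claim is off, though it does not affect the validity of the lemma: the final decay exponent cannot be ``any exponent slightly below $2\beta$.'' When you convert from $\phi_{|\Gamma_k|}$ back to $\phi$ (again via~\eqref{phipqinequality}), the optimal choice of $\varepsilon$ turns $E_k\le\theta^{2\beta k}\delta_1\phi(\lambda)$ into $\fint_{Q^\lambda_{r_k}}\phi(|D\bfu-\Gamma_k|)\,\d z\lesssim\theta^{\beta k}\sqrt{\delta_1}\,\phi(\lambda)$, i.e., a square-root loss. Adding the tail contribution $\phi(|\Gamma_k-\Gamma_{z_0}|)\lesssim\theta^{p\beta k}\delta_1^{p/2}\phi(\lambda)$, the achievable rate is governed by $\min\{\beta,p\beta\}=\beta$, not $2\beta$; the paper, using its cruder drift bound, records $\beta_1=p\beta/q$. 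Since the statement only demands some positive $\beta_1=\beta_1(p,q,\beta)$, your proof succeeds, but the numeric $\beta_1$ should be corrected.
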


\begin{proof}
For simplicity, we shall omit to write the center $z_0$.  
We recall the parameters  $\theta$ and $\delta_0$ from Lemma~\ref{Lem:nondegenerate1}. However, we notice that for any smaller $\theta$ and $\delta_0$ satisfying additional conditions, \eqref{Lem:nondegenerate1_Ass2} and \eqref{nondegenerate_decay}  still hold.
Then we choose $\delta_1\le \delta_0$. We divide the proof into two steps.

\textit{Step 1.}  We shall prove by induction that for every $i\in\mathbb N$,
\begin{equation}\label{induction1-1}
\fint_{Q^\lambda_{\theta^{i}R}} \phi_{|(D\bfu)_{\theta^{i}R}^\lambda|} (|D\bfu - (D\bfu)_{\theta^{i}R}^\lambda|) \, \d z  \le \theta^{2\beta i} \fint_{Q^\lambda_{R}} \phi_{|(D\bfu)_{R}^\lambda|} (|D\bfu - (D\bfu)_{R}^\lambda|) \, \d z\,,  
\end{equation}
and
\begin{equation}\label{induction1-2}
\tfrac{1}{2}|(D\bfu)_{R}^\lambda| \le  \bigg[1-\tfrac{1}{4}\sum_{k=0}^{i-1} 2^{-k}\bigg]|(D\bfu)_{R}^\lambda|  \le |(D\bfu)_{\theta^{i}R}^\lambda| 
\le \bigg[1+\tfrac{1}{4}\sum_{k=0}^{i-1} 2^{-k}\bigg] |(D\bfu)_{R}^\lambda| \le \tfrac{3}{2}|(D\bfu)_{R}^\lambda|\,.
\end{equation}
Suppose $i=1$. Then \eqref{nondegenerate_decay} yields \eqref{induction1-1}.  In order to prove  \eqref{induction1-2}, we first observe from \eqref{eq:approx} and \eqref{Lem:nondegenerate2_ass2} that
$$
\fint_{Q^\lambda_{R}} \phi (|D\bfu|) \, \d z \le c \fint_{Q^\lambda_{R}} \phi_{|(D\bfu)_{R}^\lambda|} (|D\bfu - (D\bfu)_{R}^\lambda|) \, \d z  +  c \phi (| (D\bfu)_{R}^\lambda|) \le c \phi (| (D\bfu)_{R}^\lambda|) .
$$
and, applying \eqref{phipqinequality} with $\epsilon=\delta_0^{\frac{1}{2}}$,
$$\begin{aligned}
&\fint_{Q^\lambda_{R}} \phi (|D\bfu - (D\bfu)_{R}^\lambda|) \, \d z \\
&\le    \fint_{Q^\lambda_{R}} \delta_0^{\frac{1}{2}} [ \phi (|D\bfu|) + \phi(|(D\bfu)_{R}^\lambda|)]  + c \delta_0^{-\frac{1}{2}}   \phi_{|(D\bfu)_{R}^\lambda|} (|D\bfu - (D\bfu)_{R}^\lambda|) \, \d z 
\le c\delta_0^{\frac{1}{2}} \phi(|(D\bfu)_{R}^\lambda|) \,.
\end{aligned}$$
Hence we have
$$\begin{aligned}
|(D\bfu)_{\theta R}^\lambda - (D\bfu)_{R}^\lambda| &\le \theta^{-\frac{n+2}{p}}\phi^{-1} \left(\fint_{Q^\lambda_{R}} \phi (|D\bfu - (D\bfu)_{R}^\lambda|) \, \d z\right) \\
&\le c \theta^{-\frac{n+2}{p}} \phi^{-1}(\delta_0^{\frac{1}{2}} \phi(|(D\bfu)_{R}^\lambda|)) \le c \theta^{-\frac{n+2}{p}} \delta_0^{\frac{1}{2q}} |(D\bfu)_{R}^\lambda| \le \frac{1}{4} |(D\bfu)_{R}^\lambda|\,,
\end{aligned}$$
after choosing sufficiently small 
$\delta_0=\delta_0(n,N,p,q,L,\nu,K_0,\beta)$,
which implies \eqref{induction1-2}. 

Suppose that \eqref{induction1-1} and \eqref{induction1-2} hold for $i=1,2,\dots,j-1$ for some $j\ge 2$. Then, using \eqref{induction1-1}, \eqref{induction1-2} with $i=j-1$ and \eqref{Lem:nondegenerate2_ass2},
 and choosing $\theta$ such that $\theta^{2\beta} \le 2^{-q} $, we have
\begin{equation}\label{Lem:nondegenerate2_pf3}\begin{aligned}
\fint_{Q^\lambda_{\theta^{j-1}R}} \phi_{|(D\bfu)_{\theta^{j-1}R}^\lambda|} (|D\bfu - (D\bfu)_{\theta^{j-1}R}^\lambda|) \, \d z &  \le  \theta^{2\beta (j-1)} \delta_1 \phi(2|(D\bfu)^\lambda_{\theta^{j-1}R}|) \\
&\le   \delta_0 \phi(|(D\bfu)^\lambda_{\theta^{j-1}R}|) \,.
\end{aligned}\end{equation}
Since  $\frac{1}{2K_0} \le |(D\bfu)^\lambda_{\theta^{j-1}R}| \le 2K_0$, applying Lemma~\ref{Lem:nondegenerate1} with $r=\theta^{j-1}R$ we obtain 
$$ \begin{aligned}
\fint_{Q^\lambda_{\theta^{j}R}} \phi_{|(D\bfu)_{\theta^{j}R}^\lambda|} (|D\bfu - (D\bfu)_{\theta^{j}R}^\lambda|) \, \d z 
& \le \theta^{2\beta} \fint_{Q^\lambda_{\theta^{j-1}R}} \phi_{|(D\bfu)_{\theta^{j-1}R}^\lambda|} (|D\bfu - (D\bfu)_{\theta^{j-1}R}^\lambda|) \, \d z \\
& \le \theta^{2\beta j} \fint_{Q^\lambda_{R}} \phi_{|(D\bfu)_{R}^\lambda|} (|D\bfu - (D\bfu)_{R}^\lambda|) \, \d z \,,
\end{aligned}$$
which proves \eqref{induction1-1} with $i=j$. We next prove \eqref{induction1-2} with $i=j$. As in the case $i=1$, we have from \eqref{Lem:nondegenerate2_pf3} that 
\begin{equation*}\label{Lem:nondegenerate2_pf6}
\fint_{Q^\lambda_{\theta^{j-1}R}} \phi (|D\bfu - (D\bfu)_{\theta^{j-1}R}^\lambda|) \, \d z
\le c\theta^{\beta (j-1)} \delta_1^{\frac{1}{2}}\phi(|(D\bfu)^\lambda_{\theta^{j-1}R}|)\,,
\end{equation*}
and hence
$$
\begin{aligned}
|(D\bfu)_{\theta^{j} R}^\lambda - (D\bfu)_{\theta^{j-1}R}^\lambda| &\le \theta^{-\frac{n+2}{p}}\phi^{-1} \left(\fint_{Q^\lambda_{\theta^{j-1}R}} \phi (|D\bfu - (D\bfu)_{\theta^{j-1}R}^\lambda|) \, \d z\right) \\
&\le c \theta^{-\frac{n+2}{p}} \phi^{-1} (\delta_1^{\frac{1}{2}}\theta^{\beta (j-1)} \phi( |(D\bfu)_{\theta^{j-1}R}^\lambda|)) \\
&\le c \theta^{-\frac{n+2}{p}}\delta_1^{\frac{1}{2q}}\theta^{\frac{\beta (j-1)}{q}} |(D\bfu)_{\theta^{j-1}R}^\lambda|\,.
\end{aligned}$$
Therefore, choosing $\theta$ such that $\theta^{\frac{\beta}{q}}\le 2^{-1}$ and  $\delta_1$ such that $c \theta^{-\frac{n+2}{p}}\delta_1^{\frac{1}{2q}} \le 1/4$, we obtain 
\begin{equation}\label{Lem:nondegenerate2_pf5}
|(D\bfu)_{\theta^{j} R}^\lambda - (D\bfu)_{\theta^{j-1}R}^\lambda| \le \theta^{\frac{\beta (j-1)}{q}} |(D\bfu)_{\theta^{j-1}R}^\lambda|  \le \frac{1}{4} 2^{-(j-1)}|(D\bfu)_{\theta^{j-1}R}^\lambda|\,,
\end{equation}
which, together with  \eqref{induction1-2} with $i=j-1$, implies  \eqref{induction1-2} with $i=j$.

\textit{Step 2.}  We first note that $\{(D\bfu)_{\theta^{i} R}^\lambda\}_{i\in \mathbb N}$ is a Cauchy sequence. Indeed, by \eqref{Lem:nondegenerate2_pf5} and \eqref{induction1-2}, we have that any $i,j\in \mathbb N$ with $i<j$
\begin{equation}\label{Lem:nondegenerate2_pf4}\begin{aligned}
|(D\bfu)_{\theta^{j} R}^\lambda - (D\bfu)_{\theta^{i} R}^\lambda| 
\le    \sum_{k=i}^{j-1} \theta^{\frac{\beta k}{q}} |(D\bfu)_{\theta^{k}R}^\lambda| 
 \le \frac{1}{4} \sum_{k=i}^{j-1} 2^{-k}| (D\bfu)_{\theta^{k} R}^\lambda| \le \frac{3}{4} 2^{-i} |(D\bfu)_{R}^\lambda|\,.
\end{aligned}\end{equation}
Therefore, set 
$$
\Gamma_{0} :=  \lim_{i\to \infty}  (D\bfu)_{\theta^{i} R}^\lambda\,.
$$
Then \eqref{induction1-2} implies \eqref{Lem:nondegenerate2_result2}. We shall prove \eqref{nondegenerate_decayDu}. 
Note that  by \eqref{induction1-1}, \eqref{Lem:nondegenerate2_ass2}, \eqref{Lem:nondegenerate2_pf4} and \eqref{induction1-2},
$$\begin{aligned}
\fint_{Q^\lambda_{\theta^iR}} \phi(|D\bfu - \Gamma_0 |) \, \d z 
& \le c \fint_{Q^\lambda_{\theta^iR}} \phi(|D\bfu - (D\bfu)_{\theta^{i} R}^\lambda |) \, \d z +c \phi(|(D\bfu)_{\theta^{i} R}^\lambda -\Gamma_0|) \\
& \le c \theta^{\beta i} \phi(\lambda) +  c \phi\bigg(\sum_{k=i}^\infty \theta^{\frac{\beta k}{q}} \lambda \bigg) \le c \theta^{\frac{p\beta i}{q}} \phi(\lambda)
\end{aligned}$$
Therefore, for every $r<\theta R$ with $i\in \mathbb N$ satisfying $\theta^{i+1} R\le r \le \theta^iR$, we have 
$$
\fint_{Q^\lambda_{r}} \phi(|D\bfu - \Gamma_0|) \, \d z  \le \frac{1}{\theta^{n+2}} \fint_{Q^\lambda_{\theta^iR}} \phi(|D\bfu - \Gamma_0|) \, \d z \le c  \theta^{\frac{p\beta i}{q}} \phi(\lambda) \le c \left(\frac{r}{R}\right)^{\frac{p\beta}{q}} \phi(\lambda),
$$
which implies \eqref{nondegenerate_decayDu}. 

It remains to prove \eqref{Lem:nondegenerate2_result1}. 
For $0< r \le \min\{1, (\phi(\lambda)/\lambda^2)^{-\frac{1}{2}}\}\theta R$, set
$$
\rho:= \max\{1, (\phi(\lambda)/\lambda^2)^{\frac{1}{2}}\} \,r \le \theta R\,.
$$
Then we have  $Q_r\subset Q^\lambda_{\rho}$ and by \eqref{nondegenerate_decayDu}
$$\begin{aligned}
|(D\bfu)_{Q_r} - \Gamma_0 |
& \le  \frac{\rho^{n+2}\lambda^2}{\phi(\lambda)r^{n+2}} \fint_{Q^\lambda_{\rho}} |D\bfu-\Gamma_0|\, \dz \\
& \le  c \frac{\max\{1, (\phi(\lambda)/\lambda^2)^{-\frac{n+2}{2}}\}}{(\phi(\lambda)/\lambda^2)} \phi^{-1}  \left( \Big(\frac{\rho}{R}\Big)^{\frac{p\beta}{q}} \phi(\lambda)\right)\\
& \le  c \frac{\max\{1, (\phi(\lambda)/\lambda^2)^{-\frac{n+2}{2}}\}}{\phi(\lambda)/\lambda^2}   \Big(\frac{r}{R}\max\{1, (\phi(\lambda)/\lambda^2)^{-\frac{1}{2}}\} \Big)^{\frac{p\beta}{q^2}}\lambda  \  \longrightarrow \ 0  
\end{aligned}$$
as $r\to 0$, which implies \eqref{Lem:nondegenerate2_result1}. 
Therefore, the proof is completed.
\end{proof}


\section{Degenerate regime}\label{Sec:degenerate}

We consider the \emph{degenerate} regime, which means that the average of the gradient of solution is relatively smaller than the relevant excess function, see for instance \eqref{Lem:degenerate_ass2}. In this regime, we apply the $\phi$-caloric approximation. We start by investigating regularity results for $\phi$-caloric maps. We refer to \cite{OSS23} for regularity results for  $\phi$-caloric maps.

Let ${\bf h}$ be a weak solution to 
\begin{equation}\label{system:phi}
\partial_t \bfh -   \div\left(\frac{\phi'(|D\bfh |)}{|D \bfh|} D \bfh\right) = \zero \quad \text{in }\  Q^\lambda_{R}.
\end{equation}
Then by \cite[Corollary 5.3]{OSS23} with the scaling argument used in the proof of Corollary~\ref{cor:high}, we have 
\begin{equation}\label{Lip_phicaloric}
\sup_{Q^\lambda_{R/2}}\phi (|D\bfh|) \le c \phi(\lambda)\,.
\end{equation}
for some $c>0$ depending on $n,N,p,q$ and $\tilde c$ if $\bfh$ satisfies 
$$
\fint_{Q^\lambda_{R}} \phi (|D\bfh|) \, \d z \le \tilde{c} \phi(\lambda)\,.
$$
Moreover, from \cite[Section 6]{OSS23}, we have the following result concerned with $C^{1,\alpha}$-regularity for $\bfh$.

\begin{lemma}\label{Lem:phiLaplacesystem}
Let ${\bf h}$ be a weak solution to \eqref{system:phi}  in $Q^\lambda_{R}=Q^\lambda_{R}(z_0)$
with
\begin{equation}
\sup_{Q^{\lambda}_{R}} |D \bfh| \le    \lambda
\label{eq:bounded1}
\end{equation}
for some $\lambda>0$. Then there exist $\alpha_1\in(0,1)$ depending on $n,N,p,q$, a switching radius $r_s\in[0,R]$ and $\lambda_r>0$ for each $r\in (0,R]$, such that  
\begin{equation}\label{lambdar}
\lambda_r = \lambda_{r_s}  \  \ \text{if}\ \ r\in(0,r_s]
\quad\text{and}\quad
\Big(\frac{r}{R}\Big)^{\alpha_1}\lambda 
\le  \lambda_r
\le 2 \Big(\frac{r}{R}\Big)^{\alpha_1}\lambda 
\ \ \text{if}\ \ r\in (r_s,R]\,,
\end{equation}
\begin{equation}\label{phiDhLipr}
\sup_{Q^{\lambda_r}_{r}} |D \bfh| \le    \lambda_r 
\quad \text{for all}\ \ r\in (0,R]\,,
\end{equation}
and
\begin{equation}\label{degenerate_excess}
\fint_{Q^{\lambda_r}_{ r}} \phi_{|(D\bfh)_{ r}^{\lambda_r}|}(|D\bfh - (D\bfh)_{r}^{\lambda_r}|) \,\dz 
\le c \left(\frac{r}{r_{s}}\right)^{3/4}  \phi( \lambda_r)\, 
\quad \text{if}\ \ r\in(0,r_s].
\end{equation}
Moreover,  we also have 
\begin{equation}\label{aveDulowerbound}
|(D\bfh)^{\lambda_{r}}_{r}| \ge C_s^{-1}\lambda_{r} 
 \quad \text{and}\quad 
 \underset{Q^{\lambda_{r}}_{r}}{\mathrm{osc}}\, D\bfh \le c \left(\frac{r}{r_s}\right)^{3/4}\lambda_r  \quad \text{if}\ \ r\in(0,r_s] \,,
\end{equation}
for some $C_s>1$ and $c>0$ depending on $n,N,p,q$.
\end{lemma}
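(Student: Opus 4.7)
The proof proceeds by combining an iterative intrinsic-scaling argument in a \emph{degenerate} regime with a Campanato-type decay in a \emph{nondegenerate} one, patched at the switching radius $r_s$. The plan is to construct the sequence $\{\lambda_{r_k}\}$ inductively along dyadic scales $r_k := \theta^k R$, starting with $\lambda_{r_0} := \lambda$, testing at each stage whether
\[
\fint_{Q^{\lambda_{r_k}}_{r_k}} \phi(|D\bfh|)\,\d z
\]
lies below a small universal fraction of $\phi(\lambda_{r_k})$ (degenerate case) or not (nondegenerate case). The switching radius $r_s$ is declared to be the first $r_k$ at which the nondegenerate alternative occurs; for $r \le r_s$ I would freeze $\lambda_r := \lambda_{r_s}$, and for intermediate $r \in (0,R]$ one passes from the dyadic $r_k$'s to arbitrary radii by comparing adjacent intrinsic cylinders.

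In the degenerate regime the $\phi$-caloric approximation of Theorem~\ref{thm:phi-caloric}, applied on $Q^{\lambda_{r_k}}_{r_k}$, produces a genuine $\phi$-caloric comparison map $\mathbf{g}$ whose gradient is controlled by $\lambda_{r_k}$ via the Lipschitz-type bound \eqref{Lip_phicaloric}. Choosing the approximation parameter small and $\theta$ small enough, I would transfer this sup bound to $\bfh$ at the next scale, obtaining
\[
\sup_{Q^{\lambda_{r_{k+1}}}_{r_{k+1}}} |D\bfh| \le \lambda_{r_{k+1}}, \qquad \lambda_{r_{k+1}} := \theta^{\alpha_1}\lambda_{r_k},
\]
with $\alpha_1 \in (0,1)$ fixed by matching the scaling behaviour of $\phi$ against the contraction factor $\theta$. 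Iterating yields $\lambda_{r_k} \simeq (r_k/R)^{\alpha_1}\lambda$ on dyadic scales, hence \eqref{lambdar} and \eqref{phiDhLipr}.

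In the nondegenerate regime (scales $r \le r_s$) the $\phi$-Laplacian system is locally uniformly parabolic in the gradient variable, and a Campanato iteration analogous to Lemma~\ref{Lem:nondegenerate1}---in fact simpler, because there is no off-diagonal error term for a genuine $\phi$-caloric map---yields the excess decay \eqref{degenerate_excess} with any H\"older exponent below $1$, in particular the stated $3/4$, together with the oscillation estimate in \eqref{aveDulowerbound}. The lower bound $|(D\bfh)^{\lambda_r}_r| \ge C_s^{-1}\lambda_r$ then follows from the nondegenerate alternative at scale $r_s$ combined with the smallness of the excess on sub-scales.

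The main technical obstacle is the propagation of the pointwise bound $\sup_{Q^{\lambda_r}_r}|D\bfh| \le \lambda_r$ through the degenerate iteration: Theorem~\ref{thm:phi-caloric} gives only integral closeness of $D\bfh$ and $D\mathbf{g}$, which must be upgraded to a sup bound via \eqref{Lip_phicaloric} together with a Caccioppoli/Moser-type absorption, with the approximation parameter $\delta$ carefully balanced against the geometric factor $\theta^{\alpha_1}$. A secondary subtlety is the compatibility at $r = r_s$: one must verify that the first nondegenerate average $|(D\bfh)^{\lambda_{r_s}}_{r_s}|$ is genuinely bounded below by a positive multiple of $\lambda_{r_s}$, so that the Campanato iteration may be initialised at the switching radius.
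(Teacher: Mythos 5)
Your proposal mislocates the role of the $\phi$-caloric approximation. The lemma is about $\bfh$, which is a \emph{weak solution} of the $\phi$-Laplace system \eqref{system:phi}, i.e. $\bfh$ is itself an exact $\phi$-caloric map. Theorem~\ref{thm:phi-caloric} produces a $\phi$-caloric map close to an ``almost $\phi$-caloric'' one; applied to $\bfh$ it is vacuous (one can take the comparison map to be $\bfh$ itself), and it cannot upgrade any integral control to the pointwise bound $\sup_{Q^{\lambda_r}_r}|D\bfh|\le\lambda_r$, which is the crux of the statement. What is actually needed, and what the paper uses, is the \emph{regularity theory for exact $\phi$-caloric maps} developed in \cite{OSS23}: the gradient $L^\infty$ estimate, the DiBenedetto--Friedman type alternative based on a \emph{measure-density} condition (whether $|\{|D\bfh|\le(1-\sigma)\lambda_i\}\cap Q^{\lambda_i}_{r_i}|>\sigma|Q^{\lambda_i}_{r_i}|$, not whether $\fint\phi(|D\bfh|)$ is small), the sup propagation in the degenerate case (\cite[Proposition 6.3]{OSS23}), the oscillation decay in the nondegenerate case (\cite[Proposition 6.2]{OSS23}), and the lower bound on the gradient average (\cite[Lemmas 6.8--6.9]{OSS23}).

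A secondary gap is the geometric compatibility of the intrinsic cylinders across scales. The paper defines $\lambda_{i+1}:=\nu\lambda_i$, $r_{i+1}:=\tilde\sigma r_i$ with
$\tilde\sigma:=\min\{\tfrac{\sigma p^{1/2}\nu^{(q-2)/2}}{2q^{1/2}},\nu^{4q/3}\}$
precisely so that $Q^{\lambda_{i+1}}_{r_{i+1}}\subset Q^{\lambda_i}_{\frac{\sigma}{2}r_i}\subset Q^{\lambda_i}_{r_i}$, and so that $\alpha_1=\tfrac{\ln\nu}{\ln\tilde\sigma}\le\tfrac{3}{4q}$; these constraints fix the admissible exponent and ensure the nesting used both in the iteration and in Remark~\ref{rmk:nested}. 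Your proposal announces ``$\theta$ small enough'' without exhibiting this balancing, and also asserts that the nondegenerate Campanato decay can be achieved with ``any'' H\"older exponent, whereas the $3/4$ here is the fixed exponent delivered by the cited oscillation estimate. Finally, your discussion of the lower bound $|(D\bfh)^{\lambda_r}_r|\ge C_s^{-1}\lambda_r$ leaves it to the ``nondegenerate alternative at scale $r_s$,'' but the argument in the paper is a separate propagation step (from $r_s/2$ down, using \cite[Lemmas 6.8--6.9]{OSS23} and a covering argument), not a consequence of the excess-decay alone.
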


\begin{proof}
For each $i=0,1,2,\dots$, we inductively define
$$
\lambda_0:=\lambda, \quad   \lambda_{i+1}:= \nu \lambda_i 
\quad \text{and}\quad
r_0:=R, \quad  r_{i+1} = \tilde \sigma r_i,
$$
where $\sigma\in(0,1)$ is from \cite[Proposition 6.2]{OSS23}, $\nu\in (0,1)$ is from \cite[Proposition 6.3]{OSS23} corresponding to the preceding $\sigma$, and 
\begin{equation}
\tilde \sigma:= \min\left\{\frac{\sigma p^\frac{1}{2}\nu^{\frac{q-2}{2}}}{2q^\frac{1}{2}},\nu^{\frac{4q}{3}}\right\}<\frac\sigma2\,. 
\label{eq:sigmatild}
\end{equation}
Note that we may assume that $\nu>1/2$.
Then we have
$$
\frac{r_{i+1}^2\lambda_{i+1}^2}{\varphi(\lambda_{i+1})} 
\leq \frac{q{\tilde \sigma}^2 \lambda_{i+1}}{\varphi'(\lambda_{i+1})} r_i^2 
\leq  \frac{q{\tilde \sigma}^2 \lambda_{i}}{\varphi'(\lambda_{i})\nu^{q-2}} r_i^2
 \leq \frac{q{\tilde \sigma}^2 }{p\nu^{q-2}}  \frac{ \lambda_{i}^2r_i^2}{\varphi(\lambda_{i})}   
 \leq \left(\frac{\sigma}{2}\right)^2 \frac{r_i^2\lambda_{i}^2}{\varphi(\lambda_{i})} 
 <  \frac{r_i^2\lambda_{i}^2}{\varphi(\lambda_{i})}\,,
$$
hence  $Q^{\lambda_{i+1}}_{r_{i+1}} \subset Q^{\lambda_{i}}_{\frac{\sigma}{2}r_{i}} \subset Q^{\lambda_{i}}_{r_{i}} $. Moreover, we have 
$$
\lambda_i=\nu^i\lambda_0 = \tilde \sigma^{i\frac{\ln \nu}{\ln \tilde\sigma}} \lambda_0 = \left(\frac{r_i}{R}\right)^{\alpha_1}\lambda_0\,,
\quad \text{where }\ 
\alpha_1:=\frac{\ln \nu}{\ln \tilde\sigma}\le \frac{3}{4q}\,,
$$
hence for every $r\in[r_{i+1},r_i]$,
\begin{equation}\label{inequality:lambdai}
\left(\frac{r}{R}\right)^{\alpha_1}\lambda_0 \le  \lambda_i =\left(\frac{r_{i+1}}{R}\right)^{\alpha_1}\tilde\sigma^{-\alpha_1}\lambda_0
= \left(\frac{r_{i+1}}{R}\right)^{\alpha_1}\nu^{-1}\lambda_0
\le 2 \left(\frac{r}{R}\right)^{\alpha_1}\lambda_0\,.
\end{equation}

Now we consider the inequality 
\begin{equation}\label{density} 
|\{D\bfh \le (1-\sigma)\lambda_i \}| \cap Q_{r_i}^{\lambda_i}| > \sigma |Q_{r_i}^{\lambda_i}|  
\quad \text{for }\ i=0,1,2, \dots\,.
\end{equation}
Then we have the following three cases:
\begin{itemize}
\item[(i)] \eqref{density} does not hold when $i=0$.
\item[(ii)] There exists $n_0\in \mathbb N$ such that \eqref{density} holds when $i=0,1,2,\dots, n_0-1$, but not when $i=n_0$.   
\item[(iii)] \eqref{density} holds for every $i$.
\end{itemize}

\smallskip

If the case (i) holds, then by \cite[Proposition 6.2]{OSS23}  we have for every $r\in(0,R]$
\begin{equation*}
\underset{Q^{\lambda}_r}{\mathrm{osc}} \, D{\bf h} \le c \left(\frac{r}{R}\right)^{\frac{3}{4}}  \underset{Q^{\lambda}_{R}}{\mathrm{osc}} \, D{\bf h }\,,
\end{equation*}
whence, with \eqref{eq:bounded1}, 
$$\begin{aligned}
&\fint_{Q^{\lambda_0}_r} \phi_{|(D\bfh)_{r}^{\lambda_0}|}(|D\bfh - (D\bfh)_{r}^{\lambda_0}|) \,\dz \\
&\le c \fint_{Q^{\lambda_0}_r} \phi'(|(D\bfh)_{r}^{\lambda_0}|+ |D\bfh - (D\bfh)_{r}^{\lambda_0}|)|D\bfh - (D\bfh)_{r}^{\lambda_0}| \,\dz \\
&\le c \phi'(\lambda_0) \left(\frac{r}{R}\right)^{\frac{3}{4}}\lambda_0  \le c \phi(\lambda_0) \left(\frac{r}{R}\right)^{\frac{3}{4}},
\end{aligned}$$
which implies the inequalities \eqref{lambdar}, \eqref{phiDhLipr}, \eqref{degenerate_excess} and the second inequality in \eqref{aveDulowerbound} with $\lambda_r=\lambda_0=\lambda$ for all $r\in(0,R]$ and $r_s=R$.

If the case (ii) holds, then by \cite[Proposition 6.2]{OSS23} with $R=r_i$, $i=0,\dots, n_0-1$ and \cite[Proposition 6.3]{OSS23} with $R=r_{n_0}$, we have that
\begin{equation}\label{phiDhLipi}
\sup_{Q^{\lambda_i}_{r_i}} |D \bfh| \le  \lambda_i \quad \text{for all }\ i=0,1,2,\dots, n_0,
\end{equation}
and for every $r\in(0,r_{n_0}]$
\begin{equation*}
\underset{Q^{\lambda_{n_0}}_r}{\mathrm{osc}} \, D{\bf h} \le c \left(\frac{r}{r_{n_0}}\right)^{\frac{3}{4}}  \underset{Q^{\lambda}_{r_{n_0}}}{\mathrm{osc}} \, D{\bf h }\,,
\end{equation*}
hence
$$
\fint_{Q^{\lambda_{n_0}}_r} \phi_{|(D\bfh)_{r}^{\lambda_{n_0}}|}(|D\bfh - (D\bfh)_{r}^{\lambda_{n_0}}|) \,\dz \le c \phi(\lambda_{n_0}) \left(\frac{r}{r_{n_0}}\right)^{3/4}.
$$
Therefore, choosing
$$
\lambda_r=
\begin{cases}
\lambda_i  & \text{when }\ r\in(r_{i+1},r_i] \ \ \text{and} \ \ i=0,1,\dots,n_0-1\,,\\
\lambda_{n_0} & \text{when }\ r\in(0,r_{n_{i_0}}]\,,
\end{cases}
\quad \text{and}\quad r_s=r_{n_0}\,.
$$
we have \eqref{lambdar} (see \eqref{inequality:lambdai}) and \eqref{phiDhLipr}, \eqref{degenerate_excess} and and the second inequality in \eqref{aveDulowerbound}.

If the case (iii) holds,  we have \eqref{phiDhLipi} for all $i=0,1,2,\dots$, which implies the desired estimates with  $r_s=0$ and $\lambda_r=\lambda_i$ for every $r\in (r_{i+1},r_i ] $ and $i=0,1,2,\dots$.

Finally, we are left to prove the first inequality in \eqref{aveDulowerbound}. In case (iii) since $r_s=0$, there is nothing to prove. Then we consider  the cases (i) and (ii) where  $r_s>0$.
By \cite[Lemma 6.8]{OSS23} with $R=\frac{r_s}{2}$, together with \cite[Lemma 6.9]{OSS23} with $R=r_s$,  we have 
$$
|(D{\bfh})_{Q^{\lambda_{r_s}}_{\theta^j r_s/2}}| \ge \frac12  \lambda,
$$
for all $j\in \mathbb N_0$ and for some sufficiently small $ \theta \in(0,1)$ depending on $n,N,p,q$.  If $r\in (\frac{r_s}{2}, r_s]$ then 
$|(D{\bfh})_{Q^{\lambda_{r_s}}_{r}}| \ge \frac{1}{2^{n+2}}|(D{\bf u})_{Q^{\lambda_r}_{r_s/2}}| \ge \lambda / 2^{n+3}$. If $r\in (\frac{\theta^{j+1}r_s}{2},\frac{\theta^{j}r_s}{2}]$,
$
|(D{\bfh})_{Q^{\lambda_{r_s}}_{r}}| \ge  \frac{1}{\theta^{n+2}} |(D{\bfh})_{Q^{\lambda_{r_s}}_{\frac{\theta^{j+1}r_s}{2}}}| \ge \frac{\theta^{n+2}}{2} \lambda$. 
Thus, we obtain the first inequality in  \eqref{aveDulowerbound} with $C_1=\max\{2^{n+3},2\theta^{-(n+2)}\}$. 
\end{proof}

\begin{remark}\label{rmk:nested} We list some remarks about the previous lemma.
\begin{enumerate}
\item The numbers  $r_s$ and $\lambda_r$ may depend on $\bfh$ and the center $z_0$ of $Q^\lambda_R$.
\item
Recalling the constants $\sigma$ and $\tilde\sigma$ in the first part of the proof, one can see that if $r \in(0, \tilde\sigma R]$ then $Q^{\lambda_r}_r \subset Q^{\lambda}_{\frac{\sigma}{2}R}$.
\end{enumerate}
\end{remark}

The following lemma will be crucially used in the iteration process in Section~\ref{Sec:iteration}.

\begin{lemma} \label{Lem:degenerate}
Let $M_1\ge 1$, $\lambda\in(0,1]$, $ \chi,\chi_1\in(0,1]$ and $\alpha_1\in(0,\frac{3}{4q})$ be given in Lemma~\ref{Lem:phiLaplacesystem}. There exist large constants $K_1,C_1\ge 1$ depending on $n, N, p, q, \nu, L, \gamma, M_1$ and $\alpha_1$ such that the following holds: for every $\vartheta\in(0,C^{-1}_1]$ there exists large $K\ge 1$ depending on $n,N,p,q,\nu,L, \gamma, M_1,\alpha_1$ and $\vartheta$ and small $\epsilon_1\in(0,1)$ depending on $n, N, p, q, \nu, L,$ $\gamma, M_1, \alpha_1, \vartheta$ and $\chi$,  such that the following holds: 
if 
\begin{equation*}\label{Lem:degenerate_ass1}
|(D \bfu )^\lambda_r| \le M_1\lambda,
\end{equation*}
\begin{equation}\label{Lem:degenerate_ass2}
\chi  \phi(|(D \bfu)^\lambda_r|) \le \fint_{Q^\lambda_r} \phi_{|(D \bfu)^\lambda_r|} (|D \bfu-(D \bfu)^\lambda_r|) \, \dz 
\quad\text{or}\quad
|(D \bfu)^\lambda_r| \le \frac{\lambda}{K}
\end{equation}
and 
\begin{equation*}\label{Lem:degenerate_ass3}
\fint_{Q^\lambda_r} \phi_{|(D \bfu)^\lambda_r|} (|D \bfu-(D \bfu)^\lambda_r|) \, \dz \le \min\{\phi(\lambda),\epsilon_1\}\,,
\end{equation*}
then there exists 
\begin{equation*}\label{Lem:degenerate_lambda1}
\lambda_1\in[\vartheta^{\alpha_1}\lambda, C_1\lambda]
\end{equation*} 
such that 
$Q^{\lambda_1}_{\vartheta r} \subset Q^\lambda_{\frac{\sigma}{2}r}$ with $\sigma\in(0,1)$ from Lemma~\ref{Lem:phiLaplacesystem},
\begin{equation}\label{Lem:degenerate_decay}
 \fint_{Q^{\lambda_1}_{\vartheta r}} \phi_{|(D \bfu)^{\lambda_1}_{\vartheta r}|} (|D \bfu-(D \bfu)^{\lambda_1}_{\vartheta r}|) \, \dz \le \phi(\lambda_1)  
\quad\text{and}\quad
|(D \bfu)^{\lambda_1}_{2\vartheta r}| \le \lambda_1\,.
\end{equation}
Additionally, for each $\alpha\in(0,\alpha_1)$ there exists small $\vartheta_1\in (0,C^{-1}_1]$ depending on $n, N, p, q, \nu,$ $L, M_1, \alpha_1$, $\alpha$ and $\chi_1$ such that for every $\vartheta\in (0,\vartheta_1]$  if
\begin{equation}\label{Lem:degenerate_ass4}
\chi_1  \phi(|(D \bfu)^{\lambda_1}_{\vartheta r}|)  \le  \fint_{Q^{\lambda_1}_{\vartheta r}} \phi_{|(D \bfu)^{\lambda_1}_{\vartheta r}|} (|D \bfu-(D \bfu)^{\lambda_1}_{\vartheta r}|) \, \dz   
\quad\text{or}\quad
|(D \bfu)^{\lambda_1}_{\vartheta r}| \le \frac{\lambda_1}{K_1}\,,
\end{equation}
then
\begin{equation}\label{Lem:degenerate_lambda1-1}
\lambda_1\le \vartheta^\alpha \lambda.
\end{equation}
\end{lemma}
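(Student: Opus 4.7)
The plan is to approximate $\bfu$ by a $\phi$-caloric map $\bfh$ via Theorem~\ref{thm:phi-caloric} and then import the sharp oscillation structure of $\bfh$ coming from Lemma~\ref{Lem:phiLaplacesystem}. First I would rescale: set $\tilde\bfu(x,t):=\lambda^{-1}\bfu(x_0+rx,t_0+(\lambda^2/\phi(\lambda))r^2 t)$, $\tilde\bA(\bP):=\lambda\bA(\lambda\bP)/\phi(\lambda)$ and $\tilde\phi(\tau):=\phi(\lambda\tau)/\phi(\lambda)$, so $\tilde\bfu$ solves $\partial_t\tilde\bfu-\div\tilde\bA(D\tilde\bfu)={\bf 0}$ on $Q_1$ with $\fint_{Q_1}\tilde\phi(|D\tilde\bfu|)\,\dz\lesssim 1$ (using \eqref{eq:approx} and the smallness of the excess), while Corollary~\ref{cor:high} upgrades this to an $L^{1+\sigma_0}$ bound for some $\sigma_0=\sigma_0(n,N,p,q,L,\nu)>0$.

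To activate Theorem~\ref{thm:phi-caloric}, I need to verify that $\tilde\bfu$ is almost $\tilde\phi$-caloric. Testing the weak formulation against $\bm\zeta\in C^\infty_0(Q_1)$ reduces the left-hand side of the almost-caloricity inequality to
$$
\Big|\fint_{Q_1}\Big[\tilde\bA(D\tilde\bfu)-\frac{\tilde\phi'(|D\tilde\bfu|)}{|D\tilde\bfu|}D\tilde\bfu\Big]:D\bm\zeta\,\dz\Big|,
$$
which I bound by splitting $\{|D\tilde\bfu|\le\tau_0\}$, where the rescaled form of (A3) yields a pointwise estimate by $\epsilon\,\tilde\phi'(|D\tilde\bfu|)$ once $\lambda\tau_0\le\delta_{(\mathrm{A3})}(\epsilon)$, from $\{|D\tilde\bfu|>\tau_0\}$, where the growth bound together with H\"older's inequality and the $L^{1+\sigma_0}$ higher integrability produce a tail controlled by $c\,\tau_0^{-\sigma_0 p/(1+\sigma_0)}\,\|D\bm\zeta\|_\infty$. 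Choosing $\tau_0$ small and then $\epsilon_1$ small enough that both the rescaled (A3) fires and the smallness parameter $\delta$ of Theorem~\ref{thm:phi-caloric} is attained, I obtain a $\tilde\phi$-caloric $\tilde\bfh$ on $Q_{1/2}$ with $\tilde\bfh=\tilde\bfu$ on $\partial_{\mathrm p}Q_{1/2}$ and
$$
\Big(\fint_{Q_{1/2}}|\bV(D\tilde\bfu)-\bV(D\tilde\bfh)|^{2\gamma_1}\,\dz\Big)^{1/\gamma_1}\le\epsilon\,.
$$

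With $\tilde\bfh$ in hand, the Lipschitz bound \eqref{Lip_phicaloric} gives $\sup_{Q_{1/4}}|D\tilde\bfh|\le\tilde\lambda_*$ with $\tilde\lambda_*\lesssim 1$, so I can apply Lemma~\ref{Lem:phiLaplacesystem} to $\tilde\bfh$ on $Q^{\tilde\lambda_*}_{1/4}$. For every $\vartheta\in(0,C_1^{-1}]$, with $C_1$ taken large enough to absorb $\tilde\lambda_*$ and to guarantee $Q^{\tilde\lambda_1}_\vartheta\Subset Q_{1/4}$, this yields an intrinsic scale $\tilde\lambda_1\in[\vartheta^{\alpha_1}\tilde\lambda_*,2\vartheta^{\alpha_1}\tilde\lambda_*]$ (or stabilized below the switching radius of $\tilde\bfh$), together with $\sup_{Q^{\tilde\lambda_1}_\vartheta}|D\tilde\bfh|\le\tilde\lambda_1$ and the decay \eqref{degenerate_excess}. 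Setting $\lambda_1:=\max(\lambda\tilde\lambda_1,\vartheta^{\alpha_1}\lambda)$ and scaling back produces the claimed range $\lambda_1\in[\vartheta^{\alpha_1}\lambda,C_1\lambda]$ and the inclusion $Q^{\lambda_1}_{\vartheta r}\subset Q^\lambda_{\sigma r/2}$ from Remark~\ref{rmk:nested}. The two conclusions in \eqref{Lem:degenerate_decay} then follow from the corresponding bounds for $\tilde\bfh$: by \eqref{monotonicity1}, \eqref{Vintequivalent} and the $\bV$-closeness above, the excess of $\tilde\bfu$ in $Q^{\tilde\lambda_1}_\vartheta$ differs from that of $\tilde\bfh$ by at most $c\epsilon$, which is absorbed by taking $\epsilon$ (hence $\epsilon_1$) small depending on $\vartheta$ and $\chi$.

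For the additional assertion, suppose \eqref{Lem:degenerate_ass4} holds at the new scale. If $\vartheta$ lay \emph{below} the switching radius of $\tilde\bfh$, then the first inequality in \eqref{aveDulowerbound} would give $|(D\tilde\bfh)^{\tilde\lambda_1}_\vartheta|\ge C_s^{-1}\tilde\lambda_1$ and \eqref{degenerate_excess} would make the normalized $\bV$-excess of $\tilde\bfh$ on $Q^{\tilde\lambda_1}_\vartheta$ arbitrarily small; via $\bV$-closeness, \eqref{monotonicity1} and \eqref{phipqinequality} both facts transfer to $\tilde\bfu$ and contradict both alternatives in \eqref{Lem:degenerate_ass4}, once $K_1$ is fixed large (independently of $\vartheta$) and $\epsilon_1$ is small. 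Hence $\vartheta$ lies above the switching radius, so $\tilde\lambda_1\le 2\vartheta^{\alpha_1}\tilde\lambda_*$, and since $\alpha<\alpha_1$ the choice $\vartheta_1$ so small that $2\vartheta_1^{\alpha_1-\alpha}\tilde\lambda_*\le 1$ yields $\lambda_1\le\vartheta^\alpha\lambda$ for all $\vartheta\le\vartheta_1$. The main technical obstacle is the almost-$\tilde\phi$-caloricity verification: the threshold $\tau_0$ must beat the extremely small $\delta(\epsilon)$ required by Theorem~\ref{thm:phi-caloric}, and balancing $\tau_0$ against the higher-integrability exponent $\sigma_0$ is precisely where the chain of dependencies $\epsilon_1=\epsilon_1(\vartheta,\chi)$ and $K=K(\vartheta)$ enters.
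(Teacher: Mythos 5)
Your overall strategy tracks the paper's: verify almost $\phi$-caloricity using the isotropy condition (A3) with a split at a small threshold, apply the $\phi$-caloric approximation of Theorem~\ref{thm:phi-caloric}, and then import the intrinsic oscillation structure of the comparison map from Lemma~\ref{Lem:phiLaplacesystem}. The first part of your argument is essentially the paper's Step 1, and your treatment of the decay estimate follows the paper's Step 2 via $\bV$-closeness and interpolation with higher integrability. Two issues, one minor and one more serious, keep the proposal from being complete.

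First, your choice $\lambda_1:=\max(\lambda\tilde\lambda_1,\vartheta^{\alpha_1}\lambda)$ provides no multiplicative slack above $\sup_{Q^{\tilde\lambda_1}_\vartheta}|D\tilde\bfh|$. In the paper the intrinsic scale is run at the larger parabolic radius $\theta=2C_0^{(2-p)/2}\vartheta$ and then $\lambda_1:=C_0\lambda_\theta$ with $C_0\gg 1$; the whole point is that then $\sup|D\bfh|\le C_0^{-1}\lambda_1$ so that $\phi(\sup|D\bfh|)\le C_0^{-p}\phi(\lambda_1)$ can absorb the comparison error and yield $|(D\bfu)^{\lambda_1}_{2\vartheta r}|\le\lambda_1$. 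Without this buffer, the second inequality in \eqref{Lem:degenerate_decay} is not obtained: the comparison only gives the average of $|D\bfu|$ comparable to, not strictly below, $\sup|D\bfh|\approx\lambda_1$. This is repairable by inflating $\lambda_1$ by a fixed large constant and running Lemma~\ref{Lem:phiLaplacesystem} at a correspondingly larger scale, which is exactly the paper's device.

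Second, and more substantively, your contradiction argument for \eqref{Lem:degenerate_lambda1-1} does not go through. You claim that if $\vartheta$ were below the switching radius, \eqref{degenerate_excess} would make the normalized excess of $\bfh$ arbitrarily small, contradicting \eqref{Lem:degenerate_ass4}. But \eqref{degenerate_excess} gives decay $(\theta/\theta_s)^{3/4}$, which is not small when $\theta$ is only slightly below $\theta_s$; in that regime there is no contradiction with either alternative in \eqref{Lem:degenerate_ass4}, and $K_1,\epsilon_1$ cannot help because $\theta_s$ depends on $\bfh$, not on those parameters. The correct conclusion is not a dichotomy "$\theta\ge\theta_s$ or contradiction" but the quantitative bound $\theta_s^{\alpha_1}\le c\chi_1^{-1/p}\theta^{\alpha_1}$ (paper's \eqref{thetastheta}), extracted from \eqref{Lem:degenerate_ass4} together with the lower bound in \eqref{aveDulowerbound} and the $L^1$ comparison \eqref{uhcomparison2}. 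From this one still gets $\lambda_1\sim C_0\theta_s^{\alpha_1}\lambda\le c\chi_1^{-1/p}\vartheta^{\alpha_1}\lambda$, and the choice of $\vartheta_1$ small depending on $\chi_1$ then produces $\lambda_1\le\vartheta^\alpha\lambda$. You need to replace the alleged contradiction with this quantitative estimate relating $\theta_s$ to $\theta$.
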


\begin{proof} We first observe that, from  all the assumptions
and  \eqref{eq:approx}, we get
\begin{equation}\label{Lem:degenerate_pf1_0} \begin{aligned}
\fint_{Q^\lambda_r} \phi(|D\bfu|) \, \dz
& \le c \left( \fint_{Q^\lambda_r} \phi_{|(D\bfu)^\lambda_r|}(|D\bfu- (D\bfu)^\lambda_r|) \, \dz + \phi (|(D\bfu)^\lambda_r|)\right)  \\
& \le \begin{cases}
c (1+M_1^q) \phi(\lambda) \,,\\
c \left\{(1+\chi^{-1})\epsilon_1 + K^{-q}\right\}\,.  
\end{cases}
\end{aligned}\end{equation}
We divide the proof into two steps.

\textit{Step 1. ($\phi$-caloric approximation)} We show that  $\bfu$ is an almost $\phi$-caloric mapping.  Namely, for every $\bm\zeta\in C^\infty_0(Q^\lambda_r)$
\begin{equation}\label{ualmostphi}
\left|\fint_{Q^\lambda_r} \bfu \cdot \bm\zeta_t - \frac{\phi'(|D\bfu|)}{|D\bfu|} \langle D\bfu , D\bm\zeta \rangle \, \dz \right|  \le  c \epsilon_0 \left( \fint_{Q^\lambda_r} \phi(|D\bfu|) \,\dz  + \phi(\|D\bm\zeta \|_{\infty}) \right)\,,
\end{equation}
where $\epsilon_0>0$ is a sufficiently small constant determined later. From the weak form of \eqref{system1} we have
$$
\left|\fint_{Q^\lambda_r} \bfu \cdot \bm\zeta_t - \frac{\phi'(|D\bfu|)}{|D\bfu|} \langle D\bfu , D\bm\zeta \rangle \, \dz \right| = \left|\fint_{Q^\lambda_r}  \langle  \bA(D\bfu) - \frac{\phi'(|D\bfu|)}{|D\bfu|} D\bfu , D\bm\zeta \rangle \, \dz \right| \,.
$$
Choose $\delta=\delta(\epsilon_0)$ such that \eqref{almostphi} holds for $\epsilon=\epsilon_0$, and set
$E_1:=\{z\in Q^\lambda_r : |D\bfu(z)|\le \delta\}$ and $E_2=Q^\lambda_r\setminus E_1$. Then one has from the Young inequality \eqref{eq:young} and \eqref{eq:hok2.4} that
$$\begin{aligned}
\frac{1}{|Q^\lambda_r |}\left|\int_{E_1}  \langle  \bA(D\bfu) - \frac{\phi'(|D\bfu|)}{|D\bfu|} D\bfu , D\bm\zeta \rangle \, \dz \right| 
& \le \epsilon_0 \fint_{Q^\lambda_r} \phi'(|D\bfu|) \|D\bm\zeta \|_{\infty} \, \dz \\
& \le c \epsilon_0 \left( \fint_{Q^\lambda_r} \phi(|D\bfu|)  \, \dz  + \phi(\|D\bm\zeta \|_{\infty}) \right) \,.
\end{aligned}$$
On the other hand, by \eqref{growth}, the Young inequality \eqref{eq:young}, \eqref{characteristic} and \eqref{Lem:degenerate_pf1_0},
$$\begin{aligned}
&\frac{1}{|Q^\lambda_r |}\left|\int_{E_2}  \langle  \bA(D\bfu) - \frac{\phi'(|D\bfu|)}{|D\bfu|} D\bfu , D\bm\zeta \rangle \, \dz \right| \\
& \le \frac{1}{|Q^\lambda_r |}  \int_{E_2}  \left(|\bA(D\bfu)|+ \phi' (|D\bfu|)\right) \|D\bm\zeta\|_{\infty} \, \dz \\
& \le  \frac{c \|D\bm\zeta\|_{\infty}}{\delta(\epsilon_0)} \fint_{Q^\lambda_r}  \phi (|D\bfu|)  \, \dz  \\
& \le  c \phi^* \left( \frac{1}{\delta(\epsilon_0) \epsilon_0^{1/p}} \fint_{Q^\lambda_r}  \phi (|D\bfu|)  \, \dz \right)   +  \epsilon_0 \phi(\|D\bm\zeta \|_{\infty}) \\
& \le  \frac{c}{\delta(\epsilon_0)^q \epsilon_0^{q/p}} \phi^* \left(  \fint_{Q^\lambda_r}  \phi (|D\bfu|)  \, \dz \right)   + \epsilon_0 \phi(\|D\bm\zeta \|_{\infty})\\
& \le  \frac{c}{\delta(\epsilon_0)^q \epsilon_0^{q/p}} \psi\left((1+\chi^{-1})\epsilon_1 + K^{-q}\right)   \fint_{Q^\lambda_r}  \phi (|D\bfu|)  \, \dz   +  \epsilon_0 \phi(\|D\bm\zeta \|_{\infty}) \,,
\end{aligned}$$
where $\psi(t):= \frac{\phi^*(t)}{t}$. Therefore, choosing $\epsilon_1=\epsilon_1(\chi,\epsilon_0)$ sufficiently small and $K=K(\epsilon_0)$ sufficiently large, we obtain \eqref{ualmostphi}. 

Therefore, by Theorem~\ref{thm:phi-caloric} applied with ${\bf G}:=\bA(D\bfu)$ and $\gamma_1:=\frac12$, we have that  for a constant $\epsilon>0$  to be determined later, there exists $\epsilon_0=\epsilon_0(\epsilon)$ such that
\begin{equation}
\left(\fint_{Q^\lambda_r} |\bV(D\bfu)-\bV(D\bfh)|  \, \dz    \right)^{2} \le \epsilon \fint_{Q^\lambda_r} \phi(|D\bfu|)\, \dz \le c \epsilon \phi(\lambda)\,,
\label{eq:V(Du)-V(Dh)estim0}
\end{equation}
where $\bfh$ is the weak solution to 
$$\begin{cases}
\partial_t \bfh - \div \left(\frac{\phi'(|D\bfh|)}{|D\bfh|}D\bfh\right)=\zero \quad \text{in }\ Q^\lambda_r\,,\\
\bfh=\bfu \quad \text{in }\ \partial_{\mathrm p} Q^\lambda_r \,.
\end{cases}
$$
The existence and uniqueness of the solution $\bfh$  follows from the theory of monotone operators or by utilizing the Galerkin approximation method, see for instance \cite{Lions}. Here from higher integrability result in \eqref{eq:high1} with $\bm\ell={\bf 0}$ and the Lipschitz estimate for $\bfh$ in \eqref{Lip_phicaloric} we have
\begin{equation}
\left(\fint_{Q^\lambda_{r/2}} \phi(|D\bfu|)^{1+\sigma}\, \dz\right)^{\frac{1}{1+\sigma}} \le c \phi(\lambda) 
\label{eq:highintestimate1}
\end{equation}
and 
\begin{equation}
\sup_{Q^\lambda_{r/2}} \phi(|D\bfh|) \le  c \phi(\lambda)  \,,\label{eq:DhDuestimate}
\end{equation}
since 
$$
 \fint_{Q^\lambda_{r}} \phi(|D\bfh|)\, \dz  \le  c \fint_{Q^\lambda_{r}} \phi(|D\bfu|)\, \dz \le c \phi(\lambda)
$$
by a standard energy estimate of the above system.
Moreover, with the change of shift formula \eqref{(5.4diekreu)} and the triangle inequality, we get
\begin{equation*}
\begin{split}
\fint_{Q^\lambda_{r/2}} \phi_{|D\bfh|}^{1+\sigma}(|D\bfu-D\bfh|)\,\mathrm{d}z & \leq c\left(c_\eta \fint_{Q^\lambda_{r/2}} \phi^{1+\sigma}(|D\bfu-D\bfh|)\,\mathrm{d}z + c\eta \sup_{Q^\lambda_{r/2}} \phi^{1+\sigma}(|D\bfh|)\right) \\
& \leq \tilde{c}\left(\fint_{Q^\lambda_{r/2}} \phi^{1+\sigma}(|D\bfu|)\,\mathrm{d}z + \sup_{Q^\lambda_{r/2}} \phi^{1+\sigma}(|D\bfh|)\right)\,,
\end{split}
\end{equation*}
whence
\begin{equation*}
\begin{split}
\left(\fint_{Q^\lambda_{r/2}} \phi_{|D\bfh|}^{1+\sigma}(|D\bfu-D\bfh|)\,\mathrm{d}z\right)^\frac{1}{1+\sigma}  \leq {c}\left(\left(\fint_{Q^\lambda_{r/2}} \phi^{1+\sigma}(|D\bfu|)\,\mathrm{d}z\right)^\frac{1}{1+\sigma} + \sup_{Q^\lambda_{r/2}} \phi(|D\bfh|)\right)\,.
\end{split}
\end{equation*}
This, together with \eqref{eq:highintestimate1} and \eqref{eq:DhDuestimate}, implies
\begin{equation}
\left(\fint_{Q^\lambda_{r/2}} |\bV(D\bfu)-\bV(D\bfh)|^{2(1+\sigma)}  \, \dz \right)^{\frac{1}{1+\sigma}} \leq c \phi(\lambda)\,.
\label{eq:V(Du)-V(Dh)estim}
\end{equation}
Therefore, by interpolation, choosing $\tau\in(0,1)$ such that $\frac{1-\tau}{2}+\tau(1+\sigma)=1$; i.e., $\tau=\frac1{1+2\sigma}$,
 we have, with \eqref{eq:V(Du)-V(Dh)estim0}, \eqref{eq:V(Du)-V(Dh)estim} and the last inequality in \eqref{eq:DhDuestimate}, 
\begin{equation}\label{uhcomparison1}\begin{aligned}
&\fint_{Q^\lambda_{r/2}} |\bV(D\bfu)-\bV(D\bfh)|^{2}  \, \dz \\
&\le   \left(\fint_{Q^\lambda_{r/2}} |\bV(D\bfu)-\bV(D\bfh)|\, \dz \right)^{1-\tau}\left(\fint_{Q^\lambda_{r/2}} |\bV(D\bfu)-\bV(D\bfh)|^{2(1+\sigma)}  \, \dz \right)^{\tau} \\
&\le    c \epsilon^{\frac{1-\tau}{2}} \phi ( \lambda )\,.
\end{aligned}\end{equation}
Moreover,  by applying \eqref{phipqinequality}, with \eqref{eq:V(Du)-V(Dh)estim} and again by \eqref{eq:DhDuestimate}, we also have
$$\begin{aligned}
& \phi\bigg(\fint_{Q^\lambda_{r/2}} |D\bfu -D\bfh |  \, \dz\bigg) \le \fint_{Q^\lambda_{r/2}} \phi(|D\bfu-D\bfh|)  \, \dz  \\
& \le \epsilon^{\frac{1-\tau}{4}}   \fint_{Q^\lambda_{r/2}} \left[\phi(|D\bfu|) +\phi(|D\bfh|) \right]  \, \dz + c   \epsilon^{-\frac{1-\tau}{4}} \fint_{Q^\lambda_{r/2}} |\bV(D\bfu)-\bV(D\bfh)|^2 \, \dz \\
& \le  c \epsilon^{\frac{1-\tau}{4}} \phi(\lambda)\,,
\end{aligned}$$
hence
\begin{equation}\label{uhcomparison2}
\fint_{Q^\lambda_{r/2}} |D\bfu -D\bfh |  \, \dz \le c \epsilon^{\frac{1-\tau}{4q}} \lambda \,.
\end{equation}

\textit{Step 2.} Let $0< \theta \le 1/2$. Applying Lemma~\ref{Lem:phiLaplacesystem} with $R=r/2$ and writing $\lambda_{\theta}:=\lambda_{\theta r}$ for each $\theta\in(0,1/2]$ and $\theta_s:=\frac{r_s}{r}$, we have  that 
\begin{equation}\label{Lem:degenerate_pf1}
\lambda_\theta = \lambda_{\theta_s}  \  \ \text{if}\ \ \theta \in(0,\theta_s ]
\quad\text{and}\quad 
\theta^{\alpha_1}\lambda 
\le  \lambda_\theta
\le 2\theta^{\alpha_1}\lambda \  \ \text{if}\ \ \theta\in(\theta_s,1/2]\,,   
\end{equation}
\begin{equation}\label{Lem:degenerate_pf3}
\sup_{Q^{\lambda_\theta}_{\theta r}} |D\bfh| \le \lambda_\theta 
\quad\text{for all}\ \ \theta \in(0,1/2 ]\,,
\end{equation}
\begin{equation}\label{Lem:degenerate_pf4}
\fint_{Q^{\lambda_\theta}_{\theta r}} \phi_{|(D\bfh)^{\lambda_\theta}_{\theta r}|}(|D\bfh-(D\bfh)^{\lambda_\theta}_{\theta r}|)  \, \dz \le c  \Big(\frac{\theta}{\theta_{s}}\Big)^{3/4} \phi (\lambda_\theta) \,
\quad \text{if}\ \ \theta \in(0,\theta_s ]\,
\end{equation}
and 
\begin{equation}\label{Lem:degenerate_pf5}
|(D\bfh)^{\lambda_{\theta}}_{\theta r}| \ge \tfrac{1}{C_s}\lambda_{\theta} 
 \quad \text{and}\quad 
 \underset{Q^{\lambda_{\theta}}_{\theta r}}{\mathrm{osc}}\, D\bfh \le c \Big(\frac{\theta}{\theta_s}\Big)^{\alpha_1}\lambda_\theta 
 \quad \text{if}\ \ \theta \in(0,\theta_s ]\,.
\end{equation}
Moreover, we have from Remark~\ref{rmk:nested} (2) that $Q^{\lambda_\theta}_{\theta r}\subset Q^\lambda_{\frac{\sigma}{2}r}$.

For $0< \theta \le \tilde\sigma$ with $\tilde\sigma\in(0,1)$ as in \eqref{eq:sigmatild} and  a large constant  $C_0>1$ to be determined later, set
$$
C_1:= \max\{2C_0, 2C_0^{\frac{2-p}{2}} \tilde\sigma^{-1}\}.
$$
For $\vartheta \in (0,C^{-1}_1]$ we set
\begin{equation}\label{Lem:degenerate_pf2}
\theta := 2 C_0^{\frac{2-p}{2}}\vartheta  \in (0,\tilde \sigma]
\quad \text{and then}\quad 
\lambda_1 :=  C_0 \lambda_\theta \,.
\end{equation}
Note that $Q^{\lambda_1}_{2\vartheta r} \subset Q^{\lambda_\theta}_{\theta r}\subset Q^\lambda_{\frac{\sigma}{2}r}$ since $\frac{(2\vartheta)^2\lambda_1^2}{\phi(\lambda_1)}\le \frac{4C_0^{2-p}\vartheta^2\lambda_{\theta}^2}{\phi(\lambda_\theta)} = \frac{\theta^2\lambda_\theta^2}{\phi(\lambda_\theta)}$ 
and that by \eqref{Lem:degenerate_pf1}
$$
\vartheta^{\alpha_1}\lambda  \le \theta^{\alpha_1} \lambda \le \lambda_{\theta} \le  \lambda_1 \le 2\theta^{\alpha_1} C_0 \lambda  \le 2C_0 \lambda \le C_1\lambda \,.
$$
We then prove \eqref{Lem:degenerate_decay} for $\vartheta\in (0,C_1^{-1}]$ with choosing $\epsilon$ and $C_0$. Note that
using \eqref{monotonicity1}, \eqref{Vintequivalent} and \eqref{Lem:degenerate_pf1}, we have
$$\begin{aligned}
&\fint_{Q^{\lambda_1}_{\vartheta r}} \phi_{|(D\bfu)^{\lambda_1}_{\vartheta r}|}(|D\bfu- (D\bfu)^{\lambda_1}_{\vartheta r}|)  \, \dz
 \le  c \fint_{Q^{\lambda_1}_{\vartheta r}} |\bV(D\bfu)- \bV((D\bfu)^{\lambda_1}_{\vartheta r})|^{2}  \, \dz \\
 & \le  c \fint_{Q^{\lambda_1}_{\vartheta r}} |\bV(D\bfu)- (\bV(D\bfu))^{\lambda_1}_{\vartheta r}|^{2}  \, \dz 
  \le  c \fint_{Q^{\lambda_1}_{\vartheta r}} |\bV(D\bfu)- (\bV(D\bfh))^{\lambda_1}_{\vartheta r}|^{2}  \, \dz \\
  & \le  c \fint_{Q^{\lambda_1}_{\vartheta r}} |\bV(D\bfu)- \bV(D\bfh)|^{2}  \, \dz  + c \fint_{Q^{\lambda_1}_{\vartheta r}} |\bV(D\bfh)- (\bV(D\bfh))^{\lambda_1}_{\vartheta r}|^{2}  \, \dz \\
& \le  c 
\frac{\varphi(\lambda_{1})\lambda^2\vartheta^{-(n+2)}}{\varphi(\lambda)\lambda^2_{1}}
\fint_{Q^{\lambda}_{r/2}} |\bV(D\bfu)- \bV(D\bfh)|^{2}  \, \dz +  c \sup_{Q^{\lambda_\theta}_{\theta r}} \phi(|D \bfh|) \,.
\end{aligned}$$
For the second term on the right hand side, by \eqref{Lem:degenerate_pf3} and \eqref{Lem:degenerate_pf2}, we have 
$$
 \sup_{Q^{\lambda_\theta}_{\theta r}} \phi(|D \bfh|) \le \phi (C_0^{-1}\lambda_1) \le C_0^{-p}\phi(\lambda_1)\,.
$$
As for the first term on the right hand side,
by  \eqref{Lem:degenerate_pf2} and \eqref{uhcomparison1}, we have 
$$\begin{aligned}
&
\frac{\varphi(\lambda_{1})\lambda^2\vartheta^{-(n+2)}}{\varphi(\lambda)\lambda^2_{1}}
\fint_{Q^{\lambda}_{r/2}} |\bV(D\bfu)- \bV(D\bfh)|^{2}  \, \dz\\
& \le  c \vartheta^{-(n+2+2\alpha_1)}
\fint_{Q^{\lambda}_{r/2}} |\bV(D\bfu)- \bV(D\bfh)|^{2}  \, \dz \\
& \le  c \vartheta^{-(n+2)-(q+2)\alpha_1}
\epsilon^{\frac{1-\tau}{2}} \phi(\lambda_1)\,.
\end{aligned}$$
Therefore, choosing $C_0$ large and then $\epsilon=\epsilon(\vartheta)$ small, we obtain the first estimate in \eqref{Lem:degenerate_decay}.
Moreover, in a similar way with Jensen's inequality, 
we also have
$$\begin{aligned}
\phi\left(|(D\bfu)^{\lambda_1}_{2\vartheta r}|\right) 
&\le \fint_{Q^{\lambda_1}_{2\vartheta r}} \phi(|D\bfu|)\, \dz \le  c \fint_{Q^{\lambda_1}_{2\vartheta r}} |\bV(D\bfu)|^2\, \dz \\
&\le c \fint_{Q^{\lambda_1}_{2\vartheta r}} |\bV(D\bfu)- \bV(D\bfh)|^{2}  \, \dz  + c \fint_{Q^{\lambda_1}_{2\vartheta r}} | \bV(D\bfh)|^{2}  \, \dz \\
&\le  c \vartheta^{-(n+2+2\alpha_1+q\alpha_1)}  \epsilon^{\frac{1-\tau}{2}} \phi(\lambda_1)+  c C_0^{-p} \phi(\lambda_1) \le  \phi(\lambda_1)\,,
\end{aligned}$$
provided that $C_0$ is large enough and then $\epsilon=\epsilon(\vartheta)$ is small enough, which together with the monotonicity of $\phi$ implies the second estimate in  \eqref{Lem:degenerate_decay}. 

 We next prove \eqref{Lem:degenerate_lambda1-1} under the condition \eqref{Lem:degenerate_ass4}, by choosing $\vartheta$ sufficiently small. Recall the definition of $\theta$ and we distinguish between two cases.
 If $\theta \in [\theta_s , 1/2)$, by \eqref{Lem:degenerate_pf1} and \eqref{Lem:degenerate_pf2}, we have that for each $\alpha\in(0,\alpha_1)$
$$
\lambda_1 = C_0 \lambda_\theta \le 2 C_0  \theta^{\alpha_1}\lambda =    2 C_0  \Big(2C_0^{\frac{2-p}{2}}\Big)^{\alpha_1} \vartheta^{\alpha_1-\alpha}   \vartheta^{\alpha}\lambda \le \vartheta^{\alpha}\lambda,
$$
where we  
choose $\vartheta_1$ small to satisfy the last inequality for $\vartheta \in (0,\vartheta_1]$.
Therefore, we obtain \eqref{Lem:degenerate_lambda1-1} without considering \eqref{Lem:degenerate_ass4}.
On the other hand, let $\theta \in (0,\theta_s)$. Note that, as $\vartheta< \theta <\theta_s$, there hold $\lambda_{\theta_s}=\lambda_\theta=\lambda_{\vartheta}$ and $Q^{\lambda_1}_{\vartheta r} \cup  Q^{\lambda_\vartheta}_{\vartheta r} \subset Q^{\lambda_\theta}_{\theta r}$. Hence, by \eqref{phipqinequality} and \eqref{uhcomparison2}
$$\begin{aligned}
& | (D\bfu)^{\lambda_1}_{\vartheta r}- (D\bfh)^{\lambda_\theta}_{\theta r}| \le  c \fint_{Q^{\lambda_\theta}_{\theta r}} | D \bfu- (D\bfh)^{\lambda_\theta}_{\theta r} |\, \dz \\
& \le c \fint_{Q^{\lambda_\theta}_{\theta r}} | D \bfu-  D\bfh |\, \dz +c \fint_{Q^{\lambda_\theta}_{\theta r}} | D \bfh- (D\bfh)^{\lambda_\theta}_{\theta r} |\, \dz\\
& \le c \theta^{-n-2-\alpha_1(q-2)} \fint_{Q^{\lambda}_{r/2}} | D \bfu-  D\bfh |\, \dz +c \left(\frac{\theta}{\theta_s}\right)^{\alpha_1} \lambda_\theta\\
& \le c \vartheta^{-n-2-\alpha_1(q-2)} \epsilon^{\frac{1-\tau}{4q}} \lambda +c \left(\frac{\theta}{\theta_s}\right)^{\alpha_1} \lambda_\theta\\
& \le c \vartheta^{-\kappa_1} \epsilon^{\kappa_2} \lambda_1 +c \left(\frac{\theta}{\theta_s}\right)^{\alpha_1} \lambda_1\,,
\end{aligned}$$
where 
$\kappa_1= n+2+\alpha_1(q-1)$
and $\kappa_2=\frac{1-\tau}{4q} $. Then using \eqref{Lem:degenerate_pf5}  and choosing $\epsilon=\epsilon(\vartheta)$ sufficiently small we have 
$$\begin{aligned}
|(D\bfu)^{\lambda_1}_{\vartheta r}| &\ge |(D\bfh)^{\lambda_\theta}_{\theta r}|  - | (D\bfu)^{\lambda_1}_{\vartheta r}- (D\bfh)^{\lambda_\theta}_{\theta r}| \\
&\ge \left(C_0^{-1}C_s^{-1}  - c \vartheta^{-\kappa_1} \epsilon^{\kappa_2}  - c \left(\frac{\theta}{\theta_s}\right)^{\alpha_1} \right)\lambda_1  \\
&\ge \left(\frac{1}{2C_0C_s}   - c \left(\frac{\theta}{\theta_s}\right)^{\alpha_1} \right)\lambda_1\,.
\end{aligned}$$
Moreover, if  the first condition in \eqref{Lem:degenerate_ass4} holds, then by \eqref{monotonicity1} and \eqref{Lem:degenerate_pf4} we have 
$$ \begin{aligned}
\phi(|(D\bfu)^{\lambda_1}_{\vartheta r}|)  & \le \chi_1^{-1} \fint_{Q^{\lambda_1}_{\vartheta r}} \phi_{|(D \bfu)^{\lambda_1}_{\vartheta r}|} (|D \bfu-(D \bfu)^{\lambda_1}_{\vartheta r}|) \, \dz 
\le c \chi_1^{-1} \fint_{Q^{\lambda_1}_{\vartheta r}} | \bV(D\bfu) - ( \bV(D \bfu))^{\lambda_1}_{\vartheta r}|^2 \, \dz \\
&\le c \chi_1^{-1} \fint_{Q^{\lambda_\theta}_{\theta r}} | \bV(D\bfu) - ( \bV(D \bfu))^{\lambda_\theta}_{\theta r}|^2 \, \dz  
\le c  \chi_1^{-1}  \Big(\frac{\theta}{\theta_{s}}\Big)^{3/4} \phi (\lambda_\theta)\,,
\end{aligned}$$
which implies that
$$\begin{aligned}
& \left(\frac{1}{2C_0C_s}  - c \left(\frac{\theta}{\theta_s}\right)^{\alpha_1} \right)\lambda_1  \le c \chi_{1}^{-\frac{1}{p}}\left(\frac{\theta}{\theta_{s}}\right)^{\frac{3}{4q}} \lambda_\theta  \le c\chi_{1}^{-\frac{1}{p}} \left(\frac{\theta}{\theta_s}\right)^{\alpha_1}\lambda_1,
\end{aligned}$$
hence
\begin{equation}\label{thetastheta}
\theta_s^{\alpha_1} \le c\chi_{1}^{-\frac{1}{p}} \theta^{\alpha_1} \,.
\end{equation}
If  the second condition in \eqref{Lem:degenerate_ass4} holds, then choosing $K_1\ge 4C_0C_s$,
$$
\left(\frac{1}{2C_0C_s}  - c \left(\frac{\theta}{\theta_s}\right)^{\alpha_1} \right)\lambda_1 \le \frac{\lambda_1}{K_1} \le \frac{\lambda_1}{4C_0C_s} \,,
$$
which implies \eqref{thetastheta} again. Consequently, we have
$$
\lambda_1  = C_0\lambda_\theta = 2 C_0 \theta_s^{\alpha_1}\lambda \le c {\color{blue}\chi_{1}^{-\frac{1}{p}}}  \theta^{\alpha_1}\lambda  \le c {\color{blue}\chi_{1}^{-\frac{1}{p}}} \vartheta^{\alpha_1-\alpha}\vartheta^\alpha \lambda \le \vartheta^\alpha \lambda\,,
$$
Therefore, choosing $\vartheta_1$ sufficiently small depending on $\chi_1$, we get
$$
\lambda_1 \le \vartheta_1^\alpha \lambda\,,
$$
whenever $\vartheta\in (0,\vartheta_1]$.
\end{proof}



\section{Iteration and Proof of Theorem~\ref{mainthm}}\label{Sec:iteration}

With the following result we will combine the degenerate and the nondegenerate regimes by an inductive iteration scheme. Roughly speaking, as long as on an iteration scale the degenerate case holds, we shall apply on this scale Lemma \ref{Lem:degenerate}. On the other hand, when on an iteration scale the nondegenerate
regime occurs we can apply Lemma \ref{Lem:nondegenerate2} which provides a suitable excess decay estimate. Either this happens at a certain scale $\vartheta^mR$, $m<\infty$, or we go on by iterating
the excess improvement from the degenerate case on each scale (i.e, $m=\infty$), thus obtaining the desired excess decay estimate.

\begin{lemma}\label{Lem:iteration1}
Let $\bfu$ be a weak solution to \eqref{mainthm}, $M_0\ge 1$ and $\alpha\in(0,\alpha_1)$. There exist small $\vartheta,\epsilon_2\in (0,1)$ and large $K_2\ge 1$ depending on $n,N,p,q,\nu,L,\alpha_1$ and $M_0$ such that the following holds: 
suppose $Q_{2R}(z_0)\Subset\Omega_T$,
\begin{equation}\label{Lem:iteration1_ass1}
|(D\bfu)_{Q_{2R}(z_0)}| \le M_0\,,
\end{equation}
and
\begin{equation}\label{Lem:iteration1_ass2}
\fint_{Q_{R}(z_0)} \phi_{|(D \bfu)_{Q_{R}(z_0)}|} (|D \bfu-(D \bfu)_{Q_{R}(z_0)}|) \, \dz \le \epsilon_2\,. 
\end{equation}
Then the limit
\begin{equation*}\label{Lem:iteration1_result1}
\Gamma_{z_0} :=\lim_{r\to 0^+} (D\bfu)_{Q_{r}(z_0)}
\end{equation*}
exists and there exist $m\in \mathbb N_0 \cup\{\infty\}$ and positive numbers $\lambda_j$ with $ j\in\{0,1,2,\dots,m\}$ such that $\lambda_0 =1$,
\begin{equation}\label{Lem:iteration1_result2}
\vartheta^{\alpha_1} \lambda_{j-1}\le \lambda_j  \le \vartheta^{\alpha}  \lambda_{j-1}  \ \text{ for } 1\le  j < m\,,\ \ 
\vartheta^{\alpha_1}\lambda_{m-1} \le \lambda_m  \le C_1\lambda_{m-1} \ \text{ for } 0< m<\infty\,,
\end{equation}
\begin{equation}\label{Lem:iteration1_result4}
Q^{\lambda_j}_{\vartheta^j R}(z_0) \subset Q^{\lambda_{j-1}}_{\sigma_1\vartheta^{j-1}R}(z_0) \qquad \text{for } \ 1\le j \le m\ \text{ with }\ j<\infty\,,  
\end{equation}
\begin{equation}\label{Lem:iteration1_result3}
\frac{\lambda_m}{2K_2} \le | \Gamma_{z_0}| \le 2K_2 \lambda_m\ \text{ if }\ m<\infty 
\quad \text{and} \quad 
\Gamma_{z_0} = {\bf 0}\ \text{ if }\ m=\infty  \,,
\end{equation}
\begin{equation}\label{Lem:iteration1_result5}
\fint_{Q^{\lambda_j}_{\vartheta^{j}R}(z_0)} \phi (|D \bfu-\Gamma_{z_0}|) \, \dz 
\le c \phi(\lambda_j) \qquad \text{for }\ 0\le j \le m  \ \text{ with }\ j<\infty\,,
\end{equation}
and, if $m<\infty$,
\begin{equation}\label{Lem:iteration1_result6}
\fint_{Q^{\lambda_m}_{r}(z_0)} \phi (|D \bfu-\Gamma_{z_0}|) \, \dz 
\le c \left(\frac{r}{\vartheta^m R}\right)^{\alpha_2}\phi(\lambda_m) \quad \text{for all }\ 0<r \le \vartheta^mR \,,
\end{equation}
where $\alpha_2\in(0,1)$ depends on $p$ and $q$.
\end{lemma}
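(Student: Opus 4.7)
The plan is to construct the sequence $(\lambda_j)_{j}$ by iterating Lemma~\ref{Lem:degenerate} as long as the solution remains in the degenerate regime at the current intrinsic cylinder, and to close the argument via Lemma~\ref{Lem:nondegenerate2} at the first scale where the regime switches. Start with $\lambda_0 := 1$, noting that $Q^{\lambda_0}_R = Q_R$ because $\phi(1)=1$. Fix the parameters in Lemma~\ref{Lem:degenerate} with a bound constant $M_1$ chosen at least as large as $\max\{M_0,C_1\}$, set $K_2:=\max\{K_1,M_1\}$, and take $\chi := \chi_1 \le \delta_1$, where $\delta_1$ is the constant from Lemma~\ref{Lem:nondegenerate2} applied with $K_0:=K_2$ and some $\beta\in(0,1)$ yielding the target H\"older exponent $\alpha_2$ in \eqref{Lem:iteration1_result6}. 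Finally, shrink $\vartheta \le \vartheta_1$ so that the improved conclusion \eqref{Lem:degenerate_lambda1-1} is triggered, and pick $\epsilon_2$ small enough so that \eqref{Lem:iteration1_ass2} verifies the smallness hypothesis of Lemma~\ref{Lem:degenerate} at the base scale.

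Define the exit time
\[
m := \inf\Big\{ j \ge 0 : \ |(D\bfu)^{\lambda_j}_{\vartheta^j R}| > \tfrac{\lambda_j}{K_1} \ \text{and}\ \fint_{Q^{\lambda_j}_{\vartheta^j R}} \phi_{|(D\bfu)^{\lambda_j}_{\vartheta^j R}|}(|D\bfu - (D\bfu)^{\lambda_j}_{\vartheta^j R}|)\,\dz < \chi_1 \phi(|(D\bfu)^{\lambda_j}_{\vartheta^j R}|) \Big\}.
\]
For each $j < m$ the hypothesis \eqref{Lem:degenerate_ass2} of Lemma~\ref{Lem:degenerate} is met, so the lemma produces $\lambda_{j+1}\in[\vartheta^{\alpha_1}\lambda_j,C_1\lambda_j]$ satisfying the excess bound \eqref{Lem:degenerate_decay} (which gives the first half of \eqref{Lem:iteration1_result5}) together with $|(D\bfu)^{\lambda_{j+1}}_{2\vartheta^{j+1}R}|\le\lambda_{j+1}$, and the inclusion $Q^{\lambda_{j+1}}_{\vartheta^{j+1}R}\subset Q^{\lambda_j}_{\sigma\vartheta^j R/2}$ (this is \eqref{Lem:iteration1_result4}, with $\sigma_1$ identified as $\sigma/2$ from Remark~\ref{rmk:nested}). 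Whenever $j+1 < m$, by definition of $m$ the condition \eqref{Lem:degenerate_ass4} holds at scale $\vartheta^{j+1}R$ with $(\chi_1,K_1)$, so \eqref{Lem:degenerate_lambda1-1} upgrades the bound to $\lambda_{j+1}\le\vartheta^\alpha\lambda_j$, yielding \eqref{Lem:iteration1_result2}. A Jensen-type passage from $2\vartheta^{j+1}R$ to $\vartheta^{j+1}R$, combined with $M_1\ge C_1$, preserves the bound $|(D\bfu)^{\lambda_{j+1}}_{\vartheta^{j+1}R}|\le M_1\lambda_{j+1}$ required to re-enter Lemma~\ref{Lem:degenerate}.

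If $m=\infty$, then $\lambda_j\le\vartheta^{\alpha j}\to 0$ geometrically, and since $|(D\bfu)^{\lambda_j}_{\vartheta^j R}|\le M_1\lambda_j$, a comparison of $Q^{\lambda_j}_{\vartheta^j R}$ with the Euclidean cylinder $Q_r$ centered at $z_0$ (covering $Q_r$ by a controlled number of intrinsic cylinders at the appropriate scale) forces $(D\bfu)_{Q_r(z_0)}\to{\bf 0}$, so $\Gamma_{z_0}={\bf 0}$, proving the second half of \eqref{Lem:iteration1_result3}. If $m<\infty$, the failure of the degenerate alternative at the $m$th scale gives simultaneously $\lambda_m/K_1 < |(D\bfu)^{\lambda_m}_{\vartheta^m R}|\le M_1\lambda_m$ (verifying \eqref{Lem:nondegenerate2_ass1} with $K_0=K_2$) and the excess bound $<\chi_1\phi(|(D\bfu)^{\lambda_m}_{\vartheta^m R}|)\le\delta_1\phi(|(D\bfu)^{\lambda_m}_{\vartheta^m R}|)$ (verifying \eqref{Lem:nondegenerate2_ass2}). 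Lemma~\ref{Lem:nondegenerate2} then yields existence of $\Gamma_{z_0}$ with \eqref{Lem:iteration1_result3} (first half) and the decay \eqref{nondegenerate_decayDu} at scale $\vartheta^m R$, providing \eqref{Lem:iteration1_result6} with $\alpha_2:=\beta_1$. The intermediate estimate \eqref{Lem:iteration1_result5} for $j\le m$ then follows by combining the excess bound $\le\phi(\lambda_j)$ with the triangle inequality and the control $|\Gamma_{z_0}-(D\bfu)^{\lambda_j}_{\vartheta^j R}|\lesssim\lambda_j$ (obtained by geometric summation of $\lambda_k$ for $k\ge j$ in the $m=\infty$ case, or via \eqref{Lem:nondegenerate2_result2} and the geometric jumps for $j<m$ in the $m<\infty$ case).

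The main obstacle is the persistence of the bound $|(D\bfu)^{\lambda_j}_{\vartheta^j R}|\le M_1\lambda_j$ across iterations, because Lemma~\ref{Lem:degenerate} only controls the average over the larger cylinder $Q^{\lambda_{j+1}}_{2\vartheta^{j+1}R}$; passing to $Q^{\lambda_{j+1}}_{\vartheta^{j+1}R}$ requires exploiting $\Delta_2$ of $\phi_{|(D\bfu)|}$ uniformly in the shift through Lemma~\ref{lem:Jensen}. The second subtlety is the Euclidean/intrinsic conversion in the case $m=\infty$: when $\phi(\lambda)/\lambda^2\to 0$ (the subquadratic situation), the intrinsic cylinder is much wider than the Euclidean one and the covering argument must be carried out carefully to maintain the bound $(D\bfu)_{Q_r}\to{\bf 0}$ without losing integrability, leveraging the higher integrability of Theorem~\ref{thm:high} if needed.
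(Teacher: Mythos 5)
Your outline captures the broad two-regime strategy, but the construction of the exit time has a genuine gap that makes the iteration break down.

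\textbf{The main problem is the single-threshold exit criterion.} You define $m$ by the failure of the condition with the pair $(\chi_1,K_1)$ and set $\chi:=\chi_1$. But the hypothesis~\eqref{Lem:degenerate_ass2} of Lemma~\ref{Lem:degenerate} that you need to invoke for each $j<m$ reads
\[
\chi\,\phi\big(|(D\bfu)^{\lambda_j}_{\vartheta^j R}|\big)\le \text{excess} \quad\text{or}\quad |(D\bfu)^{\lambda_j}_{\vartheta^j R}|\le \lambda_j/K,
\]
where $K$ is the \emph{large} constant produced by that lemma, which depends on $\vartheta$ (and hence cannot be identified with $K_1$, which is fixed before $\vartheta$). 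For small $\vartheta$ one has $K\gg K_1$. Thus if for some $j<m$ you are in the regime ``$|(D\bfu)^{\lambda_j}_{\vartheta^j R}|\le\lambda_j/K_1$ but excess $<\chi_1\phi(\cdot)$'', neither alternative of~\eqref{Lem:degenerate_ass2} is implied, and you cannot apply Lemma~\ref{Lem:degenerate} to produce $\lambda_{j+1}$. There is no way to collapse $(\chi,K)$ and $(\chi_1,K_1)$ into one pair: $\chi$ must be at most $\delta_1$ from Lemma~\ref{Lem:nondegenerate2} with $K_0\gtrsim K$, so $\chi$ must be chosen \emph{after} $K$, hence after $\vartheta$; but $\vartheta\le\vartheta_1$ with $\vartheta_1$ depending on $\chi_1$. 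The paper's way out is precisely the two-threshold structure: $(\chi_1,K_1)$ are fixed first and govern the ``early'' exit via~\eqref{Lem:degenerate_ass4}, $\vartheta$ then $K$ are chosen, and only then $\chi:=\delta_1(K)$ is fixed to govern the ``late'' exit when the degenerate hypothesis~\eqref{Lem:degenerate_ass2} itself fails. Relatedly, $K_2$ must be taken $\gtrsim K$ as well (not just $\max\{K_1,M_1\}$) in order for the first half of~\eqref{Lem:iteration1_result3} to hold when the exit is of the second type.

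\textbf{A secondary gap:} the smallness condition~\eqref{Lem:degenerate_ass3} requires the intrinsic excess at each scale to satisfy $\le\min\{\phi(\lambda_j),\epsilon_1\}$. The bound $\le\phi(\lambda_j)$ comes from~\eqref{Lem:degenerate_decay} at the previous step, but the bound $\le\epsilon_1$ is not automatic: for large $j$ it follows from $\lambda_j\le\vartheta^{\alpha j}$ and $\phi(\lambda_j)\le\vartheta^{\alpha p j}\le\epsilon_1$, while for small $j$ it must be transported from the initial assumption~\eqref{Lem:iteration1_ass2} through finitely many changes of cylinder via the $\bV$-function. This forces the specific choice $\epsilon_2\sim\vartheta^{(n+4-p)(j_*+1)}\epsilon_1$ with $j_*$ the crossover index; choosing $\epsilon_2$ only for the base scale does not suffice to re-enter Lemma~\ref{Lem:degenerate} at intermediate scales $j<j_*$.
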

\begin{proof}
\textit{Step 1. (Choice of parameters)} 
Without loss of generality, we may assume that $z_0=0$. We start by fixing the parameters. Let $M_1:=2^{n+2}M_0$. First, we fix $K_1$ and $C_1$ as in Lemma~\ref{Lem:degenerate}, and then choose $\delta_1=\delta_1(K_1)$ in Lemma~\ref{Lem:nondegenerate2} with $K_0=\max\{M_1,2^{n+2}K_1\}$, and set $\chi_1:=\delta_1(K_1)$  in Lemma~\ref{Lem:degenerate}.  
Then we next determine $\vartheta_1$ as in Lemma~\ref{Lem:degenerate}, and then $K$ as in Lemma~\ref{Lem:degenerate} with $\vartheta\le \vartheta_1$.  Then choose $\delta_1=\delta_1(K)$ in Lemma~\ref{Lem:nondegenerate2} with $K_0=\max\{M_1, 2^{n+2}K\}$, and set $\chi:=\delta_1(K)$ in Lemma~\ref{Lem:degenerate}. We then determine $\epsilon_1$ as in  Lemma~\ref{Lem:degenerate}. 
Note that there exists $j_*\in \mathbb N$ such that 
\begin{equation}\label{j*}
\vartheta^{\alpha p( j_*+1)} \le \epsilon_1\,.
\end{equation}
With this $j_*$, we determine 
\begin{equation}\label{epsilon2}
   \epsilon_2 =  C_2^{-1}\vartheta^{(n+4-p)(j_*+1)} \epsilon_1\,,
\end{equation}
where $C_2>1$ is a large constant to be determined in \eqref{Lem:iteration1_pf7}. 
Finally, set
$$K_2=\max\{M_1, 2^{n+2}K_1, 2^{n+2}K\}\,.$$

We next choose $\lambda_j$ inductively. Set $\lambda_0=1$. For some $j\in \mathbb N_0$ and $\lambda_j\in (0,1]$, we consider the following condition:
\begin{equation}\label{induction2}
\chi |(D\bfu)^{\lambda_j}_{\vartheta^{j}R}| \le \fint_{Q^{\lambda_j}_{\vartheta^{j}R}} \phi_{|(D \bfu)^{\lambda_j}_{\vartheta^{j}R}|} (|D \bfu-(D \bfu)^{\lambda_j}_{\vartheta^{j}R}|) \, \dz 
\quad\text{or}\quad
|(D \bfu)^{\lambda_j}_{\vartheta^{j}R}| \le \frac{\lambda_j}{K}\,.
\end{equation}
If the first three conditions in Lemma~\ref{Lem:degenerate} hold with $\lambda_j= \lambda$ and $r=\vartheta^jR$,
there exists $\lambda_{j+1}\in[\vartheta^{\alpha_1}\lambda_j, C_1\lambda_j]$  such that $Q^{\lambda_{j+1}}_{\vartheta^{j+1}R} \subset Q^{\lambda_j}_{\frac{\sigma}{2}\vartheta^j R}$,
\begin{equation}\label{pf11}
 \fint_{Q^{\lambda_{j+1}}_{\vartheta^{j+1} R}} \phi_{|(D \bfu)^{\lambda_{j+1}}_{\vartheta^{j+1} R}|} (|D \bfu-(D \bfu)^{\lambda_{j+1}}_{\vartheta^{j+1} R}|) \, \dz \le \phi(\lambda_{j+1})  
\quad\text{and}\quad
|(D \bfu)^{\lambda_{j+1}}_{2\vartheta^{j+1} R}| \le \lambda_{j+1} \,.
\end{equation}
Then we have two cases:
\begin{equation}\label{induction3}
\chi_1 |(D\bfu)^{\lambda_{j+1}}_{\vartheta^{j+1}R}| \le \fint_{Q^{\lambda_{j+1}}_{\vartheta^{j+1}R}} \phi_{|(D \bfu)^{\lambda_{j+1}}_{\vartheta^{j+1}R}|} (|D \bfu-(D \bfu)^{\lambda_{j+1}}_{\vartheta^{j+1}R}|) \, \dz 
\quad\text{or}\quad
|(D \bfu)^{\lambda_{j+1}}_{\vartheta^{j+1}R}| \le \frac{\lambda_{j+1}}{K_1}\,,
\end{equation}
and the other case. 
Let $m_1$ be the unique number in $\mathbb{N}_0\cup\{\infty\}$ such that (7.10) holds for all $0\le j<m_1$, but does not hold for $j=m_1<\infty$.

\textit{Step 2. (Nondegenerate decay)}  
If $m_1=0$, then the lemma with $m=0$ follows directly from Lemma~\ref{Lem:nondegenerate2} with $R=r$, $K_0=2^{n+2}\max\{M_1, K_1\}$ and $\delta_1=\delta_1(K)$. 

We next suppose $m_1\in \mathbb N \cup \{\infty\}$. If  there exists $m_0\in \mathbb N$ with $m_0\le m_1+1$ such that \eqref{induction3} holds  for all $ j < m_0-1$ but not $j= m_0-1$  then we choose $m=m_0$. On the other hand, if $m_1\in \mathbb N$ and \eqref{induction3} holds for all $ j  \le m_1 -1$, then we choose $m=m_1$.  We note that if all the assumptions of Lemma~\ref{Lem:degenerate}
with $\lambda=\lambda_j$ and $r=\vartheta^j R$
hold and $\lambda_j\le \vartheta^{\alpha j}$, then $\lambda_{j+1} \le \vartheta^{\alpha(j+1)}$, 
\begin{equation}\label{lambdaj+1}
 \fint_{Q^{\lambda_{j+1}}_{\vartheta^{j+1} R}} \phi_{|(D \bfu)^{\lambda_{j+1}}_{\vartheta^{j+1} R}|} (|D \bfu-(D \bfu)^{\lambda_{j+1}}_{\vartheta^{j+1} R}|) \, \dz \le  \phi(\lambda_{j+1}) \le \phi(\vartheta^{\alpha(j+1)}) \le \vartheta^{\alpha p( j+1)}   
 \end{equation}
and 
 $$
|(D \bfu)^{\lambda_{j+1}}_{\vartheta^{j+1} R}| \le 2^{n+2}  |(D \bfu)^{\lambda_{j+1}}_{2\vartheta^{j+1} R}| \le 2^{n+2}  \lambda_{j+1} \le M_1\lambda_{j+1} \,.
$$
Moreover if $j\ge j_*$, the estimate \eqref{lambdaj+1} together with \eqref{j*} implies
$$ 
\fint_{Q^{\lambda_{j+1}}_{\vartheta^{j+1} R}} \phi_{|(D \bfu)^{\lambda_{j+1}}_{\vartheta^{j+1} R}|} (|D \bfu-(D \bfu)^{\lambda_{j+1}}_{\vartheta^{j+1} R}|) \, \dz  \le  \epsilon_1\,, 
$$
while if $j<j_*$, then by \eqref{epsilon2}, taking into account \eqref{monotonicity1} and \eqref{Vintequivalent},
\begin{equation}\label{Lem:iteration1_pf7} 
\begin{aligned}
&\fint_{Q^{\lambda_{j+1}}_{\vartheta^{j+1} R}} \phi_{|(D \bfu)^{\lambda_{j+1}}_{\vartheta^{j+1} R}|}  (|D \bfu-(D \bfu)^{\lambda_{j+1}}_{\vartheta^{j+1} R}|) \, \dz  \\
& \le c \fint_{Q^{\lambda_{j+1}}_{\vartheta^{j+1} R}} |\bV(D\bfu) - (\bV(D\bfu))^{\lambda_{j+1}}_{\vartheta^{j+1}R}|^2\,\dz \\
& \le c\vartheta^{-(n+4-p)(j+1)} \fint_{Q_{R}} |\bV(D\bfu) - (\bV(D\bfu))_{R}|^2\,\dz \\
& \le c\vartheta^{-(n+4-p)(j_*+1)} \fint_{Q_{R}} \phi_{|(D \bfu)_{R}|} (|D \bfu-(D \bfu)_{R}|) \, \dz  \\
&\le  C_2\vartheta^{-(n+4-p)(j_*+1)} \epsilon_2 \le \epsilon_1\,.
\end{aligned}\end{equation}
Therefore, we can inductively apply Lemma~\ref{Lem:degenerate} 
with $\lambda=\lambda_j$ and $r=\vartheta^j R$
for $j=0,1,\dots,m-1$. 

Moreover, by the second inequality in \eqref{pf11} with  $j=m-1$ and the reverse inequality of the second one in 
\eqref{induction2} when $m=m_1$, or \eqref{induction3} when $m<m_1$,
we have either
$$
\lambda_m  \ge |(D \bfu)^{\lambda_m}_{2\vartheta^{m}R}| \ge \frac{1}{2^{n+2}}  |(D \bfu)^{\lambda_m}_{\vartheta^{m}R}| \ge \frac{1}{2^{n+2} K}\lambda_m \ge \frac{1}{K}\lambda_m\,,
$$
or 
$$
\lambda_m  \ge |(D \bfu)^{\lambda_m}_{2\vartheta^{m}R}| \ge \frac{1}{2^{n+2}}  |(D \bfu)^{\lambda_m}_{\vartheta^{m}R}| \ge \frac{1}{2^{n+2} K}\lambda_m \ge \frac{1}{K_1}\lambda_m\,. 
$$
Therefore, applying Lemma~\ref{Lem:nondegenerate2} with $R=\vartheta^m r$, $K_0=\max\{M_1,K\}$ and $\delta_1=\delta_1(K)$ when $m=m_1$, or with $R=\vartheta^m r$, $K_0=\max\{M_1,K_1\}$ and $\delta_1=\delta_1(K_1)$ when $m<m_1$, we can get all the estimates except \eqref{Lem:iteration1_result5}.

Thus, we are left to prove  \eqref{Lem:iteration1_result5} for  $ j \le m-1$. Note that the case $j=m$ follows from \eqref{Lem:iteration1_result6}.   
We first observe that if $ j\le m-1$, we have \eqref{pf11} 
which, together with \eqref{eq:approx}, implies
\begin{equation}\label{Lem:iteration1_pf6}
\fint_{Q^{\lambda_j}_{\vartheta^j R}} \phi(|D \bfu-(D \bfu)^{\lambda_j}_{\vartheta^j R}|) \, \dz \le \fint_{Q^{\lambda_j}_{\vartheta^j R}} \phi(|D \bfu-(D \bfu)^{\lambda_j}_{\vartheta^j R}|+|(D \bfu)^{\lambda_j}_{\vartheta^j R}|) \, \dz \le c\phi(\lambda_j)\,,
\end{equation}
whenever $1\le j \le m$. Moreover, by the same argument, we also have \eqref{Lem:iteration1_pf6} when $j=0$  from   \eqref{Lem:iteration1_ass1} and \eqref{Lem:iteration1_ass2}. 
From this estimate we have that 
$$\begin{aligned}
\fint_{Q^{\lambda_j}_{\vartheta^{j} R}(z_0)} \phi (|D \bfu-\Gamma_0|) \, \dz &  \le c  \fint_{Q^{\lambda_j}_{\vartheta^{j} R}} \phi (|D \bfu- (D \bfu)^{\lambda_j}_{\vartheta^j R}|) \, \dz  + c \phi (|(D \bfu)^{\lambda_j}_{\vartheta^j r}  -\Gamma_0|)\\
& \le c \phi(\lambda_j)+ c \phi (|(D \bfu)^{\lambda_j}_{\vartheta^j R}  -\Gamma_0|) \,.
\end{aligned}$$
Moreover, by \eqref{Lem:iteration1_pf6} and \eqref{Lem:iteration1_result2},
$$\begin{aligned}
&|(D \bfu)^{\lambda_j}_{\vartheta^j R}  -\Gamma_0|   \le \sum_{k=j}^{m-1} |(D \bfu)^{\lambda_k}_{\vartheta^k R}  -(D \bfu)^{\lambda_{k+1}}_{\vartheta^{k+1} R}  | + |(D \bfu)^{\lambda_m}_{\vartheta^m R}  -\Gamma_0|\\
&\le \sum_{k=j}^{m-1}  \phi^{-1}\bigg(\frac{|Q^{\lambda_{k}}_{\vartheta^{k} R}|}{|Q^{\lambda_{k+1}}_{\vartheta^{k+1} R}|} \fint_{Q^{\lambda_{k}}_{\vartheta^{k} R}}\phi(|D\bfu-(D \bfu)^{\lambda_k}_{\vartheta^k R}|)\,\dz\bigg) +\phi^{-1} \bigg(\fint_{Q^{\lambda_{m}}_{\vartheta^{m} R}}\phi(|D\bfu-\Gamma_0|)\,\dz\bigg) \\
&\le \sum_{k=j}^{m-1}  \phi^{-1}\bigg(\vartheta^{-(n+2)}\frac{\phi(\lambda_{k+1})\lambda_{k}^2}{ \phi(\lambda_{k})\lambda_{k+1}^2}\phi(\lambda_k)\bigg) +c \lambda_m\\
& \le  \sum_{k=j}^{m-1}  \phi^{-1}\left(\vartheta^{-(n+2+2\alpha_1+q)} \phi(\lambda_{k})\right) +c \lambda_m \\
&\le c \sum_{k=j}^{m} \lambda_k \le c \lambda_j \sum_{k=j}^m \vartheta^{\alpha (k-j)} \le c \lambda_j\,.
\end{aligned}$$
Therefore, combining the preceding two estimates, we obtain \eqref{Lem:iteration1_result5}.

\textit{Step 3. (Degenerate decay)} 
Suppose $m_1=\infty$ and \eqref{induction3} holds for all $j\in \mathbb N$. Then we choose $m=\infty$ and by Lemma~\ref{Lem:degenerate} with $\lambda=\lambda_j$ and $r=\vartheta^j R$, we have the first inequality in  \eqref{Lem:iteration1_result2} and \eqref{Lem:iteration1_result4}. We next prove the remaining results, namely, the second  condition in \eqref{Lem:iteration1_result3} and  \eqref{Lem:iteration1_result5} with $\Gamma_0=\zero$.
Observe that applying Lemma~\ref{Lem:degenerate} inductively, we have that for all $j\in\mathbb N_0$, 
\begin{equation}\label{Lem:degenerate_decayj}
 \fint_{Q^{\lambda_{j}}_{\vartheta^{j} R}} \phi_{|(D \bfu)^{\lambda_{j}}_{\vartheta^{j} R}|} (|D \bfu-(D \bfu)^{\lambda_{j}}_{\vartheta^{j} R}|) \, \dz \le \phi(\lambda_{j})  
\quad\text{and}\quad
|(D \bfu)^{\lambda_{j}}_{2\vartheta^{j} R}| \le \lambda_{j} \le \vartheta^{\alpha j}\,,
\end{equation}
hence 
$$
\lim_{j\to \infty} (D\bfu)^{\lambda_j}_{\vartheta^j R} = {\bf 0}\,.
$$
Fix any $r \in(0,R)$. Since
$$
Q^{\lambda_{j}}_{\vartheta^{j} R} \subset Q^{\lambda_0}_{\tilde \sigma_1^j R}=Q_{\sigma_1^j R} \,, \qquad  j\in \mathbb{N}_0
$$ 
by \eqref{Lem:iteration1_result4}, there exists $j \in \mathbb{N}_0$ such that
$$
Q_r \subset Q^{\lambda_{j}}_{\vartheta^{j}R}  
\quad \text{and} \quad
Q_r \not\subset Q^{\lambda_{j+1}}_{\vartheta^{j+1} R} \,,
$$
which implies that
$$
\min\left\{\vartheta^{j+1}R, \frac{\vartheta^{j+1} R}{\sqrt{\phi(\lambda_{j+1})/\lambda_{j+1}^2}}  \right\}
<  r \le \min\left\{ \vartheta^{j} R, \frac{\vartheta^{j} R}{\sqrt{\phi(\lambda_j)/\lambda_{j+1}^2}}\right\} \,.
$$
By \eqref{Lem:degenerate_decayj} and \eqref{Lem:iteration1_pf6} and the inequality $\frac{2n}{n+2} < p < 2$, we have 
$$\begin{aligned}
|(D\bfu)_{r}| & \le   \fint_{ Q_{r} }|D\bfu-(D\bfu)^{\lambda_j}_{\vartheta^{j} R}| \,\dz +   |(D\bfu)^{\lambda_j}_{\vartheta^{j} R}| \\
& \le \phi^{-1}\left(\frac{|Q^{\lambda_{j}}_{\vartheta^{j} R}|}{|Q_{r}|}\fint_{ Q^{\lambda_{j}}_{\vartheta^{j} R} }\phi(|D\bfu-(D\bfu)^{\lambda_j}_{\vartheta^{j} R}|) \,\dz\right) +   |(D\bfu)^{\lambda_j}_{\vartheta^{j} R}| \\
&\le c \phi^{-1} \left(\frac{(\vartheta^{j}R)^{n+2}}{r^{n+2}} \lambda_j^2 \right) + \lambda_j\\
&\le c \phi^{-1} \left(\vartheta^{-(n+2)} \max\{1,(\phi(\lambda_{j+1})/\lambda_{j+1}^2)^{\frac{n+2}{2}}\} \lambda_j^2 \right) + \lambda_j\\
&\le c \phi^{-1} \left( \max\{\lambda_j^2,\lambda_j^2(\phi(\lambda_j)/\lambda_{j+1}^2)^{\frac{n+2}{2}}\}  \right) + \lambda_j\\
&\le c \phi^{-1} \left( \max\{\lambda_j^2,\lambda_j^{\frac{n+2}{2}p-n}\} \right) + \lambda_j  \le c \lambda_j^{\frac{1}{q}(\frac{n+2}{2}p-n)}\,.
\end{aligned}$$
Moreover, since 
$$
\frac{r}{R} \ge \vartheta^{j+1} \min \left\{ 1, (\phi(\lambda_{j+1})/\lambda_{j+1}^2)^{-1/2} \right\} \ge  \vartheta^{j+1} \lambda_{j+1}^{\frac{2-p}{2}} \ge c \lambda_j^{\frac{1}{\alpha}+\frac{2-p}{2}}
$$ 
by   \eqref{Lem:iteration1_result2}  we have 
$$
|(D\bfu)_{r}|   \le c \left(\frac{r}{R}\right)^{\frac{1}{q}(\frac{n+2}{2}p-n)/(\frac{1}{\alpha}+\frac{2-p}{2})} \longrightarrow 0 \quad \text{as }\ r\to 0 \,,
$$
hence 
$$
\lim_{r\to 0^+}(D\bfu)_{r} = {\bf 0} =: \Gamma_0\,.
$$
Finally, by \eqref{Lem:iteration1_pf6}  and the second inequality in \eqref{Lem:degenerate_decayj} we have 
$$
\fint_{Q^{\lambda_j}_{\vartheta^j R}} \phi(|D \bfu|) \, \dz  \le c \fint_{Q^{\lambda_j}_{\vartheta^j R}} \phi(|D \bfu-(D \bfu)^{\lambda_j}_{\vartheta^j R}|) \, \dz + c\phi(|(D \bfu)^{\lambda_j}_{\vartheta^j R}|)  \le c\phi(\lambda_j)\,,
$$
which proves \eqref{Lem:iteration1_result5} when $m=\infty$.
\end{proof}

Now we prove the partial H\"older continuity result for $D\bfu$.

\begin{proof}[Proof of Theorem~\ref{mainthm}] 
By the parabolic Lebesgue differentiation theorem, we may assume that $D\bfu(z)=\lim_{r\to 0^+}(D\bfu)_{Q_r(z)}$ if the limit exists.
Fix  $z_0\in \Omega_T \setminus (\Sigma_1\cup \Sigma_2)$,  where $\Sigma_1$ and $\Sigma_2$ are defined in \eqref{Sigma1} and \eqref{Sigma2}, respectively. Hence we have  
$$
\liminf_{r\to 0^+}\fint_{Q_{r}(z_0)}  \phi_{|(D\bfu)_{Q_r(z_0)}|}(|D\bfu -(D\bfu)_{Q_r(z_0)} |) \,\dz =0\,,
$$
$$
\limsup_{r\to 0^+}  |(D\bfu)_{Q_r(z_0)} | < \infty  \,.
$$
From these results and the absolute continuity of the integral, 
one can find $R>0$ such that $Q_{2R}(z_0)\Subset \Omega_T$ and for every $\tilde z\in Q_R(z_0)$,
$$
\fint_{Q_{R}(\tilde z)}  \phi_{|(D\bfu)_{Q_R(\tilde z)}|}(|D\bfu -(D\bfu)_{Q_R(\tilde z)} |) \,\dz \le \epsilon_2
$$
with $\epsilon_2$ as in \eqref{epsilon2}, and
$$
|(D\bfu)_{Q_R(\tilde z)}| \le M_0\,,
$$
for some $M_0<\infty$.
 Then, in view of Lemma~\ref{Lem:iteration1}, we have that for each $\tilde z\in Q_R(z_0)$,
$$
\Gamma_{\tilde z} : = \lim_{r\to 0^+} (D\bfu)_{Q_r(\tilde z)} 
$$
exists and there exist $m_{\tilde z} \in \mathbb N_0 \cup \{\infty\}$ and positive numbers $\lambda_{\tilde z, j}$ with $j\in\{0,1,\dots,  m_{\tilde z}\}$ such that
\begin{equation}\label{mainthm_pf1}
\begin{cases}
\lambda_{\tilde z, 0}=1\,,\\
\vartheta^{\alpha_1} \lambda_{\tilde z, j-1}\le \lambda_{\tilde z, j}  \le \vartheta^{\alpha}  \lambda_{\tilde z, j-1} \quad \text{for}\ \ 1\le  j < m_{\tilde z}\,,\\
\vartheta^{\alpha_1}\lambda_{\tilde z, m_{\tilde z}-1} \le \lambda_{\tilde z, m_{\tilde z}}  \le C_1\lambda_{\tilde z, m_{\tilde z}-1} \quad \text{for}\ \  0< m_{\tilde z} <\infty \,,
\end{cases}
\end{equation}
\begin{equation*}\label{mainthm_pf2}
\frac{\lambda_{\tilde z,m_{\tilde z}}}{2K_2} \le |\Gamma_{\tilde z}| \le 2K_2 \lambda_{\tilde z,m_{\tilde z}}\,,
\end{equation*}
\begin{equation}\label{mainthm_pf3}
\fint_{Q^{\lambda_{\tilde z,j}}_{\vartheta^{j} R}(\tilde z)} \phi (|D \bfu-\Gamma_{\tilde z}|) \, \dz 
\le c \phi(\lambda_{\tilde z,j}) \qquad \text{for }\ 0\le j \le m_{\tilde z}  \ \text{ with }\ j<\infty\,,
\end{equation}
and, if $m_{\tilde z}<\infty$,
\begin{equation}\label{mainthm_pf4}
\fint_{Q^{\lambda_{\tilde z,m_{\tilde z}}}_{r}(\tilde z)} \phi (|D \bfu-\Gamma_{\tilde z}|) \, \dz 
\le c \left(\frac{r}{\vartheta^{m_{\tilde z} } R}\right)^{\alpha_2}\phi(\lambda_{\tilde z,m_{\tilde z}}) \quad \text{for all }\ 0< r \le \vartheta^{m_{\tilde z}} R \,.
\end{equation}

We shall prove that the mapping $z \mapsto \Gamma_z=D\bfu(z)$ from $Q_{R/2}(z_0)$ to $\R^{Nn}$ is H\"older continuous  with respect to the parabolic distance in $Q_{R/2}(z_0)\subset \R^{n+1}$ and the Euclidean distance in $\R^{Nn}$.
For  $\tilde z\in Q_{R/2}(z_0)$ and $r\in(0,R)$,
we first suppose
$Q_r(\tilde z) \subset  Q_{\vartheta^{m_{\tilde z}}R}^{\lambda_{\tilde z, m_{\tilde z}}}(\tilde z)$. Note that in this case $m_{\tilde z}<\infty$. Then define $\rho>0$ as
$$
\rho := \max\left\{1, \sqrt{\phi(\lambda_{m_{\tilde z}})/\lambda_{m_{\tilde z}}^2}  \right\} r\,, 
$$
so that $Q_r(\tilde z) \subset  Q_{\rho}^{\lambda_{\tilde z, m_{\tilde z}}}(\tilde z)$. Hence  by \eqref{mainthm_pf4}, the inequality $\frac{2n}{n+2}<p<2$ and \eqref{mainthm_pf1}, we have 
$$ \begin{aligned}
\fint_{Q_r(\tilde z)} \phi (|D \bfu-\Gamma_{\tilde z}|) \, \dz 
&\le \frac{|Q^{\lambda_{\tilde z,m_{\tilde z}}}_{\rho}(\tilde z)|}{|Q_r(\tilde z)|}\fint_{Q^{\lambda_{\tilde z,m_{\tilde z}}}_{\rho}(\tilde z)} \phi (|D \bfu-\Gamma_{\tilde z}|) \, \dz \\
&\le c \frac{\rho^{n+2}\lambda_{\tilde z, m_{\tilde z}}^2}{r^{n+2}\phi(\lambda_{\tilde z, m_{\tilde z}})}\left(\frac{\rho}{\vartheta^{m_{\tilde z} } R}\right)^{\alpha_2}\phi(\lambda_{\tilde z,m_{\tilde z}}) \\
&= c  \left(\frac{\rho}{\vartheta^{m_{\tilde z} } R}\right)^{\alpha_2} \lambda_{\tilde z,m_{\tilde z}}^2  \max\left\{1, (\phi(\lambda_{\tilde z,m_{\tilde z}})/\lambda_{\tilde z,m_{\tilde z}}^2 )^{\frac{n+2}{2}} \right\} \\
&\le c  \left(\frac{\rho}{\vartheta^{m_{\tilde z} } R}\right)^{\alpha_3}   \max\{\lambda_{\tilde z,m_{\tilde z}}^2, \lambda_{\tilde z,m_{\tilde z}}^{\frac{n+2}{2}p-n} \} \\
&\le c  \left(\frac{\max\left\{1, \sqrt{\phi(\lambda_{m_{\tilde z}})/\lambda_{m_{\tilde z}}^2}  \right\} r}{\lambda_{\tilde z, m_{\tilde z} }^{1/\alpha} R}\right)^{\alpha_3}   \lambda_{\tilde z,m_{\tilde z}}^{\frac{n+2}{2}p-n} \\
&\le c  \left(\frac{r}{ R}\right)^{\alpha_3}   \lambda_{\tilde z, m_{\tilde z}}^{ -(\frac{2-p}{2}+\frac{1}{\alpha})\alpha_3+ \alpha (\frac{n+2}{2}p-n)}  \le c \left(\frac{r}{R}\right)^{\alpha_3}\,,
\end{aligned}$$
where 
$$
0<\alpha_3 \le \min\left\{\alpha_2, \frac{1}{2}\frac{\alpha (\frac{n+2}{2}p-n)}{\frac{2-p}{2}+\frac{1}{\alpha}}\right\}\,.
$$
We next suppose
$Q_r(\tilde z) \not\subset  Q_{\vartheta^{m_{\tilde z}}R}^{\lambda_{\tilde z, m_{\tilde z}}}(\tilde z)$. Then there exists $0\le j < m_{\tilde z}$ such that
$$
Q_r(\tilde z) \not\subset  Q_{\vartheta^{l+1}R}^{\lambda_{\tilde z, l+1}}(\tilde z) 
\quad \text{and}\quad
 Q_r(z_0)\subset 
Q_{\vartheta^{l}R}^{\lambda_{\tilde z, l}}(\tilde z)\,,
$$
which implies 
\begin{equation}\label{mainthm_pf5}
\vartheta^{j+1}R < \max\left\{1,\sqrt{\phi(\lambda_{\tilde z, j+1})/\lambda_{\tilde z, j+1}^2}\right\}r  
\quad \text{and}\quad 
\vartheta^{j}R \ge  \max\left\{1,\sqrt{\phi(\lambda_{\tilde z, j})/\lambda_{\tilde z, j+1}^2}\right\}r \,.
\end{equation}
Then by \eqref{mainthm_pf3}, the first inequality in \eqref{mainthm_pf5}, the inequality $\frac{2n}{n+2}<p<2$ and \eqref{mainthm_pf1}, we have 
$$\begin{aligned}
\fint_{Q_r(\tilde z)} \phi (|D \bfu-\Gamma_{\tilde z}|) \, \dz  
& \le \frac{|Q^{\lambda_{\tilde z,j}}_{\vartheta^{j} R}(\tilde z)|}{|Q_r(\tilde z)|} \fint_{Q^{\lambda_{\tilde z,j}}_{\vartheta^{j} R}(\tilde z)} \phi (|D \bfu-\Gamma_{\tilde z}|) \, \dz \\
& \le c \frac{(\vartheta^j R)^{n+2}\lambda_{\tilde z, j}^2}{r^{n+2}\phi(\lambda_{\tilde z, j})} \phi(\lambda_{\tilde z,j}) \\
& \le c \lambda_{\tilde z,j}^2  \max\{1, (\phi(\lambda_{\tilde z,j+1} )/\lambda_{\tilde z,j+1} ^2)^{\frac{n+2}{2}}  \}  \le  \lambda_{\tilde z,j+1}^{\frac{n+2}{2}p-n}  \,.
\end{aligned}$$
Moreover, by the first inequality in \eqref{mainthm_pf5} again we have 
$$
\frac{r}{R} >  \frac{\vartheta^{j+1}}{\max\left\{1,\sqrt{\phi(\lambda_{\tilde z, j+1})/\lambda_{\tilde z, j+1}^2}\right\}} \ge c \lambda_{\tilde z, j+1}^{\frac{1}{\alpha}} \min\left\{ 1,  \lambda_{\tilde z, j+1}^{1-\frac{p}{2}} \right\} \ge c  \lambda_{\tilde z, j+1}^{1+\frac{1}{\alpha}-\frac{p}{2}}\,.
$$
Therefore, combining the above results, we have that 
$$
\fint_{Q_r(\tilde z)} \phi (|D \bfu-\Gamma_{\tilde z}|) \, \dz \le c \left(\frac{r}{R}\right)^{\alpha_3}
$$
for some small $\alpha_3\in(0,1)$.

Now, let $z_1,z_2\in Q_{R/2}(z_0)$ be any two points with $z_1\neq z_2$ and $r:= d_p(z_1,z_2)$, where
$$
d_p(z_1,z_2):= \max\{|x_1-x_2|,\sqrt{|t_1-t_2|}\}\,.
$$
Set $Q:= Q_{r}(z_1)\cap Q_{r}(z_2)$. Note that $ c(n)|Q_r| \le |Q| \le |Q_r|$.  Therefore,
$$\begin{aligned}
|\Gamma_{z_1}-\Gamma_{z_2}| 
&\le \fint_{Q}|D\bfu - \Gamma_{z_1}| \,\dz + \fint_{Q}|D\bfu - \Gamma_{z_1}| \,\dz\\
& \le c \fint_{Q_r(z_1)}|D\bfu - \Gamma_{z_1}| \,\dz + c \fint_{Q_r(z_2)}|D\bfu - \Gamma_{z_1}| \,\dz \\
& \le c \phi^{-1}\left(\left(\frac{r}{R}\right)^{\alpha_3}\right) \le c \left(\frac{r}{R}\right)^{\alpha_3/q}=c \left(\frac{d_p(z_1,z_2)}{R}\right)^{\alpha_3/q}\,,
\end{aligned}$$
which implies the H\"older continuity of $z \mapsto \Gamma_z=D\bfu(z)$ with respect to the parabolic metric on $Q_{R/2}(z_0)$ with H\"older exponent $\alpha_3/q$. Since  $z_0\in \Omega_T \setminus (\Sigma_1\cup \Sigma_2)$ was an arbitrary point and both $\Sigma_1$ and $\Sigma_2$ are $\mathcal{L}^{n+1}$-null sets, the proof is complete. 
\end{proof}

\section*{Acknowledgments} 

J. Ok was supported by the National Research Foundation of Korea by the Korean Government (NRF-2022R1C1C1004523) and by GNAMPA visiting program. 
G. Scilla has been supported by the Italian Ministry of Education, University and Research through the MIUR – PRIN project 2017BTM7SN “Variational methods for stationary and evolution problems with singularities and interfaces”. The research of B. Stroffolini was supported by PRIN Project 2017TEXA3H “Gradient flows, Optimal Transport and Metric Measure Structures”.

\subsection*{Data availability}
Data sharing is not applicable to this article as obviously no datasets were generated or
analyzed during the current study.

\bibliographystyle{amsplain}

\end{document}